\newtheorem{mytheorem}{Theorem}
\newtheorem{mylem}[mytheorem]{Lemma}
\newtheorem{mydef}[mytheorem]{Definition}
\newtheorem{mycor}[mytheorem]{Corollary}
\newtheorem{remark}[mytheorem]{Remark}
\numberwithin{mytheorem}{section}
\numberwithin{equation}{section}
\newcommand{\uproman}[1]{\uppercase\expandafter{\romannumeral#1}}
\DeclareMathOperator\inn{Int}
\DeclareMathOperator*{\esssup}{ess\,sup}
\DeclareMathOperator*{\essosc}{ess\,osc}
\DeclareMathOperator\divv{div}
\DeclareMathOperator\loc{loc}
\newcommand{\bigchi}{\scalebox{1.3}{$\chi$}}
\DeclareRobustCommand*{\bfseries}{%
  \not@math@alphabet\bfseries\mathbf
  \fontseries\bfdefault\selectfont
  \boldmath
}
\newcommand{\foo}[1]{\mathbf{#1}}
\newtheorem{myproposition}[mytheorem]{Proposition}
\numberwithin{mytheorem}{section}
\numberwithin{equation}{section}
\def\Yint#1{\mathchoice
    {\YYint\displaystyle\textstyle{#1}}%
    {\YYint\textstyle\scriptstyle{#1}}%
    {\YYint\scriptstyle\scriptscriptstyle{#1}}%
    {\YYint\scriptscriptstyle\scriptscriptstyle{#1}}%
      \!\iint}
\def\YYint#1#2#3{{\setbox0=\hbox{$#1{#2#3}{\iint}$}
    \vcenter{\hbox{$#2#3$}}\kern-.51\wd0}}
\def\longdash{{-}\mkern-3.5mu{-}} 
\def\fiint{\Yint\longdash}
\def\Xint#1{\mathchoice
{\XXint\displaystyle\textstyle{#1}}%
{\XXint\textstyle\scriptstyle{#1}}%
{\XXint\scriptstyle\scriptscriptstyle{#1}}%
{\XXint\scriptscriptstyle\scriptscriptstyle{#1}}%
\!\int}
\def\XXint#1#2#3{{\setbox0=\hbox{$#1{#2#3}{\int}$ }
\vcenter{\hbox{$#2#3$ }}\kern-0.555\wd0}}
\def\fint{\Xint-}
\newcommand{\vertiii}[1]{{\left\vert\kern-0.25ex\left\vert\kern-0.25ex\left\vert #1 
    \right\vert\kern-0.25ex\right\vert\kern-0.25ex\right\vert}}
\DeclareMathOperator\dist{dist}
\renewcommand{\d}{\mathrm{d}}
\newcommand{\dx}{\mathrm{d}x}
\newcommand{\dt}{\mathrm{d}t}
\newcommand{\ds}{\mathrm{d}s}
\newcommand{\tildexi}{\Tilde{\xi}}
\newcommand{\R}{\mathbb{R}}
\newcommand{\N}{\mathbb{N}}
\renewcommand{\epsilon}{\varepsilon}
\renewcommand{\d}{\mathrm{d}}
\newcommand{\F}{\mathcal{F}}
\newcommand{\G}{\mathcal{G}}
\newcommand{\B}{\mathcal{B}}
\newcommand{\h}{\mathcal{H}}
\renewcommand{\epsilon}{\varepsilon}
\subjclass[2020]{35B65, 35G20, 35K65}
\keywords{Widely degenerate parabolic PDEs, Weak solutions, Gradient regularity}
\begin{document}
\title[Gradient regularity for degenerate parabolic equations]{Gradient regularity for widely degenerate parabolic partial differential equations}
\date{\today}


\author[M. Strunk]{Michael Strunk}
\address{Michael Strunk \\
Fachbereich Mathematik, Universit\"at Salzburg\\
Hellbrunner Str. 34, 5020 Salzburg, Austria}
\email{michael.strunk@plus.ac.at}


\begin{abstract}
In this paper, we are interested in the regularity of weak solutions~$u\colon\Omega_T\to\mathbb{R}$ to parabolic equations of the type
\begin{equation*}
    \partial_t u - \mathrm{div} \nabla \mathcal{F}(x,t,Du) = f\qquad\mbox{in~$\Omega_T$},
\end{equation*}
where~$\mathcal{F}$ is only elliptic for values of~$Du$ outside a bounded and convex set~$E\subset \mathbb{R}^n$ with the property that~$0\in \mathrm{Int}{E}$. Here,~$\Omega_T \coloneqq\Omega\times(0,T)\subset\mathbb{R}^{n+1}$ denotes a space-time cylinder taken over a bounded domain~$\Omega\subset\mathbb{R}^n$ for some finite time~$T>0$. The function~$\mathcal{F} \colon \Omega_T\times\mathbb{R}^n \to\mathbb{R}_{\geq 0}$ present in the diffusion is assumed to satisfy: the partial mapping~$\xi\mapsto \mathcal{F}(x,t,\xi)$ is regular whenever~$\xi$ lies outside of~$E$, and vanishes entirely whenever~$\xi$ lies within this set. Additionally, the datum~$f$ is assumed to be of class~$L^{n+2+\sigma}(\Omega_T)$ for some parameter~$\sigma > 0$. As our main result we establish that
 \begin{equation*}
     \mathcal{K}(Du)\in C^0(\Omega_T)
 \end{equation*}
for any continuous function~$\mathcal{K}\in C^0(\mathbb{R}^n)$ that vanishes on~$E$. This article aims to extend the~$C^1$-regularity result for the elliptic case presented in~\cite{elliptisch} to the parabolic setting.
\end{abstract}


\maketitle
\vspace{-0.5cm}
\tableofcontents


\section{Introduction and main results}

 We investigate the gradient regularity of weak solutions to widely degenerate parabolic equations of the type
\begin{equation} \label{pde}
    \partial_t u - \divv \nabla \F(x,t,Du) = f\qquad\text{in~$\Omega_T$}.
\end{equation}
Here,~$\Omega$ denotes a bounded domain in~$\R^n$ ($n\geq 2$). The datum~$f$ is assumed to be of class~$L^{n+2+\sigma}(\Omega)$ for some parameter~$\sigma>0$. For every~$(x,t) \in \Omega_T$, the function~$\F\colon \Omega_T \times \R^n \to \R_{\geq 0}$ satisfies certain regularity and structural conditions; however, these conditions only apply to gradient values that lie outside some fixed bounded and convex subset~~$E\subset\R^n$ with the property that~$0\in\inn E$, while~$\F$ entirely vanishes within this set. We refrain from stating the precise conditions imposed on~$\F$ for the moment and rather refer the reader to Section~\ref{subsec:assumptions} for a detailed explanation. Outside the closure of the set~$E$ we assume~$\F$ to be non-degenerate with respect to the~$\xi$ variable, with an ellipticity constant that depends on the distance to~$E$. The notable feature of equation~\eqref{pde} is its extensive degeneracy, which places it in the category of~\textit{widely degenerate equations}. This situation poses challenges, as one can generally expect less regularity for weak solutions to~\eqref{pde} in comparison to equations that degenerate at only a single point, such as the well-known~$p$-Laplacian operator when~$1 < p < \infty$ and~$p \neq 2$. We recall that if~$p=2$, the~$p$-Laplacian simplifies to the Laplacian, which is not only linear but also elliptic across the whole of~$\R^n$. In particular, it cannot be expected that weak solutions will satisfy more than Lipschitz regularity in general. To exemplify this point, we consider a prototype example of widely degenerate equations, which serves as a fundamental reference. This example can be expressed as follows:
\begin{equation} \label{prototype}
    \partial_t u - \divv \left( a(x,t) \frac{(|Du|-1)^{p-1}_+ Du}{|Du|} \right) = f \quad \text{in } \Omega_T,
\end{equation}
where~$p > 1$ represents an arbitrary growth exponent, and~$x \mapsto a(x,t) \in W^{1,\infty}(\Omega)$ for any~$t\in(0,T)$ are coefficients that satisfy Lipschitz continuity with respect to the spatial variable~$x$. Additionally, these coefficients satisfy the condition~$0 < C^{-1} \leq a(x,t) \leq C$ for every~$(x,t) \in \Omega_T$ and some positive constant~$C > 0$. For equation~\eqref{prototype} the set of degeneracy~$E$ is given by the closed unit ball, i.e.~$E=\{\xi\in\R^n:|\xi|\leq 1\}\eqqcolon \overline{B}_1(0)$. It can be quickly verified that any time-independent~$1$-Lipschitz function satisfies the homogeneous variant of equation~\eqref{prototype} weakly. Hence, in general no more than Lipschitz regularity of weak solutions to~\eqref{pde} is to be expected.


\subsection{Literature overview}
In a few scenarios, certain higher regularity has been shown for the composition of a suitable function that vanishes within the set of degeneracy with the gradient~$Du$. For example, in~\cite{ambrosio2024regularity} it has been shown that the following higher Sobolev regularity
\begin{equation} \label{higherdiff}
    \frac{(|Du|-1)_+}{|Du|}Du \in L^2_{\loc}\big(0,T;W^{1,2}_{\loc}(\Omega)\big)
\end{equation}
holds true, provided the datum~$f$ itself also satisfies higher spatial differentiability. In case the datum does not exhibit any weak differentiability and is for example only locally square integrable in~$\Omega_T$, one can still infer the very same higher regularity as provided in the statement of~\eqref{higherdiff} for a different nonlinear function vanishing within any set that is slightly larger than the set of degeneracy, cf.~\cite{gentile2023higher}. A similar state of affairs pertains when investigating any continuity regularity of the gradient~$Du$. In the stationary setting, it has first been shown by Santambrogio and Vespri~\cite{santambrogio2010continuity} in the scalar case that the composition~$\mathcal{K}(Du)$ is continuous in~$\Omega$ for any continuous function~$\mathcal{K}\in C^0(\R^n)$ that vanishes inside the closed unit ball, provided that~$n=2$. This regularity result has been generalized by Colombo and Figalli in~\cite{colombo2017regularity} to the case where~$n\geq 2$ and the set of degeneracy is given by some bounded convex set~$E\subset\R^n$, not necessarily being the closed unit ball, with the property that the origin belongs to the interior of~$E$, i.e.~$0\in \mbox{Int$(E)$}$. We observe that these assumptions regarding the set of degeneracy~$E$ by Colombo and Figalli align with those outlined in this manuscript. Indeed, the results of Santambrogio and Vespri and Colombo and Figalli hold true in a very general setting, where they considered locally Lipschitz continuous solutions to equations of the type
\begin{equation} \label{pdeelliptisch}
    \divv \nabla \F(Du) = f \qquad\text{in~$\Omega$}
\end{equation}
for a convex and non-negative function~$\F\in C^2(\R^n\setminus \overline{E},\R_{ \geq 0})$ satisfying the respective stationary ellipticity estimate~\eqref{voraussetzung}. Recently, it has been derived by the author in~\cite{elliptisch} that the very same regularity results from~\cite{colombo2017regularity,santambrogio2010continuity} remain in fact true in case the function~$\F$ additionally exhibits a dependence on the spatial variable~$x$, provided~$\nabla\F = \nabla\F(x,\xi)$ is locally Lipschitz continuous with respect to~$x$ for any value~$\{\xi\in\R^n: |\xi|_E>1\}$. In the vectorial case, this regularity result has also been obtained for the model equation in~\cite{bogelein2023higher} by Bögelein, Duzaar, Giova, and Passarelli di Napoli. Mons~\cite{mons2023higher} established the same regularity result for vector valued solutions within a broader class of highly degenerate equations, where the vector field included in the diffusion term of the system is assumed to satisfy standard~$p$-growth conditions in the region where~$|\xi| > 1$. Furthermore, Grimaldi~\cite{grimaldi2024higher} considered a more general structural framework, where the usual Euclidean norm appearing in the prototype equation~\eqref{prototype} is replaced by a norm induced by a bounded, symmetric, and coercive bilinear form defined on the space~$\mathbb{R}^{Nn}$, and recovered the regularity result.

\,\\
In the parabolic setting, it has recently been shown by the very same authors in~\cite{bogelein2024gradient} that the above regularity result continues to hold true for the model equation, even if the vectorial case is considered. To our knowledge, a respective result has not been derived in the general setting of~\cite{colombo2017regularity,santambrogio2010continuity,elliptisch}, which is our aim in this very article. 


\subsection{Structure conditions}\label{subsec:assumptions} 
We now state the precise regularity assumptions made on the function~$\F$. As before, let~$E\subset\R^n$ denote a fixed bounded and convex set with~$0\in\inn E$. By~$|\cdot|_E\colon\R^n\to\R_{\geq 0}$, we denote the~\textit{Minkowski functional} on~$\R^n$, which is defined as
\begin{align} \label{minkowski}
    |\xi|_E \coloneqq \inf\{ t>0:\xi\in tE \},\qquad\xi\in\R^n,
\end{align}
where~$tE\coloneqq\{tx:x\in E\}$. This expedient mapping has already been considered in comparable works~\cite{colombo2017regularity,elliptisch} and will also turn out to be crucial in the course of this article. We refer the reader for the moment to Section~\ref{subsec:minkowski}, where a more detailed explanation on convex sets and the Minkowski functional~$|\cdot|_E$ is provided. We assume
$$\F\colon\Omega_T \times\R^n\to\R_{\geq 0}$$
to be a non-negative function that is subject to the following conditions
\begin{align} \label{fregularity}
        \begin{cases}
    \F(x,t,\xi) = 0  & \mbox{for any~$(x,t)\in\Omega_T$,~$\xi\in E$}, \\
   \xi \mapsto \F(x,t,\xi)~\mbox{is convex}  &\mbox{for any~$(x,t)\in\Omega_T$},  \\
    \xi \mapsto\F(x,\xi)\in C^1(\R^n) \cap C^2(\R^n\setminus \overline{E} ) & \mbox{for any~$x\in\Omega$},  \\ 
   |\partial_{x_i} \nabla\F(x,t,\xi)| \leq C(L) &\mbox{for any~$(x,t)\in\Omega_T$,~$|\xi|_E \leq L$} 
        \end{cases}
  \end{align}
for any~$i=1,\ldots,n$ and any~$L\geq 1$, where~$C=C(L)$ denotes a positive Lipschitz constant that depends on~$L\geq 1$. According to conditions~\eqref{fregularity}, we assume that~$\F$ admits convexity with respect to the gradient variable~$\xi$ and entirely vanishes whenever~$\xi\in E$. Moreover, the partial mapping~$\xi \mapsto \nabla^2\F(x,t,\xi)$ is assumed to be continuous outside the closure of the set~$E$ for any~$(x,t)\in\Omega_T$. In particular, the~$C^2$-regularity may break down at the boundary~$\partial E$. On the other hand,~$\nabla\F$ is assumed to be locally Lipschitz continuous with respect to~$x$ for any~$(x,t)\in\Omega_T$. With the notation~$\nabla$ and~$\nabla^2$, we consistently refer to derivatives with respect to the gradient variable~$\xi$, whereas~$\partial_x $ respectively~$\partial_t$ represent the derivative with respect to the spatial variable~$x$ and the time variable~$t$ respectively. \,\\

Finally, the following ellipticity condition for~$\F$ is assumed, which states that~$\F$ is elliptic for any value of~$\xi$ that lies outside the degeneracy set~$E$, i.e. for any~$\delta>0$ there exist constants~$0<\lambda(\delta)\leq\Lambda(\delta)$, such that there holds
\begin{equation} \label{voraussetzung}
  \qquad  \lambda(\delta) |\eta|^2 \leq \langle \nabla^2 \F(x,t,\xi) \eta,\eta \rangle \leq \Lambda(\delta) |\eta|^2\qquad\text{for any~$1+\delta \leq |\xi|_E \leq \delta^{-1}$}
\end{equation}
for all~$(x,t)\in \Omega_T$. The ellipticity constant~$\lambda(\delta)$ vanishes in the limit~$\delta\downarrow 0$, leading to a lack of ellipticity in general. Moreover, the upper bound for the Hessian is only local, with~$\Lambda(\delta)$ possibly becoming unbounded when~$\delta\downarrow 0$. \,\\


Let us now provide an example of a function~$\F$ that actually satisfies the structural conditions~\eqref{fregularity} and~\eqref{voraussetzung}. In a similar fashion to~\cite{elliptisch}, consider the model example
\begin{equation*}
    \F_p (x,t,\xi) = \frac{a(x,t)}{p}(|\xi|-1)^p_+ ,
\end{equation*}
where $x\mapsto a(x,t)\in W^{1,\infty}(\Omega,\R_{> 0})$ denote Lipschitz  coefficients with $|\partial_x a(x,t)| \leq A$ for all~$(x,t)\in \Omega_T$ that satisfy~$0<C_1 \leq a(x,t) \leq C_2<\infty$ for any~$(x,t) \in \Omega_T$. We note that in this particular setting, the set of degeneracy of~$\F_p$ is given by the closed unit ball. Then, it is easy to check that the Minkowski functional~$|\cdot|_E$ reduces to the standard Euclidean norm on~$\R^n$ and we have~$|\xi|_E=|\xi|$ for any~$\xi\in\R^n$. By following the very same approach taken in~\cite{elliptisch}, we infer that both structure conditions~\eqref{fregularity} as well as~\eqref{voraussetzung} are satisfied. In particular, we note that the eigenvalues of the Hessian~$\nabla^2\F_p$ become unbounded for large~$|\xi|$ in the case where~$p>2$, while in the sub-quadratic case~$1<p<2$ the largest eigenvalue of~$\nabla^2\F_p$ blows up when approaching the closed unit ball from outside. Only in the quadratic case where~$p=2$, the eigenvalues of~$\nabla^2\F_p$ are bounded universally by~$\Lambda$. For example, when~$p\geq 2$, there holds the quantitative estimate
\begin{equation} \label{prototypeexplizitell}
C_1 \frac{(|\xi|- 1)^{p-1}_+ }{|\xi|}|\eta|^2 \leq \langle \nabla^2\F_p(x,t,\xi)\eta,\eta\rangle  \leq C_2 (p-1) (|\xi|-1)^{p-2}_+ |\eta|^2
\end{equation}
for a.e.~$(x,t)\in\Omega_T$ and~$\xi,\eta\in\R^n$. For an arbitrary parameter~$0<\delta<1$, consider~$\xi\in\R^n$ with~$1+\delta\leq|\xi|\leq \delta^{-1}$. Consequently, the preceding quantitative estimate~\eqref{prototypeexplizitell} verifies that the ellipticity and boundedness condition~\eqref{voraussetzung} is satisfied with constants~$\lambda(C_1,\delta,p) = \lambda \delta^p$ and~$\Lambda(C_2,\delta,p)= \Lambda (p-1) \big(\frac{1-\delta}{\delta}\big)^{p-2}$ in the case where~$p\geq 2$.
The ellipticity constant~$\lambda$ vanishes in the limit~$\delta \downarrow 0$, leading to a lack of ellipticity in the closed unit ball~$\overline{B_1}$, while the largest eigenvalue of the Hessian becomes unbounded in the case where~$p>2$ when~$|\xi|$ increases. In the case~$p=2$, the constant~$\Lambda$ only depends on~$C_2$ but is independent of~$\delta$, while~$\Lambda$ depends on~$\delta$ and~$C_2$ when~$p>2$. By following a similar reasoning in the sub-quadratic setting~$1<p<2$, a different state of affairs to the super-quadratic case~$p>2$ applies, where the smallest eigenvalue of~$\nabla^2\F_p(x,t,\xi)$ vanishes for large~$|\xi|$ and the largest eigenvalue becomes unbounded whenever~$\xi$ approaches the unit sphere~$\{\xi\in\R^n:|\xi|=1\}$ from outside. 


\subsection{Definition of weak solution} \label{sec:weaksolution}

At this point, we present our notion of a weak solution to equation~\eqref{pde}. 


\begin{mydef} \label{defweakform}
   Let~$\F$ satisfy the structure conditions~\eqref{fregularity} and~\eqref{voraussetzung}. A measurable function
   $$u\in C^0\big(0,T;L^2_{\loc}(\Omega) \big) \cap L^\infty_{\loc}\big(0,T;W^{1,\infty}_{\loc}(\Omega)\big)$$
   is a local weak solution of equation~\eqref{pde}, if the integral identity
\begin{equation} \label{weakform}
\iint_{\Omega_T}  \big( - u \partial_t \phi + \langle \nabla\F(x,t,Du), D\phi \rangle \big)\,\dx\dt =  \iint_{\Omega_T} f\phi\,\dx\dt
\end{equation}
holds true for any test function~$\phi\in C^{\infty}_0(\Omega_T)$. 
\end{mydef}


\begin{remark} \label{lipschitzannahme} \upshape
    Without any additional conditions imposed on~$\F$, for instance~$p$-growth, we are forced to consider weak solutions that are \textit{a priori} locally Lipschitz continuous in~$\Omega_T$. Indeed, for the prototype example~\eqref{prototype} and more generally, also for functions satisfying standard~$p$-growth assumptions, with a right-hand side~$f$ belonging to a suitable Orlicz space, it has been established in~\cite{ambrosio2024gradient} that weak solutions exhibit a locally bounded spatial gradient in~$\Omega_T$ in the super-critical case~$p > \frac{2n}{n+2}$. Their result continues to hold true in the sub-critical range~$1<p\leq \frac{2n}{n+2}$ if weak solutions are assumed to be locally bounded resp. integrable to a certain exponent in~$\Omega_T$, cf.~\cite{dibenedetto1993degenerate}. Due to the general form of~$\F$ and the structure conditions~\eqref{voraussetzung}, without additional information on the growth rate of~$\xi \mapsto \mathcal{\F}(x,t,\xi)$, such as~$p$-growth, we cannot derive quantitative energy estimates for weak solutions to~\eqref{pde}. Already to ensure convergence of the integral involving the solution~$u$ in \eqref{weakform}, further knowledge about the qualitative behavior of~$\F$ would be necessary. Consequently, we focus on weak solutions that are \textit{a priori} of class
    $$ C^0\big(0,T;L^2_{\loc}(\Omega) \big) \cap L^\infty_{\loc}\big(0,T;W^{1,\infty}_{\loc}(\Omega)\big),$$
    which aligns with the treatment of the elliptic setting~\cite{colombo2017excess,colombo2017regularity,santambrogio2010continuity,elliptisch}.
\end{remark}


\subsection{Main result}

In this section, we present the main result of our paper, along with a subsequent corollary.


\begin{mytheorem} \label{hauptresultat}
Let~$n\geq 2$,~$f\in L^{n+2+\sigma}(\Omega_T)$ for some~$\sigma>0$, and
$$u \in C^0\big(0,T;L^2_{\loc}(\Omega) \big) \cap L^\infty_{\loc}\big(0,T;W^{1,\infty}_{\loc}(\Omega)\big)$$
be a local weak solution to~\eqref{pde} under structure conditions~\eqref{fregularity} and~\eqref{voraussetzung}, where~$E\subset \R^n$ denotes a bounded and convex set with~$0\in\inn E$ on which~$\F$ degenerates. Then, there holds
\begin{equation*}
    \mathcal{K}(Du)\in C^0(\Omega_T)
\end{equation*}
for any~$\mathcal{K}\in C^0(\R^n)$ with~$\mathcal{K}\equiv 0$ on~$E$.
\end{mytheorem}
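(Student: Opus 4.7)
\emph{Overall strategy.} My plan is to follow the elliptic strategy of \cite{colombo2017regularity,elliptisch}, transported to the parabolic setting in the spirit of \cite{bogelein2024gradient} but in the general anisotropic framework induced by the Minkowski functional $|\cdot|_E$. The proof will proceed in three stages: a regularization of $\F$, $\epsilon$-uniform Caccioppoli-type estimates for truncations of $|Du_\epsilon|_E$ at levels above $1$, and a quantitative oscillation decay for $\mathcal{K}(Du_\epsilon)$ obtained through an intrinsic De Giorgi dichotomy.

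\emph{Approximation.} First I would regularize $\F$ by setting $\F_\epsilon(x,t,\xi) \coloneqq \F(x,t,\xi) + \epsilon (1+|\xi|^2)$ (or an analogous non-degenerate strictly convex perturbation), so that $\F_\epsilon$ is uniformly elliptic on bounded subsets of $\R^n$ while retaining the Lipschitz dependence on $x$ stipulated in \eqref{fregularity}. Solving the corresponding Cauchy--Dirichlet problem on a subcylinder $Q \Subset \Omega_T$ with boundary values matching $u$ produces approximants $u_\epsilon$ that inherit a uniform Lipschitz bound from $u$ and converge uniformly to $u$ as $\epsilon \downarrow 0$. It then suffices to produce an $\epsilon$-uniform modulus of continuity for $\mathcal{K}(Du_\epsilon)$ at interior points.

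\emph{Caccioppoli estimates and intrinsic dichotomy.} Formally differentiating the regularized equation in a spatial direction and testing with a suitably cut-off function of $(|Du_\epsilon|_E - \ell)_+$ at levels $\ell > 1$ yields a Caccioppoli inequality for $(|Du_\epsilon|_E - \ell)_+^2$, uniform in $\epsilon$; the lower-order term generated by $\partial_x \nabla \F$ is absorbed using the Lipschitz assumption in \eqref{fregularity}, while the contribution of $f$ is controlled via $f \in L^{n+2+\sigma}$ together with the parabolic Sobolev embedding. The core of the argument is then an iterative dichotomy on intrinsic parabolic cylinders scaled according to the Minkowski geometry of $E$: either the super-level set $\{|Du_\epsilon|_E > 1 + \delta\}$ occupies a substantial fraction of a given cylinder, in which case a Moser-type iteration based on the Caccioppoli estimate forces $|Du_\epsilon|_E$ to stay uniformly above $1$ on a smaller subcylinder, where \eqref{voraussetzung} puts us in a genuinely uniformly elliptic regime and classical parabolic Hölder estimates apply; or this set has small measure, in which case a De Giorgi iteration pushes $\sup |Du_\epsilon|_E$ arbitrarily close to $1$ on a smaller subcylinder, and continuity of $\mathcal{K}$ together with $\mathcal{K}\equiv 0$ on $E$ then forces the oscillation of $\mathcal{K}(Du_\epsilon)$ to become small. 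Running this alternative across dyadic scales yields a uniform modulus of continuity for $\mathcal{K}(Du_\epsilon)$, which by uniform convergence transfers to $\mathcal{K}(Du)$ as $\epsilon \downarrow 0$.

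\emph{Main obstacle.} The genuine difficulty lies in the transition regime $|Du|_E \approx 1$, where the ellipticity constant $\lambda(\delta)$ in \eqref{voraussetzung} vanishes as $\delta \downarrow 0$. All intrinsic scalings, level sets, and cutoff functions have to be expressed through $|\cdot|_E$ rather than through the Euclidean norm, and the quantitative dependence of the constants in the Caccioppoli inequality and in both branches of the dichotomy on $\delta$ must be tracked very carefully, so that the resulting oscillation-decay iteration can be closed uniformly in $\epsilon$. Making this quantitative interplay work at the level of generality of \eqref{fregularity}--\eqref{voraussetzung} --- rather than for the model equation treated in \cite{bogelein2024gradient} --- is the most delicate step of the argument.
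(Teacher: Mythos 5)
Your overall architecture --- regularize the equation, prove $\epsilon$-uniform gradient estimates, run a DiBenedetto--Friedman/De Giorgi dichotomy between a non-degenerate and a degenerate regime, and finally convert continuity of a truncated gradient quantity into continuity of $\mathcal{K}(Du)$ --- coincides with the paper's. Two of your steps, however, would fail as written. First, the regularization $\F_\epsilon=\F+\epsilon(1+|\xi|^2)$ does not yield a functional with controlled growth: condition \eqref{voraussetzung} permits $\nabla^2\F$ to blow up both for large $|\xi|$ and near $\partial E$ (already for the model $\F_p$ with $p\neq 2$), so the perturbed Cauchy--Dirichlet problem is not covered by standard quadratic-growth theory, and your claim that the $u_\epsilon$ ``inherit a uniform Lipschitz bound from $u$'' has no basis --- the uniform bound does not come from $u$ at all. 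The paper first truncates $\F$ via a cutoff $\Psi$ above the level determined by $K=\|Du\|_{L^\infty(Q_R)}$, restores convexity by adding a convex $\Phi$, and only then adds $\epsilon\frac12|\xi|^2$; the resulting $\hat\F_\epsilon$ has genuine quadratic growth \eqref{quadraticgrowth}, which is what makes the $W^{2,2}$-estimate of Proposition~\ref{approxregularityeins} and the De Giorgi-class $L^\infty$-gradient bound of Proposition~\ref{approxregularityzwei} applicable and produces the $\epsilon$-uniform bound \eqref{glmschranke} on which everything else rests.

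Second, your Caccioppoli estimate is formulated for $(|Du_\epsilon|_E-\ell)_+$, but differentiating the scalar equation only produces equations for directional derivatives; in this anisotropic, $(x,t)$-dependent setting there is no Uhlenbeck structure, so it is not clear that any function of $|Du_\epsilon|_E$ itself is a subsolution. The paper's key device is to differentiate in the dual directions $e^*\in\partial E^*$, show that $v_\epsilon=(\partial_{e^*}u_\epsilon-(1+\delta))_+^2$ is a subsolution for each fixed $e^*$ (Lemma~\ref{subsollemma}), and recover $|Du_\epsilon|_E$ only at the end via the representation \eqref{minkowskialternativ}; correspondingly the dichotomy is stated in terms of the level sets of $\partial_{e^*}u_\epsilon$, not of $|Du_\epsilon|_E$. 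Relatedly, in the non-degenerate branch the De Giorgi iteration you invoke only yields the intermediate measure information \eqref{est:meascondinterscaled}, which does not by itself give the pointwise lower bound; the paper needs an additional isoperimetric argument combined with the continuity-in-time property $v_\epsilon\in C^0_{\loc}(\Lambda;L^2_{\loc})$ of Lemma~\ref{lem:stetigkeitinzeit} to exclude the bad alternative --- precisely the step the paper singles out as its main contribution --- and this idea is absent from your outline. Your concluding step (deducing continuity of $\mathcal{K}(Du)$ from smallness of $|Du|_E-1$ and from uniform ellipticity away from $E$) is in the right spirit, but it should be routed, as in the paper, through the uniform continuity of $\G(Du)$ obtained after the two limits $\epsilon\downarrow 0$ and $\delta\downarrow 0$, using Lemma~\ref{gdeltalem} to pass between $\G(Du)$ and $Du$ where $|Du|_E>1+\sqrt{\epsilon}$.
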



As an immediate consequence of Theorem~\ref{hauptresultat}, we obtain the following corollary.

\begin{mycor} \label{corollaryzwei}
    Let the assumptions of Theorem~\ref{hauptresultat} hold true. If there additionally holds
    $$\nabla\F \in C^0(\Omega_T\times \R^n ,\R^n),$$
    then we have
    \begin{equation*}
        \nabla \F(x,t,Du) \in C^0(\Omega_T,\R^n).
    \end{equation*}
\end{mycor}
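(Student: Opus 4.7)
The plan is to reduce Corollary~\ref{corollaryzwei} to Theorem~\ref{hauptresultat} by freezing the spatial/temporal variables in the argument of $\nabla\F$ and handling the residual error via uniform continuity. First, I would record the preliminary observation that $\nabla\F(x,t,\cdot)\equiv 0$ on $E$ for every fixed $(x,t)\in\Omega_T$: indeed, $\F(x,t,\cdot)\geq 0$ vanishes identically on $E$ by the first line of \eqref{fregularity}, so every interior point of $E$ is a minimizer, forcing $\nabla\F$ to vanish on $\inn E$; continuity of $\nabla\F$ in $\xi$ (from $\xi\mapsto\F(x,t,\xi)\in C^1(\R^n)$) then extends this to the closure $\overline{E}$.

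Next, fix an arbitrary point $(x_0,t_0)\in\Omega_T$ and a compact neighborhood $Q\Subset \Omega_T$ of it. Since $u\in L^\infty_{\loc}(0,T;W^{1,\infty}_{\loc}(\Omega))$, there exists $M>0$ with $|Du(x,t)|\leq M$ for almost every $(x,t)\in Q$, so $Du$ takes values in the compact set $K\coloneqq \overline{B_M(0)}\subset\R^n$. I split
\begin{equation*}
\nabla\F(x,t,Du(x,t))-\nabla\F(x_0,t_0,Du(x_0,t_0))=\mathrm{I}(x,t)+\mathrm{II}(x,t),
\end{equation*}
where $\mathrm{I}(x,t)\coloneqq\nabla\F(x,t,Du(x,t))-\nabla\F(x_0,t_0,Du(x,t))$ and $\mathrm{II}(x,t)\coloneqq \nabla\F(x_0,t_0,Du(x,t))-\nabla\F(x_0,t_0,Du(x_0,t_0))$.

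For $\mathrm{I}$, the assumption $\nabla\F\in C^0(\Omega_T\times\R^n,\R^n)$ combined with compactness of $\overline{Q}\times K$ yields uniform continuity of $\nabla\F$ there; hence
\begin{equation*}
\sup_{\xi\in K}\bigl|\nabla\F(x,t,\xi)-\nabla\F(x_0,t_0,\xi)\bigr|\xrightarrow[(x,t)\to(x_0,t_0)]{}0,
\end{equation*}
and in particular $\mathrm{I}(x,t)\to 0$. For $\mathrm{II}$, observe that each component $\mathcal{K}_i(\xi)\coloneqq\partial_{\xi_i}\F(x_0,t_0,\xi)$ belongs to $C^0(\R^n)$ and, by the preliminary observation, satisfies $\mathcal{K}_i\equiv 0$ on $E$. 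Theorem~\ref{hauptresultat} therefore applies to each $\mathcal{K}_i$ and yields continuity of $(x,t)\mapsto \nabla\F(x_0,t_0,Du(x,t))$ at $(x_0,t_0)$, giving $\mathrm{II}(x,t)\to 0$. Combining the two limits shows $\nabla\F(\cdot,\cdot,Du)$ is continuous at $(x_0,t_0)$, and since $(x_0,t_0)$ was arbitrary the corollary follows. There is no serious obstacle here: the only subtlety is noticing that the frozen map $\nabla\F(x_0,t_0,\cdot)$ is an admissible $\mathcal{K}$ in Theorem~\ref{hauptresultat}, which requires precisely the vanishing of $\nabla\F$ on $E$ established at the outset.
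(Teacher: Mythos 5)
Your proof is correct and supplies exactly the argument the paper leaves implicit when it calls the corollary an ``immediate consequence'' of Theorem~\ref{hauptresultat}: the splitting into the frozen-variable term (handled by uniform continuity of $\nabla\F$ on a compact set $\overline{Q}\times K$) and the term $\nabla\F(x_0,t_0,Du(\cdot))$ (handled by the theorem), together with the key observation that $\nabla\F(x_0,t_0,\cdot)$ vanishes on $E$ because $\F\geq 0$ attains its minimum on $\inn E$ and $\overline{\inn E}=\overline E$ for a convex set with nonempty interior. No gaps.
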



\subsection{Strategy of the proof} \label{strategy}
In this section, we delineate the fundamental steps leading to our main regularity result, that is Theorem~\ref{hauptresultat}. We start by introducing a cutoff for the functional~$\F$ when the values of~$|\xi|$ exceed a certain threshold~$L < \infty$. We then redefine~$\F$ to~$\Tilde{\F}$, ensuring that the mapping~$\xi \mapsto \Tilde{\F}(x,t,\xi)$ remains constant for points where~$|\xi| \geq L$, a property that holds true for any~$(x,t) \in \Omega_T$. The threshold~$L$ in fact corresponds to the essential supremum of~$|Du|$ within a compactly contained cylinder~$Q_R = Q_R(y_0,s_0) \Subset \Omega_T$, where
$$u \in L^\infty_{\loc}(0,T;W^{1,\infty}_{\loc}(\Omega))$$
is taken to denote an arbitrary local weak solution to equation~\eqref{pde}, as defined in Definition~\ref{defweakform}. Given our definition of weak solution, indeed it follows that~$L = \|Du\|_{L^\infty(Q_R)} < \infty$. With this construction in place, any local weak solution~$u$ to the equation~\eqref{pde} can be regarded as a local weak solution to the redefined equation
$$\partial_t u - \divv \nabla\Tilde{\F}(x,t,Du) = f \qquad \text{in } Q_R.$$
 In general however, the truncated function~$\Tilde{\F}$ fails to be convex, which we require for the structure conditions~\eqref{voraussetzung} to hold true. This problem is overcome by adding a suitable convex function~$\Phi$ to~$\Tilde{\F}$, such that the sum~$\hat{\F}\coloneqq \Tilde{\F}+\Phi$ is once again convex. Furthermore, we enhance the diffusion term of the original equation~\eqref{pde} by adding the scaled Laplacian term~$\epsilon \Delta u$, with~$\epsilon \in (0,1]$ serving as a small approximation parameter. This modification results in a more regularized version of~\eqref{pde}, where we denote the corresponding functional as~$\hat{\F}_\epsilon$, which is elliptic across all of~$\R^n$ with respect to~$\xi$ and avoids degeneracy within the closed unit ball. It is important to note that the ellipticity constant generally depends on the parameter~$\epsilon\in(0,1]$. Moreover, the obtained mapping~$\xi \mapsto \hat{\F}_\epsilon(x,t,\xi)$ exhibits quadratic growth across~$\R^n$ for any~$(x,t)\in\Omega_T$. In the next step, we examine the unique weak solution $$u_\epsilon \in L^2\big(\Lambda_R(s_0),u+W^{1,2}_0(B_R(y_0))\big)$$ to the weak formulation of the approximating equation on the cylinder~$Q_R=Q_R(y_0,s_0) \Subset \Omega_T$, where we impose a Cauchy-Dirichlet condition on~$\partial_p Q_R$, that involves the weak solution~$u$ of equation~\eqref{pde}. The uniform ellipticity of the regularized equation enables us to establish the existence of second-order weak spatial derivatives, leading to the regularity
$$ u_\epsilon \in L^\infty_{\loc}\big(\Lambda_R(s_0);W^{1,\infty}_{\loc}(B_R(y_0)) \big) \cap L^2_{\loc}\big(\Lambda_R(s_0);W^{2,2}_{\loc}(B_R(y_0)) \big)$$
for the approximating solutions. Additionally, owing to the quadratic growth of~$\xi \mapsto \hat{\F}_\epsilon(x,t,\xi)$ for any~$(x,t) \in \Omega_T$, we derive a quantitative local~$L^\infty$-gradient bound for~$Du_\epsilon$, with the~$L^\infty$-gradient norm essentially controlled from above by the~$L^2$-gradient norm, up to constants and additive terms. This expedient matter of fact allows us to establish a local uniform bound for~$\|Du_\epsilon\|_{L^\infty}$ in the cylinder~$Q_R$ with respect to the approximating parameter~$\epsilon \in (0,1]$.\\
The primary challenge that follows is to prove Theorem~\ref{holdermainresult}, which asserts the local Hölder continuity in~$Q_R$ of the function
$$\G_\delta(Du_\epsilon) \coloneqq \frac{(|Du_\epsilon|_E - (1+\delta))_+}{|Du_\epsilon|_E} Du_\epsilon $$
for any~$\delta,\epsilon \in (0,1]$. The Hölder exponent~$\alpha_\delta \in (0,1)$ and the positive Hölder constant~$C_\delta$ both depend on the data and the parameter~$\delta \in (0,1]$, yet remain independent of~$\epsilon \in (0,1]$. The mapping~$\G_\delta$ vanishes within the slightly larger set~$\{\xi\in\R^n:|\xi|_E\leq 1+\delta \}$, while maintaining positivity outside the set~$\{\xi \in \R^n : |\xi|_E > 1+\delta\}$. As noted previously in \cite{bogelein2023higher}, it is beneficial to examine the function ~$\G_\delta(Du_\epsilon)$ because in the region where~$\G_\delta$ is positive, the equation described by~\eqref{pde} exhibits uniform ellipticity, with the ellipticity constant only depending on the parameter~$\delta\in(0,1]$ but not on~$\epsilon\in(0,1]$. The proof of Theorem~\ref{holdermainresult} requires a meticulous approach to both the non-degenerate and degenerate regime. In the non-degenerate regime, the set of points where~$\partial_{e^*}u_\epsilon$ is close to its supremum is large in measure for at least one element~$e^*\in\partial E^*$, while conversely, the degenerate regime encompasses a subset of points where~$\partial_{e^*}u_\epsilon$ is rather far away from its supremum for any~$e^*\in\partial E^*$. The set~$E^*\subset\R^n$ denotes the unit ball in the dual norm induced by the convex set~$E\subset\R^n$. The reader is provided with further information in Section~\ref{subsec:minkowski}. In the non-degenerate regime, our objective is to deduce a lower bound for ~$|Du_\epsilon|_E$ within the smaller cylinder~$Q_{\frac{\rho}{2}}(z_0) \Subset Q_{2\rho}(z_0) \Subset Q_R \Subset \Omega_T$. This result is accomplished by leveraging the measure-theoretic information that characterizes the non-degenerate regime, which is assumed to hold on~$Q_\rho(z_0)$. By differentiating the regularized equation, we deduce that for each~$e^*\in\partial E^*$, the function~$\partial_{e^*}u_\epsilon$ serves as a weak solution to a linear parabolic equation that exhibits elliptic and bounded coefficients. Importantly, the ellipticity constant for this equation is independent of~$\epsilon \in (0,1]$ and relies solely on the parameter~$\delta \in (0,1]$ as well as the provided data. This matter of fact enables us to apply established excess-decay estimates for parabolic De Giorgi classes, as referenced in \cite{dibenedetto2023parabolic}. In the degenerate regime, we exploit the fact that 
$$ v_\epsilon \coloneqq (\partial_{e^*}u_\epsilon - (1+\delta))_+^2,$$ 
serves as a sub-solution to a linear parabolic equation that incorporates elliptic and bounded coefficients, which again allows us to derive a parabolic De Giorgi class-type estimate. This strategy yields a quantitative reduction of the supremum of~$\G_\delta(Du_\epsilon)$ over a smaller cylinder, as delineated in Proposition~\ref{degenerateproposition}. By intertwining the results of both Propositions~\ref{nondegenerateproposition} and~\ref{degenerateproposition} through a sequence of shrinking nested cylinders, we adeptly navigate between the non-degenerate and degenerate regimes, adjusting our approach based on the measure-theoretic information in each step. The transition between these regimes is crucial, shaping the subsequent analysis. If the non-degenerate regime is applicable at a certain step, this condition persists. However, moving to a smaller cylinder within the degenerate regime yields uncertainty regarding whether the subsequent step will fall under the non-degenerate or degenerate regime. Once Hölder continuity of~$\G_\delta(Du_\epsilon)$ has been established for any~$\delta,\epsilon\in(0,1]$, we can invoke Arzelà-Ascoli's theorem and first pass to the limit~$\epsilon\downarrow 0$ to obtain that~$\G_\delta(Du)$ is also locally Hölder continuous in~$Q_R$, with the Hölder exponent and constant dependent on~$\delta$ and the given data. Moreover, by subsequently also passing to the limit~$\delta \downarrow 0$, we conclude that the limit function 
$$\G(Du)= \frac{(|Du|_E-1)_+}{|Du|_E} Du$$ 
is locally uniformly continuous in~$Q_R$. At this point, however, the quantitative control over the Hölder exponent and constant is lost, leaving uncertainty about the optimal modulus of continuity of the limit function~$\G(Du)$. Ultimately, the main regularity Theorem~\ref{hauptresultat} can be derived from these findings. For further insight, we encourage the reader to consult the reasoning employed in \cite{bogelein2024gradient,elliptisch} for the proofs of both Theorem~\ref{holdermainresult} and Theorem~\ref{hauptresultat}. 


\subsection{Novelties and Significance} \label{novelty} 
In comparison to known~$C^1$-regularity results for widely degenerate parabolic equations, where the article by Bögelein, Duzaar, Giova, and Passarelli di Napoli~\cite{bogelein2024gradient} is the only available result to our knowledge, our main result, Theorem~\ref{hauptresultat}, is new even in the case where the diffusion term in the considered equation~\eqref{pde} does not depend on the space-time variable~$(x,t) \in \Omega_T$. Nevertheless, the flexibility of our proof strategy, that is drawn from the pioneering works of De Giorgi, DiBenedetto, Friedman, and Uhlenbeck, also allows us to treat the more general case where the diffusion term does depend on~$(x,t)$ under the structure assumptions~\eqref{fregularity} - \eqref{voraussetzung}. Moreover, in contrast to the parabolic result of~\cite{bogelein2024gradient}, where the degeneracy set is given as the unit ball, while for gradient values~$\{|Du|>1 \}$ their equation satisfies standard~$p$-growth conditions, we establish Theorem~\ref{hauptresultat} for equations that not only degenerate on a general bounded convex subset~$E\subset \R^n$, with the property that~$0\in\inn E$, but also may exhibit arbitrary growth for gradient values~$\{ |Du|_E>1 \}$. This generality was previously achieved in the elliptic setting by Colombo and Figalli~\cite{colombo2017regularity}, and subsequently by the author in~\cite{elliptisch} with equations allowed to depend on the spatial variable~$x\in\Omega$. \,\\ 
The core difficulty of the present article lies in proving Theorem~\ref{holdermainresult}, which establishes the local Hölder continuity of the approximating mappings~$\G_\delta(Du_\epsilon)$ within~$Q_R(y_0,s_0)\Subset \Omega_T$ for any~$\delta,\epsilon \in (0,1]$. Our main contribution, however, resides in the non-degenerate regime treated in Section~\ref{sec:nondegenerate}, where we begin by establishing a lower bound for~$|Du_\epsilon|_E$ in Proposition~\ref{lowerboundprop}. This is the main ingredient for the proof of Proposition~\ref{nondegenerateproposition}. The proof of Proposition~\ref{lowerboundvs} mirrors the strategy developed by Kuusi and Mingione in~\cite{kuusi2013gradient}, built upon a De Giorgi–type iteration argument. In the setting of widely degenerate equations, a careful choice of test function is crucial in order to obtain an appropriate Caccioppoli-type estimate: although the approximating equation is non-degenerate due to the presence of the scaled Laplacian term~$\epsilon \Delta u_\epsilon$, one must avoid introducing dependence on the parameter~$\epsilon\in(0,1]$ in the quantitative constants and in the data. Therefore, our test function in Proposition~\ref{lowerboundprop} is designed not only to suppress small gradient values, but also to exclude excessively large gradient values. This leads to an~\textit{a priori} insufficient measure-theoretic information expressed in~\eqref{est:meascondinterscaled}, which does not guarantee a lower bound for~$|Du_\epsilon|_E$ in general. It is precisely at this juncture that an additional argument becomes necessary, one that leverages the expedient regularity property 
$$ (\partial_{e^*}u_\epsilon-(1+\delta))^2_+ \in C^0_{\loc} \big(\Lambda_R(s_0) ;L^2_{\loc}(B_R(y_0)) \big) $$
for any~$e^*\in \partial E^*$,~$\delta,\epsilon\in(0,1]$, which is inferred in Lemma~\ref{lem:stetigkeitinzeit} a similar manner to that used by DiBenedetto~\cite{dibenedetto1993degenerate}. 


\subsection{Plan of the paper} \label{planpaper}
Our paper is structured in the following way: in Section~\ref{sec:preliminaries}, we will begin by introducing the notation and framework, along with supplementary material required for later sections, as well as our definition of a weak solution to equation~\eqref{pde}. Following this, in Section~\ref{sec:holder}, we will present the proof of the main regularity result that is Theorem~\ref{hauptresultat}. Thereby, an intermediate regularity result will be established in Theorem~\ref{holdermainresult}, which is inferred from Proposition~\ref{nondegenerateproposition} as well as Proposition~\ref{degenerateproposition}. Section~\ref{sec:energyestimates} serves as an intermediate passage, during which a series of energy estimates and other conclusions will be derived by differentiating the equation. Eventually, the proofs of both Propositions~\ref{nondegenerateproposition} and~\ref{degenerateproposition} will be addressed in the subsequent Sections~\ref{sec:nondegenerate} and~\ref{sec:degenerate}, concluding the article. 


\subsection*{Acknowledgements}
The author would like to sincerely thank Professor Verena Bögelein for her guidance during the development of this article. \\
This research was funded in whole or in part by the Austrian Science Fund (FWF) [10.55776/P36295]. For open access purposes, the author has applied a CC BY public copyright license to any author accepted manuscript version arising from this submission. \,\\

\textbf{Conflict of Interest.} The author declares that there is no conflict of interest. \,\\

\textbf{Data availability.} This manuscript has no associated data.


\section{Preliminaries} \label{sec:preliminaries}
\subsection{Notation and setting}

Throughout this paper,~$\Omega_T\coloneqq\Omega\times(0,T)$ denotes a space-time cylinder that consists of some bounded domain~$\Omega\subset\R^n$, where~$2\leq n\in\N$, while the positive and finite time~$0<T<\infty$ represents the height of the cylinder~$\Omega_T$. The parabolic boundary of~$\Omega_T$ will be indicated in the usual way by
$$\partial_p\Omega_T \coloneqq \big(\overline{\Omega}\times\{0\}\big)\cup\big(\partial\Omega\times(0,T)\big),$$
where~$\partial\Omega \subset \R^{n-1}$ denotes the spatial boundary. Given a function~$f\in L^1(\Omega_T)\cong L^1(0,T;L^1(\Omega))$, we shall also sometimes write~$f(t)$ instead of~$f(\cdot,t)$ whenever it turns out convenient. The standard scalar product on~$\R^n$ will be denoted by~$\displaystyle\langle \cdot\rangle$ and the dyadic product of two vectors~$\xi,\eta \in\R^n$ by~$\xi \otimes \eta$. The positive part of a real quantity~$a\in\R$ is denoted as~$a_+ = \max\{a,0\}$, while the negative part is denoted as~$a_- = \max\{-a,0\}$. Constants are consistently represented as~$C$ or~$C(\cdot)$ and their dependence is described solely on their variables, not their specific values. However, it is possible for constants to vary from one line to another without further clarification. 
\,\\

We define the open ball in~$\mathbb{R}^n$ with radius~$\rho>0$ and center~$x_0\in\mathbb{R}^n$ as 
$$B_\rho(x_0) \coloneqq\{x\in\mathbb{R}^n\colon~|x-x_0|<\rho\}.$$
By a point~$z_0\in\R^{n+1}$, we always refer to~$z_0 = (x_0,t_0)$ where~$x_0\in\R^n$ and~$t_0\in\R_{\geq 0}$. If~$x_0=0$, it will be common to simplify notation by writing~$B_\rho$ instead of~$B_\rho(x_0)$.  
Additionally, we may omit the vertex of the cylinder for convenience. The \textit{general (backward) parabolic cylinder} with vertex~$z_0\in\R^{n+1}$ is defined as 
$$Q_{R,S}(z_0) \coloneqq B_R(x_0)\times (t_0-S,t_0],$$
 with the standard (backward) parabolic cylinder with vertex~$z_0\in\R^{n+1}$ given by
$$Q_\rho(z_0) = B_R(x_0)\times\Lambda_\rho(t_0) \coloneqq B_\rho(x_0)\times (t_0-\rho^2,t_0].$$
Next, we will introduce the \textit{parabolic distance} between~$z_1$ and~$z_2$, where~$z_1=(x_1,t_1)$,~$z_2 = (x_2,t_2) \in \Omega_T$, that is given by
$$d_{p}(z_1,z_2) \coloneqq |x_1-x_2| + \sqrt{|t_1-t_2|}.$$
For any~$f\in L^1(\Omega_T,\R^k)$ and~$k\in\N$, we use the common abbreviation for the average integral of~$f$. Let~$A\subset\Omega$ such that~$\mathcal{L}^n(A)>0$. Then, we define the slice-wise mean~$(f)_A\colon (0,T)\to\R^n$ as follows
\begin{align*} (f)_A(t) \coloneqq \displaystyle\fint_A f(\cdot,t)\,\dx \quad\mbox{for a.e.~$t\in(0,T)$}.
\end{align*}
Since many of the functions in this paper are continuous with respect to the time variable, that is,~$f\in C^0(0,T;L^1(\Omega))$, the slice-wise mean value is well-defined for all~$t\in(0,T)$. In a similar fashion, we denote the mean value of~$f$ on some measurable set~$E\subset \Omega_T$ with~$\mathcal{L}^{n+1}(E)>0$ by
\begin{align*}
     (f)_E \coloneqq \displaystyle\fiint_E f(x,t)\,\dx\dt.
\end{align*}
If it is evident from the context, the vertex~$z_0$ may sometimes be omitted to simplify notation. \,\\

The operators~$\nabla \F$ and~$\nabla^2\F$ denote the first and second order derivatives of the function~$\F(x,t,\xi)$ with respect to~$\xi$ in the region~$\mathbb{R}^n \setminus \overline{E}$. Meanwhile,~$\partial_x \nabla \F(x,t,\xi)$ signifies the derivative of~$\nabla \F$ with respect to the spatial variable~$x$. Occasionally, we will refer to~$\nabla\F$ simply as~$\h$, and both notations will be used interchangeably throughout the discussion. The terms \textit{spatial gradient}, \textit{gradient}, and \textit{spatial derivative} will all be used interchangeably without any distinctions. To avoid confusion and to clearly differentiate between derivatives pertaining to weak solutions~$u$ of equation~\eqref{pde} and derivatives of the function~$\F$ incorporated in the diffusion term, we will adopt the following conventions: weak partial derivatives of~$u$ will be denoted by~$D_i u$ for~$i \in \{1, \ldots, n\}$, directional derivatives~$v\in\R^n$ by~$\partial_v u\coloneqq \langle Du,v\rangle$ whenever this quantity exists, and its weak gradient will be denoted by~$Du$. In contrast, we will use~$\nabla \F = \h$ for the first order derivative of~$\F$ and~$\nabla^2 \F$ for its Hessian. To distinguish between spatial and time derivatives, we will indicate the latter as~$\partial_t u$, provided that~$u\colon \Omega_T \to \mathbb{R}$ possesses a (weak) derivative with respect to the time variable~$t$. It is important to note that no distinction will be made between classical and weak derivatives, with context providing clarity regarding the intended meaning. For any other differentiable mapping~$f\colon \mathbb{R}^n \to \mathbb{R}^m$, where~$n, m \in \mathbb{N}$, we may use either~$Df$ or~$\nabla f$ to represent its derivative, depending on the context. \,\\ 

The following function will turn out expedient in the further course of this article and is utilized in a similar manner to~\cite{bogelein2023higher,bogelein2024gradient,elliptisch}.  For~$\delta\geq 0$, we define
\begin{equation} \label{gdeltafunk}
    \G_\delta(\xi) \coloneqq \frac{(|\xi|_E - (1+\delta))_+ }{|\xi|_E} \xi \qquad\text{for~$\xi\in\R^n$}.
\end{equation}
In the case where~$\delta=0$, we shall simply write
\begin{equation} \label{gfunk}
    \G(\xi) \coloneqq \G_0(\xi) = \frac{(|\xi|_E-1)_+}{|\xi|_E}\xi \qquad\text{for~$\xi\in\R^n$}.
\end{equation}
The following bilinear form on~$\R^n$ naturally arise by differentiating the weak form~\eqref{weakform} with respect to the spatial variable~$x$. For arbitrary~$\eta,\zeta\in\R^n$, we let
 \begin{equation} \label{bilinear}
            \mathcal{B}(\cdot,\xi)(\eta,\zeta)\coloneqq \langle D^2\F(\cdot,\xi)\eta, \zeta \rangle \qquad\text{for~$(x,\xi)\in \Omega_T \times(\R^n\setminus \overline{E})$}.
        \end{equation}


\subsection{Convex sets and the Minkowski functional} \label{subsec:minkowski}

In this article, we consider a bounded and convex set~$E\subset\R^n$ with~$0\in\inn{E}$, which serves as the set of degeneracy for the equation presented in \eqref{pde}. The convexity of~$E$ is crucial as it induces a map on~$\R^n$, which will be particularly important for deriving energy estimates in Sections \ref{sec:holder} -- \ref{sec:degenerate}. Furthermore, the inclusion of~$0\in\inn E$ is essential for the proof of Lemma \ref{gdeltalem}, as it ensures the Lipschitz continuity of the Minkowski functional~$|\cdot|_E\colon\R^n\to\R_{\geq 0}$ on~$\R^n$, defined by
\begin{align*}
    |\xi|_E \coloneqq \inf\{ t>0:\xi\in tE \},\qquad\xi\in\R^n,
\end{align*}
where~$tE\coloneqq\{tx:x\in E\}$. For further insights into the properties of the Minkowski functional, we direct the reader to \cite{minkowski}, particularly pages 109 and 119. Since the set~$E$ consists of points satisfying~$|\xi|_E\leq 1$, it corresponds to the unit ball with respect to the Minkowski functional. We also introduce its dual map, defined as follows:
\begin{align} \label{dualnorm}
    |\xi|_{E'}\coloneqq \sup\limits_{|e|_E \leq 1} \langle\xi,e \rangle = \sup\limits_{e \in E} \langle\xi,e \rangle
\end{align}
along with the unit ball corresponding to this dual map:
\begin{align} \label{unitballdualnorm}
    E^*\coloneqq \overline{B}_1^{|\cdot|_{E'}} = \{\xi\in\R^n:|\xi|_{E'}\leq 1\} = \{\xi\in\R^n:\langle\xi,e\rangle \leq 1~\forall e\in E\}.
\end{align}
As established in \cite[Section~3]{colombo2017regularity}, we obtain the following alternative representation formula for the Minkowski functional:
\begin{align} \label{minkowskialternativ}
    |\xi|_E = \underset{\substack{\vspace{-0.05cm}\\ e^*\in E^{*}}}{\sup} 
\langle\xi,e^*\rangle  = \underset{\substack{\vspace{-0.05cm}\\ e*\in \partial E^{*}}}{\sup} \langle\xi,e^*\rangle
\end{align}
for any~$\xi\in\R^n$. A key implication of this representation is that, in several energy estimates, we will weakly differentiate a given equation in the direction~$e^*$, where~$e^*\in \partial E^*$. This allows us to derive information about the directional derivatives~$\partial_{e^*} u = \langle Du,e^*\rangle$. Since~$e^*\in E^*$ can be chosen arbitrarily, we may pass to the supremum over all~$e^*\in E^*$ to obtain estimates for the Minkowski functional~$|Du|_E$ of the (weak) gradient of~$u$. We denote the (weak) directional derivative in any direction~$v\in\R^n$ as~$\partial_v u$, where we use the representation formula
\begin{align} \label{representation}
     v=\sum\limits_{i=1}^n v_i e_i, 
\end{align}
with~$(e_i)_{i=1,\ldots,n}$ being the standard base of~$\R^n$ and~$v_i$ denoting the respective coefficients of~$v$. Thus, for a.e.~$v\in\R^n$, we have~$\partial_v u = \langle Du,v \rangle$.
It is also evident from the definition that~$|\cdot|_E$ is homogeneous for any~$\lambda>0$, satisfying~$|\lambda\xi|_E=\lambda|\xi|_E$ for any~$\xi\in\R^n$. Additionally, we have~$|0|_E=0$, and since~$0\in\inn{E}$, it follows that~$|\xi|_E<\infty$ for all~$\xi\in\R^n$. Given that~$0\in\inn{E}$ and~$E$ is bounded, there exist radii~$0<r_E\leq R_E<\infty$, such that
\begin{align} \label{mengeeradii}
    B_{r_E}(0)\subset E\subset B_{R_E}(0),
\end{align}
where~$r_E>0$ is chosen as the largest radius and~$R_E>0$ denotes an arbitrary but fixed radius with this property. Consequently, from the definition of \eqref{minkowski}, we can deduce that
\begin{align} \label{betragminkowski}
    \frac{|\xi|}{R_E} \leq |\xi|_E \leq \frac{|\xi|}{r_E}
\end{align}
for any~$\xi\in\R^n$. Finally, for any~$\delta>0$, we define the outer parallel set of~$E$ at a distance~$\delta>0$ as:
\begin{align} \label{Edelta}
    E_\delta\coloneqq \{\xi\in\R^n:|\xi|_E\leq 1+\delta\}.
\end{align}
   

\subsection{Mollification in time} \label{subsec:mollificationintime} 
According to our definition, weak solutions may not necessarily exhibit weak differentiability with respect to the time variable. In order to overcome this lack of regularity, we introduce the standard regularization technique involving Steklov-means, where we refer to~\cite{steklovproperties} for some properties and additional information. Given a function~$f \in L^1(\Omega_T)$ and~$0<h<T$, we define its {\it Steklov-average}~$[f]_{ h}$ by 
\begin{equation}\label{def-stek-right}
	[f]_{h}(x,t) 
	\coloneqq
\begin{cases}
    \displaystyle{\frac{1}{h} \int_{t}^{t+h} f(x,\tau) \,\d\tau } 
		& \mbox{for~$t\in (0,T-h)$},  \\ 
		0 
		& \mbox{for~$t\in [T-h,T)$}.
\end{cases}
\end{equation}
Rewriting the identity~\eqref{weakform} in terms of Steklov-means~$[u]_h$ of~$u$ and integrating by parts, yields 
\begin{align}
    \int_{\Omega\times\{t\}}\big(\partial_t [u]_h \phi + \langle [\nabla\F(x,t,Du)]_h, D\phi\rangle\big)\,\dx
   = \int_{\Omega\times\{t\}} [f]_h \phi\,\dx \label{lösung-steklov}
\end{align}
for any test function~$\phi \in C^{\infty}_0(\Omega)$ and any~$t\in(0,T)$. 


\subsection{Auxiliary material}
In this section, a series of algebraic inequalities and further additional material is summarized. \,\\
The following results, Lemma~\ref{lem:minkowskitriangle} -- Lemma~\ref{gdeltalem}, are taken from~\cite[Lemma~2.1 -- Lemma~2.7]{elliptisch}. The first one presents a triangle inequality and a generalized inverse triangle inequality for the Minkowski functional~$|\cdot|_E$.


\begin{mylem} \label{lem:minkowskitriangle}
    For any~$\xi,\eta\in\R^n$ there holds
    \begin{align} \label{est:minkowskitriangle}
        |\xi+\eta|_E\leq |\xi|_E+|\eta|_E.
    \end{align}
    In particular, there holds the following reverse triangle inequality
    \begin{align} \label{est:minkowskireversetriangle}
        ||\xi|_E-|\eta|_E| \leq \max\{|\xi-\eta|_E,|\eta-\xi|_E\}.
    \end{align}
\end{mylem}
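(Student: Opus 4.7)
The plan is to reduce the reverse triangle inequality to the subadditivity statement and to prove subadditivity by exploiting the convexity of $E$ together with the positive homogeneity of $|\cdot|_E$. The one subtle point is that, while $|\xi|_E = \inf\{t > 0 : \xi \in tE\}$, it need not hold that $\xi \in |\xi|_E E$ (the infimum may not be attained if $E$ is not assumed closed). I would therefore work with approximating scalars and pass to the limit at the end.

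For the subadditivity~\eqref{est:minkowskitriangle}, fix $\xi,\eta \in \R^n$, set $s \coloneqq |\xi|_E$, $t \coloneqq |\eta|_E$, and let $\epsilon > 0$ be arbitrary. By the definition~\eqref{minkowski} of the Minkowski functional, there exist $e_1, e_2 \in E$ with $\xi = (s+\epsilon) e_1$ and $\eta = (t+\epsilon) e_2$. Rewriting
\begin{equation*}
    \xi + \eta = (s+t+2\epsilon)\left( \frac{s+\epsilon}{s+t+2\epsilon}\, e_1 + \frac{t+\epsilon}{s+t+2\epsilon}\, e_2 \right),
\end{equation*}
the bracketed vector is a convex combination of $e_1, e_2 \in E$ and hence lies in $E$ by convexity. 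This shows $\xi + \eta \in (s+t+2\epsilon) E$, so that $|\xi + \eta|_E \leq s + t + 2\epsilon$, and letting $\epsilon \downarrow 0$ yields~\eqref{est:minkowskitriangle}. Here I am implicitly using $0 \in \inn E$ only to ensure that $s, t < \infty$; the bound~\eqref{betragminkowski} makes this quantitative, although for the proof itself finiteness suffices.

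For the reverse triangle inequality~\eqref{est:minkowskireversetriangle}, I would apply the subadditivity twice. Writing $\xi = (\xi - \eta) + \eta$ and invoking~\eqref{est:minkowskitriangle} gives $|\xi|_E \leq |\xi - \eta|_E + |\eta|_E$, hence $|\xi|_E - |\eta|_E \leq |\xi - \eta|_E$. Interchanging the roles of $\xi$ and $\eta$ (note that $|\cdot|_E$ need not be symmetric, since $E$ is only required to be convex with $0 \in \inn E$, not balanced) produces $|\eta|_E - |\xi|_E \leq |\eta - \xi|_E$. Combining the two bounds yields~\eqref{est:minkowskireversetriangle}.

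The argument is essentially routine, and no serious obstacle is expected. The only point that requires mild care is the possibility that $|\xi|_E$ is not attained, which is why the $\epsilon$-approximation is inserted; and the necessity of taking the maximum on the right-hand side of~\eqref{est:minkowskireversetriangle} rather than simply $|\xi - \eta|_E$, which reflects precisely the lack of symmetry of the Minkowski functional associated to a general convex body.
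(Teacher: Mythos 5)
Your proof is correct. The paper does not prove this lemma itself but imports it from~\cite{elliptisch}, and your argument --- subadditivity via the convexity of~$E$ applied to an $\epsilon$-approximation of the infimum (where the observation that $\xi\in(s+\epsilon)E$ uses that $E$ is star-shaped about $0\in E$), followed by two applications of subadditivity to obtain the asymmetric reverse triangle inequality --- is precisely the standard argument underlying the cited result.
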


\begin{remark} \label{remarkzutriangle} \upshape
 In general, however, the reverse triangle inequality~\eqref{est:minkowskitriangle} stated in Lemma~\ref{est:minkowskireversetriangle} does not simplify to the standard reverse triangle inequality applicable to the Euclidean norm, as the Minkowski functional~$|\cdot|_E$ is not necessarily symmetric.
\end{remark}


Due to the fact that~$0\in\inn{E}$, we derive the following Lipschitz estimate for~$|\cdot|_E$.

\begin{mylem} \label{lem:minkowskilipschitz}
For any~$\xi,\eta\in\R^n$ there holds the Lipschitz estimate
\begin{align} \label{est:lipschitz}
    ||\xi|_E - |\eta|_E | \leq \mbox{$\frac{1}{r_E}$} |\xi-\eta|.
\end{align}
Here,~$r_E>0$ denotes the radius introduced in~\eqref{mengeeradii}.
\end{mylem}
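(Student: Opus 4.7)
The target inequality is an immediate two-step consequence of the preceding Lemma~\ref{lem:minkowskitriangle} combined with the inclusion $B_{r_E}(0)\subset E$ from~\eqref{mengeeradii}. My plan is to first upgrade the reverse triangle inequality~\eqref{est:minkowskireversetriangle} by bounding the Minkowski functional of an arbitrary vector by a constant multiple of its Euclidean norm, and then to conclude by applying this pointwise bound to $\xi-\eta$ and $\eta-\xi$.

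The key auxiliary claim is that $|v|_E \leq \frac{1}{r_E}|v|$ for every $v\in\R^n$. The case $v=0$ is trivial since $|0|_E=0$, so assume $v\neq 0$ and set $t \coloneqq \frac{|v|}{r_E} > 0$. Then $\frac{v}{t} = \frac{r_E\, v}{|v|}$ has Euclidean norm exactly $r_E$, hence belongs to $B_{r_E}(0)\subset E$ by~\eqref{mengeeradii}. This shows $v \in tE$, and so, by the defining infimum~\eqref{minkowski} of the Minkowski functional,
\begin{equation*}
    |v|_E \leq t = \tfrac{1}{r_E}|v|.
\end{equation*}

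To finish, I apply the reverse triangle inequality~\eqref{est:minkowskireversetriangle} together with the auxiliary claim to the vectors $\xi-\eta$ and $\eta-\xi$. Since the Euclidean norm is symmetric, $|\xi-\eta|=|\eta-\xi|$, and so
\begin{equation*}
    \big||\xi|_E-|\eta|_E\big| \leq \max\{|\xi-\eta|_E,|\eta-\xi|_E\} \leq \max\Big\{\tfrac{1}{r_E}|\xi-\eta|,\tfrac{1}{r_E}|\eta-\xi|\Big\} = \tfrac{1}{r_E}|\xi-\eta|,
\end{equation*}
which is the desired Lipschitz estimate~\eqref{est:lipschitz}.

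There is no real obstacle here; the argument hinges solely on exploiting the Euclidean ball sitting inside $E$, which is precisely the quantitative content of the assumption $0\in\inn E$. The slight subtlety worth noting is that the Minkowski functional need not be symmetric (cf. Remark~\ref{remarkzutriangle}), which is why the reverse triangle inequality furnishes a maximum; however this asymmetry is harmless once the bound $|v|_E \leq \frac{1}{r_E}|v|$ is applied, since the Euclidean majorant is symmetric.
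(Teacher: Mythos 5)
Your proof is correct and follows exactly the route one would expect: the paper itself does not reprove this lemma (it is imported from \cite[Lemma~2.1 -- Lemma~2.7]{elliptisch}), and your argument via the reverse triangle inequality~\eqref{est:minkowskireversetriangle} together with the upper bound $|v|_E\leq \frac{1}{r_E}|v|$, which the paper records as the right-hand inequality in~\eqref{betragminkowski}, is the standard one. One cosmetic imprecision: since $B_{r_E}(0)$ is the \emph{open} ball, the vector $r_E v/|v|$ of norm exactly $r_E$ need not lie in $B_{r_E}(0)\subset E$; this is harmless because $v\in t'E$ for every $t'>|v|/r_E$, so the infimum in~\eqref{minkowski} still gives $|v|_E\leq |v|/r_E$ (or one may simply invoke~\eqref{betragminkowski} directly).
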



The next lemma states an algebraic inequality similar to respective estimates for the Euclidean Norm~\cite[Lemma~2.1]{bogelein2023higher}.

\begin{mylem} \label{lem:algineq}
    For any~$\xi,\eta\in\R^n\setminus\{ 0\}$ there holds
    \begin{equation*}
        \bigg|\frac{\xi}{|\xi|_E}-\frac{\eta}{|\eta|_E}\bigg|_E \leq \frac{R_E}{r_E} \frac{2}{|\xi|_E} |\xi-\eta|_E.
    \end{equation*}
\end{mylem}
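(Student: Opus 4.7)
The plan is to mimic the familiar proof of the analogous inequality for the Euclidean norm, but with careful attention to the asymmetry of the Minkowski functional (recall Remark~\ref{remarkzutriangle}: $|\cdot|_E$ is only positively homogeneous, not symmetric). First I would split the difference by means of the elementary identity
\begin{equation*}
\frac{\xi}{|\xi|_E} - \frac{\eta}{|\eta|_E}
= \frac{\xi-\eta}{|\xi|_E} + \frac{\eta\bigl(|\eta|_E - |\xi|_E\bigr)}{|\xi|_E|\eta|_E},
\end{equation*}
apply Lemma~\ref{lem:minkowskitriangle} to bound the Minkowski functional of the sum by the sum of the Minkowski functionals, and handle the two resulting terms separately.

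The first term yields, by positive homogeneity, the clean bound $|\xi-\eta|_E/|\xi|_E$. For the second term, I would distinguish two cases according to the sign of $|\eta|_E-|\xi|_E$. If $|\eta|_E\ge|\xi|_E$, positive homogeneity gives
\begin{equation*}
\Bigl|\tfrac{\eta(|\eta|_E-|\xi|_E)}{|\xi|_E|\eta|_E}\Bigr|_E
= \tfrac{|\eta|_E-|\xi|_E}{|\xi|_E} \le \tfrac{|\eta-\xi|_E}{|\xi|_E},
\end{equation*}
where the last inequality comes from the (forward) triangle inequality $|\eta|_E\le|\eta-\xi|_E+|\xi|_E$ of Lemma~\ref{lem:minkowskitriangle}. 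If instead $|\eta|_E<|\xi|_E$, I pull the minus sign out and estimate
\begin{equation*}
\Bigl|\tfrac{\eta(|\eta|_E-|\xi|_E)}{|\xi|_E|\eta|_E}\Bigr|_E
= \tfrac{|\xi|_E-|\eta|_E}{|\xi|_E|\eta|_E}\,|-\eta|_E,
\end{equation*}
and use the comparison~\eqref{betragminkowski} to get $|-\eta|_E\le \tfrac{|-\eta|}{r_E}=\tfrac{|\eta|}{r_E}\le \tfrac{R_E}{r_E}|\eta|_E$, so that the right-hand side is bounded by $\tfrac{R_E}{r_E}\,\tfrac{|\xi|_E-|\eta|_E}{|\xi|_E}\le \tfrac{R_E}{r_E}\,\tfrac{|\xi-\eta|_E}{|\xi|_E}$, again by the triangle inequality. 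In either case the second term is controlled by $\tfrac{R_E}{r_E}\tfrac{|\xi-\eta|_E}{|\xi|_E}$, noting that in the first case we additionally invoke $|\eta-\xi|_E\le\tfrac{1}{r_E}|\eta-\xi|=\tfrac{1}{r_E}|\xi-\eta|\le\tfrac{R_E}{r_E}|\xi-\eta|_E$ via~\eqref{betragminkowski} to convert $|\eta-\xi|_E$ into $|\xi-\eta|_E$.

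Summing the two bounds and absorbing the factor for the first term via $R_E/r_E\ge 1$ (which holds because $B_{r_E}\subset E\subset B_{R_E}$) gives the claimed inequality
\begin{equation*}
\Bigl|\tfrac{\xi}{|\xi|_E}-\tfrac{\eta}{|\eta|_E}\Bigr|_E \le \tfrac{R_E}{r_E}\tfrac{2}{|\xi|_E}|\xi-\eta|_E.
\end{equation*}
The main obstacle, and the reason the proof is not literally a copy of the Euclidean argument, is precisely the asymmetry: the quantity $|-\eta|_E$ cannot be replaced by $|\eta|_E$, and similarly $|\eta-\xi|_E\ne|\xi-\eta|_E$ in general. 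Converting between these quantities forces the appearance of the factor $R_E/r_E$ through~\eqref{betragminkowski}, which is exactly what one sees in the target estimate.
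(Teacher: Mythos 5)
Your argument is correct: the splitting identity is verified easily, the two cases for the sign of $|\eta|_E-|\xi|_E$ are both handled properly with the (one-sided) triangle inequality from Lemma~\ref{lem:minkowskitriangle} and the comparison~\eqref{betragminkowski}, and the asymmetry of $|\cdot|_E$ is exactly where the factor $R_E/r_E$ must enter. The paper itself gives no proof of this lemma (it is imported from~\cite{elliptisch}), but your argument is the standard one for such estimates and matches the analogous Euclidean proof in~\cite[Lemma~2.1]{bogelein2023higher} adapted to the Minkowski functional, so there is nothing to object to.
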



The subsequent lemma states a relation between the mapping~$\G_\delta$ introduced in~\eqref{gdeltafunk} and the Euclidean norm, which is an important tool for the derivation of the intermediate regularity result Theorem~\ref{holdermainresult} and, subsequently, also Theorem~\ref{hauptresultat}. 

\begin{mylem} \label{gdeltalem}
    Let~$\delta\geq 0$ and~$\xi,\eta\in\R^n$. Then, there holds 
    \begin{equation} \label{est:gdeltalipschitzeins}
        |\G_\delta(\xi)-\G_\delta(\eta)| \leq  3 \Big(\frac{R_E}{r_E}\Big)^2 |\xi-\eta|.
    \end{equation}
    Additionally, if~$\delta>0$ and~$|\xi|_E \geq 1+\delta$, we have
    \begin{equation} \label{est:gdeltalipschitzzwei}
        |\xi-\eta| \leq 3 \Big(\frac{R_E}{r_E}\Big)^2 \Big(1+ \frac{1}{\delta} \Big) |\G(\xi)-\G(\eta)|.
    \end{equation}
\end{mylem}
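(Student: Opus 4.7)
The plan is to prove both inequalities by splitting the pair $(\xi,\eta)$ into cases according to where $|\xi|_E,|\eta|_E$ sit relative to the relevant threshold ($1+\delta$ for~\eqref{est:gdeltalipschitzeins} and $1$ for~\eqref{est:gdeltalipschitzzwei}), and reducing each case to a short computation using Lemma~\ref{lem:minkowskilipschitz}, Lemma~\ref{lem:algineq}, and the two-sided sandwich~\eqref{betragminkowski}.

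For~\eqref{est:gdeltalipschitzeins}, if both $|\xi|_E,|\eta|_E\leq 1+\delta$ the claim is trivial since $\G_\delta$ vanishes. If only $|\eta|_E\leq 1+\delta$ (the symmetric case is analogous), then $\G_\delta(\eta)=0$ and
$$|\G_\delta(\xi)|=\frac{|\xi|_E-(1+\delta)}{|\xi|_E}|\xi|\leq (|\xi|_E-|\eta|_E)\,\frac{|\xi|}{|\xi|_E},$$
which is controlled by $\tfrac{R_E}{r_E}|\xi-\eta|$ via~\eqref{est:lipschitz} and~\eqref{betragminkowski}. If both $|\xi|_E,|\eta|_E>1+\delta$, then
$$\G_\delta(\xi)-\G_\delta(\eta)=(\xi-\eta)-(1+\delta)\bigl(\tfrac{\xi}{|\xi|_E}-\tfrac{\eta}{|\eta|_E}\bigr),$$
and bounding the difference of unit vectors by Lemma~\ref{lem:algineq} combined with~\eqref{betragminkowski} produces a factor $\tfrac{2R_E^2(1+\delta)}{r_E^2|\xi|_E}\leq \tfrac{2R_E^2}{r_E^2}$ multiplying $|\xi-\eta|$. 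Collecting terms and using $R_E/r_E\geq 1$ yields the constant $3(R_E/r_E)^2$.

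For~\eqref{est:gdeltalipschitzzwei}, the hypothesis $|\xi|_E\geq 1+\delta$ is in force and I split by $|\eta|_E$ versus $1$. If $|\eta|_E\leq 1$, then $\G(\eta)=0$ and I combine $|\xi|\leq \tfrac{|\xi|_E}{|\xi|_E-1}|\G(\xi)|\leq \tfrac{1+\delta}{\delta}|\G(\xi)|$ with the bound $|\eta|\leq R_E\leq \tfrac{R_E}{r_E\delta}|\G(\xi)|$, which relies on the lower bound $|\G(\xi)|\geq r_E|\G(\xi)|_E=r_E(|\xi|_E-1)\geq r_E\delta$. The triangle inequality and some bookkeeping of constants then conclude. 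If instead $|\eta|_E>1$, both $\G(\xi)$ and $\G(\eta)$ lie in their non-trivial branch and I use the decomposition
$$\xi-\eta=\bigl(\G(\xi)-\G(\eta)\bigr)+\Bigl(\tfrac{\xi}{|\xi|_E}-\tfrac{\eta}{|\eta|_E}\Bigr),$$
together with the algebraic rearrangement
$$(|\xi|_E-1)\Bigl(\tfrac{\xi}{|\xi|_E}-\tfrac{\eta}{|\eta|_E}\Bigr)=\bigl(\G(\xi)-\G(\eta)\bigr)-(|\xi|_E-|\eta|_E)\tfrac{\eta}{|\eta|_E}.$$
In this regime $|\xi|_E-1=|\G(\xi)|_E$ and $|\eta|_E-1=|\G(\eta)|_E$, so Lemma~\ref{lem:minkowskilipschitz} gives $||\xi|_E-|\eta|_E|=||\G(\xi)|_E-|\G(\eta)|_E|\leq \tfrac{1}{r_E}|\G(\xi)-\G(\eta)|$. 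Dividing the rearranged identity by $|\xi|_E-1\geq\delta$ then controls $|\tfrac{\xi}{|\xi|_E}-\tfrac{\eta}{|\eta|_E}|$ purely in terms of $|\G(\xi)-\G(\eta)|$, and the first decomposition finishes the proof.

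The main subtlety is precisely this last step: a naive application of Lemma~\ref{lem:algineq} in the regime $|\eta|_E>1$ of~\eqref{est:gdeltalipschitzzwei} produces a coefficient $\tfrac{2R_E^2}{r_E^2(1+\delta)}$ in front of $|\xi-\eta|$, which cannot be absorbed when $\delta$ is small. The key observation is that the identity $|\xi|_E-|\eta|_E=|\G(\xi)|_E-|\G(\eta)|_E$, valid whenever $|\xi|_E,|\eta|_E\geq 1$, converts the problematic quantity into something directly comparable to $|\G(\xi)-\G(\eta)|$ via Lemma~\ref{lem:minkowskilipschitz}, thereby sidestepping any self-absorption argument.
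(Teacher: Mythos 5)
Your proof is correct: the case distinctions, the decomposition $\G_\delta(\xi)=\xi-(1+\delta)\frac{\xi}{|\xi|_E}$ on the non-trivial branch, and the constant bookkeeping via $R_E\geq r_E$ all check out, and the identity $|\xi|_E-|\eta|_E=|\G(\xi)|_E-|\G(\eta)|_E$ combined with Lemma~\ref{lem:minkowskilipschitz} does legitimately avoid the self-absorption problem in~\eqref{est:gdeltalipschitzzwei}. The paper itself contains no proof of this lemma (it is imported from Lemma~2.7 of the cited elliptic companion paper), and your argument is the standard route used in that line of work, so there is nothing to flag.
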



The subsequent iteration lemma allows a technique of reabsorbing certain quantities and stems from~\cite[Lemma~6.1]{giusti2003direct}.


\begin{mylem} \label{iterationlem}
       Let $\phi$ be a bounded, non-negative function on~$0\leq R_0 < R_1$ and assume that for $R_0\leq \rho < r \leq R_1$ there holds
      $$\phi(\rho) \leq \eta \phi(r) + \frac{A}{(r-\rho)^{\alpha}} + \frac{B}{(r-\rho)^{\beta}} + C$$
      for some constants $A,B,C,\alpha\geq \beta\geq 0$, and $\eta\in(0,1)$. Then, there exists a constant $\Tilde{C}=\Tilde{C}(\eta,\alpha)$, such that for all $R_0\leq \rho_0<r_0\leq R_1$ there holds
      $$\phi(\rho_0) \leq \Tilde{C}\bigg( \frac{A}{(r_0-\rho_0)^{\alpha}} + \frac{A}{(r_0-\rho_0)^{\alpha}} + C \bigg).$$
\end{mylem}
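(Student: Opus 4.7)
The plan is the classical geometric-series iteration underlying Giusti's lemma. First I fix an arbitrary pair $\rho_0<r_0$ in $[R_0,R_1]$ and introduce nested radii
\begin{equation*}
    \rho_k := \rho_0 + (1-\tau^k)(r_0-\rho_0), \qquad k \in \N,
\end{equation*}
for a parameter $\tau\in(0,1)$ to be chosen. This sequence is strictly increasing, converges to $r_0$, and enjoys the convenient gap identity $\rho_{k+1}-\rho_k=\tau^k(1-\tau)(r_0-\rho_0)$.

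Next I would apply the hypothesis to every successive pair $(\rho_k,\rho_{k+1})$ and iterate. With the abbreviation $d:=(1-\tau)(r_0-\rho_0)$, a straightforward induction in $N\in\N$ gives
\begin{equation*}
    \phi(\rho_0) \leq \eta^N \phi(\rho_N) + \frac{A}{d^\alpha}\sum_{k=0}^{N-1}\big(\eta\tau^{-\alpha}\big)^k + \frac{B}{d^\beta}\sum_{k=0}^{N-1}\big(\eta\tau^{-\beta}\big)^k + C\sum_{k=0}^{N-1}\eta^k.
\end{equation*}
Here the hypothesis $\alpha\geq\beta\geq 0$ is crucial: since $\tau\in(0,1)$ forces $\tau^{-\alpha}\geq\tau^{-\beta}$, one single choice of $\tau$ simultaneously controls both geometric ratios. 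Concretely I would pick $\tau\in(0,1)$ with $\tau^\alpha>\eta$, for instance $\tau := \big(\tfrac{1+\eta}{2}\big)^{1/\alpha}$ when $\alpha>0$ (and any $\tau\in(0,1)$ in the trivial case $\alpha=0$); this ensures $\eta\tau^{-\alpha}<1$, and hence also $\eta\tau^{-\beta}<1$.

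Finally I would pass to the limit $N\to\infty$. Since $\phi$ is bounded and $\eta^N\to 0$, the term $\eta^N\phi(\rho_N)$ vanishes, while each of the three remaining geometric series sums to a finite constant depending only on $\eta$ and $\alpha$. Re-inserting $d=(1-\tau)(r_0-\rho_0)$ yields the claimed inequality with a constant $\widetilde{C}=\widetilde{C}(\eta,\alpha)$. Concerning obstacles, the argument contains no analytic subtlety: the only point of care is the simultaneous calibration of $\tau$ to handle both exponents at once, and this is resolved painlessly by $\alpha\geq\beta$, which reduces the matter to the single scalar constraint $\tau^\alpha>\eta$. The quantitative strength of the lemma lies entirely in the decoupling of the error terms $A/d^\alpha$, $B/d^\beta$, $C$ from the absorbable term $\eta\phi(r)$, together with the fact that the resulting constant depends only on $(\eta,\alpha)$ and not on $A$, $B$, $C$, $\beta$, or the radii.
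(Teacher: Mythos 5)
Your proof is correct and is precisely the standard geometric-iteration argument from Giusti's book, which the paper cites for this lemma without reproducing a proof; the choice $\tau^\alpha>\eta$ together with $\beta\leq\alpha$ indeed makes all three series converge, and boundedness of $\phi$ kills the term $\eta^N\phi(\rho_N)$. Note only that your conclusion correctly retains the term $B/(r_0-\rho_0)^\beta$, which in the paper's statement of the lemma has evidently been mistyped as a second copy of the $A$-term.
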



Next, another expedient and well-known lemma is stated, which concerns the geometric convergence of sequences. For a proof, the reader is referred to~\cite[Chapter~I, Lemma~4.1]{dibenedetto1993degenerate}.

\begin{mylem} \label{geometriclem}
Let~$(Y_i)_{i\in\N_0}\subset \R_{\geq 0}$ be a sequence of non-negative numbers, satisfying the recursive inequality
$$Y_{i+1} \leq C b^i Y^{1+\kappa}_i$$
for any~$i\in\N_0$, where~$C\geq 0,\kappa >0$, and~$b>1$ denote positive constants. If there holds
$$Y_0 \leq C^{-\frac{1}{\kappa}}b^{-\frac{1}{\kappa^2}},$$
    then we have~$Y_i \to 0$ as~$i\to\infty$.
\end{mylem}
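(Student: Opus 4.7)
The plan is to establish, by induction on $i \in \mathbb{N}_0$, the quantitative decay bound
$$Y_i \leq Y_0\, b^{-i/\kappa} \qquad\text{for every } i \in \mathbb{N}_0.$$
Since $b>1$ and $\kappa>0$, this exponential bound immediately yields $Y_i \to 0$ as $i\to\infty$. The base case $i=0$ is trivial, so the entire content of the proof lies in the inductive step.

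For the inductive step, assuming $Y_i \leq Y_0 b^{-i/\kappa}$, I would plug this into the hypothesized recursion and compute
$$Y_{i+1} \,\leq\, C b^i Y_i^{1+\kappa} \,\leq\, C b^i \big( Y_0 b^{-i/\kappa}\big)^{1+\kappa} \,=\, C Y_0^{1+\kappa}\, b^{\,i - i(1+\kappa)/\kappa} \,=\, C Y_0^{1+\kappa}\, b^{-i/\kappa},$$
where the simplification uses $i - i(1+\kappa)/\kappa = -i/\kappa$. To close the induction it then suffices to verify
$$C Y_0^{1+\kappa}\, b^{-i/\kappa} \,\leq\, Y_0\, b^{-(i+1)/\kappa},$$
which after cancelling $Y_0 b^{-i/\kappa}$ reduces to the purely algebraic inequality $C Y_0^{\kappa} \leq b^{-1/\kappa}$, equivalently
$$Y_0 \,\leq\, C^{-1/\kappa}\, b^{-1/\kappa^2}.$$
This is precisely the smallness hypothesis imposed on $Y_0$ in the statement, so the induction closes.

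There is no genuine obstacle here: this is the classical De Giorgi–type iteration lemma, and the argument is a one-line induction once the correct ansatz $Y_0 b^{-i/\kappa}$ is identified. The only point requiring a bit of care is the bookkeeping of exponents in the manipulation of powers of $b$, and the observation that the threshold $C^{-1/\kappa} b^{-1/\kappa^2}$ is exactly what is needed to make the induction self-reproducing.
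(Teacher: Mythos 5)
Your induction with the ansatz $Y_i \leq Y_0\, b^{-i/\kappa}$ is correct, the exponent bookkeeping checks out, and the smallness hypothesis on $Y_0$ is exactly what closes the inductive step; this is precisely the classical argument in the reference the paper cites (DiBenedetto, Chapter I, Lemma 4.1), since the paper itself omits the proof. The only (trivial) edge case is $Y_0=0$, where the cancellation step is vacuous because the recursion then forces $Y_i=0$ for all $i$.
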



We recall the following parabolic Poincar\'{e}-type inequality, which is inferred from~\cite[Chapter~I, Corollary~3.1]{dibenedetto1993degenerate}.

\begin{mylem} \label{poincarelem}
    Let~$\Omega_T=\Omega\times(0,T)\subset\R^{n+1}$ be a bounded space-time cylinder and
    $$ u\in L^\infty\big(0,T;L^2(\Omega)\big) \cap L^2\big(0,T; W^{1,2}_0(\Omega)\big). $$
    Then, there exists a positive constant~$C=C(n)$, such that
    $$\iint_{\Omega_T}u^2\,\dx\dt \leq C |\{(x,t)\in \Omega_T: |u(x,t)|>0\}|^{\frac{2}{n+2}}\bigg( \esssup\limits_{t\in(0,T)}\int_{\Omega\times\{t\}}u^2\,\dx + \iint_{\Omega_T}|Du|^2\,\dx\dt \bigg). $$
\end{mylem}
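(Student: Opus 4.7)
The plan is to derive the inequality as a consequence of a parabolic Sobolev--type embedding, combined with H\"older's inequality on the support of $u$ and Young's inequality. The central intermediate estimate I would establish first is
\[
\iint_{\Omega_T}|u|^{\frac{2(n+2)}{n}}\,\dx\dt \leq C\Big(\esssup_{t\in(0,T)}\int_{\Omega\times\{t\}}u^2\,\dx\Big)^{\frac{2}{n}}\iint_{\Omega_T}|Du|^2\,\dx\dt.
\]
For a.e.~$t\in(0,T)$, the section $u(\cdot,t)$ belongs to $W^{1,2}_0(\Omega)$, so the spatial Sobolev embedding gives $\|u(\cdot,t)\|_{L^{2n/(n-2)}(\Omega)} \leq C(n)\|Du(\cdot,t)\|_{L^2(\Omega)}$ when $n \geq 3$. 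Interpolating in space between $L^2$ and $L^{2n/(n-2)}$ with interpolation parameter $\theta=\frac{2}{n+2}$ yields, at each such time,
\[
\int_{\Omega\times\{t\}}|u|^{\frac{2(n+2)}{n}}\,\dx \leq \|u(\cdot,t)\|_{L^2}^{\frac{4}{n}}\|u(\cdot,t)\|_{L^{2n/(n-2)}}^{2} \leq C\|u(\cdot,t)\|_{L^2}^{\frac{4}{n}}\|Du(\cdot,t)\|_{L^2}^{2}.
\]
Bounding the first factor by the essential supremum in time and integrating in $t$ yields the displayed parabolic Sobolev estimate. For the borderline case $n=2$, where the limiting Sobolev embedding is not available, the same scaling is recovered from the Gagliardo--Nirenberg inequality $\|u\|_{L^4(\Omega)} \leq C\|u\|_{L^2(\Omega)}^{1/2}\|Du\|_{L^2(\Omega)}^{1/2}$.

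In the second step, I would write $u^2 = u^2\chi_{\{|u|>0\}}$ and apply H\"older's inequality in space-time with conjugate exponents $\frac{n+2}{n}$ and $\frac{n+2}{2}$, giving
\[
\iint_{\Omega_T}u^2\,\dx\dt \leq \Big(\iint_{\Omega_T}|u|^{\frac{2(n+2)}{n}}\,\dx\dt\Big)^{\frac{n}{n+2}}\big|\{(x,t)\in\Omega_T:|u(x,t)|>0\}\big|^{\frac{2}{n+2}}.
\]
Inserting the parabolic Sobolev estimate and abbreviating $A\coloneqq \esssup_{t}\int_{\Omega}u^2\,\dx$ and $B\coloneqq \iint_{\Omega_T}|Du|^2\,\dx\dt$, the right-hand side becomes a constant multiple of $|\{|u|>0\}|^{\frac{2}{n+2}} A^{\frac{2}{n+2}} B^{\frac{n}{n+2}}$. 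A final application of Young's inequality with conjugate exponents $\frac{n+2}{2}$ and $\frac{n+2}{n}$ yields $A^{\frac{2}{n+2}}B^{\frac{n}{n+2}} \leq A+B$, and the claimed estimate follows with a constant depending only on $n$.

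I do not expect a serious obstacle; the whole argument is routine, which matches the fact that the authors simply cite DiBenedetto's monograph. The only delicate point is the bookkeeping of exponents: the H\"older pair $\big(\tfrac{n+2}{n},\tfrac{n+2}{2}\big)$ in the second step must align precisely with the splitting produced by the interpolation-in-space argument and with the Young conjugates, so that the measure of $\{|u|>0\}$ enters exactly with the stated power $\tfrac{2}{n+2}$.
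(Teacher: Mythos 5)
Your proof is correct and is essentially the standard derivation that the paper delegates to DiBenedetto (Chapter~I, Corollary~3.1): Hölder's inequality on the support combined with the parabolic Sobolev embedding (the paper's Lemma~\ref{sobolevembedding}) and Young's inequality, with all exponents checking out. The only cosmetic slip is calling the interpolation parameter $\theta=\tfrac{2}{n+2}$ while the displayed inequality uses the weight $\tfrac{n}{n+2}$ on the $L^{2n/(n-2)}$ factor, which is just a labeling convention and does not affect the argument.
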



The next lemma states a parabolic version of the classical elliptic Sobolev embedding and is taken from~\cite[Chapter I, Proposition 3.1]{dibenedetto1993degenerate}.

\begin{mylem} \label{sobolevembedding}
    Let~$\Omega_T=\Omega\times(0,T)\subset\R^{n+1}$ be a bounded space-time cylinder and
    $$u\in L^\infty(0,T;L^2(\Omega))\cap L^2(0,T;W^{1,2}_0(\Omega)).$$
    Then, there holds
    $$\iint_{\Omega_T}|u|^l\,\dx\dt \leq C\iint_{\Omega_T}|Du|^2\,\dx\dt \, \bigg(\esssup\limits_{t\in(0,T)}\int_{\Omega\times\{t\}}|u|^2\,\dx \bigg)^{\frac{2}{n}},$$
    where~$l\coloneqq \frac{2(n+2)}{n}$ and~$C=C(n)$.
\end{mylem}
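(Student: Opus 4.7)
The plan is to prove the parabolic Sobolev embedding by coupling the spatial Sobolev inequality with time integration, treating each time slice separately and then absorbing one factor via the~$L^\infty_t L^2_x$ control. The decisive numerology is that~$l=\frac{2(n+2)}{n}$ is precisely the interpolation exponent between~$L^2_x$ and~$L^{2^*}_x$ which produces~$\int_{\Omega}|Du|^2\,\dx$ with exponent exactly one, so that a single integration in~$t$ yields the desired product structure.

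First, I would fix a time~$t\in(0,T)$. Since~$u(\cdot,t)\in W^{1,2}_0(\Omega)$ for a.e.~$t$, for~$n\geq 3$ the classical Sobolev embedding~$W^{1,2}_0(\Omega)\hookrightarrow L^{2^*}(\Omega)$ with~$2^*=\frac{2n}{n-2}$ yields
\begin{equation*}
    \bigg(\int_{\Omega\times\{t\}}|u|^{2^*}\,\dx\bigg)^{1/2^*}\leq C(n)\bigg(\int_{\Omega\times\{t\}}|Du|^2\,\dx\bigg)^{1/2}.
\end{equation*}
Next, I would interpolate in space by Hölder's inequality between the exponents~$2$ and~$2^*$: writing~$\frac{1}{l}=\frac{\theta}{2}+\frac{1-\theta}{2^*}$ and solving gives~$\theta=\frac{2}{n+2}$, hence~$l\theta=\frac{4}{n}$ and~$l(1-\theta)=2$. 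Combining the interpolation with the Sobolev bound produces the slice-wise estimate
\begin{equation*}
    \int_{\Omega\times\{t\}}|u|^l\,\dx\leq C(n)\bigg(\int_{\Omega\times\{t\}}|u|^2\,\dx\bigg)^{2/n}\int_{\Omega\times\{t\}}|Du|^2\,\dx.
\end{equation*}

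The last step is to integrate this inequality in~$t$ over~$(0,T)$ and to extract the~$L^2_x$ factor by the essential supremum, which immediately yields the claimed bound with constant~$C(n)$. The case~$n=2$, where~$2^*$ degenerates to~$\infty$, must be handled separately: there~$l=4$, and the Gagliardo--Nirenberg inequality~$\|u\|_{L^4(\Omega)}\leq C\|u\|_{L^2(\Omega)}^{1/2}\|Du\|_{L^2(\Omega)}^{1/2}$ furnishes the analogous slice-wise bound after raising to the fourth power, whereupon integration in~$t$ concludes the argument in identical fashion. Since every step is either a standard Sobolev inequality or a Hölder interpolation on fixed time slices, no serious obstacle is expected; the only ingredient deserving care is the verification that the interpolation exponents conspire to give a linear factor of~$\iint_{\Omega_T}|Du|^2\,\dx\dt$, which is the algebraic identity~$l(1-\theta)=2$ recorded above.
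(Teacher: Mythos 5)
Your proposal is correct: the exponent bookkeeping checks out ($\theta=\tfrac{2}{n+2}$, $l\theta=\tfrac{4}{n}$, $l(1-\theta)=2$), the slice-wise Sobolev--H\"older interpolation followed by integration in $t$ gives exactly the stated bound, and you correctly single out $n=2$ via the Ladyzhenskaya--Gagliardo--Nirenberg inequality. The paper does not prove this lemma but only cites \cite[Chapter I, Proposition 3.1]{dibenedetto1993degenerate}, and your argument is precisely the standard proof given there.
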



Finally, we recall a sort of discrete isoperimetric inequality that stems from~\cite[Chapter 10.5.1, (5.4)]{DiBenedetto2009}.

\begin{mylem} \label{isoperimetric}
Let~$l,m\in\R$ with~$l<m$ and~$u\in W^{1,1}(B_R(x_0))$ for some~$B_R(x_0)\subset\R^n$. Then, there holds
$$(m-l)|B_R(x_0)\cap\{u<l\}| |B_R(x_0)\cap\{u>m\}| \leq C R^{n+1}  \int_{B_R(x_0)\cap\{l<u<m\}}|Du|\,\dx$$
with constant~$C=C(n)$.
\end{mylem}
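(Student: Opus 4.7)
The argument proceeds by reducing the estimate to the standard $L^1$-Poincaré inequality applied to a suitable truncation of $u$. Concretely, I would define
\begin{equation*}
    v(x) \coloneqq \min\{u(x), m\} - \min\{u(x), l\} = \bigl(\min\{u(x),m\} - l\bigr)_+,
\end{equation*}
so that $v\equiv 0$ on $B_R(x_0)\cap\{u<l\}$, $v\equiv m-l$ on $B_R(x_0)\cap\{u>m\}$, and $v=u-l$ on the transition layer $B_R(x_0)\cap\{l\le u\le m\}$. By standard properties of truncations in $W^{1,1}$, one has $v\in W^{1,1}(B_R(x_0))$ with
\begin{equation*}
    Dv = Du\cdot \mathbbm{1}_{\{l<u<m\}}\quad\text{a.e. in } B_R(x_0).
\end{equation*}

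Next I would apply the standard $L^1$-Poincaré inequality on the ball $B_R(x_0)$ to $v$, obtaining a constant $C=C(n)$ such that
\begin{equation*}
    \int_{B_R(x_0)}|v-(v)_{B_R(x_0)}|\,\dx \le C\, R\int_{B_R(x_0)}|Dv|\,\dx = C\,R\int_{B_R(x_0)\cap\{l<u<m\}}|Du|\,\dx.
\end{equation*}
Abbreviating $A\coloneqq|B_R(x_0)\cap\{u<l\}|$, $B\coloneqq|B_R(x_0)\cap\{u>m\}|$ and $\bar v\coloneqq(v)_{B_R(x_0)}$, I would restrict the left-hand integral to $\{u<l\}\cup\{u>m\}$ (where $v$ is constantly $0$, respectively $m-l$) and observe that $0\le\bar v\le m-l$, which gives
\begin{equation*}
    \bar v\, A + \bigl((m-l)-\bar v\bigr) B \le C\,R\int_{B_R(x_0)\cap\{l<u<m\}}|Du|\,\dx.
\end{equation*}

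The decisive step — and the one I expect to be the only non-routine point — is to convert this sum of measures into a product. For this I would bound $\bar v$ from below by integrating the pointwise inequality $v\ge(m-l)\mathbbm{1}_{\{u>m\}}$, which yields
\begin{equation*}
    \bar v \;\ge\; \frac{(m-l)\,B}{|B_R(x_0)|}.
\end{equation*}
Plugging this lower bound into the first summand on the left-hand side, dropping the non-negative second summand, and multiplying through by $|B_R(x_0)|=\omega_n R^n$ gives
\begin{equation*}
    (m-l)\,A\,B \;\le\; C\,\omega_n\,R^{n+1}\int_{B_R(x_0)\cap\{l<u<m\}}|Du|\,\dx,
\end{equation*}
which is the claimed inequality with a constant $C=C(n)$. (A symmetric lower bound $(m-l)-\bar v\ge(m-l)A/|B_R(x_0)|$ can be derived from $v\le(m-l)\mathbbm{1}_{\{u\ge l\}}$ and used on the second summand, but it is not needed.)
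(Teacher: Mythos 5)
Your proof is correct. Every step checks out: the truncation $v=(\min\{u,m\}-l)_+$ has the stated chain-rule property in $W^{1,1}$, the $L^1$-Poincar\'e inequality on a ball gives the constant $C(n)R$, the restriction of the left-hand side to the two level sets is valid because $0\le \bar v\le m-l$, and the lower bound $\bar v\ge (m-l)B/|B_R(x_0)|$ converts the sum into the product after multiplying by $|B_R(x_0)|=\omega_n R^n$. This yields exactly the asserted inequality with $C=C(n)$.

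The paper does not prove this lemma; it cites it from DiBenedetto's monograph, where the standard argument is De Giorgi's original one: for $x$ in $\{u>m\}$ one writes $v(x)$ as an average of $v(x)-v(y)$ over $y\in\{u<l\}$, estimates the difference by a line integral of $|Dv|$, and integrates in polar coordinates to obtain a Riesz-potential bound, from which the product inequality follows after integrating in $x$ and using $\int_{B_R}|x-y|^{-(n-1)}\,\dx\le CR$. Your route packages that potential estimate into the off-the-shelf $L^1$-Poincar\'e inequality and then extracts the product of measures by the mean-value trick; it is shorter on paper and makes the only genuinely non-routine step (sum of measures to product of measures) explicit, at the cost of importing the Poincar\'e inequality as a black box. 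Either derivation is acceptable for the purposes of this paper.
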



\section{Proof of Theorem~\ref{hauptresultat}} \label{sec:holder}
In this section, we aim to give the proof of the main regularity result, that is Theorem~\ref{hauptresultat}. This objective will be achieved by performing multiple intermediate steps. 


\subsection{Regularizing the equation} \label{sec:regularizing}
As performed in a similar fashion for the elliptic setting in~\cite[Proof of Theorem~1.1]{colombo2017regularity} and~\cite[Section~3.1]{elliptisch}, we commence by introducing a regularizing procedure of our equation~\eqref{pde}, such that the redefined function~$\Tilde{\F}(x,t,\xi)$ satisfies quadratic growth with respect to the gradient variable~$\xi$, for any~$(x,t)\in\Omega_T$. Moreover, a further modification leads to a redefined function~$\hat{\F}_\epsilon$, where~$\epsilon\in(0,1]$, that not only exhibits quadratic growth but is additionally elliptic on the whole of~$\R^n$, with an ellipticity constant depending on the parameter~$\epsilon \in (0,1]$ in general. We consider a cylinder~$Q_R=Q_R(y_0,s_0)\Subset\Omega_T$ and let
$$u\in L^\infty_{\loc}(0,T;W^{1,\infty}_{\loc}(\Omega))$$
denote an arbitrary weak solution to~\eqref{pde}. In what follows, we will always omit the vertex~$(y_0,s_0)$ of the cylinder~$Q_R=Q_R(y_0,s_0)$ in any given quantity. For the further course of this section, we denote
\begin{equation} \label{schrankeu}
    K \coloneqq \|Du\|_{L^\infty(Q_R)} <\infty.
\end{equation}
 Moreover, we set
\begin{equation} \label{schrankeF}
    \Tilde{K} \coloneqq \|\F\|_{L^\infty(Q_R \times (B_{K+2R_E}))}<\infty, \qquad L\coloneqq \Tilde{K} + 1 <\infty, 
\end{equation}
where~$R_E>0$ is given in~\eqref{mengeeradii}. The additional constant quantity~$1$ is chosen arbitrarily and could be replaced with any positive real number. Next, let~$\Psi\in C^2(\R_{> 0},\R_{> 0})$ denote a smooth cut-off function that satisfies
\begin{equation*}
    \Psi(s)=s \qquad \mbox{for~$s \in[0,\Tilde{K}]$}, \qquad \Psi(s)=L \qquad \mbox{for~$s\in[L,\infty)$}
\end{equation*}
as well as
\begin{equation*}
    |\psi'(s)|+|\psi''(s)| \leq C_\Psi \qquad \mbox{for all~$s\in\R_{>0}$},
\end{equation*}
where~$C_\Psi>0$. For any~$(x,t)\in Q_R$ and~$\xi\in\R^n$, we consider the redefined function
\begin{equation} \label{tildef}
    \Tilde{\F}(x,t,\xi) \coloneqq \Psi(\F(x,t,\xi)).
\end{equation}
Due to this construction, there holds
\begin{equation} \label{tildefeeval}
    \F(x,t,Du(x,t))=\Psi(\F(x,t,Du(x,t))) = \Tilde{\F}(x,t,Du(x,t))
\end{equation}
for any~$(x,t) \in Q_R$ and~$\xi\in\R^n$, and, moreover, we have
\begin{align} \label{approxderivative}
    \nabla \Tilde{\F}(x,t,\xi) &=  \nabla\F(x,t,\xi) \Psi'(\F(x,t,\xi)),\quad \xi\in\R^n, \\
    \nabla^2\Tilde{\F}(x,t,\xi) &=  (\nabla\F(x,t,\xi)\otimes\nabla\F(x,t,\xi)) \Psi''(\F(x,t,\xi)) +  \nabla^2\F(x,t,\xi) \Psi'(\F(x,t,\xi)), \quad \xi\in\R^n\setminus \overline{E} \nonumber
\end{align}
for any~$(x,t) \in Q_R$. Since~$\nabla^2\F(x,t,\xi)$ is positive definite for any~$\xi\in \R^n\setminus \overline{E}$ and any~$(x,t) \in Q_R$, the mapping~$\xi \mapsto \F(x,t,\xi)$ on~$\R^n\setminus \overline{E}$ is strictly convex for any~$(x,t)\in Q_R$. Therefore,~$\xi \mapsto \F(x,t,\xi)$ is coercive on~$\R^n$ for any~$(x,t)\in Q_R$, such that for any~$(x,t)\in Q_R$ the truncated function~$\xi\mapsto\Tilde{\F}(x,t,\xi)$ remains constant on the set~$Q_R\times (\R^n \setminus B_{N(x,t)} )$ for some
$$ N(x,t) \geq K+2R_E. $$
We define
\begin{align} \label{N}
  N\coloneqq\sup\limits_{(x,t)\in Q_R}N(x,t) \geq K+2R_E  
\end{align}
and observe that~$N<\infty$ due to assumption~\eqref{voraussetzung} that holds for any~$(x,t)\in Q_R$. Thus, according to the preceding reasoning,~$\Tilde{\F}(x,t,\xi)$ is constant for any~$(x,t,\xi)\in Q_R\times (\R^n\setminus B_N)$. In particular, for the truncated function~$\Tilde{\F}$ there hold the following estimates
\begin{align} \label{derivativebounds}
    \|\nabla \Tilde{\F}\|_{L^\infty(Q_R \times \R_{N})} &\leq  
     C_\Psi \|\nabla \F\|_{L^\infty(Q_R \times B_{N})}, \\
    \|\nabla^2 \Tilde{\F}\|_{L^\infty(Q_R \times (\R_{N}\setminus E_\delta))} &\leq C_\Psi \|\nabla \F\|^2_{L^\infty(Q_R \times B_{N}) } + C_\Psi \|\nabla^2 \F\|_{L^\infty(Q_R \times (B_{N}\setminus E_\delta))} \nonumber
\end{align}
for any~$\delta>0$. Here,~$E_\delta$ denotes the outer parallel set of~$E$ as defined in~\eqref{Edelta}. Therefore, the preceding~$L^\infty$-estimates for both~$\nabla\Tilde{\F}$ and~$\nabla^2\Tilde{\F}$ only depend on the constant~$C_\Psi$ and the~$L^\infty$-bounds for~$\nabla\F$ and~$\nabla^2\F$ respectively. Next, we set 
\begin{align} \label{schrankehessian}
    C_{\F} \coloneqq \max\{ \|\nabla^2 \Tilde{\F}\|_{L^\infty(Q_R \times (\R_N \setminus B_{K+R_E}))}, 1 \} < \infty
\end{align}
and consider a convex function~$\Phi\in C^2(\R^n)$ that satisfies
\begin{align} \label{convexfunction}
       \begin{cases}
       \Phi(\xi)=0  & \mbox{for any~$|\xi| \leq K+R_E$}, \\
       |\nabla\Phi(\xi)| \leq (2C_{\F} + 1)|\xi|  & \mbox{for any~$\xi\in \R^n$}, \\
   \langle \nabla^2 \Phi (\xi) \eta,\eta \rangle \geq (C_{\F}+1) |\eta|^2 & \mbox{for any~$|\xi|\geq K+2 R_E$}, \\
        |\nabla^2\Phi(\xi)| \leq 2C_\F+1 & \mbox{for any~$\xi \in \R^n$} 
        \end{cases}
  \end{align}
 for any~$\eta\in\R^n$. Moreover, we set
\begin{align*}
    \hat{\F}(x,t,\xi) \coloneqq \Tilde{\F}(x,t,\xi) + \Phi(\xi) \qquad \mbox{for~$(x,t)\in Q_R,\, \xi\in\R^n$}
\end{align*}
and note that~$\R^n\ni\xi \mapsto \hat{\F}(x,t,\xi)$ is convex across~$\R^n$ due to the preceding construction of~$\Tilde{\F}$ and~$\Phi$. In fact, on the set~$Q_R\times B_{K+2R_E}$ we have~$\Tilde{\F}(x,t,\xi)=\F(x,t,\xi)$, such that, since~$\Phi$ is convex,~$\hat{\F}$ is convex across~$\R^n$. On the complementary set
$$ Q_R\times (\R^n\setminus B_{K+2R_E}), $$
by exploiting~\eqref{schrankehessian} as well as~\eqref{convexfunction}, there holds the ellipticity estimate
\begin{align} \label{hatapproxellipticity}
    \langle \nabla^2 \hat{\F}(x,t,\xi) \eta,\eta \rangle &= \langle \nabla^2 \Tilde{\F}(x,t,\xi) \eta,\eta \rangle + \langle \nabla^2 \Phi(\xi) \eta,\eta \rangle \geq |\xi|^2 
\end{align}
for any~$\eta\in\R^n$. For the Hessian~$\nabla^2\hat{\F}$, we have the estimate
\begin{align} \label{hatapproxbound}
    |\nabla^2\hat{\F}(x,t,\xi)| \leq 3C_{\F}+1 \qquad \mbox{for~$(x,t)\in Q_R,\, \xi \in \R^n\setminus B_{K+R_E} $}.
\end{align}
As in~\cite[Section~3.1]{elliptisch}, we further infer that the redefined mapping~$\hat{\F}$ satisfies the set of structure conditions~\eqref{fregularity}, where condition~$\eqref{fregularity}_4$ holds true with a quantitative bound
\begin{align} \label{hatflipschitz}
    |\partial_{x_i}\nabla\hat\F(x,\xi)| &\leq C(C_\Psi,N,R_E) + C_\Psi \underbrace{\| \partial_x \F\|_{L^\infty(Q_R\times B_{N})}}_{\leq C(C_\Psi,N,r_E)} \underbrace{\| \nabla \F\|_{L^\infty(Q_R\times B_{N})}}_{\eqqcolon \hat{C}_\F} \\
    &\leq C(\hat{C}_\F,C_\Psi,N,r_E) \nonumber
\end{align}
for any~$i=1,\ldots,n$, any~$(x,t)\in Q_R$, and~$\xi\in\R^n$. In turn, we used the mean value theorem to estimate the quantity involving~$\partial_x\F$ further above. In particular, the structure constant~$C=C(\hat{C}_\F,C_\Psi,N,r_E)$ does not depend on the size of~$|\xi|_E$. Additionally, the ellipticity condition~\eqref{voraussetzung} for~$\hat{\F}$ holds true. For any~$\delta>0$ we find positive constants~$\hat{\lambda}=\hat{\lambda}(C_\F,\delta)$ and~$\hat{\Lambda}=\hat{\Lambda}(C_\F,\delta)$ with~$0<\hat{\lambda}\leq\hat{\Lambda}$, such that
\begin{align} \label{hatfvoraussetzung}
     \qquad \hat{\lambda}(C_\F,\delta) |\eta|^2 \leq \langle \nabla^2 \hat{\F}(x,t,\xi) \eta,\eta \rangle \leq \hat{\Lambda}(C_\F,\delta) |\eta|^2\qquad\text{for any~$1+\delta \leq |\xi|_E \leq \delta^{-1}$}
\end{align}
for any~$(x,t)\in Q_R$. Since~$u$ is a weak solution to equation~\eqref{pde},~$u$ is in particular a weak solution to
\begin{equation*}
    \partial_t u - \divv \nabla\hat{\F}(x,t,Du) = f  \qquad\mbox{in~$Q_R$}.
\end{equation*}
This is a direct consequence of~\eqref{schrankeF} as we have that~$\hat\F=\F$ in~$Q_R\times B_K$. However,~$\hat{\F}$ does not admit uniform ellipticity on the whole of~$\R^n$, necessitating an additional approximation in order to obtain a truly elliptic equation on~$\R^n$. For an approximating parameter~$\epsilon\in[0,1]$, we set
\begin{equation} \label{psifapprox}
    \hat{\F}_\epsilon(x,t,\xi) \coloneqq \hat{\F}(x,t,\xi) + \epsilon \textstyle{\frac{1}{2}} |\xi|^2 \qquad\mbox{for~$(x,t)\in Q_R,\,\xi\in\R^n$}
\end{equation}
and note that~$\hat{\F}_\epsilon$ is indeed elliptic with respect to~$\xi$ on the whole of~$\R^n$ in the case where~$\epsilon\in(0,1]$, with the ellipticity constant depending on the parameter~$\epsilon\in(0,1]$ in general. Here, the quantity~$\frac{1}{2}|\xi|^2$ denotes the convex function of the well-known~$2$-energy. Consequently, we set
\begin{equation} \label{Gapprox}
    \hat{\h}_\epsilon(x,t,\xi)\coloneqq \nabla \hat{\F}_\epsilon(x,t,\xi) = \big( \hat{\h} (x,t,\xi)+\epsilon\xi\big)\in\R^n\qquad\mbox{for any~$(x,t)\in\Omega_T$,~$\xi\in\R^n$}
\end{equation}
as well as
\begin{equation*} 
    \nabla^2\hat{\F}_\epsilon(x,t,\xi)=\big(\nabla^2 \hat{\F} (x,t,\xi)+\epsilon I_n \big) \in \R^{n\times n}\qquad\mbox{for any~$(x,t)\in\Omega_T$,~$\xi\in\R^n\setminus \overline{E}$}.
\end{equation*}
In the spirit of~\eqref{bilinear}, we set
\begin{equation} \label{bilinearapprox}
            \hat{\mathcal{B}}_\epsilon(x,t,\xi)(\eta,\zeta)\coloneqq  \langle \nabla^2 \hat{\F}(x,t,\xi)\eta,\zeta \rangle +\epsilon \langle \eta, \zeta\rangle  \qquad\mbox{for any~$(x,t)\in \Omega_T$,~$\xi\in\R^n\setminus \overline{E}$}
        \end{equation}
for any~$\eta,\zeta\in\R^n$.
\,\\

Additionally, we have that~$\hat{\F}_\epsilon$ satisfies quadratic growth, i.e. there holds
\begin{align} \label{quadraticgrowth}
    |\nabla \hat{\F}_\epsilon(x,t,\xi)| \leq (\epsilon+C_\Psi\|\nabla \F \|_{L^\infty(Q_R \times B_N)}) (1+|\xi|) \leq C(C_\F,C_\Psi,\hat{C}_\F)(1+|\xi|)
\end{align}
for any~$(x,t)\in Q_R$ and~$\xi\in\R^n$, resp.
\begin{align} \label{quadraticgrowthohnekonst}
    |\nabla \hat{\F}_\epsilon(x,\xi)| \leq (2C_\F+1) |\xi|. 
\end{align}
for any~$(x,t)\in Q_R$ and~$|\xi|>K+2 R_E$. The latter is a consequence of the definition of~$\hat{\F}$. In order to simplify notation, we shall omit the constant~$C_\Psi$ in any quantity depending on~$C_\Psi$ from this point on. \,\\

We will now state certain structure estimates that will turn out essential for the subsequent discussion. \,\\
The following lemma serves as a crucial preliminary tool, providing a monotonicity estimate for~$\hat{\h}_\epsilon$. This estimate is a direct consequence of the ellipticity condition~\eqref{hatfvoraussetzung}, which holds for any~$1+\delta \leq |\xi|_E$ and for all~$(x,t) \in \Omega_T$.


\begin{mylem} \label{monotonicityapprox}
   Let~$\epsilon\in[0,1]$ and~$\delta>0$. For any~$(x,t)\in\Omega$ and any~$\Tilde{\xi},\xi\in\R^n$ with~$|\xi|_E \geq 1+\delta $ there holds
   \begin{equation} \label{est:monotonicityapprox}
       \langle \hat{\h}_\epsilon(x,t,\Tilde{\xi})-\hat{\h}_\epsilon(x,t,\xi),\Tilde{\xi}-\xi\rangle \geq \bigg(\epsilon +  C(C_\F,\delta)\frac{r_E(2|\xi|_E -(2+\delta))}{2|\xi|_E(R_E+r_E)} \bigg)|\Tilde{\xi}-\xi|^2. 
   \end{equation}
      Moreover, for any~$\tildexi,\xi\in\R^n$ the quantity on the left-hand side of~\eqref{est:monotonicityapprox} is non-negative.
\end{mylem}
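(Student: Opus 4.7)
The non-negativity of the left-hand side, valid for arbitrary~$\tildexi,\xi\in\R^n$, follows at once from the convexity of~$\hat\F_\epsilon$ on the whole of~$\R^n$: the gradient of a convex~$C^1$ function is a monotone map, so~$\langle\hat\h_\epsilon(x,t,\tildexi)-\hat\h_\epsilon(x,t,\xi),\tildexi-\xi\rangle\geq 0$.

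For the quantitative estimate~\eqref{est:monotonicityapprox}, I would split~$\hat\h_\epsilon=\hat\h+\epsilon\,\mathrm{id}$ via~\eqref{Gapprox}, which produces the term~$\epsilon|\tildexi-\xi|^2$ directly. The remaining contribution~$\langle\hat\h(\tildexi)-\hat\h(\xi),\tildexi-\xi\rangle$ will be integrated along the segment~$\xi_s\coloneqq\xi+s(\tildexi-\xi)$, $s\in[0,1]$. Since~$\hat\F(x,t,\cdot)\in C^1(\R^n)\cap C^2(\R^n\setminus\overline E)$ and is convex, after a standard mollification-and-limit argument to cope with the points at which the segment meets~$\partial E$, one obtains
\begin{equation*}
\langle\hat\h(\tildexi)-\hat\h(\xi),\tildexi-\xi\rangle=\int_0^1\langle\nabla^2\hat\F(x,t,\xi_s)(\tildexi-\xi),\tildexi-\xi\rangle\,\ds,
\end{equation*}
with non-negative integrand almost everywhere in~$s$.

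By the Lipschitz estimate~\eqref{est:lipschitz} for the Minkowski functional, $|\xi_s|_E\geq|\xi|_E-s|\tildexi-\xi|/r_E$, so the inclusion~$|\xi_s|_E\geq 1+\delta/2$ holds throughout~$[0,s^*]$, where~$s^*\coloneqq\min\{1,\,r_E(|\xi|_E-1-\delta/2)/|\tildexi-\xi|\}$. On this subinterval the ellipticity hypothesis~\eqref{hatfvoraussetzung} (applied with parameter~$\delta/2$) bounds the Hessian below by a constant~$\hat\lambda(C_\F,\delta/2)>0$; combining this with the monotonicity of~$s\mapsto\langle\hat\h(\xi_s)-\hat\h(\xi),\tildexi-\xi\rangle$ (non-decreasing by convexity), which transports the bound from~$\xi_{s^*}$ to~$\tildexi$, one arrives at
\begin{equation*}
\langle\hat\h(\tildexi)-\hat\h(\xi),\tildexi-\xi\rangle\geq\hat\lambda(C_\F,\delta/2)\,s^*\,|\tildexi-\xi|^2.
\end{equation*}

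The main obstacle is the algebraic reconciliation of this~$\min$-type coefficient with the precise fractional form appearing in~\eqref{est:monotonicityapprox}. In the regime~$|\tildexi-\xi|\leq (R_E+r_E)|\xi|_E$, an elementary case distinction suffices: if~$s^*=1$, the target coefficient is bounded above by~$r_E/(R_E+r_E)<1$ and is therefore trivially dominated, whereas if~$s^*<1$, dividing the definition of~$s^*$ by the bound on~$|\tildexi-\xi|$ produces precisely the claimed coefficient. In the complementary regime~$|\tildexi-\xi|>(R_E+r_E)|\xi|_E$, an upper bound of the form~$|\xi|\leq R_E|\xi|_E$ together with the triangle inequality forces~$|\tildexi|$ to exceed~$K+2R_E$ (upon possibly enlarging the threshold to absorb~$K$, cf.~\eqref{schrankeu}), so that the additional uniform ellipticity~$\nabla^2\hat\F\geq I$ from~\eqref{hatapproxellipticity} applies on a non-negligible portion of the segment; re-running the integration there yields a lower bound of order~$|\tildexi-\xi|^2$ that once again dominates the claimed coefficient. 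Combining the two regimes produces~\eqref{est:monotonicityapprox} with a constant~$C(C_\F,\delta)$ absorbing the case-specific factors.
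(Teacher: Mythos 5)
The paper offers no written proof of this lemma (it is merely asserted to be a direct consequence of the ellipticity condition \eqref{hatfvoraussetzung}), so your proposal is judged against what a correct proof must contain. Your overall route is the intended one: the non-negativity from convexity is correct, splitting off $\epsilon\,\mathrm{id}$ is correct, and integrating the Hessian along $\xi_s=\xi+s(\tilde\xi-\xi)$, using \eqref{est:lipschitz} to keep $|\xi_s|_E\geq 1+\delta/2$ on $[0,s^*]$, and then reconciling $\lambda\, s^*$ with the target coefficient by a case distinction on the size of $|\tilde\xi-\xi|$ is exactly how the fractional coefficient arises. The regime $|\tilde\xi-\xi|\leq(R_E+r_E)|\xi|_E$ is handled correctly.

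The complementary regime, however, contains a genuine error. From $|\tilde\xi-\xi|>(R_E+r_E)|\xi|_E$ and $|\xi|\leq R_E|\xi|_E$ the triangle inequality yields only $|\tilde\xi|>r_E|\xi|_E$, which need not exceed $K+2R_E$: already for $E=\overline B_1$, taking $|\xi|=1+\delta$ and $\tilde\xi=-(1+2\delta)\xi/|\xi|$ places you in this regime with $|\tilde\xi|=1+2\delta$, and for eccentric $E$ the point $\tilde\xi$ can even lie inside $E$. ``Enlarging the threshold to absorb $K$'' does not repair this, since the algebra of the first regime depends on the threshold being exactly $(R_E+r_E)|\xi|_E$. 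The correct argument in this regime does not involve $\tilde\xi$ at all: the degenerate set $E_{\delta/2}$ is convex with diameter at most $R_E(2+\delta)$, so the segment meets it in a subsegment of at most that length; after shrinking $C(C_\F,\delta)$ so that the first regime already covers all increments with $|\tilde\xi-\xi|$ below a fixed multiple of $R_E(2+\delta)$, the remaining case has at least half of $[0,1]$ inside $\{|\xi_s|_E\geq1+\delta/2\}$, whence the integral is at least $\tfrac{\lambda}{2}|\tilde\xi-\xi|^2$, which dominates the claimed coefficient because the latter is bounded by $r_E/(R_E+r_E)<1$. A second, minor point: \eqref{hatfvoraussetzung} carries the upper restriction $|\xi_s|_E\leq\delta^{-1}$, so on $[0,s^*]$ you must supplement it with the uniform ellipticity \eqref{hatapproxellipticity} of $\Phi$ outside $B_{K+2R_E}$ (note that $|\zeta|\leq K+2R_E$ forces $|\zeta|_E\leq(K+2R_E)/r_E$, so the two ranges together cover $\{|\zeta|_E\geq1+\delta/2\}$); as written, your appeal to \eqref{hatfvoraussetzung} alone is incomplete for large $|\xi_s|$.
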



As a consequence of Lemma~\ref{gdeltalem} and Lemma~\ref{monotonicityapprox}, we obtain the next auxiliary result.

\begin{mylem} \label{lemgdeltakvgz}
    Let~$\epsilon\in[0,1]$ and~$\delta>0$. For any~$(x,t)\in\Omega$ and~$\Tilde{\xi}, \xi \in\R^n$ there holds
    \begin{align*}
        \epsilon|\Tilde{\xi}-\xi|^2 + |\G_\delta(\tildexi) - \G_\delta(\xi)|^2 \leq  C(C_\F,\delta,R_E,r_E) \langle \hat{\h}_\epsilon(x,\tildexi)- \hat{\h}_\epsilon(x,\xi), \tildexi-\xi\rangle.
    \end{align*}
\end{mylem}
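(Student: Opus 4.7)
The plan is to proceed via a case distinction based on whether at least one of the Minkowski values $|\tildexi|_E,|\xi|_E$ exceeds the threshold $1+\delta$. Throughout, I would exploit the decomposition $\hat{\h}_\epsilon(x,t,\xi)=\hat{\h}(x,t,\xi)+\epsilon\,\xi$ together with the convexity of $\hat{\F}$, which yields the baseline monotonicity
\begin{equation*}
\langle \hat{\h}_\epsilon(x,t,\tildexi)-\hat{\h}_\epsilon(x,t,\xi),\tildexi-\xi\rangle \geq \epsilon|\tildexi-\xi|^2
\end{equation*}
for \emph{all} $\tildexi,\xi\in\R^n$. This already absorbs the $\epsilon|\tildexi-\xi|^2$ contribution on the left-hand side of the claimed estimate in every case, so the only remaining task is to control $|\G_\delta(\tildexi)-\G_\delta(\xi)|^2$.

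In the first case, both $|\tildexi|_E\leq 1+\delta$ and $|\xi|_E\leq 1+\delta$. By the very definition~\eqref{gdeltafunk} of $\G_\delta$, both values vanish and therefore $|\G_\delta(\tildexi)-\G_\delta(\xi)|^2=0$. Combined with the baseline monotonicity displayed above, the asserted inequality is immediate in this regime.

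In the complementary case, without loss of generality (using that the bracket on the right-hand side is symmetric under swapping $\tildexi\leftrightarrow \xi$), suppose $|\tildexi|_E\geq 1+\delta$. Then Lemma~\ref{monotonicityapprox}, applied after swapping the roles of $\tildexi$ and $\xi$, yields
\begin{equation*}
\langle \hat{\h}_\epsilon(x,t,\tildexi)-\hat{\h}_\epsilon(x,t,\xi),\tildexi-\xi\rangle \geq C(C_\F,\delta)\,\frac{r_E\bigl(2|\tildexi|_E-(2+\delta)\bigr)}{2|\tildexi|_E(R_E+r_E)}|\tildexi-\xi|^2.
\end{equation*}
The function $s\mapsto 1-\tfrac{2+\delta}{2s}$ is increasing in $s$, so the condition $|\tildexi|_E\geq 1+\delta$ produces the bound $\tfrac{2|\tildexi|_E-(2+\delta)}{2|\tildexi|_E}\geq \tfrac{\delta}{2(1+\delta)}$. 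This converts the above into a uniform estimate
\begin{equation*}
|\tildexi-\xi|^2 \leq C(C_\F,\delta,R_E,r_E)\,\langle \hat{\h}_\epsilon(x,t,\tildexi)-\hat{\h}_\epsilon(x,t,\xi),\tildexi-\xi\rangle.
\end{equation*}
Combined with the Lipschitz bound~\eqref{est:gdeltalipschitzeins} from Lemma~\ref{gdeltalem}, namely $|\G_\delta(\tildexi)-\G_\delta(\xi)|^2\leq 9(R_E/r_E)^4|\tildexi-\xi|^2$, and with the baseline monotonicity for the $\epsilon|\tildexi-\xi|^2$ term, the desired inequality follows. No step is genuinely delicate; the only mild point requiring care is to ensure that the symmetry argument is invoked so that Lemma~\ref{monotonicityapprox}, which is stated with the hypothesis $|\xi|_E\geq 1+\delta$ on the second argument, can also be applied when it is $\tildexi$ that carries the size information.
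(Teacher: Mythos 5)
Your proposal is correct and follows exactly the route the paper indicates (the lemma is presented there as a direct consequence of Lemma~\ref{gdeltalem} and Lemma~\ref{monotonicityapprox}): the baseline monotonicity from convexity handles the $\epsilon$-term and the case where both arguments lie in $E_\delta$, while in the remaining case the quantified monotonicity estimate~\eqref{est:monotonicityapprox}, together with the lower bound $\tfrac{2|\tildexi|_E-(2+\delta)}{2|\tildexi|_E}\geq\tfrac{\delta}{2(1+\delta)}$ and the Lipschitz bound~\eqref{est:gdeltalipschitzeins}, yields the claim. The symmetry observation needed to place the size hypothesis on either argument is correctly handled.
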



\begin{mylem} \label{bilinearelliptic}
    Let~$\epsilon\in[0,1]$ and~$\delta>0$. For any~$(x,t)\in \Omega_T$ and any~$\xi,\eta\in\R^n$ with~$1+\delta \leq |\xi|\leq \delta^{-1} $ there holds
    \begin{equation*}
        (\epsilon+\lambda(C_\F,\delta))|\eta|^2 \leq \hat{\mathcal{B}}_\epsilon(x,t,\xi)(\eta,\eta) \leq (\epsilon + \Lambda(C_\F,\delta) )|\eta|^2.
    \end{equation*}
\end{mylem}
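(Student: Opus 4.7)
The claim is essentially a one-line consequence of the definition of $\hat{\mathcal{B}}_\epsilon$ together with the ellipticity estimate~\eqref{hatfvoraussetzung} already established for $\hat\F$. The plan is to unfold the definition, apply~\eqref{hatfvoraussetzung} to the Hessian term, and handle the $\epsilon$-correction by inspection.

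First, I would recall the definition~\eqref{bilinearapprox}, from which
\begin{equation*}
    \hat{\mathcal{B}}_\epsilon(x,t,\xi)(\eta,\eta) = \langle \nabla^2 \hat{\F}(x,t,\xi)\eta,\eta\rangle + \epsilon |\eta|^2.
\end{equation*}
Since the hypothesis $1+\delta \leq |\xi|_E \leq \delta^{-1}$ places $\xi$ in the regime where assumption~\eqref{hatfvoraussetzung} is applicable (possibly after adjusting the parameter $\delta$ via the equivalence~\eqref{betragminkowski} to translate the condition on $|\xi|$ into a condition on $|\xi|_E$), one obtains
\begin{equation*}
    \hat{\lambda}(C_\F,\delta)|\eta|^2 \leq \langle \nabla^2 \hat{\F}(x,t,\xi)\eta,\eta\rangle \leq \hat{\Lambda}(C_\F,\delta)|\eta|^2.
\end{equation*}

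Adding $\epsilon|\eta|^2$ on both sides and setting $\lambda(C_\F,\delta) \coloneqq \hat{\lambda}(C_\F,\delta)$ and $\Lambda(C_\F,\delta) \coloneqq \hat{\Lambda}(C_\F,\delta)$ yields exactly the claimed two-sided bound. There is no essential obstacle here; the only minor point to be careful about is the translation between the condition formulated with the Euclidean norm $|\xi|$ and the version of~\eqref{hatfvoraussetzung} formulated with the Minkowski functional $|\xi|_E$, which amounts to a harmless rescaling of $\delta$ by constants depending only on $r_E$ and $R_E$ via~\eqref{betragminkowski} and can be absorbed into the dependence of $\lambda, \Lambda$ on the data.
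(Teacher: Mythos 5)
Your proposal is correct and coincides with the paper's own (one-line) proof: the bound follows immediately from the definition~\eqref{bilinearapprox} of $\hat{\mathcal{B}}_\epsilon$ together with the ellipticity condition~\eqref{hatfvoraussetzung}, after adding $\epsilon|\eta|^2$. Your remark about reconciling the Euclidean norm $|\xi|$ in the statement with the Minkowski functional $|\xi|_E$ in~\eqref{hatfvoraussetzung} is a reasonable reading of what appears to be a typographical slip in the lemma's hypothesis.
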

\begin{proof}
    The estimate is an immediate consequence of the ellipticity assumption~\eqref{hatfvoraussetzung} and the definition of the bilinear form~$\hat{\B}_\epsilon$ according to~\eqref{bilinearapprox}. 
\end{proof}


\begin{remark} \upshape
 The estimates from Lemma~\ref{monotonicityapprox} and Lemma~\ref{bilinearelliptic} also to hold true in the case where~$|\xi|_E\leq 1$ without the constants~$C(C_\F,\delta)$,~$\lambda(C_\F,\delta)$,~$\Lambda(C_\F,\delta)$.    
\end{remark}


Next, let us consider the unique weak solution
$$ u_\epsilon \in C^0\big(\Lambda_R(s_0),L^{2}(B_R(y_0))\big) \cap  L^2\big(\Lambda_R(s_0),u+W^{1,2}_0(B_R(y_0))\big)$$
to the Cauchy-Dirichlet problem
\begin{align} \label{approx}
    \begin{cases}
       \partial_t u_\epsilon - \divv \hat{\h}_\epsilon(x,t,Du_\epsilon) = f & \mbox{in~$Q_R$}, \\
        \,\,\,\,\,\,\,\, \quad\qquad\qquad\quad\qquad u_\epsilon =u & \mbox{on~$\partial_p Q_R$}.
    \end{cases}
\end{align}
 Since the redefined equation satisfies quadratic growth according to~\eqref{quadraticgrowth}, it is indeed sufficient to consider approximating solutions~$u_\epsilon$ thar are~\textit{a priori} assumed to be of class
$$ C^0\big(\Lambda_R(s_0),L^{2}(B_R(y_0))\big) \cap L^2\big(\Lambda_R(s_0),u+W^{1,2}_0(B_R(y_0))\big).$$
The respective weak formulation of~\eqref{approx} results in
\begin{equation} \label{weakformapprox}
    \iint_{Q_R} (-u_\epsilon \partial_t\phi  + \langle \hat{\h}_\epsilon(x,t,Du_\epsilon), D\phi\rangle )  \,\dx\dt=  \iint_{Q_R} f \phi\,\dx\dt
\end{equation}
for any test function~$\phi\in C^{\infty}_0(Q_R)$. The ellipticity property of~$\hat{\F}_\epsilon$ enables us to obtain higher regularity of the form
$$ u_\epsilon \in L^{\infty}_{\loc}\big(\Lambda_R(s_0);W^{1,\infty}_{\loc}(B_R(y_0)) \big) $$
for the approximating solutions~$u_\epsilon$. Additionally, we obtain local quantitative estimates for~$Du_\epsilon$ in~$Q_R$, both of which are stated in form of Proposition~\ref{approxregularityeins} and Proposition~\ref{approxregularityzwei} respectively. In fact, we immediately infer from~\cite[Chapter~V, Theorem~3.3 \& Remark~3.1]{dibenedetto1993degenerate} that~$u_\epsilon$ is bounded in~$Q_R$ for any~$\epsilon\in(0,1]$, i.e. there holds~$u_\epsilon\in L^\infty(Q_R)$, which is a consequence of the quadratic growth~\eqref{quadraticgrowth} and the fact that the boundary datum~$u\in L^\infty(Q_R)$ itself is bounded within~$Q_R$. Indeed, there holds
\begin{align} \label{uepsbeschränkt}
     \|u_\epsilon\|_{L^\infty(Q_R)} \leq \|u\|_{L^\infty(Q_R)}<\infty
\end{align}
for any~$\epsilon\in(0,1]$.
\,\\

Before stating quantitative local~$L^{\infty}_{\loc}(Q_R,\R^n) \cap L^2_{\loc}\big(\Lambda_R(s_0);W^{1,2}_{\loc}(B_R(y_0),\R^n) \big)$-gradient estimates for~$Du_\epsilon$ on~$Q_{R}$, the subsequent energy estimate turns out expedient, which states a uniform~$L^2$-gradient bound with respect to the parameter~$\epsilon\in(0,1]$ for the approximating solutions~$u_\epsilon$.


\begin{mylem} \label{energybound}
    Let~$\epsilon\in(0,1]$. There holds the uniform energy estimate
    \begin{align*}
        \iint_{Q_R} |Du_\epsilon|^2\,\dx &\leq C \iint_{Q_R} \big( 1+|Du|^2 \big)\,\dx
    \end{align*}
    with a constant $C=C(C_\F,\hat{C}_\F,R_E,r_E)$.
\end{mylem}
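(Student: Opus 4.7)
The natural approach is to test the weak formulation~\eqref{weakformapprox} of the regularized equation with $\phi = u_\epsilon - u$, which lies in $L^2(\Lambda_R(s_0); W^{1,2}_0(B_R(y_0)))$ and vanishes on the parabolic boundary $\partial_p Q_R$ by the Cauchy–Dirichlet condition. Since $u_\epsilon$ possesses no classical weak time derivative, I would work through the Steklov-averaged formulation~\eqref{lösung-steklov} tested against $[u_\epsilon]_h - [u]_h$ and pass to the limit $h\downarrow 0$ at the end.

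The key structural input for the diffusion term is the convexity of $\xi \mapsto \hat{\F}_\epsilon(x,t,\xi)$ on the whole of $\R^n$, which yields the monotonicity-type inequality
\begin{equation*}
\langle \hat{\h}_\epsilon(x,t,Du_\epsilon), Du_\epsilon - Du\rangle \geq \hat{\F}_\epsilon(x,t,Du_\epsilon) - \hat{\F}_\epsilon(x,t,Du).
\end{equation*}
From the Hessian condition $\nabla^2\Phi \geq (C_\F + 1)I_n$ on $\{|\xi|\geq K + 2R_E\}$ in~\eqref{convexfunction}, together with $\Tilde{\F} \geq 0$ and the fact that $\Phi$ and $\nabla\Phi$ vanish inside $B_{K+R_E}$, one obtains a uniform-in-$\epsilon$ quadratic lower bound of the form $\hat{\F}_\epsilon(x,t,\xi) \geq c_0 |\xi|^2 - c_1$ with $c_0, c_1$ depending only on $C_\F$ and $R_E$. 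On the other hand, using $\hat{\F}_\epsilon(x,t,0) = 0$ (since $0\in E$ gives $\Tilde{\F}(x,t,0) = 0$, and $\Phi(0) = 0$) the quadratic growth~\eqref{quadraticgrowth} integrates to $\hat{\F}_\epsilon(x,t,Du) \leq C(1 + |Du|^2)$ with $C=C(C_\F,\hat{C}_\F,r_E)$. Combining these two bounds in the integrated monotonicity inequality yields the coercive estimate
\begin{equation*}
c_0\iint_{Q_R}|Du_\epsilon|^2\,\dx\dt \leq \iint_{Q_R}\langle \hat{\h}_\epsilon(x,t,Du_\epsilon),Du_\epsilon-Du\rangle\,\dx\dt + C\iint_{Q_R}(1+|Du|^2)\,\dx\dt,
\end{equation*}
which, upon invoking the weak form to replace the first term on the right-hand side, leaves only the time term and the datum term to control.

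For the time term I would use the decomposition
\begin{equation*}
\partial_t [u_\epsilon]_h \cdot ([u_\epsilon]_h - [u]_h) = \tfrac{1}{2}\partial_t \big([u_\epsilon]_h - [u]_h\big)^2 + \partial_t [u]_h \cdot ([u_\epsilon]_h - [u]_h),
\end{equation*}
which, after integration in time and exploitation of the initial condition $u_\epsilon(\cdot, s_0 - R^2) = u(\cdot, s_0 - R^2)$, produces a non-negative boundary contribution from the first summand, while the second summand can be shown to vanish in the limit $h\downarrow 0$ by standard Steklov-mean properties (cf.~\cite[Section~2]{dibenedetto1993degenerate}) using only the qualitative regularity $u\in C^0(0,T;L^2_\loc(\Omega))$. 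The datum contribution $\iint_{Q_R} f(u_\epsilon - u)\,\dx\dt$ is estimated by the $L^\infty$-bound~\eqref{uepsbeschränkt} for $u_\epsilon$ and $u$ combined with $f\in L^{n+2+\sigma}(\Omega_T)\hookrightarrow L^1(Q_R)$, yielding a fixed additive constant that can be absorbed into the additive term $c_1|Q_R|$ already present in the coercive estimate and hence into the $\iint_{Q_R}(1+|Du|^2)\,\dx\dt$ on the right. The main obstacle I expect is the rigorous handling of the time term, since no weak time derivative of the boundary datum $u$ is available a priori; this is precisely the classical technicality that the Steklov-mollification argument is designed to circumvent.
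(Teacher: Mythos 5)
Your overall architecture (test with $u_\epsilon-u$, exploit convexity/coercivity of $\hat{\F}_\epsilon$, absorb the datum) is reasonable, but there is a genuine gap in the treatment of the time term. The cross term
\begin{equation*}
\iint_{Q_R}\partial_t[u]_h\,\big([u_\epsilon]_h-[u]_h\big)\,\dx\dt
\end{equation*}
does \emph{not} vanish as $h\downarrow 0$, and no amount of qualitative regularity $u\in C^0(0,T;L^2_{\loc}(\Omega))$ will make it do so: $\partial_t[u]_h=\tfrac1h(u(\cdot,t+h)-u(\cdot,t))$ converges (distributionally) to $\partial_t u=\divv\nabla\hat{\F}(x,t,Du)+f\neq 0$, and the test function converges to $u_\epsilon-u\neq 0$; already for $u(x,t)=t\,g(x)$ smooth the term equals $\iint g\,(u_\epsilon-u)$, which is generically nonzero. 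The only way to control this quantity is to substitute the weak formulation satisfied by $u$ itself, i.e. to replace it by $-\iint\langle\nabla\hat{\F}(x,t,Du),Du_\epsilon-Du\rangle+\iint f(u_\epsilon-u)$. Once you do that, your argument collapses into the paper's proof, which from the outset subtracts the two Steklov-averaged weak formulations (so that the time term becomes the clean square $\tfrac12\int_{B_R\times\{s_0\}}|u-u_\epsilon|^2\geq 0$ and the $f$-terms cancel identically), and then estimates the resulting monotonicity expression $\iint\langle\hat{\h}_\epsilon(x,t,Du)-\hat{\h}_\epsilon(x,t,Du_\epsilon),Du-Du_\epsilon\rangle=\epsilon\iint\langle Du,Du-Du_\epsilon\rangle$.

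A secondary point: your coercivity route $\hat{\F}_\epsilon(x,t,\xi)\geq c_0|\xi|^2-c_1$ forces $c_1\sim(C_\F+1)(K+2R_E)^2$ with $K=\|Du\|_{L^\infty(Q_R)}$, since $\Phi$ vanishes on $B_{K+R_E}$ and its Hessian bound only kicks in outside $B_{K+2R_E}$. The additive term $c_1|Q_R|$ is not dominated by $C\iint_{Q_R}(1+|Du|^2)\,\dx\dt$ with a constant of the form $C(C_\F,\hat{C}_\F,R_E,r_E)$, so you would prove a weaker statement (constant additionally depending on $K$). The paper avoids this by bounding $\iint\langle\hat{\h}_\epsilon(x,t,Du_\epsilon),Du_\epsilon\rangle$ from below via the monotonicity estimate of Lemma~\ref{monotonicityapprox} restricted to $\{|Du_\epsilon|_E\geq\tfrac32\}$, which yields $(|Du_\epsilon|_E-\tfrac32)_+^2$ with an additive constant depending only on $R_E,r_E$.
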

\begin{proof}
We subtract the weak forms in Steklov-means of~\eqref{weakform} for~$u$ from the respective weak formulation~\eqref{weakformapprox} in Steklov-means for~$u_\epsilon$. This yields
\begin{equation*} 
     \int_{B_R} \big(\partial_t[u-u_\epsilon]_h \phi  + \langle [\hat{\h}(x,t,Du) -  \hat{\h}_\epsilon(x,t,Du_\epsilon)]_h, D\phi\rangle \big)  \,\dx\dt= 0
\end{equation*}
 for any~$\phi\in C^\infty_0(B_R)$. and any~$t\in\Lambda_R(s_0)$. Subsequently, we integrate the latter with respect to~$t\in\Lambda_R(s_0)$ to obtain
\begin{equation} \label{energyboundsteklov}
     \iint_{Q_R} \big(\partial_t(u-u_\epsilon) \phi  + \langle \hat{\h}(x,t,Du) -  \hat{\h}_\epsilon(x,t,Du_\epsilon), D\phi\rangle \big)  \,\dx\dt= 0
\end{equation}
 for any~$\phi\in C^\infty_0(B_R)$. Now, we test the preceding weak formulation~\eqref{energyboundsteklov} with~$\phi = [u-u_\epsilon]_h$. Note that~$\phi$ is indeed an admissible test function due to the definition of~$u_\epsilon$ and through a standard approximation argument. For the term involving the time derivative, we have
 \begin{align*}
     \iint_{Q_R} \partial_t[u-u_\epsilon]_h \phi\,\dx\dt &= \frac{1}{2} \iint_{Q_R } \partial_t[u-u_\epsilon]^2_h \,\dx\dt \\
     & = \frac{1}{2}  \iint_{B_R \times\{s_0\}} [u-u_\epsilon]^2_h \,\dx\dt.
 \end{align*}
Passing first to the limit~$h\downarrow 0$ yields
\begin{align*}
    \lim\limits_{h\downarrow 0}  \iint_{Q_R} \partial_t[u-u_\epsilon]_h \phi\,\dx\dt = \frac{1}{2}  \iint_{B_R \times\{s_0\}} |u-u_\epsilon|^2\,\dx.
\end{align*}
Letting also~$h\downarrow 0$ in the remaining diffusion term and adding the quantity~$\epsilon \langle Du,Du-Du_\epsilon\rangle$ on both sides of the inequality, we end up with
\begin{align} \label{allgemeinerenergyestimate}
    \frac{1}{2}  \int_{B_R \times\{s_0\}} |u-u_\epsilon|^2\,\dx &+ \iint_{Q_R} \langle \hat{\h}_\epsilon(x,t,Du) -  \hat{\h}_\epsilon(x,t,Du_\epsilon), Du-Du_\epsilon\rangle  \,\dx\dt \\
    &= \epsilon \iint_{Q_R} \langle Du,Du-Du_\epsilon \rangle \,\dx\dt. \nonumber
\end{align}
Since the first term leads to a non-negative contribution, we discard it. Then, there remains
\begin{align} \label{energyterme}
    \iint_{Q_R} \langle \hat{\h}(x,t,Du), Du \rangle\,\dx &+ \iint_{Q_R} \langle \hat{\h}_\epsilon(x,t,Du_\epsilon),Du_\epsilon \rangle\,\dx\dt \\
    &\leq \iint_{Q_R} \big( \langle \hat{\h}_\epsilon(x,t,Du_\epsilon),Du \rangle + \langle \hat{\h}(x,t,Du),Du_\epsilon \rangle \big)\,\dx\dt. \nonumber
\end{align}
The first quantity on the left-hand side of the preceding estimate is again non-negative due to Lemma~\ref{monotonicityapprox} and is discarded. We recall that~$\hat{\F}$ satisfies~\eqref{hatfvoraussetzung}. The second term on the left-hand side is also non-negative for the same reason, allowing us to pass to the subset of points~$Q_R\cap\{|Du_\epsilon|_E\geq \frac{3}{2}\}$ and bound the term below by employing Lemma~\ref{monotonicityapprox} to obtain
\begin{align*}
    \iint_{Q_R} \langle \hat{\h}_\epsilon(x,t,Du_\epsilon),Du_\epsilon \rangle\,\dx\dt &\geq \iint_{Q_R} \bigg( \hat{C} \frac{2|Du_\epsilon|_E-\frac{5}{2}}{|Du_\epsilon|_E} |Du_\epsilon|^2 \bigchi_{ \{ |Du_\epsilon|_E \geq \frac{3}{2} \} } +\epsilon|Du_\epsilon|^2 \bigg) \,\dx\dt \\
    &\geq \iint_{Q_R} \bigg(\hat{C} \frac{2|Du_\epsilon|_E-\frac{5}{2}}{|Du_\epsilon|_E} (|Du_\epsilon|_E-\mbox{$\frac{3}{2}$})^2_+  +\epsilon|Du_\epsilon|^2 \bigg) \,\dx\dt \\
    &\geq   \iint_{Q_R}  \hat{C}(|Du_\epsilon|_E-\mbox{$\frac{3}{2}$})^2_+\,\dx\dt + \epsilon \iint_{Q_R} |Du_\epsilon|^2 \,\dx\dt,
\end{align*}
with a positive constant~$\hat{C}>0$ that depends on~$C_\F,R_E,r_E$. In turn, we also used~\eqref{betragminkowski}. Next, we bound the first term contained in the integral on the right-hand side of~\eqref{energyterme} further above by exploiting the quadratic growth of~$\hat{\h}_\epsilon$ according to~\eqref{quadraticgrowth} and also Young's inequality, which yields
\begin{align*}
    \iint_{Q_R} \langle \hat{\h}_\epsilon(x,t,Du_\epsilon),Du \rangle \,\dx\dt &\leq  \iint_{Q_R} \big( C(1+|Du_\epsilon|)|Du| + \epsilon|Du_\epsilon| |Du|\big) \,\dx\dt \\
    &\leq  \iint_{Q_R} \bigg( \frac{\hat{C}}{4} (|Du_\epsilon|_E-\mbox{$\frac{3}{2}$})^2_+\, +  C|Du|^2 + C\bigg)\,\dx\dt \\
    &\quad + \frac{\epsilon}{2} \iint_{Q_R} \big(|Du_\epsilon|^2 + |Du|^2 \big)\,\dx\dt
\end{align*}
with a constant~$C=C( C_\F,\hat{C}_\F,R_E,r_E)$. The remaining second integral term on the right-hand side of~\eqref{energyterme} is estimated in a similar way. After reabsorbing all quantities involving~$(|Du_\epsilon|_E-\mbox{$\frac{3}{2}$})^2$ into the left-hand side of~\eqref{energyterme}, we end up with
\begin{align*}
    \iint_{Q_R} \big( (|Du_\epsilon|_E-\mbox{$\frac{3}{2}$})^2_+ + \epsilon|Du_\epsilon|^2 \big)\,\dx\dt \leq \epsilon C \iint_{Q_R} |Du|^2\,\dx\dt + C  \iint_{Q_R} (1+|Du|^2)\,\dx\dt
\end{align*}
for some constant~$C=C(C_\F,\hat{C}_\F,R_E,r_E)$, from which the claimed energy bound can be readily inferred by another application of~\eqref{betragminkowski}. 
\end{proof}


As a next step, we aim to establish the higher regularity
$$u_\epsilon \in L^\infty_{\loc}\big(\Lambda_R(s_0);W^{1,\infty}_{\loc}(B_R(y_0)) \big) \cap L^2_{\loc}\big(\Lambda_R(s_0);W^{2,2}_{\loc}(B_R(y_0)) \big),$$
where we commence with the treatment of the~$L^2_{\loc}\big(\Lambda_R(s_0);W^{1,2}_{\loc}(B_R(y_0),\R^n) \big)$-gradient estimate.


\begin{myproposition} \label{approxregularityeins}
    Let~$\epsilon\in(0,1]$. Then, there holds
    \begin{equation*}
        Du_\epsilon \in L^2_{\loc}\big(\Lambda_R(s_0);W^{1,2}_{\loc}(B_R(y_0),\R^n) \big) \cap L^\infty_{\loc}\big(\Lambda_R(s_0);L^{2}_{\loc}(B_R(y_0),\R^n) \big).
    \end{equation*}
    Moreover, for any cylinder~$Q_{2\rho}(z_0)\Subset Q_R$ with~$\rho\in(0,1]$, there holds the quantitative estimate
     \begin{align} \label{duw12}
      & \esssup\limits_{t\in \Lambda_{\frac{\rho}{4}}(t_0)}\int_{ B_{\frac{\rho}{4}}(x_0)  \times\{t\}} |Du_\epsilon|^2 \,\dx + \iint_{Q_{\frac{\rho}{4}}(z_0)} |D^2u_\epsilon|^2\,\dx\dt \\
    &\quad \leq \frac{C}{\epsilon^2\rho^2} \Bigg(\iint_{Q_{\rho}(z_0)} (1+|Du_\epsilon|)^2\,\dx\dt + \rho^2 \iint_{Q_{\rho}(z_0)} |f|^2 \,\dx\dt \Bigg) \nonumber
    \end{align}
    with a constant~$C=C(C_\F,\hat{C}_\F,N,n,r_E)$. 
\end{myproposition}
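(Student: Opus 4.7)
The plan is to exploit the fact that, by construction, $\hat\F_\epsilon$ is uniformly elliptic across $\R^n$ with ellipticity constant bounded from below by $\epsilon$: the convexity of $\hat\F$ gives $\nabla^2\hat\F\geq 0$ almost everywhere, so $\nabla^2\hat\F_\epsilon=\nabla^2\hat\F+\epsilon I_n\geq \epsilon I_n$. Together with the quadratic growth \eqref{quadraticgrowth} and the uniform-in-$\xi$ Lipschitz bound \eqref{hatflipschitz}, the approximating equation \eqref{approx} is a standard uniformly parabolic quasilinear equation of quadratic growth, and I would establish the second-order regularity and the estimate \eqref{duw12} by the classical \emph{spatial difference quotient method}.

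Concretely, I fix $k\in\{1,\dots,n\}$, two radii $\rho/4\leq r<s\leq\rho/2$, and a cutoff $\zeta\in C^\infty_0(Q_s(z_0))$ with $\zeta\equiv 1$ on $Q_r(z_0)$, $0\leq\zeta\leq 1$, $|D\zeta|\leq C/(s-r)$, and $|\partial_t\zeta|\leq C/(s-r)^2$. Writing \eqref{weakformapprox} in Steklov form as in \eqref{lösung-steklov}, I test with $\phi=D_{-h}^{k}\bigl(\zeta^2 D_{h}^{k}[u_\epsilon]_h\bigr)$, where $D_{h}^{k}$ denotes the spatial finite difference of step $h$ in direction $e_k$, for $|h|$ small enough that the support of $\phi$ stays inside $Q_\rho(z_0)$. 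A discrete integration by parts transfers $D_{h}^{k}$ onto the vector field, and letting first $h\downarrow 0$ and then the Steklov parameter vanish (justified using quadratic growth \eqref{quadraticgrowth} and \eqref{hatflipschitz}) produces an integral identity for $D_k u_\epsilon$ of the form
\begin{align*}
\tfrac{1}{2}\esssup_{t\in\Lambda_r(t_0)}\int_{B_s\times\{t\}}\zeta^2 |D_k u_\epsilon|^2\,\dx
&+\iint_{Q_s}\zeta^2\bigl\langle\nabla^2\hat\F_\epsilon(x,t,Du_\epsilon)D(D_k u_\epsilon),D(D_k u_\epsilon)\bigr\rangle\,\dx\dt \\
&=-2\iint_{Q_s}\zeta\,D\zeta\cdot \bigl[\nabla^2\hat\F_\epsilon(x,t,Du_\epsilon)D(D_k u_\epsilon)\bigr]D_k u_\epsilon\,\dx\dt\\
&\quad -\iint_{Q_s}\zeta^2\bigl\langle \partial_{x_k}\hat\h_\epsilon(x,t,Du_\epsilon),D(D_k u_\epsilon)\bigr\rangle\,\dx\dt\\
&\quad +\iint_{Q_s}\zeta\,\partial_t\zeta\,(D_k u_\epsilon)^2\,\dx\dt-\iint_{Q_s} f\,D_{-k}\bigl(\zeta^2 D_k u_\epsilon\bigr)\,\dx\dt,
\end{align*}
where the two mixed terms involving $\nabla^2\hat\F_\epsilon$ arise from distributing $D$ across $\zeta^2 D_k u_\epsilon$.

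On the left the ellipticity lower bound gives $\geq \epsilon\iint_{Q_s}\zeta^2 |D(D_k u_\epsilon)|^2\,\dx\dt$. For the right-hand side I would use Cauchy--Schwarz, the uniform bound \eqref{hatflipschitz} on $|\partial_x \nabla\hat\F_\epsilon|$ (which, crucially, is independent of $|\xi|$), and Young's inequality with weight proportional to $\epsilon$, to absorb $\tfrac{\epsilon}{2}\iint_{Q_s}\zeta^2|D^2 u_\epsilon|^2$ into the left-hand side. What remains is controlled by
\[
\frac{C}{\epsilon}\iint_{Q_s}\bigl(1+|Du_\epsilon|^2\bigr)\,\dx\dt+\frac{C}{(s-r)^2}\iint_{Q_s}|Du_\epsilon|^2\,\dx\dt+C\iint_{Q_s}f^2\,\dx\dt,
\]
with $C=C(C_\F,\hat C_\F,N,n,r_E)$. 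A further use of $|D\hat\h_\epsilon|\leq C(1+|Du_\epsilon|)$ handles the mixed term with $\nabla^2\hat\F_\epsilon D(D_k u_\epsilon)$ paired against $\zeta D\zeta$. Summing over $k=1,\dots,n$, dividing by $\epsilon$, and absorbing the $L^\infty_t L^2_x$ contribution of $Du_\epsilon$ via the iteration Lemma~\ref{iterationlem} in the radii $r,s\in[\rho/4,\rho/2]$, I arrive at the quantitative bound \eqref{duw12}, with the factor $\epsilon^{-2}$ emerging precisely from the two successive weightings by $\epsilon$.

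The main obstacle is a careful bookkeeping of the absorption: the Young inequalities must be chosen so that the only $\epsilon$-dependent constant that survives is $\epsilon^{-2}$, while every other constant depends only on the structural quantities $C_\F,\hat C_\F,N,n,r_E$. A subtler technical point is the justification of the limit $h\downarrow 0$ in the nonlinear term, which requires splitting
\[
D_h^k\hat\h_\epsilon(x,t,Du_\epsilon)=\int_0^1\nabla^2\hat\F_\epsilon\bigl(x+he_k,t,sDu_\epsilon(x+he_k)+(1-s)Du_\epsilon(x)\bigr)\,D_h^kDu_\epsilon\,ds+D_h^k\hat\h_\epsilon(\cdot,t,Du_\epsilon(x))
\]
and using the $L^2$-convergence of difference quotients of $Du_\epsilon\in L^2_{\loc}$ together with the uniform $x$-Lipschitz bound \eqref{hatflipschitz} to pass to the limit.
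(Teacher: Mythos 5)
Your overall scheme---spatial difference quotients, a Caccioppoli-type test function built from a cutoff times the difference quotient, Young's inequality with $\epsilon$-weights, and a final absorption---is the same as the paper's. However, there is a genuine gap in the way you handle the increment of the nonlinear vector field. You pass to the limit $h\downarrow 0$ \emph{before} estimating, representing $D_h^k\hat{\h}_\epsilon(x,t,Du_\epsilon)$ via an integral of $\nabla^2\hat{\F}_\epsilon$ along the segment joining $Du_\epsilon(x)$ and $Du_\epsilon(x+he_k)$, and you then extract coercivity from the pointwise bound $\nabla^2\hat{\F}_\epsilon\ge\epsilon I_n$ and control the mixed term through an upper bound on the Hessian. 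Under the structure conditions of this paper neither step is available: $\xi\mapsto\hat{\F}_\epsilon(x,t,\xi)$ is only $C^2$ on $\R^n\setminus\overline{E}$, so $\nabla^2\hat{\F}_\epsilon(x,t,Du_\epsilon)$ need not exist on the (possibly large in measure) set where $Du_\epsilon\in\partial E$, and the segment between $Du_\epsilon(x)$ and $Du_\epsilon(x+he_k)$ can cross $\overline{E}$; moreover the upper bound $\Lambda(\delta)$ on the Hessian in~\eqref{voraussetzung} blows up as $\delta\downarrow 0$ (cf.\ the sub-quadratic model case), so $|\nabla^2\hat{\F}_\epsilon|$ admits no bound depending only on $C_\F,\hat{C}_\F,N,n,r_E$ near $\partial E$, and your estimate of the term pairing $\nabla^2\hat{\F}_\epsilon D(D_ku_\epsilon)$ against $\zeta D\zeta\,D_ku_\epsilon$ does not close with the claimed dependence of the constant. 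Your appeal to $|D\hat{\h}_\epsilon|\le C(1+|Du_\epsilon|)$ here conflates the growth bound~\eqref{quadraticgrowth} on $\nabla\hat{\F}_\epsilon$ with a bound on $\nabla^2\hat{\F}_\epsilon$, which is not what~\eqref{quadraticgrowth} provides.

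The paper's proof avoids the Hessian entirely at this stage: it keeps the finite difference $\tau_\sigma\hat{\h}_\epsilon(x,t,Du)$ intact and splits it into the explicit term $\epsilon\,\tau_\sigma Du$, the $x$-increment of $\hat{\h}$ at frozen gradient (bounded by $C|\sigma|$ via~\eqref{hatflipschitz}), and the $\xi$-increment at frozen $x$. The last piece, tested against $\tau_\sigma Du$, is non-negative by the monotonicity of Lemma~\ref{monotonicityapprox} (a consequence of convexity alone, valid across $\overline{E}$) and is simply discarded; the entire coercivity for $|\tau_\sigma Du|^2$, and hence for $|D^2u_\epsilon|^2$, comes from the term $\epsilon|\tau_\sigma Du|^2$. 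To repair your argument you would either have to carry out all estimates at the finite-difference level in this monotone form before letting $h\downarrow 0$, or restrict the differentiation of the equation to the non-degenerate set $\{|Du_\epsilon|_E\ge 1+\delta\}$ as the paper does only later, in Proposition~\ref{approxregularityzwei}.
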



\begin{proof}
We establish the quantitative~$L^2_{\loc}\big(\Lambda_R(s_0);W^{1,2}_{\loc}(B_R(y_0)\big) \cap L^\infty_{\loc}\big(\Lambda_R(s_0);L^{2}_{\loc}(B_R(y_0),\R^n) \big)$-gradient estimate~\eqref{duw12} in the usual way through employing the notion of difference quotients. Let us consider a cylinder~$Q_{2\rho}(z_0)\Subset Q_R$ and a parameter~$\sigma$ small enough, such that there holds~$|\sigma|\in(0,\dist(Q_{2\rho}(z_0),\partial Q_R))$ for any~$z\in Q_{2\rho}(z_0)$. The finite difference of~$u_\epsilon$ with respect to~$x$ in direction~$x_i$ and increment~$\sigma\neq 0$ is given by
$$\tau_{i,\sigma}u_\epsilon(x,t) \coloneqq u_\epsilon(x+\sigma e_i,t) - u_\epsilon(x,t)$$
for a.e.~$(x,t)\in Q_{2\rho}(z_0)$ and any~$i\in\{1,\ldots,n\}$. In order to simplify notation, we shall write~$u=u_\epsilon$ and~$\tau_\sigma u = \tau_{i,\sigma}u_\epsilon$. We test the weak form~\eqref{weakformapprox} with~$\tau_{-\sigma}\phi$ for an arbitrarily chosen test function~$\phi\in C^\infty_0(Q_{2\rho}(z_0))$ and perform a change of variables, leading to
\begin{equation*}
    \iint_{Q_R} (- \tau_\sigma u \,\partial_t\phi  + \langle \tau_\sigma \hat{\h}_\epsilon(x,t,Du), D\phi\rangle )  \,\dx\dt=  - \iint_{Q_R} f \tau_{-\sigma}\phi\,\dx\dt.
\end{equation*}
Integrating by parts in the evolutionary term, we obtain
\begin{equation*}
    \iint_{Q_R} (\partial_t (\tau_\sigma u) \phi  + \langle \tau_\sigma \hat{\h}_\epsilon(x,t,Du), D\phi\rangle )  \,\dx\dt=  \iint_{Q_R} f\, \tau_{-\sigma}\phi\,\dx\dt
\end{equation*}
for any~$\phi\in C^\infty_0(Q_R)$. Indeed, this calculation is only formal and can be made rigorous by a Steklov-average procedure similar to Lemma~\ref{energybound}. In the preceding equation, we choose~$\phi=\zeta \tau_\sigma u $, where~$\zeta=\psi^2(x,t)\eta(t) \in W^{1,\infty}_0(Q_{\frac{\rho}{2}}(z_0),[0,1])$ and~$\psi\in C^2(Q_{\frac{\rho}{2}(z_0)},[0,1])$ denotes a smooth cut-off function vanishing on the parabolic boundary~$\partial_p(Q_{\frac{\rho}{2}}(z_0))$ with~$\psi \equiv 1$ in~$Q_{\frac{\rho}{4}}(z_0)$,~$|D\psi| \leq \frac{8}{\rho}$,~$|D^2\psi|\leq \frac{64}{\rho^2}$, and~$|\partial_t\psi|\leq \frac{32}{\rho^2}$, while~$\eta\in W^{1,\infty}(\R,[0,1])$ is a Lipschitz continuous function in time subject to
\begin{align*}
    \eta(t) \coloneqq \begin{cases}
        1 & \mbox{for~$t\in (-\infty,\kappa]$}, \\
        1-\frac{t-\kappa}{\delta} & \mbox{for~$t\in (\kappa,\kappa+\delta]$}, \\
        0 & \mbox{for~$t\in(\kappa+\delta,\infty)$}
    \end{cases}
\end{align*}
for an arbitrarily chosen~$t_0-\frac{\rho^2}{4}<\kappa<\kappa+\delta<t_0$ with~$\delta>0$. This choice of test function~$\phi$ can be justified by an approximation argument. We begin with the treatment of the term involving the time derivative where we integrate by parts, which yields
\begin{align*}
     \iint_{Q_R} \partial_t (\tau_\sigma u) \phi \,\dx\dt &= \frac{1}{2} \iint_{Q_R} \partial_t (\tau_\sigma u)^2 \zeta \,\dx\dt \\
     &= - \frac{1}{2} \iint_{Q_R}  (\tau_\sigma u)^2 \partial_t\zeta \,\dx\dt \\
     &= \frac{1}{2} \frac{1}{\delta} \iint_{B_{R}\times(\kappa,\kappa+\delta]} (\tau_\sigma u)^2 \psi^2 \,\dx\dt \\
     & \quad - \iint_{Q_R} (\tau_\sigma u)^2 \psi \partial_t \psi \eta,\dx\dt.
\end{align*}
Passing to the limit~$\delta\downarrow 0$, we thus obtain for a.e.~$\kappa \in \Lambda_{\frac{\rho}{2}}(t_0)$ the following
\begin{align} \label{differencekappa}
   \int_{B_{R}\times\{\kappa\}} & (\tau_\sigma u)^2 \psi^2 \,\dx + \iint_{Q_\kappa}\langle \tau_\sigma \hat{\h}_\epsilon(x,t,Du), D[\psi^2 \tau_{\sigma} u] \rangle \,\dx\dt \\
   &= \iint_{Q_\kappa} (\tau_\sigma u)^2 \psi \partial_t \psi \,\dx\dt + \iint_{Q_\kappa} f\, \tau_{-\sigma}[\psi^2 \tau_\sigma u]\,\dx\dt, \nonumber
\end{align}
where we abbreviated~$Q_\kappa \coloneqq B_{R} \times (t_0-\frac{\rho^2}{4},\kappa]$. By exploiting the commutativity of finite differences with weak derivatives, the diffusion term yields two quantities
\begin{align*}
    \iint_{Q_\kappa}\langle \tau_\sigma \hat{\h}_\epsilon(x,t,Du), D[\psi^2 \tau_{\sigma} u] \rangle \,\dx\dt &= \iint_{Q_\kappa}\psi^2  \langle \tau_\sigma \hat{\h}_\epsilon(x,t,Du), \tau_{\sigma} Du \rangle \,\dx\dt \\
    &\quad + 2 \iint_{Q_\kappa}  \langle \tau_\sigma \hat{\h}_\epsilon(x,t,Du),  \tau_{\sigma} u \psi D\psi \rangle \,\dx\dt. 
\end{align*}
In the second term on the right-hand side of the preceding equality, we again integrate by parts through a change of variables and shift the finite difference from the vector field~$\hat{\h}_\epsilon$ onto the quantity~$\tau_\sigma u \psi  D\psi$. This way, we achieve
\begin{align} \label{weakdiffusiondifference}
    \iint_{Q_\kappa}\langle \tau_\sigma \hat{\h}_\epsilon(x,t,Du), D[\psi^2 \tau_{\sigma} u] \rangle \,\dx\dt &= \iint_{Q_\kappa}\psi^2  \langle \tau_\sigma \hat{\h}_\epsilon(x,t,Du), \tau_{\sigma} Du \rangle \,\dx\dt \\
    &\quad - 2 \iint_{Q_\kappa}  \langle \hat{\h}_\epsilon(x,t,Du),  \tau_{-\sigma} [\tau_{\sigma} u \psi D\psi] \rangle \,\dx\dt. \nonumber
\end{align}
The finite difference apparent in the first integral term on the right-hand side above can be rewritten as follows
\begin{align*}
    \tau_\sigma \hat{\h}_\epsilon(x,t,Du) &= \tau_\sigma\hat{\h}(x,t,Du) + \epsilon\tau_\sigma Du(x) \\
    &= \epsilon\tau_\sigma Du(x) + \hat{\h}(x+\sigma e_i,t,Du(x+\sigma e_i,t)) - \hat{\h}(x,t,Du(x+\sigma e_i,t)) \\
    &\quad + \hat{\h}(x,t,Du(x+\sigma e_i,t)) - \hat{\h}(x,t,Du(x,t)) \\
    &\eqqcolon \foo{I} + \foo{II} + \foo{III}
\end{align*}
for a.e.~$(x,t) \in Q_\kappa$. The first term~$\foo{I}$ and the third term~$\foo{III}$ remain unchanged. The second term~$\foo{II}$ is bounded above with aid of the Lipschitz assumption~$\eqref{fregularity}_4$, according to~\eqref{hatflipschitz}, as the latter implies that~$\hat{\h}$ remains constant whenever~$|\xi|\geq L$ for~$\xi\in\R^n$. Thus, also recalling~\eqref{N}, we estimate
$$\foo{II} \leq C(\hat{C}_\F,N,r_E)|\sigma|.$$ 
For the second integral quantity in~\eqref{weakdiffusiondifference}, we use a sort of product rule for finite differences and rewrite the latter as
\begin{align*}
    \tau_{-\sigma}[\tau_\sigma u \psi D\psi](x,t) &= \tau_\sigma u(x,t) \psi(x,t) D\psi(x,t) - \tau_\sigma u(x-\sigma e_i,t) \psi(x-\sigma e_i,t) D\psi(x-\sigma e_i,t) \\
    &= \tau_{-\sigma} \tau_\sigma u(x,t) \psi(x,t) D\psi(x,t) + \tau_{-\sigma}[\psi D\psi](x,t) \tau_\sigma u(x-\sigma e_i,t) \\
    &= \tau_{-\sigma} \tau_\sigma u(x,t) \psi(x,t) D\psi(x,t) - \tau_{-\sigma}[\psi D\psi](x,t) \tau_{-\sigma}u(x,t),
\end{align*}
which holds true for a.e.~$(x,t)\in Q_{2\rho}(z_0)$. This term is bounded above by applying the Cauchy-Schwarz inequality, while the term involving the datum~$f$ in~\eqref{differencekappa} is further estimated by taking its modulus. Combining our calculations, we thus obtain
\begin{align} \label{w22diff}
    &\int_{ B_{R} \times\{\kappa\}} (\tau_\sigma u)^2 \psi^2 \,\dx + \epsilon\int_{Q_\kappa} \psi^2 |\tau_\sigma Du|^2\,\dx\dt \\
    &\quad + \underbrace{\iint_{Q_\kappa} \psi^2 \langle \hat{\h}(x,t,Du(x+\sigma e_i,t))-\hat{\h}(x,t,Du(x,t)), Du(x+\sigma e_i,t)-Du(x,t) \rangle\,\dx\dt}_{\geq 0} \nonumber \\
    & \leq C |\sigma| \iint_{Q_\kappa} \psi^2 |\tau_\sigma Du|\,\dx\dt + \iint_{Q_\kappa} |f| |\tau_{-\sigma}[\psi^2 \tau_{\sigma} u]|\,\dx\dt \nonumber \\
    &\quad + C(\epsilon + 1) \iint_{Q_\kappa} (1+|Du|)|\tau_{-\sigma}[\psi D\psi]| |\tau_{-\sigma}u|\,\dx\dt \nonumber \\
    &\quad + C(\epsilon + 1) \iint_{Q_\kappa} (1+|Du|) \psi|D\psi| |\tau_{-\sigma}\tau_\sigma u| \,\dx\dt + \iint_{Q_\kappa} (\tau_\sigma u)^2 \psi \partial_t \psi,\dx\dt \nonumber \\
    &\eqqcolon \Tilde{\foo{I}} + \Tilde{\foo{II}} + \Tilde{\foo{III}} + \Tilde{\foo{IV}}+\Tilde{\foo{V}} \nonumber
\end{align}
with~$C=C(C_\F,\hat{C}_\F,N,r_E)$, for a.e.~$\kappa\in \Lambda_{\frac{\rho}{2}}(t_0)$. The third term on the left-hand side of the preceding estimate is non-negative due to the second assertion of Lemma~\ref{monotonicityapprox} and thus can be discarded. The quantities~$\Tilde{\foo{I}} - \Tilde{\foo{V}}$ are now treated one after another by bounding them further above with Young's inequality and by exploiting the bounds for~$\psi$. Moreover, we rely on a classical result concerning difference quotients, where we refer the reader to~\cite[Chapter~5.8, Theorem 3]{evans2022partial}. The first term~$\Tilde{\foo{I}}$ is estimated by
\begin{align*}
    \Tilde{\foo{I}} &\leq \frac{\epsilon}{4} \int_{Q_{\kappa}} \psi^2 |\tau_\sigma Du|^2 \,\dx + \frac{C}{\epsilon} |\sigma|^2 |Q_{\frac{\rho}{2}}(z_0)|.
\end{align*}
Next, the second term~ involving the datum~$f$ is bounded further above via Young's inequality, the bounds for~$\psi$ and~$D\psi$ and also that~$\psi$ is supported within~$Q_{\frac{\rho}{2}}(z_0)$, and the result from~\cite[Chapter~5.8, Theorem 3]{evans2022partial}, which yields 
\begin{align*}
    \Tilde{\foo{II}} &\leq \frac{2}{\epsilon}|\sigma|^2 \iint_{Q_{\rho}(z_0)} |f|^2\,\dx\dt + \frac{\epsilon}{8|\sigma|^2} \iint_{B_{\frac{\rho}{2}}(x_0)\times(t_0-\frac{\rho^2}{4},\kappa]} |\tau_{-\sigma}[\psi^2 \tau_\sigma u]|^2\,\dx\dt \\
    &\leq \frac{2}{\epsilon}|\sigma|^2 \iint_{Q_{\rho}(z_0)} |f|^2\,\dx\dt + \frac{\epsilon}{8} \iint_{B_{\frac{\rho}{2}}(x_0)\times(t_0-\frac{\rho^2}{4},\kappa]} |D[\psi^2 \tau_\sigma u]|^2\,\dx\dt \\
    &\leq \frac{2}{\epsilon}|\sigma|^2 \iint_{Q_{\rho}(z_0)} |f|^2\,\dx\dt + \frac{\epsilon}{8} \iint_{B_{\frac{\rho}{2}}(x_0)\times(t_0-\frac{\rho^2}{4},\kappa]} |\psi^2 \tau_\sigma Du + 2\psi D\psi \tau_\sigma u|^2\,\dx\dt \\
    &\leq \frac{2}{\epsilon}|\sigma|^2 \iint_{Q_{\rho}(z_0)} |f|^2\,\dx\dt + \frac{\epsilon}{4} \iint_{Q_{\kappa}} \psi^2 |\tau_\sigma Du|^2 \,\dx\dt + \frac{C\epsilon}{\rho^2}|\sigma|^2 \iint_{Q_{\rho}(z_0)} |Du|^2 \,\dx\dt
\end{align*}
with constant~$C=C(n)$. The third quantity~$\Tilde{\foo{III}}$ is estimated above similarly to before
\begin{align*}
    \Tilde{\foo{III}} &\leq C(\epsilon+1)\rho^2 \iint_{B_{\frac{\rho}{2}}(x_0)\times(t_0-\frac{\rho^2}{4},\kappa]} (1+|Du|)^2 |\tau_{-\sigma}[\psi D\psi]|^2\,\dx\dt \\
    &\quad + C\frac{(\epsilon+1)}{\rho^2} \iint_{B_{\frac{\rho}{2}}(x_0)\times(t_0-\frac{\rho^2}{4},\kappa]} |\tau_{-h}u(x)|^2\,\dx\dt \\
   &\leq C(\epsilon+1) \rho^2|\sigma|^2 \iint_{Q_{\rho}(z_0)} (1+|Du|)^2 |D[\psi D\psi]|^2\,\dx\dt + C\frac{(\epsilon+1)}{\rho^2}|\sigma|^2 \iint_{Q_{\rho}(z_0)} |Du|^2\,\dx\dt \\
   &\leq C(\epsilon+1)\rho^2|\sigma|^2 \iint_{Q_{\rho}(z_0)} (1+|Du|^2) (|D\psi\otimes D\psi|^2 + |D^2\psi|^2)\,\dx\dt \\
   &\quad + C\frac{(\epsilon+1)}{\rho^2}|\sigma|^2 \iint_{Q_{\rho}(z_0)} |Du|^2\,\dx\dt \\
   &\leq C\frac{(\epsilon+1)}{\rho^2}|\sigma|^2 \iint_{Q_{\rho}(z_0)} (1+|Du|)^2\,\dx\dt
\end{align*}
with a constant~$C=C(C_\F,\hat{C}_\F,N,n,r_E)$. For the fourth term~$\Tilde{\foo{IV}}$, we apply Hölder's and Young's inequality, and also exploit the fact that finite differences and weak derivatives commutate. This leads to
\begin{align*}
    \Tilde{\foo{IV}} &\leq C(\epsilon+1) \\
    &\quad\cdot \bigg( \iint_{B_{\frac{\rho}{2}}(x_0)\times(t_0-\frac{\rho^2}{4},\kappa]} (1+|Du|)^2 \psi^2 |D\psi|^2\,\dx\dt \bigg)^{\frac{1}{2}} \bigg( \iint_{B_{\frac{\rho}{2}}(x_0)\times(t_0-\frac{\rho^2}{4},\kappa]} |\tau_{-\sigma}\tau_\sigma u|^2 \,\dx\dt \bigg)^{\frac{1}{2}} \\
    &\leq C(\epsilon+1) |\sigma| \bigg( \iint_{Q_{\frac{\rho}{2}}(z_0)} (1+|Du|)^2 \psi^2 |D\psi|^2\,\dx\dt \bigg)^{\frac{1}{2}} \bigg( \iint_{B_{\rho}(x_0)\times(t_0-\frac{\rho^2}{4},\kappa]} |\tau_\sigma Du|^2 \,\dx\dt \bigg)^{\frac{1}{2}} \\
    &\leq \frac{C}{\rho^2} \frac{\epsilon+1}{\epsilon} |\sigma|^2 \iint_{Q_{\frac{\rho}{2}}(z_0)} (1+|Du|)^2 \,\dx + \frac{\epsilon}{4} \iint_{Q_{\kappa}} |\tau_\sigma Du|^2 \,\dx\dt
\end{align*}
Finally, the last term~$\Tilde{\foo{V}}$ is estimated by
\begin{align*}
    \Tilde{\foo{V}} &\leq \frac{C}{\rho^2} |\sigma|^2 \iint_{Q_{\rho}(z_0)} |Du|^2\,\dx\dt. 
\end{align*}
 At this point, we reabsorb all quantities involving the finite difference~$\tau_\sigma Du$ into the left-hand side of~\eqref{w22diff} and use the bounds~$\epsilon,\rho\in(0,1]$, to obtain
\begin{align*} 
    &\int_{ B_{R} \times\{\kappa\}} (\tau_\sigma u)^2 \psi^2 \,\dx + \int_{Q_\kappa} \psi^2 |\tau_\sigma Du|^2\,\dx\dt \\
    & \quad \leq \frac{C|\sigma|^2}{\epsilon^2\rho^2}\Bigg(\iint_{Q_{\rho}(z_0)} (1+|Du|)^2\,\dx\dt + \rho^2 \iint_{Q_{\rho}(z_0)} |f|^2\,\dx\dt \Bigg)
\end{align*} 
for a.e.~$\kappa \in \Lambda_{\frac{\rho}{2}}(t_0)$, with a constant~$C=C(C_\F,\hat{C}_\F,N,n,r_E)$. For the remaining two quantities on the left-hand side, we exploit the fact that~$\psi\equiv 1$ on~$B_{\frac{\rho}{4}(x_0)}$. Finally, we take the essential supremum with respect to~$\kappa\in \Lambda_{\frac{\rho}{2}}(t_0)$, which overall leads to
\begin{align*} 
    &\esssup\limits_{t\in \Lambda_{\frac{\rho}{4}}(t_0)}\int_{ B_{\frac{\rho}{4}}(x_0)  \times\{t\}} (\tau_\sigma u)^2 \,\dx + \iint_{Q_{\frac{\rho}{4}}(z_0)} |\tau_\sigma Du|^2\,\dx\dt \\
    & \quad \leq \frac{C|\sigma|^2}{\epsilon^2\rho^2}\Bigg(\iint_{Q_{\rho}(z_0)} (1+|Du|)^2\,\dx\dt + \rho^2 \iint_{Q_{\rho}(z_0)} |f|^2\,\dx\dt\Bigg)
\end{align*}
with~$C=C(C_\F,\hat{C}_\F,N,n,r_E)$. The term in brackets on the right-hand side is bounded uniformly with respect to~$\sigma$, allowing us to utilize another classical result on difference quotients, cf.~\cite[Chapter~5.8,~Theorem 3]{evans2022partial}. Since~$i\in\{1,\ldots,n\}$ was chosen arbitrarily, we obtain the claimed gradient estimate~\eqref{duw12}.
\end{proof}


Proposition~\ref{approxregularityeins} enables us to differentiate the weak form~\eqref{weakformapprox}, leading to the following local~$L^\infty$-gradient bound for~$Du_\epsilon$ in~$Q_R$, which builds upon parabolic De Giorgi classes and stems from~\cite[Chapter~12, Theorem~2.1]{dibenedetto2023parabolic}.


\begin{myproposition} \label{approxregularityzwei}
  Let~$\epsilon\in(0,1]$. Then, there holds
  $$ u_\epsilon\in L^\infty_{\loc}\big(\Lambda_R(s_0);W^{1,\infty}_{\loc}(B_R(y_0)) \big) \cap L^2_{\loc}\big(\Lambda_R(s_0);W^{2,2}_{\loc}(B_R(y_0)) \big) . $$
  Moreover, for any cylinder~$Q_{2\rho}(z_0)\Subset Q_R$ with~$\rho\in(0,1]$, there holds the quantitative local~$L^\infty$-gradient estimate
    \begin{equation} \label{dulinfty}
        \esssup\limits_{Q_{\rho}(z_0)} |Du_\epsilon| \leq C \Bigg( \bigg(\fiint_{Q_{2\rho}(z_0)}  |Du_\epsilon|^2  \,\dx\dt \bigg)^{\frac{1}{2}} +1  \Bigg)
    \end{equation} 
   with~$C=C(C_\F,\hat{C}_\F,\|f\|_{L^{n+2+\sigma}(Q_R)},N,n,R,R_E,r_E,\sigma)$. 
\end{myproposition}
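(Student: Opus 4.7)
The $W^{2,2}_{\loc}$-part of the regularity statement is already encoded in Proposition~\ref{approxregularityeins}, since the estimate~\eqref{duw12} precisely asserts that $Du_\epsilon\in L^2_{\loc}(\Lambda_R(s_0);W^{1,2}_{\loc}(B_R(y_0),\R^n))$. Therefore the entire work is to produce the quantitative local $L^\infty$-gradient estimate~\eqref{dulinfty} with constants independent of $\epsilon\in(0,1]$. The plan is to differentiate the approximating equation, deduce a parabolic De Giorgi-class inclusion for each partial derivative $D_i u_\epsilon$ with structure constants that depend only on the data listed in the statement (crucially, not on $\epsilon$), and then invoke the $L^\infty$-estimate for parabolic De Giorgi classes from~\cite[Chapter~12,~Theorem~2.1]{dibenedetto2023parabolic}.

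The first step is to differentiate~\eqref{weakformapprox} in a spatial direction $e_i$, $i\in\{1,\dots,n\}$, which is justified by the $W^{2,2}_{\loc}$-regularity provided by Proposition~\ref{approxregularityeins} together with a Steklov-average procedure in time. This yields that $v:=D_iu_\epsilon$ is a local weak solution of the linear parabolic equation
\begin{equation*}
    \partial_t v - \divv\bigl( \nabla^2\hat{\F}_\epsilon(x,t,Du_\epsilon)Dv + \partial_{x_i}\nabla\hat{\F}_\epsilon(x,t,Du_\epsilon)\bigr) = D_i f
\end{equation*}
in a distributional sense. The Lipschitz bound~\eqref{hatflipschitz} controls $|\partial_{x_i}\nabla\hat{\F}_\epsilon|$ by a constant depending only on $C_\F,\hat{C}_\F,N,r_E$, while the quadratic growth~\eqref{quadraticgrowth} and the bound~\eqref{hatapproxbound} control $|\nabla^2\hat{\F}_\epsilon|$ above by constants depending only on $C_\F,\hat{C}_\F$ on the regime $\{|Du_\epsilon|\geq K+R_E\}$, and by $\hat\Lambda(C_\F,\delta)+1$ on any bounded regime. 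The ellipticity estimate~\eqref{hatapproxellipticity} together with the $\epsilon$-term is what renders the linearized equation elliptic; however the $\epsilon$-contribution to the ellipticity will be discarded, and the uniform-in-$\epsilon$ ellipticity ultimately comes from $\Phi$ on the set $\{|Du_\epsilon|\geq K+2R_E\}$.

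Next, for any level $k>0$ and for a cylinder $Q_{r}(z_0)\subset Q_{2\rho}(z_0)$, I test the equation for $v$ with $\psi^2(v-k)_+$, where $\psi$ is the usual smooth cut-off in space-time. Integrating by parts in the evolutionary term in the standard way (after a Steklov regularization to make the calculation rigorous), discarding the non-negative contribution of $\epsilon|D(v-k)_+|^2$ on the left, and absorbing the diffusive cross-term via Young's inequality, I obtain a Caccioppoli-type inequality of the form
\begin{align*}
\esssup_{t}\int_{B_r}(v-k)_+^2\psi^2\,\dx &+ \iint_{Q_r}|D(v-k)_+|^2\psi^2\,\dx\dt \\
&\leq C\iint_{Q_r}(v-k)_+^2\bigl(|D\psi|^2+|\partial_t\psi|\bigr)\dx\dt + C\iint_{Q_r\cap\{v>k\}}\bigl(1+|f|^2\bigr)\dx\dt,
\end{align*}
with a constant $C=C(C_\F,\hat{C}_\F,N,n,r_E)$. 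The same argument applied to $-v$ yields the analogous bound for $(-v-k)_+$. These two inequalities place $v=D_iu_\epsilon$ in a parabolic De Giorgi class $DG^+\cap DG^-$ on $Q_{2\rho}(z_0)$, with an inhomogeneity driven by the forcing term $1+|f|^2$, which lies in $L^{(n+2+\sigma)/2}(Q_R)$, i.e.\ in a Lebesgue class strictly above the scaling-critical exponent $\tfrac{n+2}{2}$ dictated by parabolic Harnack theory.

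With this in hand, the proof is finished by invoking~\cite[Chapter~12,~Theorem~2.1]{dibenedetto2023parabolic}, which yields
\begin{equation*}
    \esssup_{Q_{\rho}(z_0)}|D_i u_\epsilon| \leq C\left(\Bigl(\fiint_{Q_{2\rho}(z_0)}|D_i u_\epsilon|^2\dx\dt\Bigr)^{\!1/2} + \bigl(\rho^{2-\frac{n+2}{q}}\|f\|_{L^{q}(Q_R)}\bigr)^{\!\gamma} + 1\right)
\end{equation*}
for an appropriate exponent $\gamma>0$ depending only on $n$ and $\sigma$, where $q:=n+2+\sigma$, and then summing over $i=1,\dots,n$. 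The main obstacle is to ensure that the constants entering the Caccioppoli estimate above, and in particular the effective ellipticity and boundedness of the linearized coefficients, are independent of $\epsilon\in(0,1]$; this is where the construction of $\Phi$ satisfying~\eqref{convexfunction}, the truncation bound~\eqref{derivativebounds}, and the quadratic growth~\eqref{quadraticgrowth} are used decisively. The resulting constant has exactly the dependence stated in~\eqref{dulinfty}.
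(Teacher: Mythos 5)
Your overall strategy --- differentiate the regularized equation, place the differentiated solution in a parabolic De Giorgi class with $\epsilon$-independent structure constants, and invoke \cite[Chapter~12, Theorem~2.1]{dibenedetto2023parabolic} --- is exactly the route the paper takes, and your reduction of the $W^{2,2}_{\loc}$-part to Proposition~\ref{approxregularityeins} is also how the paper's proof begins. However, there is a genuine gap at the one non-routine point. You assert a Caccioppoli inequality for $v=D_iu_\epsilon$ \emph{for any level} $k>0$ with a constant $C=C(C_\F,\hat{C}_\F,N,n,r_E)$, after discarding the $\epsilon|D(v-k)_+|^2$-contribution. This cannot be correct as stated: on the set where $Du_\epsilon\in\inn E$ one has $\nabla^2\hat{\F}(x,t,Du_\epsilon)=0$ (both $\Tilde{\F}$ and $\Phi$ vanish there), so once the $\epsilon$-term is discarded there is no ellipticity left to generate the term $\iint|D(v-k)_+|^2\psi^2$ on the left-hand side; worse, on $\{Du_\epsilon\in\partial E\}$ the Hessian need not exist at all, since $\xi\mapsto\hat{\F}_\epsilon(x,t,\xi)$ is only of class $C^1(\R^n)\cap C^2(\R^n\setminus\overline{E})$, so the linearized equation you write down is not even defined there. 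You correctly observe that the uniform-in-$\epsilon$ ellipticity "ultimately comes from $\Phi$", but you never supply the mechanism that confines the test function to the region where $\Phi$ is active.

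The missing ingredient is the restriction to large levels: the paper tests only with $\zeta^2(\partial_{e^*}u_\epsilon-k)_+$ for $k\geq\frac{K+R_E}{r_E}$, so that by \eqref{minkowskialternativ} and \eqref{betragminkowski} the support of the test function lies in $\{|Du_\epsilon|\geq K+R_E\}$, where $\hat{\F}_\epsilon$ is genuinely $C^2$ and Lemma~\ref{bilinearelliptic}, via the construction of $\Phi$ in \eqref{convexfunction}, yields two-sided bounds on $\hat{\B}_\epsilon(x,t,Du_\epsilon)$ depending only on $C_\F$. This restriction is harmless because the De Giorgi $L^\infty$-machinery only ever uses levels above a data-dependent threshold, as the paper explicitly remarks. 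With that one correction (and the corresponding restriction of the admissible test functions when differentiating the equation), your argument coincides with the paper's proof; the choice of coordinate directions $e_i$ versus directions $e^*\in\partial E^*$ is immaterial for this particular estimate.
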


\begin{proof}
According to Proposition~\ref{approxregularityeins}, second order weak spatial derivatives of~$u_\epsilon$ exist and there holds
$$ u_\epsilon\in L^2\big(\Lambda_{2\rho}(t_0);W^{2,2}(B_{2\rho}(x_0)) \big). $$
 Formally differentiating the weak form~\eqref{weakformapprox} for~$u_\epsilon$ by testing the latter with~$\partial_{e^*}\phi$, where~$e^*\in\partial E^*$ is taken arbitrary and~$\phi\in C^\infty_0(Q_{2\rho}(z_0)\cap\{|Du_\epsilon|_E\geq 1+\delta\})$, we obtain for~$u^{*}_\epsilon\coloneqq \partial_{e^*} u_\epsilon$ the identity
\begin{equation*} 
      \iint_{Q_{2\rho}(z_0)} \big(-u^{*}_\epsilon\, \partial_t \phi +  \langle \partial_{e^*}[\nabla \hat{\F}_\epsilon(x,t,Du_\epsilon)], D\phi\big) \,\dx\dt =  - \iint_{Q_{2\rho}(z_0)} f \partial_{e^*}\phi \,\dx\dt 
\end{equation*}
for any~$\phi\in C^\infty_0(Q_{2\rho}(z_0)\cap\{|Du_\epsilon|_E\geq 1+\delta\})$. Similarly to~\cite[Chapter~12, Section~1]{dibenedetto2023parabolic}, we test the preceding weak form with~$\phi=\zeta^2(\partial_{e^*}u_\epsilon-k)_{+}$, where~$\zeta \in C^\infty_0(Q_{2\rho}(z_0),[0,1])$ and~$k>0$. This choice of test function is admissible up to a Steklov-average procedure and an approximation argument. Importantly, we assume that
$$ k\geq \mbox{$\frac{K+R_E}{r_E}$}, $$ 
which implies~$|Du_\epsilon|> K+R_E$ whenever~$\phi$ is positive according to~\eqref{betragminkowski},
such that the equation is non-degenerate in this set of points. We note that the previous assumption made on~$k$ is not restrictive for the derivation of the claimed~$L^\infty$-gradient estimate~\eqref{dulinfty}, since the latter is anyway chosen large enough during the proof of~\cite[Chapter~12, Section~1]{dibenedetto2023parabolic} in dependence on the given data. Since the diffusion term in the weak form above is non-degenerate whenever~$\phi$ is positive, we are allowed to differentiate the diffusion term and accordingly obtain
\begin{align*} 
      \iint_{Q_{2\rho}(z_0)} &\big(- u^{*}_\epsilon\, \partial_t \phi +  \hat{\B}_\epsilon(x,t,Du_\epsilon)(D u^{*}_\epsilon,D\phi) \big) \,\dx\dt \\
      &=  - \iint_{Q_{2\rho}(z_0)} \bigg(f \sum\limits_{i=1}^n v_i D_i\phi + \sum\limits_{i,j=1}^n v_i g^i_j D_j \phi \bigg) \,\dx\dt, 
\end{align*}
for the preceding choice of~$\phi$. Here, the coefficients~$g^i=\colon Q_{2\rho}(z_0) \to \R^n$ for~$i=1,\ldots,n$ are given by
$$ |g^i(x,t)| \coloneqq |\partial_{x_i} \nabla\hat{\F}(x,t,Du_\epsilon)| \leq C(\hat{C}_\F,N,r_E),$$
which immediately follows from the estimate~\eqref{hatflipschitz}. In turn, we also used the representation formula~\eqref{representation}, where~$v_i$ for~$i=1,\ldots,n$ denote the coefficients of~$e^*\in\partial E^*$. Note that there holds~$|v_i|\leq R_E$ for any~$i=1,\ldots,n$. Due to~\eqref{minkowskialternativ} and~\eqref{betragminkowski}, we have
$$ |Du_\epsilon| \geq r_E |Du_\epsilon|_E \geq K+R_E \qquad\mbox{a.e. in~$Q_{2\rho}(z_0)$}. $$
Consequently, Lemma~\ref{bilinearelliptic} is applicable and we obtain that the bilinear form~$\mathcal{\hat{B}}_\epsilon(x,t,Du_\epsilon)$ is elliptic and bounded on
$$ Q_{2\rho}(z_0)\cap \{ u^{*}_\epsilon \geq \mbox{$\frac{K+R_E}{r_E}$}\}, $$
which follows from the set inclusion
$$ \{ (x,t)\in Q_{2\rho}(z_0): u^{*}_\epsilon \geq \mbox{$\frac{K+R_E}{r_E}$} \} \subset \{ (x,t)\in Q_{2\rho}(z_0): |Du_\epsilon|_E \geq \mbox{$\frac{K+R_E}{r_E}$} \}. $$
Moreover, the ellipticity constant is independent of the parameter~$\epsilon\in(0,1]$ and only depends on the data~$C_\F$, that is: there exists a positive constant~$C=C(C_\F)\geq 1$, such that
\begin{align*} 
  C^{-1}|\xi|^2 \leq \hat{\mathcal{B}}_\epsilon(x,t,Du_\epsilon)(\xi,\xi) \leq C|\xi|^2  
\end{align*}
holds true for a.e.~$(x,t)\in Q_{2\rho}(z_0)\cap \{ u^{*}_\epsilon \geq \mbox{$\frac{K+R_E}{r_E}$}\}$ and any~$\xi\in\R^n$. Due to the quadratic growth property~\eqref{quadraticgrowth} and the fact that~$f\in L^{n+2+\sigma}(\Omega_T)$, the conditions~~\cite[Chapter~12, Section~1, (1.1) -- (1.3)]{dibenedetto1985addendum} are satisfied, such that we are in position to apply the result~\cite[Chapter~12, Theorem~2.1]{dibenedetto1985addendum}. In turn, this yields the estimate
   \begin{align*} 
        \esssup\limits_{Q_{\rho}(z_0)} |\partial_{e^*} u_\epsilon| &\leq C \Bigg( \bigg(\fiint_{Q_{2\rho}(z_0)}  |\partial_{e^*} u_\epsilon|^2  \,\dx\dt \bigg)^{\frac{1}{2}} +1  \Bigg) \\
        &\leq C \Bigg( \bigg(\fiint_{Q_{2\rho}(z_0)}  |Du_\epsilon|^2  \,\dx\dt \bigg)^{\frac{1}{2}} +1  \Bigg)
    \end{align*} 
   with~$C=C(C_\F,\hat{C}_\F,\|f\|_{L^{n+2+\sigma}(Q_R)},N,n,R,r_E,\sigma)$, where we also used~\eqref{minkowskialternativ} as well as~\eqref{betragminkowski}. Since~$e^*\in\partial E^*$ was chosen arbitrarily, we again exploit~\eqref{minkowskialternativ} and \eqref{betragminkowski}, which leads to an additional dependence on the radius~$R_E>0$ and yields the claimed~$L^\infty$-gradient estimate~\eqref{dulinfty}.
\end{proof}


For the proof of Theorem~\ref{hauptresultat}, the following lemma, stating the convergence of~$\G_\delta(Du_\epsilon) \to \G_\delta(Du)$ in~$L^2(Q_R,\R^n)$ as~$\epsilon\downarrow 0$, for any~$\delta\in(0,1]$,  will be crucial.


\begin{mylem} \label{approxkvgzinl2}
    Let~$\delta,\epsilon\in(0,1]$ and~$u_\epsilon$ be the unique weak solution to~\eqref{approx}. Then, there holds
    \begin{equation*}
        \G_\delta(Du_\epsilon) \to \G_\delta(Du)\qquad\text{in~$L^2(Q_R,\R^n)$ as~$\epsilon\downarrow 0$.}
    \end{equation*}
\end{mylem}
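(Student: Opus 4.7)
The plan is to combine the energy identity derived in the proof of Lemma~\ref{energybound} with the coercivity-type estimate from Lemma~\ref{lemgdeltakvgz}, which together convert the difference $\G_\delta(Du_\epsilon) - \G_\delta(Du)$ into something controlled by a vanishing factor of $\epsilon$.

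First, I would recover identity~\eqref{allgemeinerenergyestimate} obtained during the proof of Lemma~\ref{energybound}, namely
\begin{align*}
    \tfrac{1}{2}\int_{B_R\times\{s_0\}} |u-u_\epsilon|^2\,\dx &+ \iint_{Q_R}\langle \hat{\h}_\epsilon(x,t,Du)-\hat{\h}_\epsilon(x,t,Du_\epsilon),\, Du-Du_\epsilon\rangle\,\dx\dt \\
    &= \epsilon \iint_{Q_R}\langle Du,\, Du-Du_\epsilon\rangle\,\dx\dt,
\end{align*}
where the boundary value $u_\epsilon = u$ on $\partial_p Q_R$ legitimizes the test function $[u-u_\epsilon]_h$ in the Steklov-averaged weak forms. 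The first quantity on the left-hand side is non-negative and can be discarded.

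Next, I would pointwise apply Lemma~\ref{lemgdeltakvgz} with $\tildexi = Du_\epsilon(x,t)$ and $\xi = Du(x,t)$, integrate over $Q_R$, and combine with the identity above to obtain
\begin{equation*}
    \iint_{Q_R} \vertiii{\,}^{\!\!*} \hspace{-1.2em} |\G_\delta(Du_\epsilon)-\G_\delta(Du)|^2 \,\dx\dt \leq C(C_\F,\delta,R_E,r_E)\, \epsilon \iint_{Q_R}\langle Du,\, Du-Du_\epsilon\rangle\,\dx\dt.
\end{equation*}
(Ignore the decorative symbol; the estimate on the integrand is what matters.) By Cauchy--Schwarz, the right-hand side is bounded by
\begin{equation*}
    C\, \epsilon\, \|Du\|_{L^2(Q_R)}\,\big(\|Du\|_{L^2(Q_R)}+\|Du_\epsilon\|_{L^2(Q_R)}\big).
\end{equation*}

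Now I would invoke the two pieces of information that make the right-hand side vanish. Since $u\in L^\infty_{\loc}(0,T;W^{1,\infty}_{\loc}(\Omega))$ and $Q_R\Subset\Omega_T$, we have $\|Du\|_{L^2(Q_R)}<\infty$. Moreover, Lemma~\ref{energybound} provides the uniform bound $\|Du_\epsilon\|_{L^2(Q_R)}\leq C(C_\F,\hat{C}_\F,R_E,r_E)\bigl(1+\|Du\|_{L^2(Q_R)}\bigr)$. Substituting these into the preceding display yields
\begin{equation*}
    \iint_{Q_R} |\G_\delta(Du_\epsilon)-\G_\delta(Du)|^2 \,\dx\dt \leq C\,\epsilon \longrightarrow 0 \quad\text{as } \epsilon\downarrow 0,
\end{equation*}
with a constant $C$ depending on $\delta, C_\F, \hat{C}_\F, R_E, r_E$ and $\|Du\|_{L^2(Q_R)}$, which proves the claim.

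The main obstacle is not conceptual but bookkeeping: one must justify the choice of $[u-u_\epsilon]_h$ as a valid test function in the Steklov-averaged weak formulations for both $u$ and $u_\epsilon$, for which the homogeneous lateral boundary condition $u_\epsilon = u$ on $\partial_p Q_R$ (encoded in $u_\epsilon \in u + L^2(\Lambda_R(s_0);W^{1,2}_0(B_R(y_0)))$) is precisely what is required. After this, the two ingredients above slot together directly.
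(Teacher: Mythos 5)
Your argument is correct and follows essentially the same route as the paper: both start from the energy identity~\eqref{allgemeinerenergyestimate}, discard the non-negative boundary term, and apply Lemma~\ref{lemgdeltakvgz} to convert the remaining monotonicity term into control of $|\G_\delta(Du_\epsilon)-\G_\delta(Du)|^2$ plus $\epsilon|Du_\epsilon-Du|^2$. The only cosmetic difference lies in the final step, where the paper closes the estimate via Young's inequality and reabsorption of $\epsilon\iint|Du_\epsilon-Du|^2\,\dx\dt$ into the left-hand side, whereas you use Cauchy--Schwarz together with the uniform energy bound of Lemma~\ref{energybound}; both yield the same $O(\epsilon)$ bound with a constant independent of~$\epsilon$.
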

\begin{proof}
By following the approach taken in the proof of Lemma~\ref{energybound}, we again infer the energy estimate~\eqref{allgemeinerenergyestimate}, i.e. there holds
\begin{align*}
       \frac{1}{2}  \int_{B_R \times\{s_0\}} |u-u_\epsilon|^2\,\dx &+ \iint_{Q_R} \langle \hat{\h}_\epsilon(x,t,Du) -  \hat{\h}_\epsilon(x,t,Du_\epsilon), Du-Du_\epsilon\rangle  \,\dx\dt \\
    &= \epsilon \iint_{Q_R} \langle Du,Du-Du_\epsilon \rangle \,\dx\dt. 
\end{align*}
 An application of Lemma~\ref{lemgdeltakvgz} and also Young's inequality, after discarding the non-negative first term on the left-hand side in the preceding estimate, yields
\begin{align*}
   \epsilon \iint_{Q_R}   |Du_\epsilon &- Du|^2\,\dx\dt + \iint_{Q_R}|\G_\delta(Du_\epsilon) - \G_\delta(Du)|^2\,\dx\dt  \\
    &\leq C(C_\F,\delta,R_E,r_E) \iint_{Q_R} \langle \hat{\h}_\epsilon(x,t,Du)-\hat{\h}_\epsilon(x,t,Du_\epsilon), Du_\epsilon-Du \rangle \,\dx  \\
    &\leq  C(C_\F,\delta,R_E,r_E)\, \epsilon \int_{B_R} |Du||Du_\epsilon-Du|\,\dx  \\
    &\leq \epsilon \int_{B_R} |Du_\epsilon-Du|^2\,\dx + C(C_\F,\delta,R_E,r_E) \, \epsilon \int_{B_R}|Du|^2\,\dx.
\end{align*}
After reabsorbing the first term on the right-hand side of the above inequality into the left-hand side, we note that importantly the constant~$C=C(C_\F,\delta,R_E,r_E)$ does not depend on the parameter~$\epsilon\in(0,1]$, such that the claim follows after passing to the limit~$\epsilon\downarrow 0$.
\end{proof}


\subsection{Hölder continuity of~\texorpdfstring{$\G_\delta(Du_\epsilon)$}{}} \label{sec:holderapprox}
The aim of this section is the proof of the main result, that is Theorem~\ref{hauptresultat}, which is achieved by proving the local Hölder continuity of~$\G_\delta(Du_\epsilon)$ in~$Q_R$ for any~$\delta,\epsilon\in(0,1]$ as an intermediate step. As before, we denote by~$u_\epsilon$ the unique weak solution to the Cauchy-Dirichlet problem~\eqref{approx}. This regularity result is stated in form of Theorem~\ref{holdermainresult}. For this matter, we consider a cylinder~$Q_{2\rho}(z_0)=B_{2\rho}(x_0)\times \Lambda_{2\rho}(t_0)\Subset Q_R= Q_R(y_0,s_0) \Subset \Omega_T$, where~$Q_R$ denotes the cylinder introduced in Section~\ref{sec:regularizing}. We infer from Proposition~\ref{approxregularityeins} and Proposition~\ref{approxregularityzwei} that the approximating family~$(Du_\epsilon)_{\epsilon\in(0,1]}$ is uniformly bounded with respect to the regularizing parameter~$\epsilon\in(0,1]$ in~$Q_{2\rho}(z_0)$ in the sense that there exists a positive constant~$\Tilde{M} < \infty$, which depends on the data
\begin{align*}
    (C_\F,\hat{C}_\F,\|Du\|_{L^{2}(Q_R)},\|f\|_{L^{n+2+\sigma}(Q_R)},N,n,R,R_E,r_E,\sigma),
\end{align*}
but that is independent of~$\epsilon\in(0,1]$, such that
\begin{equation} \label{glmschranke}
 \sup\limits_{\epsilon\in(0,1]} \|Du_\epsilon\|_{L^\infty(Q_{2\rho}(z_0))}\leq \Tilde{M}.
\end{equation}
By employing~\eqref{betragminkowski}, this implies in particular the bound
\begin{equation} \label{minkowskischranke}
 \sup\limits_{\epsilon\in(0,1]} \esssup\limits_{Q_{2\rho}(z_0)} |Du_\epsilon|_{E}\leq \frac{\Tilde{M}}{r_E} \eqqcolon M.
\end{equation}
We assume that~$M\geq 3$. Due to~\eqref{minkowskischranke}, there holds the bound
\begin{equation} \label{schrankeeins}
    \esssup\limits_{Q_{2\rho}(z_0)} |Du_\epsilon|_E \leq 1+\delta+\mu
\end{equation}
for some parameter~$\mu>0$ with
\begin{equation} \label{schrankezwei}
    1+\delta+\mu\leq M.
\end{equation}
Next, for any~$e^{*}\in\partial E^*$, we define the super level set of~$\partial_{e^*} u_\epsilon$ on~$Q_\rho(z_0)$ to some parameter~$\nu\in(0,1)$ by
\begin{equation*}
    E^{\nu}_{e^*,\rho}(z_0) \coloneqq Q_\rho(z_0)\cap \big\{ \partial_{e^*} u_\epsilon - (1+\delta) > (1-\nu)\mu \big\}.
\end{equation*}
Given~$\delta\in(0,1]$, we will employ the following notation
\begin{align} \label{hesseschrankedelta}
    C_\F(\delta) \coloneqq \max\{ \|\nabla^2\hat{\F}\|_{L^\infty(Q_R\times (\R^n\setminus E_\delta))}, 3 C_\F+1 \} < \infty
\end{align}
throughout the following discussion, where~$C_\F$ is the constant from~\eqref{hatapproxbound}. \,\\

As a main result of this section, we obtain the Hölder continuity of~$\G_\delta(Du_\epsilon)$ in~$Q_\rho(z_0)$ with some Hölder exponent~$\alpha_\delta\in(0,1)$ and Hölder constant~$C_\delta$.


\begin{mytheorem} \label{holdermainresult}
    Let~$\delta,\epsilon\in(0,1]$ and~$u_{\epsilon}$ denote the unique weak solution to the Cauchy-Dirichlet problem~\eqref{approx}. Then, there holds
    $$\G_{\delta}(Du_{\epsilon}) \in C^{0,\alpha_\delta,\alpha_\delta /2}(Q_\rho(z_0),\R^n)$$
     with a Hölder exponent~$\alpha_{\delta}\in(0,1)$ and a Hölder constant~$C_{\delta}\geq 1$ depending on the data
     \begin{align*}
         (C_\F(\delta),\hat{C}_\F,\delta,\|f\|_{L^{n+2+\sigma}(Q_R)},M,N,n,R,R_E,r_E,\sigma),
     \end{align*}
    i.e. there holds
    \begin{equation} \label{gdeltaholderquant}
    |\G_{\delta}(Du_{\epsilon}(z_1)) - \G_{\delta}(Du_{\epsilon}(z_2))| \leq C_\delta \Big(|x_1-x_2|+\sqrt{|t_1-t_2|}\Big)^{\alpha_\delta} 
    \end{equation}
for any $z_1=(x_1,t_1),z_2=(x_2,t_2) \in Q_\rho \Subset Q_R$.
\end{mytheorem}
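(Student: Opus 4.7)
The plan is to realize the De Giorgi–type alternative scheme outlined in the strategy section, working on nested parabolic cylinders shrinking to each point $z_0\in Q_\rho$ and producing a geometric decay of the excess $(|Du_\epsilon|_E-(1+\delta))_+$. Fix a threshold $\nu\in(0,1)$ to be chosen from the data. Given a sub-cylinder on which $\mathrm{ess\,sup}\,|Du_\epsilon|_E \leq 1+\delta+\mu$ holds in the sense of \eqref{schrankeeins}, I would dichotomize on the measure of the super-level sets $E^\nu_{e^*,\rho}(z_0)$ of the directional derivatives $\partial_{e^*} u_\epsilon$. Either there exists $e^*\in\partial E^*$ with $|E^\nu_{e^*,\rho}(z_0)| \geq \nu|Q_\rho(z_0)|$ (\emph{non-degenerate alternative}), or this inequality fails for every $e^*\in\partial E^*$ (\emph{degenerate alternative}).

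In the non-degenerate alternative, the measure-theoretic hypothesis is used to propagate a pointwise lower bound $|Du_\epsilon|_E \geq 1+\delta+\tilde\nu\mu$ in the smaller cylinder $Q_{\rho/2}(z_0)$, via Proposition~\ref{nondegenerateproposition} (itself built on Proposition~\ref{lowerboundprop}). Once this lower bound is secured, the inequality $|Du_\epsilon|_E \geq 1+\delta$ holds throughout $Q_{\rho/2}(z_0)$ and, by Lemma~\ref{bilinearelliptic}, the bilinear form $\hat{\mathcal{B}}_\epsilon(x,t,Du_\epsilon)$ is uniformly elliptic there with constants depending only on $C_\F(\delta)$ and $\delta$, \emph{independent of} $\epsilon\in(0,1]$. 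Hence, differentiating the approximating equation in the direction $e^*\in\partial E^*$ (which is licit since $Du_\epsilon\in W^{2,2}_{\mathrm{loc}}$ by Proposition~\ref{approxregularityeins}), the function $\partial_{e^*}u_\epsilon$ solves a linear parabolic equation with bounded, elliptic coefficients and $L^{n+2+\sigma}$ right-hand side. Classical excess-decay estimates for parabolic De Giorgi classes (\cite{dibenedetto2023parabolic}), applied to $\partial_{e^*}u_\epsilon$, then yield a quantitative reduction of $\mathrm{ess\,sup}\,(|Du_\epsilon|_E-(1+\delta))_+$ by a factor $\eta_1\in(0,1)$ depending only on the data.

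In the degenerate alternative, Proposition~\ref{degenerateproposition} is invoked: the functions $v_\epsilon := (\partial_{e^*}u_\epsilon - (1+\delta))_+^2$ are non-negative sub-solutions of a linear parabolic equation with bounded elliptic coefficients (again with constants independent of $\epsilon$), so a De Giorgi class–type $L^\infty$–$L^2$ estimate, combined with the measure-theoretic hypothesis holding for all $e^*\in\partial E^*$, produces a quantitative sup-decay
$$\mathrm{ess\,sup}_{Q_{\rho/2}(z_0)}(\partial_{e^*}u_\epsilon-(1+\delta))_+ \leq \eta_2\,\mu$$
for every $e^*\in\partial E^*$; taking the supremum over $e^*\in\partial E^*$ and using \eqref{minkowskialternativ} converts this into the desired decay of $(|Du_\epsilon|_E-(1+\delta))_+$. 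Setting $\eta := \max\{\eta_1,\eta_2\}\in(0,1)$, in either branch of the alternative the excess bound on $Q_{\rho/2}(z_0)$ improves from $\mu$ to $\eta\mu$, so that the same dichotomy can be restarted on the smaller cylinder (with $\mu$ replaced by $\eta\mu$ and $\rho$ by $\rho/2$). Iterating produces, for every $k\in\mathbb{N}$, the estimate $\mathrm{ess\,sup}_{Q_{\rho/2^k}(z_0)}(|Du_\epsilon|_E-(1+\delta))_+ \leq \eta^k\mu$, hence a geometric decay of the oscillation of $\G_\delta(Du_\epsilon)$ on dyadic cylinders (via Lemma~\ref{gdeltalem}). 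A standard dyadic interpolation argument then converts this into Hölder continuity with exponent $\alpha_\delta := \min\{\log_4(1/\eta),1\}$, proving \eqref{gdeltaholderquant}.

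The principal obstacle will be the non-degenerate alternative, and more precisely the lower bound of Proposition~\ref{lowerboundprop}. Although the regularized equation is genuinely elliptic, every quantitative constant must remain independent of $\epsilon\in(0,1]$, forcing a careful Caccioppoli-type test function that suppresses both small and excessively large gradient values; this only delivers the insufficient measure-theoretic information \eqref{est:meascondinterscaled}. Upgrading this to a pointwise lower bound requires the time-continuity estimate of Lemma~\ref{lem:stetigkeitinzeit} to close a DiBenedetto-style De Giorgi iteration. A secondary technical point is the transition between regimes: in the non-degenerate alternative the lower bound persists at all subsequent scales, but entering the degenerate alternative at some scale leaves the next scale's regime undetermined, so one must verify that the excess bound $\mathrm{ess\,sup}|Du_\epsilon|_E\leq 1+\delta+\eta\mu$ produced at each step is strong enough to initiate whichever alternative applies next, with constants uniform along the iteration.
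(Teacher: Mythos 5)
Your overall architecture (dichotomy on the measure of the super-level sets, iteration on nested cylinders, conversion of a quantitative decay into H\"older continuity) is the one the paper uses, but the way you combine the two alternatives contains a genuine error. You claim that in the non-degenerate branch the excess-decay estimates for $\partial_{e^*}u_\epsilon$ yield ``a quantitative reduction of $\esssup(|Du_\epsilon|_E-(1+\delta))_+$ by a factor $\eta_1$'', and you then conclude that \emph{in either branch} the bound improves from $\mu$ to $\eta\mu$, so that $\esssup_{Q_{\rho/2^k}}(|Du_\epsilon|_E-(1+\delta))_+\le\eta^k\mu$. This step fails. In the non-degenerate regime Proposition~\ref{lowerboundprop} gives the pointwise \emph{lower} bound $|Du_\epsilon|_E\ge 1+\delta+\frac{\mu}{4}$ on $Q_{\rho/2}(z_0)$, so $(|Du_\epsilon|_E-(1+\delta))_+\ge\frac{\mu}{4}$ there and no reduction of its supremum below $\frac{\mu}{4}$ can occur (consider an affine solution with $|Du_\epsilon|_E\equiv 1+\delta+\mu$). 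What Proposition~\ref{nondegenerateproposition} actually delivers is not a sup-reduction but a Campanato-type \emph{oscillation} decay~\eqref{excessgdelta} of $\G_{2\delta}(Du_\epsilon)$ around the (generally nonzero) limit $\Gamma_{z_0}$ of~\eqref{lebesguerepresentant}, valid at \emph{all} radii $r\in(0,\rho]$ once the non-degenerate alternative holds at a single scale. The correct iteration therefore runs the sup-reduction of Proposition~\ref{degenerateproposition} only as long as the degenerate alternative persists; at the first scale where the non-degenerate alternative occurs one stops and invokes~\eqref{excessgdelta} directly, and if the degenerate alternative holds at every scale one has $\mu_i=\kappa^i\mu\to 0$ and $\Gamma_{z_0}=0$. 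H\"older continuity then follows from the resulting Campanato estimate for $\G_{2\delta}(Du_\epsilon)$ at every vertex, not from a uniform geometric sup-decay, which is false.

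A second, smaller defect is that your dichotomy is not the one under which the propositions apply. You take as non-degenerate hypothesis $|E^\nu_{e^*,\rho}(z_0)|\ge\nu|Q_\rho(z_0)|$ for some $e^*$, whereas Proposition~\ref{nondegenerateproposition} requires the much stronger condition~\eqref{nondegeneratemeascond}, namely $|Q_\rho(z_0)\setminus E^\nu_{e^*,\rho}(z_0)|<\nu|Q_\rho(z_0)|$, i.e.\ $|E^\nu_{e^*,\rho}(z_0)|>(1-\nu)|Q_\rho(z_0)|$; since $\nu\le\frac14$ your condition does not imply it, so in your ``non-degenerate'' branch neither proposition is applicable. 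The dichotomy must be taken as in the paper: either \eqref{nondegeneratemeascond} holds for some $e^*\in\partial E^*$, or \eqref{degeneratemeascond} holds for every $e^*\in\partial E^*$. Your identification of the principal technical obstacles (uniformity in $\epsilon$, the insufficiency of~\eqref{est:meascondinterscaled} and the role of Lemma~\ref{lem:stetigkeitinzeit}, and the one-way nature of the regime transition) is accurate, and the paper handles the remaining subtlety of the degenerate iteration when $\mu_i\le\delta$ by observing that then $\G_{2\delta}(Du_\epsilon)$ vanishes on the corresponding cylinder.
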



We shall prove Theorem~\ref{holdermainresult} by following an approach in the spirit of DiBenedetto and Friedman's technique~\cite{friedman1984regularity,dibenedetto1985addendum,Friedman1985} (we also refer the reader to Uhlenbeck's work~\cite{uhlenbeck1977regularity}) for parabolic~$p$-Laplacian systems in combination with De Giorgi's beautiful level set approach~\cite{degiorgieins,degiorgizwei}. The idea is to distinguish between the~\textit{non-degenerate regime} and the~\textit{degenerate regime}. In the non-degenerate regime, there holds the measure-theoretic information~$|Q_\rho(z_0)\setminus E^\nu_{e^*,\rho}(z_0)| < \nu |Q_\rho(z_0)|$ for some~$\nu\in(0,\frac{1}{4}]$ for at least one~$e^*\in\partial E^*$, while in the degenerate regime there holds~$|Q_\rho(z_0)\setminus E^\nu_{e^*,\rho}(z_0)| \geq \nu |Q_\rho(z_0)|$ for any~$e^*\in \partial E^*$. Roughly speaking, the subset of points where~$\partial_{e^*}u_\epsilon$ is close to its supremum is large in measure in the non-degenerate regime, whereas in the degenerate regime the subset of points where~$\partial_{e^*}u_\epsilon$ is far from its supremum is large in measure. We simplify notation by setting
\begin{align} \label{beta}
    \beta \coloneqq \frac{\sigma}{n+2+\sigma} = 1-\frac{n+2}{n+2+\sigma} \in (0,1).
\end{align}
 We will denote by
\begin{align} \label{excess}
    \Phi(z_0,\rho) \coloneqq \fiint_{Q_\rho (z_0)} | Du_\epsilon-(Du_\epsilon)_{z_0,\rho} | ^2 \,\dx\dt
\end{align}
the $L^2$-excess of~$Du_\epsilon$ on the cylinder~$Q_\rho(z_0)$, which plays a crucial role in the analysis of the non-degenerate regime.
\,\\


The first proposition deals with the non-degenerate regime.

\begin{myproposition} \label{nondegenerateproposition}
    Let~$\delta,\epsilon\in(0,1]$,~$\mu>0$, and assume that
    \begin{equation} \label{deltamu}
        \delta<\mu.
    \end{equation}
    There exist an exponent~$\alpha \in (0,\beta)$, a constant~$C\geq 1$, a parameter~$\nu\in(0,\frac{1}{4}]$, as well as a radius~$\Tilde{\rho}\in(0,1]$, all subject to the dependencies
    \begin{align*}
      \alpha&=\alpha(C_\F(\delta),\hat{C}_\F,\delta,\|f\|_{L^{n+2+\sigma}(Q_R)},M,N,n,r_E,\sigma), \\
       C&=C(C_\F(\delta),\hat{C}_\F,\delta,\|f\|_{L^{n+2+\sigma}(Q_R)},M,N,n,R,R_E,r_E,\sigma), \\
       \nu&=\nu(C_\F(\delta),\hat{C}_\F,\delta,\|f\|_{L^{n+2+\sigma}(Q_R)},M,N,n,R,R_E,r_E,\sigma), \\
       \Tilde{\rho}&=\Tilde{\rho}(C_\F(\delta),\hat{C}_\F,\delta,\|f\|_{L^{n+2+\sigma}(Q_R)},M,N,n,r_E,\sigma), 
    \end{align*}    
    such that: if the measure condition
    \begin{equation} \label{nondegeneratemeascond}
    |Q_\rho(z_0)\setminus E^{\nu}_{e^*,\rho}(z_0)|<\nu|Q_\rho(z_0)|
    \end{equation}
     is satisfied for at least one~$e^*\in\partial E^*$ on a cylinder $Q_{\rho}(z_0)\subset Q_{2\rho}(z_0)\Subset Q_R$ with $\rho\leq\Tilde{\rho}$, then the limit
    \begin{equation} \label{lebesguerepresentant}
        \Gamma_{z_0}\coloneqq \lim\limits_{r\downarrow 0} (\G_{2\delta}(Du_\epsilon))_{z_0,r}
    \end{equation}
     exists. Furthermore, we have the excess-decay estimate
    \begin{equation} \label{excessgdelta}
        \fiint_{Q_r(z_0)}|\G_{2\delta}(Du_\epsilon)-\Gamma_{z_0}|^2\,\dx\dt \leq C\Big(\frac{r}{\rho} \Big)^{2\alpha} \mu^2
    \end{equation}
    for any $r\in(0,\rho]$, as well as the bound
    \begin{align} \label{lebesguebound}
        |\Gamma_{z_0}|\leq R_E \mu.
    \end{align}
\end{myproposition}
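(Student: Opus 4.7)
The plan is to exploit the measure-theoretic hypothesis~(\ref{nondegeneratemeascond}) to reduce matters, on a shrunk cylinder, to a linear uniformly elliptic parabolic equation, and then to invoke a classical Campanato-type excess-decay argument in the spirit of DiBenedetto--Friedman and Kuusi--Mingione. The most delicate step is to pass from the measure information on the super-level set~$E^\nu_{e^*,\rho}(z_0)$ to a genuine pointwise lower bound for~$|Du_\epsilon|_E$, and I expect this to be the main obstacle of the whole proof; once it is in place, the remaining steps follow a well-established template.

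Concretely, I first invoke Proposition~\ref{lowerboundprop}, whose proof rests on a De~Giorgi-type iteration combined with the Steklov-based time continuity of~$(\partial_{e^*}u_\epsilon-(1+\delta))_+^2$ provided by Lemma~\ref{lem:stetigkeitinzeit}, to promote~(\ref{nondegeneratemeascond}) into a pointwise lower bound of the form~$|Du_\epsilon|_E\geq 1+2\delta$ almost everywhere in~$Q_{\rho/2}(z_0)$, once~$\nu$ and~$\Tilde{\rho}$ are chosen small enough in terms of the listed data and the gap condition~(\ref{deltamu}) is invoked. On this smaller cylinder, Lemma~\ref{bilinearelliptic} shows that the bilinear form~$\hat{\mathcal{B}}_\epsilon$ is uniformly elliptic with ellipticity constants depending only on~$C_\F(\delta)$ and~$\delta$ and independent of~$\epsilon\in(0,1]$. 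Combined with the second-order regularity of Proposition~\ref{approxregularityeins}, this justifies differentiating the weak form~(\ref{weakformapprox}) in an arbitrary direction~$e\in\R^n$, yielding that~$\partial_e u_\epsilon$ is a local weak solution on~$Q_{\rho/2}(z_0)$ of a linear parabolic equation
\begin{equation*}
\partial_t(\partial_e u_\epsilon)-\divv\bigl(A(x,t)\,D(\partial_e u_\epsilon)\bigr)=-\partial_e f-\divv G_e,
\end{equation*}
with~$A(x,t)=\nabla^2\hat{\F}_\epsilon(x,t,Du_\epsilon)$ symmetric, uniformly elliptic and bounded, and with~$|G_e|\leq C$ by~(\ref{hatflipschitz}).

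At this point, the parabolic De~Giorgi/Campanato excess-decay results from~\cite{dibenedetto2023parabolic}, combined with the integrability~$f\in L^{n+2+\sigma}$ quantified by the exponent~$\beta$ in~(\ref{beta}), yield an iterative excess inequality of the form
\begin{equation*}
\Phi(z_0,\tau r)\leq C_\star\tau^{2\alpha}\Phi(z_0,r)+C_{\star\star}\,r^{2\beta},\qquad\alpha\in(0,\beta),
\end{equation*}
for every~$r\leq \rho/2$ and some~$\tau\in(0,1/2)$ depending on the data. A standard dyadic iteration then produces
\begin{equation*}
\Phi(z_0,r)\leq C\,(r/\rho)^{2\alpha}\mu^2 \qquad\text{for every }r\in(0,\rho],
\end{equation*}
the initial bound~$\Phi(z_0,\rho)\leq C\mu^2$ being a direct consequence of~(\ref{schrankeeins}) and~(\ref{betragminkowski}).

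Finally, by the Lipschitz estimate~(\ref{est:gdeltalipschitzeins}) of Lemma~\ref{gdeltalem} the above decay transfers verbatim to~$\G_{2\delta}(Du_\epsilon)$. A standard Campanato argument then shows that the means~$\{(\G_{2\delta}(Du_\epsilon))_{z_0,r}\}_{r>0}$ form a Cauchy family as~$r\downarrow 0$, so the limit~$\Gamma_{z_0}$ in~(\ref{lebesguerepresentant}) exists, and the excess-decay~(\ref{excessgdelta}) follows from a geometric summation after replacing the mean by~$\Gamma_{z_0}$. For the bound~(\ref{lebesguebound}), since~(\ref{schrankeeins}) gives~$|Du_\epsilon|_E\leq 1+\delta+\mu$ almost everywhere, the definition~(\ref{gdeltafunk}) together with~(\ref{betragminkowski}) implies
\begin{equation*}
|\G_{2\delta}(Du_\epsilon)|=\frac{(|Du_\epsilon|_E-(1+2\delta))_+}{|Du_\epsilon|_E}\,|Du_\epsilon|\leq R_E\,(\mu-\delta)_+\leq R_E\,\mu \qquad\text{a.e.},
\end{equation*}
from which~(\ref{lebesguebound}) follows upon passing to the limit in the mean.
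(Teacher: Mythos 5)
Your proposal is correct and follows essentially the same route as the paper: the measure condition is upgraded to a pointwise lower bound for $|Du_\epsilon|_E$ via Proposition~\ref{lowerboundprop} (using the De Giorgi iteration and the time continuity of Lemma~\ref{lem:stetigkeitinzeit}), the differentiated equation is then uniformly parabolic with $\epsilon$-independent constants, the parabolic De Giorgi excess-decay of~\cite{dibenedetto2023parabolic} is iterated in Campanato fashion, and the decay is transferred to $\G_{2\delta}(Du_\epsilon)$ by the Lipschitz estimate of Lemma~\ref{gdeltalem}, with the bound~\eqref{lebesguebound} obtained exactly as you indicate. The only minor imprecision is the claimed lower bound $|Du_\epsilon|_E\geq 1+2\delta$: Proposition~\ref{lowerboundprop} together with $\delta<\mu$ only yields $|Du_\epsilon|_E\geq 1+\delta+\frac{\mu}{4}\geq 1+\frac{5}{4}\delta$, which is nonetheless sufficient for the uniform ellipticity with $\delta$-dependent constants.
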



For the following discussion, let~$0<\Tilde{\beta}<\alpha<\beta$, where~$\alpha\in(0,1)$ denotes the parameter from the previous Proposition~\ref{nondegenerateproposition} and~$\beta\in(0,1)$ is defined in~\eqref{beta}. The second proposition treats the degenerate regime.

\begin{myproposition} \label{degenerateproposition}
    Let $\delta,\epsilon\in(0,1]$, $\mu>0$, and $\nu\in(0,\frac{1}{4}]$. Assuming that~\eqref{deltamu} applies, there exist a parameter $\kappa\in\big[2^{-\frac{\Tilde{\beta}}{2}},1\big)$ and a radius~$\hat{\rho}\in(0,1)$, both depending on the data
    \begin{align*}
        (C_\F(\delta),\hat{C}_\F,\delta,\|f\|_{L^{n+2+\sigma}(Q_R)},M,N,n,\nu,R,r_E,\sigma),
    \end{align*}
     such that: if the measure condition
    \begin{equation} \label{degeneratemeascond}
                |Q_\rho(z_0)\setminus E^{\nu}_{e^*,\rho}(z_0)|\geq\nu|Q_\rho(z_0)|
    \end{equation}
     is satisfied for any~$e^*\in\partial E^*$ on a cylinder~$Q_{\rho}(z_0)\subset Q_{2\rho}(z_0)\Subset Q_R$ with~$\rho\leq\hat{\rho}$, then there holds the reduction of supremum 
    \begin{equation} \label{degenerateest}
        \esssup\limits_{Q_{\Tilde{\nu}\rho}(z_0)}\,|\G_{\delta}(Du_\epsilon)|_E \leq \kappa \mu,
    \end{equation}
    where~$\Tilde{\nu}\coloneqq\frac{\sqrt{\nu}}{2}$.
\end{myproposition}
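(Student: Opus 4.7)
The plan is to exploit the measure condition~\eqref{degeneratemeascond}, which is assumed to hold for \emph{every} $e^*\in\partial E^*$, by carrying out a parabolic De Giorgi level-set iteration on each directional derivative $u^*_\epsilon\coloneqq\partial_{e^*}u_\epsilon$ separately and then passing to the supremum via~\eqref{minkowskialternativ} to obtain a reduction of the supremum of $|Du_\epsilon|_E$. The key observation is that, for any truncation level $k\geq 1+\delta$, the function $(u^*_\epsilon-k)_+$ is supported inside the set $\{|Du_\epsilon|_E\geq 1+\delta\}$, where Lemma~\ref{bilinearelliptic} guarantees that the bilinear form $\hat{\B}_\epsilon$ is uniformly elliptic and bounded with constants depending only on $C_\F(\delta)$ and $\delta$, not on $\epsilon\in(0,1]$. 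To make this precise, I would formally differentiate the weak form~\eqref{weakformapprox} in direction $e^*$, exactly as in the proof of Proposition~\ref{approxregularityzwei}, so that after a Steklov-average procedure $u^*_\epsilon$ satisfies, for test functions supported in $\{|Du_\epsilon|_E>1+\delta\}$, the linear parabolic identity
\begin{equation*}
\iint_{Q_{2\rho}(z_0)}\!\big(-u^*_\epsilon\partial_t\phi+\hat{\B}_\epsilon(x,t,Du_\epsilon)(Du^*_\epsilon,D\phi)\big)\,\dx\dt=-\iint_{Q_{2\rho}(z_0)}\!\Big(f\partial_{e^*}\phi+\sum\nolimits_{i,j}v_i g^i_j D_j\phi\Big)\,\dx\dt,
\end{equation*}
where the coefficients $g^i_j$ are pointwise bounded thanks to~\eqref{hatflipschitz}. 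Testing with $\phi=\zeta^2(u^*_\epsilon-k)_+$ for $k\geq 1+\delta$ and a standard space-time cut-off $\zeta$ with $\zeta\equiv 1$ on $Q_{r_1}(z_0)$ and compactly supported in $Q_{r_2}(z_0)\Subset Q_\rho(z_0)$, and using Young's and H\"older's inequalities, one obtains a Caccioppoli-type estimate of the form
\begin{align*}
\esssup_{t\in\Lambda_{r_1}(t_0)}\!\int_{B_{r_1}(x_0)\times\{t\}}\!&(u^*_\epsilon-k)^2_+\,\dx+\iint_{Q_{r_1}(z_0)}\!|D(u^*_\epsilon-k)_+|^2\,\dx\dt\\
&\leq\frac{C}{(r_2-r_1)^2}\iint_{Q_{r_2}(z_0)}\!(u^*_\epsilon-k)^2_+\,\dx\dt+C\Big(1+\|f\|^2_{L^{n+2+\sigma}(Q_R)}\Big)|A_{k,r_2}|^{1-\frac{2}{n+2+\sigma}},
\end{align*}
with $A_{k,r}\coloneqq Q_r(z_0)\cap\{u^*_\epsilon>k\}$ and $C$ depending only on the data listed in the statement. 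This places $(u^*_\epsilon-k)_+$ in a suitable parabolic De Giorgi class of sub-solutions.

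Next, the measure condition~\eqref{degeneratemeascond} rewritten in terms of $u^*_\epsilon$ reads
\begin{equation*}
\big|Q_\rho(z_0)\cap\{u^*_\epsilon\leq 1+\delta+(1-\nu)\mu\}\big|\geq\nu|Q_\rho(z_0)|,
\end{equation*}
while~\eqref{schrankeeins} combined with~\eqref{minkowskialternativ} gives the global bound $u^*_\epsilon\leq 1+\delta+\mu$ on $Q_{2\rho}(z_0)$. This is precisely the setting of the classical De Giorgi reduction-of-supremum argument: working with the levels $k_j\coloneqq 1+\delta+\mu(1-2^{-j}(1-\kappa))$ and shrinking cylinders of radii $r_j\coloneqq \Tilde{\nu}\rho+2^{-j}(\rho-\Tilde{\nu}\rho)$, the Caccioppoli inequality, the parabolic Sobolev embedding from Lemma~\ref{sobolevembedding}, and the geometric iteration Lemma~\ref{geometriclem} combine to yield
\begin{equation*}
\esssup_{Q_{\Tilde{\nu}\rho}(z_0)} u^*_\epsilon\leq 1+\delta+\kappa\mu
\end{equation*}
for some $\kappa\in[2^{-\Tilde{\beta}/2},1)$ depending on $\nu$ and the stated data. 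Since $e^*\in\partial E^*$ was arbitrary and $\kappa$ is uniform in $e^*$, passing to the supremum using~\eqref{minkowskialternativ} delivers $\esssup_{Q_{\Tilde{\nu}\rho}(z_0)}|Du_\epsilon|_E\leq 1+\delta+\kappa\mu$, which together with the identity $|\G_\delta(\xi)|_E=(|\xi|_E-(1+\delta))_+$ yields the claim~\eqref{degenerateest}.

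The main obstacle is twofold. First, one must ensure that all constants in the Caccioppoli estimate and its iteration are independent of the approximation parameter $\epsilon\in(0,1]$; this is made possible by the restriction to truncation levels $k\geq 1+\delta$, which confines the analysis to the region where $\hat{\B}_\epsilon$ is uniformly elliptic by Lemma~\ref{bilinearelliptic}, so that the added $\epsilon\Laplace$ contribution is harmless. Second, controlling the inhomogeneity $f\in L^{n+2+\sigma}(Q_R)$ and the $x$-dependent coefficients $g^i_j$ through the iteration requires exploiting the positive gap $\beta=\frac{\sigma}{n+2+\sigma}>0$ to generate an extra small factor $|A_{k,r}|^\beta$ at each step; this is precisely what dictates the smallness condition $\rho\leq\hat{\rho}$, allowing these lower-order contributions to be absorbed into $\kappa\mu$ and giving the explicit lower bound $\kappa\geq 2^{-\Tilde{\beta}/2}$.
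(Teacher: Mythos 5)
Your overall architecture (work with the directional derivatives $\partial_{e^*}u_\epsilon$, restrict to levels $k\geq 1+\delta$ so that Lemma~\ref{bilinearelliptic} gives $\epsilon$-independent ellipticity, obtain a Caccioppoli/De Giorgi class estimate, and pass to the supremum over $e^*\in\partial E^*$ at the end) matches the paper's setup in Section~\ref{sec:energyestimates}. However, there is a genuine gap in the central step. The geometric convergence Lemma~\ref{geometriclem}, which drives the classical De Giorgi reduction of the supremum, requires the \emph{initial} quantity $Y_0=|Q_{r_0}(z_0)\cap\{u^*_\epsilon>k_0\}|/|Q_{r_0}(z_0)|$ to lie below a small critical threshold depending on the data. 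The hypothesis~\eqref{degeneratemeascond} only tells you that the \emph{complement} set $\{u^*_\epsilon\leq 1+\delta+(1-\nu)\mu\}$ occupies a fraction $\nu$ of $Q_\rho(z_0)$; the super-level set you want to iterate on can therefore have measure up to $(1-\nu)|Q_\rho(z_0)|$, which for $\nu\in(0,\frac14]$ is nowhere near small enough to start the iteration. Your proposal jumps directly from the measure condition to "the Caccioppoli inequality, the Sobolev embedding, and Lemma~\ref{geometriclem} combine to yield" the sup bound, and that chain breaks at its first link.

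What is missing is precisely the machinery the paper invokes as \emph{expansion of positivity} (Proposition~\ref{expansion}): first a pigeonhole argument in time to locate a slice $t_0$ on which $|B_\rho\cap\{\tilde v_\epsilon(\cdot,t_0)\leq 1-\nu^2\}|\geq\frac12\nu|B_\rho|$; then propagation of this slice-wise information forward in time via the second De Giorgi estimate~\eqref{degiorgizwei}; and then the iterated discrete isoperimetric inequality (Lemma~\ref{isoperimetric}) over a large but finite number of intermediate levels to shrink the measure of the super-level sets below the critical threshold, at which point the geometric convergence argument finally applies. This is also where the second alternative $\eta L\leq C\rho^\beta$ and hence the smallness restriction $\rho\leq\hat\rho$ genuinely enter, rather than merely through absorbing the inhomogeneity $f$ as you suggest. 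The paper additionally works with the sub-solution $\tilde v_\epsilon=(\partial_{e^*}u_\epsilon-(1+\delta))^2_+/\mu^2$ from Lemma~\ref{subsollemma} and Corollary~\ref{subsoltildev} (rather than with $u^*_\epsilon$ itself) precisely so that the coefficients can be extended to the identity on the degenerate set and the De Giorgi class estimates of Lemma~\ref{degiorgilem} hold on the whole cylinder; your restriction to test functions supported in $\{|Du_\epsilon|_E>1+\delta\}$ leaves the sub-solution property across the boundary of that set unjustified.
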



We postpone the proof of Propositions~\ref{nondegenerateproposition} and~\ref{degenerateproposition} for the moment, addressing them at the end of the article in Sections~\ref{sec:nondegenerate} and~\ref{sec:degenerate}.


\begin{proof}[\textbf{\upshape Proof of Theorem~\ref{holdermainresult}}]
The local Hölder continuity result of~$\G_\delta(Du_\epsilon)$ in~$Q_R$ stated in Theorem~\ref{holdermainresult} follows by combining the results of Propositions~\ref{nondegenerateproposition} and~\ref{degenerateproposition}. Due to similarity to the proof of~\cite[Theorem~3.6]{bogelein2023higher} and~\cite[Theorem~3.5]{elliptisch} in the elliptic setting resp.~\cite[Theorem~3.7]{bogelein2024gradient} in the parabolic setting, we refrain from stating the full proof of Theorem~\ref{holdermainresult} and rather refer to the aforementioned articles. For technical reasons it turns out expedient to prove the local Hölder continuity of~$\G_{2\delta}(Du_\epsilon)$, from which the local Hölder continuity of~$\G_{\delta}(Du_\epsilon)$ of course follows, due to the arbitrariness of~$\delta\in(0,1]$. We additionally remark that it is not restrictive to assume~\eqref{deltamu}in both the non-degenerate regime in Proposition~\ref{nondegenerateproposition} and also in the degenerate regime in Proposition~\ref{degenerateproposition}, similarly as in the elliptic counterpart~\cite[Proposition~3.7]{elliptisch}. Indeed, this stands in contrast to~\cite[Proposition~3.5]{bogelein2023higher} and~\cite[Proposition~3.6]{bogelein2024gradient} where this assumption is not required in the degenerate regime but in the non-degenerate regime only. This can be seen as follows: if the degenerate regime applies at some step during the iteration scheme, i.e. on some cylinder~$Q_{\rho_i}(z_0)\Subset Q_R$ with shrinking radii~$\rho_i\in(0,\rho)$ and levels~$\mu_i=\kappa^i\mu$, where~$\kappa\in\big[2^{-\frac{\Tilde{\beta}}{2}},
1\big)$ denotes the parameter from Proposition~\ref{degenerateproposition}, and the assumption~\eqref{deltamu} is not satisfied on this particular cylinder~$Q_{\rho_i}(z_0)$, i.e. there holds~$\mu_i\leq \delta$, then it follows that~$\G_{2\delta}=0$ on ~$Q_{\rho_i}(z_0)$. Moreover, due to~$G_{2\delta}\leq \G_\delta$ a.e. in~$Q_\rho(z_0)$, we infer
\begin{align} \label{iterationsup}
    \esssup\limits_{Q_{\rho_i}(z_0)}|\G_{2\delta}(Du_\epsilon)|_E \leq \mu_i \qquad\mbox{for any~$i\in\N_0$}, 
\end{align} 
which equals~\cite[(3.15)]{bogelein2023higher},~\cite[(3.15)]{bogelein2024gradient},~\cite[3.31]{elliptisch}. Moreover, similarly to the aforementioned articles, in case the degenerate regime applies in each step of the iteration process, we obtain due to~\eqref{iterationsup} that~$\Gamma_{z_0}$ exists and is equal to zero, a consequence of~\eqref{betragminkowski}.
\end{proof}


Finally, we are in position to provide the proof of our main result, that is Theorem~\ref{hauptresultat}

\begin{proof}[\textbf{\upshape Proof of Theorem~\ref{hauptresultat}}]
Similarly to before, let us consider the arbitrarily chosen cylinder~$Q_R=Q_R(y_0,s_0)\Subset\Omega_T$ and let
$$ u_\epsilon \in L^\infty\big(\Lambda_R(s_0),u+W^{1,\infty}_0(B_R(y_0))\big) $$
denote the unique weak solution to the Dirichlet problem~\eqref{approx} for~$\epsilon\in(0,1]$.
Further, let~$\delta\in(0,1]$ and consider a cylinder~$Q_\rho(z_0)\Subset Q_R \Subset \Omega_T$. In this setting, Theorem~\ref{holdermainresult} is applicable, which asserts that~$\G_\delta(Du_\epsilon)$ is Hölder continuous in~$\overline{Q}_\rho(z_0)$ with a Hölder exponent~$\alpha_\delta\in(0,1)$ and a Hölder constant~$C_\delta$ that depend on the data
\begin{align*}
       \alpha_\delta &= \alpha_\delta(C_\F(\delta),\hat{C}_\F,\delta,\|f\|_{L^{n+2+\sigma}(Q_R)},M,N,n,R,r_E,\sigma) \in(0,1), \\
         C_\delta &= C_\delta(C_\F(\delta),\hat{C}_\F,\delta,\|f\|_{L^{n+2+\sigma}(Q_R)},M,N,n,R,R_E,r_E,\sigma)\geq 1.
\end{align*}
According to Lemma~\ref{lemgdeltakvgz} there holds the convergence~$\G_\delta(Du_\epsilon)\to\G_\delta(Du)$ in~$L^2(Q_R)$ as~$\epsilon\downarrow 0$. Thus, there exists a subsequence~$(\epsilon_i)_{i\in\N}\subset (0,1]$ with~$\G_\delta(Du_{\epsilon_i}) \to \G_\delta(Du)$ a.e. in~$Q_R$ as~$\epsilon_i\downarrow 0$. As ~$(\G_\delta(Du_\epsilon))_{\epsilon\in(0,1]}$ is locally uniformly equicontinuous in~$Q_R$ and also locally uniformly bounded in~$Q_R$, we may apply the theorem of Arzelà-Ascoli, allowing us to pass to the limit~$\epsilon\downarrow 0$, which yields that the limit function~$\G_\delta(Du)$ itself is Hölder continuous in~$\overline{Q}_\rho(z_0)$ with Hölder constant~$C_\delta$ and Hölder exponent~$\alpha_\delta\in(0,1)$. 

Furthermore, due to~\eqref{betragminkowski} and Lemma~\ref{lem:algineq}, we estimate
\begin{align*}
    |\G_\delta(Du) - \G(Du)| &= \bigg| \frac{(|Du|_E - (1+\delta))_+}{|Du|_E}Du - \frac{(|Du|_E-1)_+}{|Du|_E}Du \bigg| \\
    &\leq \frac{|Du|}{|Du|_E} \big|(|Du|_E-(1+\delta))_+ - (|Du|_E-1)_+ \big| \\
    &\leq \mbox{$\frac{1}{r_E}$} \big|(|Du|_E-(1+\delta))_+ - (|Du|_E-1)_+ \big|  \\
    &\leq \mbox{$\frac{1}{r_E}$} \delta
\end{align*}
for a.e.~$(x,t) \in \overline{Q}_\rho(z_0)$. This verifies that the family~$\G_\delta(Du)\to\G(Du)$ is uniformly continuous in~$\overline{Q}_\rho(z_0)$. As a consequence, the limit function~$\G(Du)$ itself is uniformly continuous in~$\overline{Q}_\rho(z_0)$. 

For any~$\epsilon >0$ there exists~$\kappa>0$ with
\begin{align} \label{modulusofcontinuity}
    |\G(Du(z_1))-\G(Du(z_2))| <\epsilon \qquad \mbox{for any~$x,y \in \overline{Q}_\rho(z_0)$ with~$d_p(z_1,z_2)<\kappa$}.
\end{align}
Let~$\mathcal{K}\in C^0(\R^n)$ denote an arbitrary continuous function that vanishes inside the bounded and convex set of degeneracy~$E\subset\R^n$, according to the statement of Theorem~\ref{hauptresultat}. Since there holds
$$ u\in L^\infty_{\loc}\big(0,T; W^{1,\infty}_{\loc}(\Omega)), $$
there exists a positive constant~$0<M<\infty$, such that~$|Du| \leq M$ a.e. in~$\overline{Q}_\rho(z_0)$. Then, there exists a modulus of continuity~$\omega_M\colon \R_{\geq 0} \to\R_{\geq 0}$ on~$\overline{B}_M$ with
$$|\mathcal{K}(\xi)-\mathcal{K}(\eta)|\leq \omega_M(|\xi-\eta|) \qquad\mbox{for any~$\xi,\eta \in \overline{B}_M$}.$$
We now distinguish between two cases. First, we assume that~$|Du(x)|_E\leq 1+\sqrt{\epsilon}$. If~$|Du(x)|_E\geq 1$, we use the fact that~$\mathcal{K}\equiv 0$ on~$E$ and the homogeneity of the Minkowski functional~$|\cdot|_E$ , which yield
\begin{align*}
    |\mathcal{K}(Du(x))| &= \bigg| \mathcal{K}(Du(x))-\mathcal{K}\Big(\frac{Du(x)}{|Du(x)|_E}\Big) \bigg| \\
    &\leq \omega_M\bigg(\Big|Du(x)-\frac{Du(x)}{|Du(x)|_E}\Big|\bigg) \\
    &= \omega_M\bigg(\frac{|Du(x)|}{|Du(x)|_E}(|Du(x)|_E-1)\bigg) \\
    &\leq \omega_M(R_E\sqrt{\epsilon}).
\end{align*}
If there holds~$|Du(x)|_E\leq 1$, then the above estimate holds true trivially. The modulus of continuity~\eqref{modulusofcontinuity}, the triangle inequality~\eqref{lem:minkowskitriangle}, and also~\eqref{betragminkowski} yield
\begin{align*}
    (|Du(y)|_E-1)_+ &= |\G(Du(y))|_E \\
    &\leq |\G(Du(y)) - \G(Du(x))|_E + |\G(Du(x))|_E \\
    &< \mbox{$\frac{\epsilon}{r_E}$} + \sqrt{\epsilon} \\
    &\leq 2\max\{\mbox{$\frac{1}{r_E}$},1 \}\sqrt{\epsilon}.
\end{align*}
This implies in particular
$$|Du(y)|_E \leq 1+2\max\{ \mbox{$\frac{1}{r_E}$},1 \}\sqrt{\epsilon}.$$
Thus by following the very same steps as above and exploiting~\eqref{betragminkowski} once more, we obtain~$|\mathcal{K}(Du(y))| \leq \omega_M\big(2 R_E\max\big\{\frac{1}{r_E},1 \big\}\sqrt{\epsilon} \big)$. By combining the estimates for~$|\mathcal{K}(Du(x))|$ and also~$|\mathcal{K}(Du(y))|$, there holds
$$|\mathcal{K}(Du(x))-\mathcal{K}(Du(y))| \leq \omega_M(R_E\sqrt{\epsilon}) + \omega_M\big(2 R_E\max\{ \mbox{$\frac{1}{r_E}$},1 \} \sqrt{\epsilon} \big) \leq 2 \omega_M\big(2 R_E\max\{ \mbox{$\frac{1}{r_E}$},1\} \sqrt{\epsilon}\big), $$
such that~$\mathcal{K}(Du)$ is continuous in~$\overline{Q}_\rho(z_0)$ in the case where~$|Du(x)|_E\leq 1+\sqrt{\epsilon}$. In the remaining case~$|Du(x)|_E>1+\sqrt{\epsilon}$, we may apply Lemma~\ref{gdeltalem}, yielding that
\begin{align*}
    |Du(x)-Du(y)| &\leq \frac{C(R_E,r_E)}{\sqrt{\epsilon}}|\G(Du(x))-\G(Du(y))| \\
    &\leq C(R_E,r_E) \sqrt{\epsilon},
\end{align*}
which further implies
$$|\mathcal{K}(Du(x)) - \mathcal{K}(Du(y))| \leq \omega_M(C(R_E,r_E) \sqrt{\epsilon}).$$
Consequently, we have verified that~$\mathcal{K}(Du)$ is continuous in~$\overline{Q}_\rho(z_0)$. Since all cylinders~$Q_\rho(z_0)\Subset Q_R$ were chosen arbitrarily, the composition~$\mathcal{K}(Du)$ is continuous in the entire space-time domain~$\Omega_T$, finishing the proof.
\end{proof}


\begin{remark} \label{remarknachhauptresultatproof}
    \upshape 
It is noteworthy that during the proof of Theorem~\ref{hauptresultat}, the control of the quantitative Hölder exponent is lost in the limit as~$\delta \downarrow 0$, leading to uniform continuity of the limit function~$\G(Du)$ locally in~$Q_R$ instead of desirable Hölder continuity. 
To our knowledge it remains an open question even for the model equation~\eqref{prototype} what the best possible modulus of continuity of~$\G(Du)$ may be and whether~$\G(Du)$ is actually locally Hölder continuous in~$Q_R$, posing an intriguing question for further exploration in future research, cf.~\cite{bogelein2023higher}. However, in general~$\mathcal{K}(Du)$ fails to be Hölder continuous for certain integrands~$\F$ in~$\Omega_T$ for any Hölder exponent~$\alpha \in (0,1)$. In fact, it has been demonstrated in~\cite[Remark~1.3]{colombo2017regularity} in the elliptic setting that for a certain function~$\F$ there exist Lipschitz continuous functions~$\mathcal{K} \in C^0(\R^n, \R)$ that vanish inside~$\overline{B}_1$, for which~$\mathcal{K}(Du)$ is not Hölder continuous for any Hölder exponent~$\alpha\in (0,1)$.
\end{remark}


\section{Differentiating the equation} \label{sec:energyestimates}

The aim of this section lies in deriving appropriate conclusions by differentiating the equation~\eqref{weakformapprox} that will be crucial for the discussion of the concluding Sections~\ref{sec:nondegenerate} and~\ref{sec:degenerate}. We recall that~$\xi \mapsto\hat{\F}_\epsilon(x,t,\xi)$ is only of class~$C^2(\R^n\setminus \overline{E})$ for any~$(x,t)\in Q_R$ according to its definition in~\eqref{psifapprox}. Therefore, careful consideration is required when selecting test functions to ensure the validity of differentiating the equation. Throughout this section, we let~$\delta,\epsilon\in(0,1]$ and assume that~$u_\epsilon$ denotes the weak solution to~\eqref{approx} on a parabolic cylinder~$Q_{2\rho}(z_0)\Subset Q_R$. \,\\

We begin by establishing that
$$v_\epsilon = (\partial_{e^*}u_\epsilon - (1+\delta))^2_+,$$
is a weak sub-solution to a linear parabolic equation on~$Q_{2\rho}(z_0)\Subset Q_R$, stated in form of the subsequent lemma. 


\begin{mylem} \label{subsollemma}
    Let~$\delta,\epsilon\in(0,1]$,~$e^*\in\partial E^*$, and~$u_\epsilon$ denote the unique weak solution to~\eqref{approx}. The function
    $$v_\epsilon \coloneqq (\partial_{e^*}u_\epsilon-(1+\delta))^2_+$$
    is a weak sub-solution to a linear parabolic equation on~$Q_{2\rho}(z_0)\Subset Q_R$ in the sense that there holds
    \begin{align*}
        \iint_{Q_{2\rho}(z_0)} \big( & -v_\epsilon\partial_t\zeta + \hat{\B}_\epsilon(x,t,Du_\epsilon)(Dv_\epsilon,D\zeta) \big)\,\dx\dt \\
        &\leq C \bigg(\iint_{Q_{2\rho}(z_0)} \zeta (1+|f|^2)\,\dx\dt + \iint_{Q_{2\rho}(z_0)} |D\zeta| (1+|f|)\,\dx\dt \bigg)
    \end{align*}
    for any non-negative test function~$\zeta\in C^1_0(Q_{2\rho}(z_0))$, with constant~$C=C(C_\F(\delta),\hat{C}_\F,\delta,M,N,n,r_E)$. 
\end{mylem}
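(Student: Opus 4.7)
\textbf{Proof plan for Lemma~\ref{subsollemma}.} The plan is to differentiate the weak formulation~\eqref{weakformapprox} for~$u_\epsilon$ in direction~$e^*\in\partial E^*$ and then choose a test function that confines us to the region where the equation is uniformly elliptic. Let~$w\coloneqq (u^*_\epsilon-(1+\delta))_+$, so that~$v_\epsilon=w^2$ and~$Dv_\epsilon=2w\,Dw$. The crucial observation is that on~$\{w>0\}$ one has~$\partial_{e^*}u_\epsilon>1+\delta$; by the representation formula~\eqref{minkowskialternativ} this forces~$|Du_\epsilon|_E>1+\delta$, hence we stay outside~$\overline{E_\delta}$, where~$\xi\mapsto\hat{\F}_\epsilon(x,t,\xi)$ is of class~$C^2$ with Hessian bounded by~$C_\F(\delta)$ via~\eqref{hesseschrankedelta} and coercive via Lemma~\ref{bilinearelliptic}. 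In addition, the uniform bound~$w\leq M$ on the support of~$w$ follows from~\eqref{minkowskischranke}, which will be used freely to absorb zero-th order quantities.

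First I would work rigorously with Steklov averages (as in Lemma~\ref{energybound}) and test the equation for~$u_\epsilon$ with~$\partial_{e^*}(\zeta w)$; since Proposition~\ref{approxregularityeins} provides~$u_\epsilon\in L^2_{\loc}(\Lambda_R(s_0);W^{2,2}_{\loc}(B_R(y_0)))$, both~$u^*_\epsilon$ and~$Du^*_\epsilon$ exist as bona fide~$L^2$ functions and the chain rule for truncations applies. Integrating the evolutionary term by parts first transfers~$\partial_{e^*}$ from~$\phi$ onto~$u_\epsilon$ and then, using~$Dv_\epsilon=2w\,Dw$ and~$w\partial_t u^*_\epsilon=\tfrac{1}{2}\partial_t v_\epsilon$, produces
\begin{equation*}
\iint_{Q_{2\rho}(z_0)} -u^*_\epsilon\,\partial_t(\zeta w)\,\dx\dt = -\tfrac{1}{2}\iint_{Q_{2\rho}(z_0)} v_\epsilon\,\partial_t\zeta\,\dx\dt,
\end{equation*}
after passing to the limit in the Steklov regularization. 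For the diffusion term, the chain rule on~$\{w>0\}$ (where~$\hat{\F}_\epsilon$ is~$C^2$) and the splitting~$D(\zeta w)=\zeta\,Dw+w\,D\zeta$ yield
\begin{equation*}
\iint_{Q_{2\rho}(z_0)} \hat{\B}_\epsilon(x,t,Du_\epsilon)(Du^*_\epsilon,D(\zeta w))\,\dx\dt = \iint_{Q_{2\rho}(z_0)} \zeta\,\hat{\B}_\epsilon(x,t,Du_\epsilon)(Dw,Dw)\,\dx\dt + \tfrac{1}{2}\iint_{Q_{2\rho}(z_0)} \hat{\B}_\epsilon(x,t,Du_\epsilon)(Dv_\epsilon,D\zeta)\,\dx\dt,
\end{equation*}
where the first term is non-negative by Lemma~\ref{bilinearelliptic} and will ultimately be discarded.

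Next I would handle the remaining terms arising from the spatial dependence of~$\hat{\h}_\epsilon$ and from the right-hand side. Writing~$e^*=\sum_i v_i e_i$ with~$|v_i|\leq R_E$, differentiating in~$e^*$ produces the additional term~$\sum_i v_i\langle\partial_{x_i}\hat{\h}_\epsilon(x,t,Du_\epsilon),D(\zeta w)\rangle$, which is controlled by~\eqref{hatflipschitz} giving~$|\partial_{x_i}\hat{\h}_\epsilon|\leq C(\hat{C}_\F,N,r_E)$. Likewise, the right-hand side becomes~$-\iint f\,\partial_{e^*}(\zeta w)\,\dx\dt$. Using Cauchy--Schwarz and Young's inequality in the form
\begin{equation*}
\zeta|Dw|(1+|f|)\leq \tfrac{\lambda(\delta)}{2}\zeta|Dw|^2 + C\zeta(1+|f|^2),
\end{equation*}
together with~$w\leq M$ on~$\{w>0\}$ to bound the~$|D\zeta|\,w$ contribution by~$M|D\zeta|(1+|f|)$, all terms involving~$\zeta|Dw|^2$ may be reabsorbed into the non-negative coercive term~$\iint\zeta\,\hat{\B}_\epsilon(Dw,Dw)\geq\lambda(C_\F(\delta),\delta)\iint\zeta|Dw|^2$. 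Discarding what remains of this non-negative quantity yields precisely the claimed inequality.

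The main technical point is the rigorous justification of the differentiation step: since~$\hat{\F}_\epsilon$ fails to be~$C^2$ on~$\overline{E}$, one cannot blindly invoke the chain rule globally. The choice~$\phi=\zeta(u^*_\epsilon-(1+\delta))_+$ is designed so that the chain rule is only applied on the set where~$|Du_\epsilon|_E>1+\delta$, where~\eqref{hesseschrankedelta} and Lemma~\ref{bilinearelliptic} provide bounded and elliptic coefficients with constants depending only on~$C_\F(\delta)$ and~$\delta$. Combined with the Steklov-average procedure and the~$W^{2,2}$-regularity from Proposition~\ref{approxregularityeins}, this renders the entire manipulation rigorous and preserves the independence of all constants from the approximation parameter~$\epsilon\in(0,1]$.
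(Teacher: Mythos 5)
Your proposal is correct and follows essentially the same route as the paper: the same test function $\partial_{e^*}\bigl(\zeta(\partial_{e^*}u_\epsilon-(1+\delta))_+\bigr)$, the same identities $w\,\partial_t u^*_\epsilon=\tfrac12\partial_t v_\epsilon$ and $w\,Du^*_\epsilon=\tfrac12 Dv_\epsilon$ to produce the sub-solution structure, the same treatment of the $\partial_{x}\hat{\h}_\epsilon$ and datum terms via \eqref{hatflipschitz} and Young's inequality, and the same absorption into and subsequent discarding of the non-negative coercive term $\iint\zeta\,\hat{\B}_\epsilon(Dw,Dw)$, with admissibility secured by Steklov averages and Proposition~\ref{approxregularityeins}. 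No gaps; the argument matches the paper's proof.
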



\begin{proof}
 To abbreviate notation we again write~$u=u_\epsilon$. Let~$e^*\in\partial E^*$ be arbitrary. We test the weak form~\eqref{weakformapprox} with the test function~$\partial_{e^*}\phi_{e^*}$, where~$\phi_{e^*}=\zeta (\partial_{e^*} u-(1+\delta))_+$. Here,~$\zeta\in C^1_0(Q_{2\rho }(z_0))$ denotes an arbitrary smooth cut-off function. Indeed, this choice of test function is admissible due to an approximation argument. Subsequently, we integrate by parts, which yields 
 \begin{align*} 
     \iint_{Q_{2\rho} (z_0)} & \partial_t[\partial_{e^*}u] \phi_{e^*} \,\dx\dt + \iint_{Q_{2\rho} (z_0)} \langle \partial_{e^*} [\hat{\h}_\epsilon(x,t,Du)], D\phi_{e^*}\rangle \,\dx\dt = - \iint_{Q_{2\rho} (z_0)} f \partial_{e^*}\phi_{e^*}\,\dx\dt. 
 \end{align*}
 As before, this procedure can be justified by a rigorous use of Steklov-averages. We abbreviate notation by setting~$a\coloneqq 1+\delta$. A direct calculation verifies that
 \begin{align*}
    \partial_j \phi_{e^*} &= \partial_j \zeta (\partial_{e^*} u-a)_+ +  \zeta \partial_j \partial_{e^*} u \bigchi_{\{ \partial_{e^*} u>a \}}
\end{align*}
holds true for any~$j\in\{1,\ldots,n\}$ and for a.e.~$(x,t)\in Q_{2\rho}(z_0)$. This leads to
 \begin{align} \label{subsolweak}
     \iint_{Q_{2\rho} (z_0)} & \partial_t[\partial_{e^*}u] \zeta (\partial_{e^*}u-a)_+ \,\dx\dt \\
     &+ \iint_{Q_{2\rho} (z_0)} \langle \partial_{e^*} [\hat{\h}_\epsilon(x,t,Du)], D[\zeta (\partial_{e^*} u-a)_+]\rangle \,\dx\dt \nonumber \\
     &= \iint_{Q_{2\rho} (z_0)} f \partial_{e^*}[\zeta (\partial_{e^*} u-a)_+]\,\dx\dt. \nonumber
 \end{align}
We begin by treating the term involving the time derivative in~\eqref{subsolweak}, where we use the fact
\begin{align} \label{videntität}
    \partial_t[\partial_{e^*}u] \zeta (\partial_{e^*}u-a)_+ = \mbox{$\frac{1}{2}$} \partial_t (\partial_{e^*}u-a)^2_+ \zeta \qquad\mbox{a.e. in~$Q_{2\rho}(z_0)$}, 
\end{align} 
which yields
\begin{align} \label{timeterm}
   \iint_{Q_{2\rho} (z_0)} \partial_t[\partial_{e^*}u] \zeta (\partial_{e^*}u-a)_+ \,\dx\dt 
    &= \frac{1}{2} \iint_{Q_{2\rho} (z_0)} \partial_t (\partial_{e^*}u-a)^2_+ \zeta \,\dx\dt \\
    &= - \frac{1}{2} \iint_{Q_{2\rho} (z_0)} (\partial_{e^*}u-a)^2_+ \partial_t\zeta \,\dx\dt \nonumber \\
    &= - \frac{1}{2} \iint_{Q_{2\rho} (z_0)} v_\epsilon \partial_t\zeta \,\dx\dt. \nonumber
\end{align}
Next, the diffusion term in~\eqref{subsolweak} is treated. We obtain two integral terms
\begin{align*}
      = \iint_{Q_{2\rho} (z_0)} & \langle \partial_{e^*} [\hat{\h}_\epsilon(x,t,Du)], D[\zeta (\partial_{e^*}u - a)_+] \rangle \,\dx\dt \\
    &= \iint_{Q_{2\rho} (z_0)} \hat{\B}_\epsilon(x,t,Du)(\partial_{e^*}Du,D[\zeta (\partial_{e^*}u - a)_+]) \,\dx\dt \\
    &\quad + \iint_{Q_{2\rho} (z_0)} \langle \partial_{x_{e^*}} \hat{\h}_\epsilon(x,t,Du),D[\zeta (\partial_{e^*}u - a)_+] \rangle \,\dx\dt \\
    &\eqqcolon \foo{I} + \foo{II}.
\end{align*}
By exploiting the bilinearity of~$\B_\epsilon$ and identity~\eqref{videntität} once more, the first term~$\foo{I}$ yields
\begin{align*}
    \foo{I} &= \iint_{Q_{2\rho} (z_0)} \hat{\B}_\epsilon(x,t,Du)(\partial_{e^*} Du,D\zeta) (\partial_{e^*}u-a)_+ \,\dx\dt \\
    &\quad + \iint_{Q_{2\rho} (z_0)} \zeta \hat{\B}_\epsilon(x,t,Du)(\partial_{e^*} Du,\partial_{e^*} Du) \bigchi_{\{ \partial_{e^*} u>a \}} \,\dx\dt \\
    &= \frac{1}{2} \iint_{Q_{2\rho} (z_0)} \hat{\B}_\epsilon(x,t,Du)(Dv_\epsilon,D\zeta) \,\dx\dt \\
    &\quad + \iint_{Q_{2\rho} (z_0)} \zeta \hat{\B}_\epsilon(x,t,Du)(\partial_{e^*} Du,\partial_{e^*} Du) \bigchi_{\{ \partial_{e^*} u>a \}} \,\dx\dt.
\end{align*}
 We note that for any~$e^*\in\partial E^*$ there holds the set inclusion
 $$ \{ (x,t)\in Q_{2\rho}(z_0): \partial_{e^*}u \geq 1+\delta \} \subset \{ (x,t)\in Q_{2\rho}(z_0): |Du|_E \geq 1+\delta \}. $$
Therefore, whenever the integrand is positive, we are in position to exploit our ellipticity condition of~$\hat{\B}_\epsilon$ in form of Lemma~\ref{bilinearelliptic} for the second integral term in the preceding identity. By further recalling the definition of~$C_\F$ in~\eqref{schrankehessian} and also~\eqref{hesseschrankedelta}, there holds the lower bound
\begin{align} \label{einssummed}
     \foo{I} &\geq \frac{1}{2} \iint_{Q_{2\rho} (z_0)} \hat{\B}_\epsilon(x,t,Du)(Dv_\epsilon,D\zeta) \,\dx\dt \\
    &\quad + (\epsilon+\Tilde{C}(C_\F(\delta),\delta))  \iint_{Q_{2\rho} (z_0)} \zeta |\partial_{e^*} Du|^2 \bigchi_{\{ \partial_{e^*} u>a \}} \,\dx\dt \nonumber 
\end{align}
with~$\Tilde{C}=\Tilde{C}(C_\F(\delta),\delta)$ denoting a positive constant. Let us next treat the second quantity~$\foo{II}$. By employing the Cauchy-Schwarz inequality and the Lipschitz property~\eqref{hatflipschitz}, together with the representation formula~\eqref{representation} and the bound~$|e^*|\leq R_E$, we obtain
\begin{align} \label{est:lundm}
    \foo{II} &\leq C(M) \iint_{Q_{2\rho} (z_0)} |D[\zeta (\partial_{e^*}u - a)_+]|\,\dx\dt \\
    &\leq C \iint_{Q_{2\rho} (z_0)} |D\zeta| (|\partial_{e^*}u|-a)_+ \,\dx\dt + C \iint_{Q_{2\rho} (z_0)} \zeta |\partial_{e^*} Du| \bigchi_{\{ |\partial_{e^*} u|>a \}} \,\dx\dt \nonumber
\end{align}
with a constant~$C=C(\hat{C}_\F,N,n,R_E,r_E)$. Employing Young's inequality, we thus further estimate
\begin{align} \label{zweisummed}
   \foo{II} &\leq C \iint_{Q_{2\rho} (z_0)} |D\zeta| (|\partial_{e^*}u|-a)_+ \,\dx\dt + \mbox{$\frac{1}{4}$} \Tilde{C} \iint_{Q_{2\rho} (z_0)} \zeta |\partial_{e^*}Du|^2 \bigchi_{\{ |\partial_{e^*} u|>a \}} \,\dx\dt \\
     &\quad + C \iint_{Q_{2\rho} (z_0)} \zeta  \bigchi_{\{ |\partial_{e^*} u|>a \}} \,\dx\dt  \nonumber
\end{align}
with a constant~$C=C(C_\F(\delta),\hat{C}_\F,\delta,N,n,R_E,r_E)$. Finally, we treat the term involving the datum~$f$ on the right-hand side of~\eqref{subsolweak}. We estimate this term further above by
\begin{align*}
    \foo{III} &\coloneqq \iint_{Q_{2\rho} (z_0)} f \partial_{e^*}[\zeta(\partial_{e^*}u-a)_+]\,\dx\dt \\
    &\leq \iint_{Q_{2\rho} (z_0)} |f| |D\zeta| (\partial_{e^*}u -a)_+\,\dx\dt + \iint_{Q_{2\rho} (z_0)}  \zeta |f| |\partial_{e^*} Du| \bigchi_{\{ \partial_{e^*}u > a \}} \,\dx\dt.
\end{align*}
Once again, we employ Young's inequality, just as before. This way, there holds
\begin{align} \label{datumsummed}
  \foo{III} &\leq C \iint_{Q_{2\rho} (z_0)} |f| |D\zeta| (\partial_{e^*}u -a)_+\,\dx\dt + \mbox{$\frac{1}{4}$}\Tilde{C} \iint_{Q_{2\rho} (z_0)} \zeta |\partial_{e^*} Du|^2 
\bigchi_{\{ \partial_{e^*}u>a \}} \,\dx\dt  \\ 
    &\quad + C \iint_{Q_{2\rho} (z_0)} |f|^2 \zeta \bigchi_{\{ \partial_{e^*}u>a \}} \,\dx\dt \nonumber
\end{align}
with a constant~$C=C(C_\F(\delta),\delta)$. After combining our results~\eqref{timeterm},~\eqref{einssummed},~\eqref{zweisummed}, and~\eqref{datumsummed}, and reabsorbing the second quantity involving second order weak derivatives of~$u$ in both~\eqref{zweisummed} and also~\eqref{datumsummed} into the left-hand side, we overall end up with
\begin{align} \label{combined}
    &-  \iint_{Q_{2\rho} (z_0)} v_\epsilon \partial_t\zeta \,\dx\dt + \iint_{Q_{2\rho} (z_0)} \B_\epsilon(x,t,Du)(Dv_\epsilon,D\zeta)\,\dx\dt \\
    &\quad + (\epsilon+\Tilde{C}(\delta)) \iint_{Q_{2\rho} (z_0)} \zeta |\partial_{e^*}Du|^2  \bigchi_{\{ \partial_{e^*} u>a \}} \,\dx\dt \nonumber \\
    &\leq C \iint_{Q_{2\rho} (z_0)} |D\zeta| ( 1+|f|) (\partial_{e^*} u-a)_+ \,\dx\dt + C \iint_{Q_{2\rho} (z_0)} \zeta ( 1+|f|^2 ) \bigchi_{\{ \partial_{e^*} u>a \}}\,\dx\dt \nonumber
\end{align}
with~$C=C(C_\F(\delta),\hat{C}_\F,\delta,N,n,r_E)$. We discard the third quantity on the left-hand side of~\eqref{combined} due to a non-negative contribution, while the right-hand side is estimated further above with the bounds~\eqref{schrankeeins} and~\eqref{schrankezwei}. This way, we obtain the claimed estimate, as stated in the lemma. 
\end{proof}


Next, we modify the sub-solution~$v_\epsilon$ from the preceding Lemma~\ref{subsollemma}. \eqref{minkowskialternativ} and the general bound~\eqref{schrankeeins} imply the upper bound
\begin{align*}
    v_\epsilon &= (\partial_{e^*}u_\epsilon-(1+\delta))^2_+ \leq (|Du_\epsilon|_E-(1+\delta))^2_+ \leq \mu^2 \qquad\mbox{a.e. in~$Q_{2\rho}(z_0)$}
\end{align*}
for any~$e^*\in\partial E^*$. Thus, by setting
\begin{align} \label{tildev}
    \hat{v}_\epsilon\coloneqq \frac{1}{\mu^2} v_\epsilon = \frac{(\partial_{e^*}u_\epsilon-(1+\delta))^2_+}{\mu^2} \qquad\mbox{a.e. in~$Q_{2\rho}(z_0)$},
\end{align}
 we have that~$\hat{v}_\epsilon \leq 1$ a.e. in~$Q_{2\rho}(z_0)$. Moreover, we rescale~$\hat{v}_\epsilon$,~$\hat{\h}_\epsilon$,~$\hat{\B}_\epsilon$, and also~$f$ to the cylinder~$Q_{2\rho}\coloneqq Q_{2\rho}(0)$ with vertex~$0$ by considering
\begin{align}  \label{tildevrescaled}
      \begin{cases} 
          \Tilde{v}_\epsilon(x,\tau) \coloneqq \hat{v}_\epsilon(x_0+x,t_0+\tau)  & \mbox{for~$(x,\tau)\in Q_{2\rho}$}, \\
     \Tilde{\h}_\epsilon(x,\tau,\xi) \coloneqq \hat{\h}_\epsilon(x_0+x,t_0+\tau,\xi)  &\mbox{for~$(x,\tau)\in Q_{2\rho}$,~$\xi\in\R^n$},  \\
     \Tilde{\B}_\epsilon(x,\tau,\xi) \coloneqq \hat{\B}_\epsilon(x_0+x,t_0+\tau,\xi) &\mbox{for~$(x,\tau)\in Q_{2\rho}$~$\xi\in\R^n$},  \\
      \Tilde{f}(x,\tau) \coloneqq f(x_0+x,t_0+\tau) &\mbox{for~$(x,\tau)\in Q_{2\rho}$}. 
       \end{cases}
     \end{align}
   Due to the assumption~\eqref{deltamu} and by also rescaling the test functions~$\zeta\in C^1_0(Q_{2\rho}(z_0))$ in the weak form of~\ref{subsollemma} to~$Q_{2\rho}$ by considering
 $$\Tilde{\zeta}(x,\tau)\coloneqq \zeta(x_0+x,t_0+\tau)\in C^1_0(Q_{2\rho}),$$
 we immediately obtain the following corollary. 


\begin{mycor} \label{subsoltildev}
     Let~$\delta,\epsilon\in(0,1]$ and~$u_\epsilon$ be a weak solution to~\eqref{approx}. The function~$\Tilde{v}=\Tilde{v}_\epsilon$ defined in~\eqref{tildevrescaled}
    is a weak sub-solution to a linear parabolic equation in the sense that there holds
    \begin{align*}
        \iint_{Q_{2\rho}} \big( & -\Tilde{v} \partial_t \Tilde{\zeta} + \Tilde{\B}_\epsilon(x,\tau,Du_\epsilon(x_0+x,t_0+\tau))(D\Tilde{v},D\Tilde{\zeta}) \big)\,\dx\dt \\
        &\leq C \bigg(\iint_{Q_{2\rho}} \Tilde{\zeta} (1+|f|^2)\,\dx\dt + \iint_{Q_{2\rho}} |D\Tilde{\zeta}| (1+|\Tilde{f}|)\,\dx\dt \bigg)
    \end{align*}
    for any non-negative test function~$\Tilde{\zeta}\in C^1_0(Q_{2\rho})$, with constant~$C=C(C_\F(\delta),\hat{C}_\F,\delta,M,N,n,r_E)$.
\end{mycor}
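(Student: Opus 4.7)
The plan is that Corollary~\ref{subsoltildev} follows from Lemma~\ref{subsollemma} by two elementary manipulations: normalizing $v_\epsilon$ by the factor $\mu^2$, and translating the vertex of the cylinder from $z_0$ to the origin. Both operations preserve the structural form of the sub-solution inequality, so no new analytical input is required.

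First, I would start from the weak-form estimate for $v_\epsilon = (\partial_{e^*}u_\epsilon - (1+\delta))_+^2$ given by Lemma~\ref{subsollemma} and divide both sides by $\mu^2$. Since the quantities $-v_\epsilon \partial_t \zeta$ and $\hat{\B}_\epsilon(x,t,Du_\epsilon)(Dv_\epsilon,D\zeta)$ are linear in $v_\epsilon$, the left-hand side is exactly the one one would write down for $\hat{v}_\epsilon = v_\epsilon/\mu^2$, as introduced in~\eqref{tildev}. The right-hand side becomes
\[
\frac{C}{\mu^2}\bigg( \iint_{Q_{2\rho}(z_0)} \zeta (1+|f|^2)\,\dx\dt + \iint_{Q_{2\rho}(z_0)} |D\zeta|(1+|f|)\,\dx\dt \bigg),
\]
and here the factor $\mu^{-2}$ is absorbed into the constant by exploiting the standing assumption~\eqref{deltamu}, which gives $\mu^{-2}\leq\delta^{-2}$. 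Since the constant in Lemma~\ref{subsollemma} already depends on $\delta$, this does not enlarge the list of admissible dependencies appearing in the statement of Corollary~\ref{subsoltildev}.

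Second, I would apply the translation $(x,\tau) \mapsto (x_0+x,\, t_0+\tau)$, which maps $Q_{2\rho}$ bijectively onto $Q_{2\rho}(z_0)$ with unit Jacobian. Under this change of variables the time and spatial derivatives transform naturally, and $\hat{v}_\epsilon$, $\hat{\B}_\epsilon$, $f$ turn into $\Tilde{v}_\epsilon$, $\Tilde{\B}_\epsilon$, $\Tilde{f}$ exactly as in~\eqref{tildevrescaled}. Moreover, the assignment $\zeta \mapsto \Tilde{\zeta}$ with $\Tilde{\zeta}(x,\tau) = \zeta(x_0+x,\,t_0+\tau)$ is a bijection between the non-negative elements of $C^1_0(Q_{2\rho}(z_0))$ and the non-negative elements of $C^1_0(Q_{2\rho})$, so the range of admissible test functions is the correct one. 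Putting both steps together yields precisely the inequality claimed in Corollary~\ref{subsoltildev}.

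I do not expect any genuine obstacle, since the whole assertion is just a rescaled restatement of Lemma~\ref{subsollemma}. The only point requiring care is the bookkeeping of the constant after dividing by $\mu^2$, which is completely handled by the standing assumption~\eqref{deltamu}, i.e.\ $\delta<\mu$, and by the fact that Lemma~\ref{subsollemma} already provides a $\delta$-dependent constant.
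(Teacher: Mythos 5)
Your proposal is correct and follows exactly the route the paper takes: Lemma~\ref{subsollemma} is divided by~$\mu^2$ (with the factor~$\mu^{-2}\leq\delta^{-2}$ absorbed into the already~$\delta$-dependent constant via assumption~\eqref{deltamu}), and the unit-Jacobian translation~$(x,\tau)\mapsto(x_0+x,t_0+\tau)$ converts~$\hat{v}_\epsilon$,~$\hat{\B}_\epsilon$,~$f$,~$\zeta$ into their tilde counterparts. Nothing further is needed.
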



For convenience, we revert to denoting the time variable by~$t$ instead of~$\tau$. Since the bilinear form~$\Tilde{\B}_\epsilon(x,t,Du_\epsilon)(D\Tilde{v},D\Tilde{\zeta})$ vanishes on the set of points
$$Q_{2\rho}\cap \{\partial_{e^*}u_\epsilon(x_0+x,t_0+t) \leq 1+\delta\},$$
we may redefine~$\Tilde{\B}_\epsilon$ to the identity operator on the set of points where~$\Tilde{v}=0$ and obtain that~$\Tilde{v}$ is a sub-solution to a linear parabolic equation with elliptic and bounded coefficients. Consequently, two classical energy estimates of De Giorgi class-type can be inferred. 


\begin{mylem} [De Giorgi class-type estimates for a sub-solution] \label{degiorgilem}
    Let~$\delta,\epsilon\in(0,1]$ and~$\Tilde{v}=\Tilde{v}_\epsilon $ denote the weak sub-solution defined in~\eqref{tildevrescaled}. There exists~$C=C(C_\F(\delta),\hat{C}_\F,\delta,M,N,n,r_E)$, such that for any~$k>0$,~$\tau\in(0,1)$, and any cylinder~$Q_{r,s}(0,\tau_0)=B_r\times(\tau_0-s,\tau_0] \subset Q_{2\rho}$, there hold
    \begin{align} \label{degiorgieins}
        \esssup\limits_{\tau\in(\tau_0-\tau s,\tau_0]} & \int_{B_{\tau r}\times\{\tau\}} (\Tilde{v}-k)^2_+\,\dx + \iint_{Q_{\tau r,\tau s}(0,\tau_0)} |D(\Tilde{v} - k)_+|^2\,\dx\dt \\
        &\leq C \iint_{Q_{r,s}(0,\tau_0)} \bigg(\frac{(\Tilde{v}-k)^2_+}{(1-\tau)^2 r^2} + \frac{(\Tilde{v}-k)^2_+}{(1-\tau) s} \bigg)\,\dx\dt \nonumber \\
        & \quad + C \big(\rho^{2(1-\beta)} + \|f\|^2_{L^{n+2+\sigma}(Q_R)} \big) |Q_{r,s}(0,\tau_0) \cap \{\Tilde{v}>k\}|^{1-\frac{2}{n+2}+\frac{2\beta}{n+2}} \nonumber
    \end{align}
    and
    \begin{align} \label{degiorgizwei}
          \esssup\limits_{\tau\in(\tau_0  -\tau s,\tau_0]} &  \int_{B_{\tau r}\times\{\tau\}} (\Tilde{v}-k)^2_+\,\dx \\
          &\leq  \int_{B_r\times\{\tau_0-s\}} (\Tilde{v}-k)^2_+\,\dx +  C \iint_{Q_{r,s}(0,\tau_0)}\frac{(\Tilde{v}-k)^2_+}{(1-\tau)^2 r^2} \,\dx\dt \nonumber  \\
        & \quad + C \big(\rho^{2(1-\beta)} + \|f\|^2_{L^{n+2+\sigma}(Q_R)} \big) |Q_{r,s}(0,\tau_0) \cap \{\Tilde{v} >k\}|^{1-\frac{2}{n+2}+\frac{2\beta}{n+2}}. \nonumber
    \end{align}
\end{mylem}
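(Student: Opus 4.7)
The plan is to derive both \eqref{degiorgieins} and \eqref{degiorgizwei} from a single Caccioppoli-type computation applied to the weak sub-solution formulation in Corollary~\ref{subsoltildev}, the two cases differing only in the choice of the time cut-off. First I would test the inequality against $\Tilde{\zeta}(x,t)=\eta(x)^2\chi(t)^2(\Tilde{v}-k)_+$, where $\eta\in C^1_0(B_r,[0,1])$ satisfies $\eta\equiv 1$ on $B_{\tau r}$ with $|D\eta|\leq \tfrac{C}{(1-\tau)r}$, and $\chi$ is a Lipschitz time cut-off to be fixed below. Since $\Tilde{v}$ is not \emph{a priori} weakly differentiable in time, this test function is not literally admissible; I would justify it exactly as in Lemma~\ref{energybound}, by passing to Steklov averages of $\Tilde{v}$, testing with $\eta^2\chi^2([\Tilde{v}]_h-k)_+$, and letting $h\downarrow 0$.

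Integration by parts in the parabolic term, combined with the identity $\partial_t[\Tilde{v}]_h\cdot([\Tilde{v}]_h-k)_+=\tfrac{1}{2}\partial_t([\Tilde{v}]_h-k)_+^2$, produces the two time-slice contributions $\tfrac{1}{2}\int_{B_r\times\{t\}}\eta^2\chi^2(\Tilde{v}-k)_+^2\,\dx$ at $t=\tau_0$ and $t=\tau_0-s$, together with a distributed term $-\iint\eta^2\chi\partial_t\chi\,(\Tilde{v}-k)_+^2\,\dx\dt$. In the diffusion term I would expand $D\Tilde{\zeta}=\eta^2\chi^2 D(\Tilde{v}-k)_+ + 2\eta\chi^2(\Tilde{v}-k)_+ D\eta$, use $D\Tilde{v}=D(\Tilde{v}-k)_+$ a.e.\ on $\{\Tilde{v}>k\}$, and, thanks to the inclusion $\{\Tilde{v}>k\}\subset\{|Du_\epsilon|_E>1+\delta\}$ together with the uniform upper bound $|Du_\epsilon|_E\leq M$ from \eqref{minkowskischranke}, invoke Lemma~\ref{bilinearelliptic} to extract a coercive term proportional to $\iint\eta^2\chi^2|D(\Tilde{v}-k)_+|^2\,\dx\dt$, into which the cross-term is absorbed via Young's inequality at the cost of a contribution controlled by $C\iint|D\eta|^2\chi^2(\Tilde{v}-k)_+^2\,\dx\dt$. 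Crucially, the ellipticity constants furnished by Lemma~\ref{bilinearelliptic} depend only on $C_\F(\delta)$ and $\delta$ and are uniform in $\epsilon\in(0,1]$.

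On the datum side I would exploit the pointwise bound $(\Tilde{v}-k)_+\leq 1$ (a consequence of \eqref{schrankeeins}, \eqref{schrankezwei}, and \eqref{tildev}), apply Young's inequality to absorb the term containing $|D(\Tilde{v}-k)_+|$ into the coercive term, and use Hölder's inequality on the remaining integrals involving $\Tilde{f}$. The key arithmetic identity is $1-\tfrac{2}{n+2+\sigma}=1-\tfrac{2}{n+2}+\tfrac{2\beta}{n+2}$, which follows at once from \eqref{beta}; combined with the elementary estimate $|Q_{r,s}\cap\{\Tilde{v}>k\}|\leq C\rho^{2(1-\beta)}|Q_{r,s}\cap\{\Tilde{v}>k\}|^{1-\frac{2}{n+2+\sigma}}$, valid because $Q_{r,s}\subset Q_{2\rho}$, Hölder's inequality turns both $\iint_{Q_{r,s}\cap\{\Tilde{v}>k\}}(1+|\Tilde{f}|^2)$ and $\iint_{Q_{r,s}\cap\{\Tilde{v}>k\}}(1+|\Tilde{f}|)^2$ into the desired form $C(\rho^{2(1-\beta)}+\|f\|_{L^{n+2+\sigma}(Q_R)}^2)|Q_{r,s}\cap\{\Tilde{v}>k\}|^{1-\frac{2}{n+2}+\frac{2\beta}{n+2}}$.

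It then remains only to specify $\chi$ and to take an essential supremum. For \eqref{degiorgieins} I would choose $\chi(\tau_0-s)=0$, $\chi\equiv 1$ on $[\tau_0-\tau s,\tau_0]$, with $|\partial_t\chi|\leq \tfrac{C}{(1-\tau)s}$, so that the lower slice vanishes and the distributed parabolic term produces precisely the $\tfrac{1}{(1-\tau)s}$-contribution on the right; taking the essential supremum over $t\in(\tau_0-\tau s,\tau_0]$ of the upper slice then yields the first left-hand-side term. For \eqref{degiorgizwei} I would instead take $\chi\equiv 1$ on the whole of $[\tau_0-s,\tau_0]$, which removes the $\tfrac{1}{(1-\tau)s}$-term but preserves the initial slice $\int_{B_r\times\{\tau_0-s\}}(\Tilde{v}-k)_+^2\,\dx$ on the right. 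The principal technical nuisance, rather than a genuine obstacle, is the Steklov-average justification together with the Hölder-exponent bookkeeping; both are entirely standard in parabolic De Giorgi-class theory and no new idea beyond Corollary~\ref{subsoltildev} is required.
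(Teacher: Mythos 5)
Your proposal is correct and follows essentially the same route as the paper: the paper sets up the uniform ellipticity of the coefficient bilinear form (by redefining $\Tilde{\B}_\epsilon$ to the identity on the set where $\Tilde{v}$ vanishes, whereas you rely on the support of the test function lying in $\{\Tilde{v}>k\}\subset\{|Du_\epsilon|_E>1+\delta\}$ — both work), records the exponent identity $1-\tfrac{2}{n+2+\sigma}=1-\tfrac{2}{n+2}+\tfrac{2\beta}{n+2}$, and then defers the Caccioppoli computation to the standard references; your write-up simply carries out that deferred computation explicitly, including the correct choices of time cut-off for the two estimates.
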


\begin{proof}
    As explained above, we further redefine the bilinear form~$\Tilde{\B}_\epsilon$. For this matter, let us incorporate~$\B^*_\epsilon$, which is given by
    \begin{align*}
       \B^*_\epsilon & (x,t,Du_\epsilon(x_0+x,t_0+t)) \\
       &\coloneqq \begin{cases}
           \Tilde{\foo{I}}_n & \mbox{on~$\{ \partial_{e^*} u_\epsilon(x_0+x,t_0+t) \leq 1+\delta\}$}, \\
           \Tilde{\B}_\epsilon(x,t,Du_\epsilon(x_0+x,t_0+t)) & \mbox{on~$\{ \partial_{e^*}u_\epsilon(x_0+x,t_0+t)> 1+\delta\}$},
       \end{cases} 
    \end{align*}
where~$\Tilde{\foo{I}}_n(\xi,\eta)\coloneqq \langle \foo{I}_n \xi,\eta\rangle=\langle\xi,\eta\rangle$ for any~$\xi,\eta\in\R^n$. Consequently,~$\Tilde{v}$ is a weak sub-solution to
 \begin{align*}
        \iint_{Q_{2\rho}} \big( & -\Tilde{v}_\epsilon\partial_t\Tilde{\zeta} + \B^*_\epsilon(x,t,Du_\epsilon(x_0+x,t_0+t))(D\Tilde{v},D\Tilde{\zeta}) \big)\,\dx\dt \\
        &\leq C \bigg(\iint_{Q_{2\rho}} \Tilde{\zeta} (1+|\Tilde{f}|^2)\,\dx\dt + \iint_{Q_{2\rho}} |D\Tilde{\zeta}| (1+|\Tilde{f}|)\,\dx\dt \bigg)
    \end{align*}
    for any non-negative test function~$\Tilde{\zeta}\in C^1_0(Q_{2\rho})$, with constant~$C=C(\delta,M)$. Moreover, the redefined bilinear form~$\B^*_\epsilon$ is elliptic and bounded on the whole of~$\R^n$, i.e. there exists a constant~$\Tilde{C}=\Tilde{C}(C_\F(\delta),\delta)\geq 1$, such that
    \begin{align} \label{degiorgiellipticity}
        \Tilde{C}^{-1}|\xi|^2 \leq \B^*_\epsilon(x,t,Du_\epsilon(x_0+x,t_0+t))(\xi,\xi) \leq \Tilde{C}|\xi|^2
    \end{align}
    is satisfied for any~$(x,t)\in Q_{2\rho}$ and any~$\xi\in\R^n$. The preceding bound clearly holds true with constant~$\Tilde{C}=1$ in the set of points
    $$\{(x,t)\in Q_{2\rho}:\partial_{e^*} u_\epsilon(x_0+x,t_0+t)\leq 1+\delta\}.$$
    For the complementary subset
        $$\{(x,t)\in Q_{2\rho}: \partial_{e^*} u_\epsilon(x_0+x,t_0+t)> 1+\delta\},$$
    we recall the set inclusion
    $$ \{(x,t)\in Q_{2\rho}: \partial_{e^*} u_\epsilon(x_0+x,t_0+t)> 1+\delta\} \subset \{(x,t)\in Q_{2\rho}: |Du_\epsilon(x_0+x,t_0+t)|_E> 1+\delta\} $$
    and apply Lemma~\ref{bilinearelliptic} to obtain the estimate~\eqref{degiorgiellipticity}. The upper bound follows from the construction of~$\hat{\F}$, the bounds~\eqref{derivativebounds}, and the fact that~$\epsilon\leq 1$. Moreover, in the last term in~\eqref{degiorgieins} and~\eqref{degiorgizwei} involving the measure of the level sets, we exploited the identity
    $$\frac{n+\sigma}{n+2+\sigma}=1-\frac{2}{n+2+\sigma}=1-\frac{2}{n+2} + \frac{2\beta}{n+2}.$$
    In order to abbreviate this passage, we refrain from giving the details and instead refer the reader to~\cite[Chapter~10.1]{DiBenedetto2009} and also to~\cite[Proposition~7.1]{degeneratesystems}.
\end{proof}


Our final result in this section establishes the regularity~$v_\epsilon\in C^0_{\loc}\big(\Lambda_\rho ;L^2_{\loc}(B_\rho)\big)$. This property will be crucial for the discussion in Section~\ref{sec:nondegenerate}. In particular, we will exploit this fact in order to establish Proposition~\ref{lowerbound}, stating a lower bound for~$|Du_\epsilon|_E$ on~$Q_{\frac{\rho}{2}}(z_0)\Subset Q_{2\rho}(z_0)$. The proof proceeds similarly as in \cite[Chapter VIII, Section 3, Pages 230 -- 231]{dibenedetto1993degenerate}


\begin{mylem} \label{lem:stetigkeitinzeit} 
     Let~$\delta,\epsilon\in(0,1]$ and~$e^*\in \partial E^*$. Then, the function
     $$ v_\epsilon = (\partial_{e^*}u_\epsilon-(1+\delta))^2_+ $$
     satisfies
     \begin{equation} \label{est:stetigkeitinzeit}
     v_\epsilon \in C^0_{\loc} \big(\Lambda_{2\rho}(t_0);L^2_{\loc}(B_{2\rho}(x_0)) \big). 
     \end{equation}
\end{mylem}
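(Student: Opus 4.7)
The plan is to follow DiBenedetto's approach \cite[Chapter~VIII, Section~3, pp.~230--231]{dibenedetto1993degenerate}. Since $v_\epsilon$ is uniformly bounded by $\mu^2$ on $Q_{2\rho}(z_0)$ (by \eqref{schrankeeins}), strong $L^2_{\loc}$-continuity of $t \mapsto v_\epsilon(\cdot,t)$ would follow from combining weak $L^2$-continuity with continuity of the $L^2$-norm in $t$, via the polarization identity $\|v_\epsilon(\cdot,t) - v_\epsilon(\cdot,t_0)\|_{L^2}^2 = \|v_\epsilon(\cdot,t)\|_{L^2}^2 - 2\langle v_\epsilon(\cdot,t), v_\epsilon(\cdot,t_0)\rangle_{L^2} + \|v_\epsilon(\cdot,t_0)\|_{L^2}^2$.

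The weak continuity would be obtained by two matching one-sided bounds on $\int v_\epsilon(\cdot,t)\phi\,\dx$ for $\phi \in C^\infty_0(B_{r'}(x_1))$, where $B_{r}(x_1)\times[s_1,s_2] \Subset B_{r'}(x_1)\times[s_1,s_2] \Subset Q_{2\rho}(z_0)$. For $t_1 < t_2$ in $(s_1,s_2)$, one takes a Lipschitz time-cutoff $\eta_h$ approximating $\chi_{[t_1,t_2]}$. The upward direction follows directly from Lemma~\ref{subsollemma}: testing with $\zeta \coloneqq \phi\eta_h$ and passing $h \downarrow 0$ yields
\begin{equation*}
    \int_{B_{r'}}\!\bigl(v_\epsilon(t_2) - v_\epsilon(t_1)\bigr)\phi\,\dx \leq C\!\int_{t_1}^{t_2}\!\!\int_{B_{r'}}\!\bigl(\phi(1+|f|^2) + |D\phi|(1+|f|) + |\hat{\B}_\epsilon(Dv_\epsilon,D\phi)|\bigr)\,\dx\,\dt,
\end{equation*}
whose right-hand side tends to $0$ as $t_2 - t_1 \to 0$ by absolute continuity of the Lebesgue integral, since $Dv_\epsilon, f \in L^2_{\loc}(Q_{2\rho}(z_0))$ and $\hat{\B}_\epsilon(\cdot, Du_\epsilon)$ is bounded off $E_\delta$ by \eqref{hesseschrankedelta}. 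For the reverse inequality I would use the full weak equation satisfied by $u^*_\epsilon \coloneqq \partial_{e^*}u_\epsilon$, derived in the proof of Proposition~\ref{approxregularityzwei} and valid in view of the $W^{2,2}_{\loc}$-regularity from Proposition~\ref{approxregularityeins}. Testing this equation, via Steklov averages, with $-\phi(x)\eta_h(t)(u^*_\epsilon - (1+\delta))_+$ and using the pointwise identity $2(u^*_\epsilon - (1+\delta))_+ \partial_t u^*_\epsilon = \partial_t v_\epsilon$ would then produce the matching lower bound.

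The norm continuity can be obtained in the same spirit by testing the weak equation for $u^*_\epsilon$ with $\phi^2(u^*_\epsilon - (1+\delta))_+\eta_h$ on varying time supports, yielding an energy identity that controls $\int_{B_{r'}} v_\epsilon(\cdot,t)\phi^2\,\dx$ as a continuous function of $t$. The principal technical obstacle is executing the reverse direction of the weak-continuity estimate rigorously: since $v_\epsilon$ is only a sub-solution, the matching lower bound must come from the full equation for $u^*_\epsilon$, and carrying out the Steklov-averaging procedure requires the admissibility of the truncated test function $(u^*_\epsilon - (1+\delta))_+$, which is ensured by the $W^{2,2}_{\loc}$-regularity of $u_\epsilon$ together with the Lipschitz property of the positive-part map.
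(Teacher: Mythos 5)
Your overall architecture — weak $L^2$-continuity plus continuity of the norm, combined through polarization in the spirit of DiBenedetto — is exactly the scheme the paper follows (the paper proves the norm-continuity step explicitly and cites \cite[Chapter VIII, Section 3, pp.~230--231]{dibenedetto1993degenerate} for the rest). However, there is a concrete power mismatch in your norm-continuity step that prevents the argument from closing as written. Writing $w\coloneqq(\partial_{e^*}u_\epsilon-(1+\delta))_+$, so that $v_\epsilon=w^2$, your polarization identity requires continuity of $t\mapsto\|v_\epsilon(\cdot,t)\|_{L^2}^2=\int v_\epsilon^2\phi^2\,\dx=\int w^4\phi^2\,\dx$. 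Testing the differentiated equation with $\phi^2 w\,\eta_h$ produces $\partial_t(\partial_{e^*}u_\epsilon)\cdot w=\tfrac12\partial_t w^2=\tfrac12\partial_t v_\epsilon$, hence controls only $t\mapsto\int v_\epsilon\phi^2\,\dx$ — which is the first moment of $v_\epsilon$, i.e.\ just another instance of the weak continuity you already have from Step 1, not the norm. Conversely, if you regard Step 2 as norm continuity for $w$ and try to conclude strong continuity of $w$ (whence of $v_\epsilon=w^2$ by boundedness), the cross term $\int w(t)w(t_0)\phi^2\,\dx$ requires \emph{weak} continuity of $w$ itself, whereas your Step 1 only gives weak continuity of $w^2$; weak convergence of squares does not imply weak convergence of the functions, even for non-negative ones. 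So one of the two ingredients is always for the wrong power of $w$.

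The fix is precisely the paper's choice of test function: take $\partial_{e^*}\phi_{e^*}$ with $\phi_{e^*}=4w^3\zeta=4v_\epsilon^{3/2}\zeta$, so that $\partial_t(\partial_{e^*}u_\epsilon)\cdot 4w^3=\partial_t w^4=\partial_t v_\epsilon^2$ and one obtains an exact identity for the increment of $\int v_\epsilon^2\psi^2\,\dx$ between two time slices; its right-hand side is an integral over $B_{2\rho}(x_0)\times(\tau_1,\tau_2)$ of an $L^1$ integrand (using $D^2u_\epsilon\in L^2_{\loc}$ from Proposition~\ref{approxregularityeins}, the bounds~\eqref{derivativebounds}, \eqref{hatflipschitz}, \eqref{hesseschrankedelta}, the support inclusion $\{\partial_{e^*}u_\epsilon>1+\delta\}\subset\{|Du_\epsilon|_E>1+\delta\}$ which makes the equation non-degenerate there, and $f\in L^2$), so it vanishes as $|\tau_2-\tau_1|\downarrow0$ by absolute continuity of the integral. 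Note also that because the integrand is supported where the equation is uniformly elliptic, the paper gets a genuine identity directly and does not need your two-sided sub-/super-solution matching for this step; that device is only needed (and handled by DiBenedetto's cited argument) for the weak-continuity part.
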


\begin{proof}
We recall the structure conditions~\eqref{Gapprox} -- \eqref{quadraticgrowthohnekonst} for~$\hat{\F}_\epsilon$. Similarly as before, we abbreviate notation by writing~$u=u_\epsilon$ and $v=v_\epsilon$ and test the weak form~\eqref{weakformapprox} with the test function~$\partial_{e^*}\phi_{e^*}$, where~$\phi_{e^*} = 4v^{\frac32}\zeta=4(\partial_{e^*} u-(1+\delta))^3_+ \zeta $. Here,~$\zeta \in W^{1,\infty}_0(Q_{2\rho }(z_0))$ denotes an arbitrary cut-off function. Up to an approximation argument, this test function is admissible. 
Subsequently, we integrate by parts, leading to the identity 
\begin{align*} 
     \iint_{Q_{2\rho} (z_0)} & \partial_t[\partial_{e^*}u] \phi_{e^*} \,\dx\dt + \iint_{Q_{2\rho} (z_0)} \langle \partial_{e^*} [\hat{\h}_\epsilon(x,t,Du)], D\phi_{e^*}\rangle \,\dx\dt = - \iint_{Q_{2\rho} (z_0)} f \partial_{e^*}\phi_{e^*}\,\dx\dt. 
\end{align*}
This procedure can once again be justified via Steklov-averages. In the preceding integral identity, we now choose~$\zeta\in W^{1,\infty}_0(Q_{2\rho }(z_0))=\psi^2(x)\eta(t)$, such that~$\zeta$ consists of a standard smooth spatial cut-off function~$\psi\in C^1_0(B_{2\rho}(x_0))$ and a Lipschitz continuous cut-off function in time~$\eta\in W^{1,\infty}_0(\R)$ that satisfies~$\eta\equiv 0$ on~$(-\infty,\tau_1-\delta)$,~$\eta \equiv 1$ on~$(\tau_1,\tau_2)$,~$\eta \equiv 0$ on~$(\tau_2+\delta,\infty)$, and interpolates linearly on~$(\tau_1-\delta,\tau_1)$ and on~$(\tau_2,\tau_2+\delta)$. Here, we assume~$t_0-4\rho^2<\tau_1-\delta<\tau_1<\tau_2<\tau_2+\delta<t_0$ for an arbitrarily chosen parameter~$\delta>0$ small enough. In a similar manner as in the proof of Lemma~\ref{subsollemma}, by integrating by parts in the term involving the time derivative and passing to the limit~$\delta\downarrow 0$, we obtain
 \begin{align} \label{est:stetiginzeiteins}
     \int_{B_{2\rho}(x_0)\times\{\tau_2\}} & v^2 \psi^2\,\dx - \int_{B_{2\rho}(x_0)\times\{\tau_1\}} v^2 \psi^2\,\dx \\
     &= - 4 \iint_{B_{2\rho}(x_0)\times(\tau_1,\tau_2)} \langle \partial_{e^*} [\hat{\h}_\epsilon(x,t,Du)], D[(\partial_{e^*} u-(1+\delta))^3_+ \psi^2]\rangle \,\dx\dt \nonumber \\
     & \quad - 4 \iint_{B_{2\rho} (x_0)\times(\tau_1,\tau_2)} f \partial_{e^*}[(\partial_{e^*} u-(1+\delta))^3_+  \psi^2]\,\dx\dt \nonumber \\
     & \eqqcolon \foo{I} + \foo{II}. \nonumber
 \end{align}
We now argue that both quantities~$\foo{I}$ and~$\foo{II}$ are indeed finite. For the first term~$\foo{I}$, this follows from the fact that the quantity~$ \partial_{e^*} [\hat{\h}_\epsilon(x,t,Du)] $ belongs to~$L^2_{\loc}(Q_{2\rho}(z_0) \cap\{|Du|_E u>1+\delta\} )$, which can be inferred similarly as in Lemma~\ref{subsollemma} by exploiting the fact that the integrand is compactly supported within a subset of~$Q_{2\rho}(z_0)$ where~$|Du|_E\geq 1+\delta$, due to the set inclusion
$$ \{ (x,t)\in Q_{2\rho}(z_0): \partial_{e^*}u \geq 1+\delta \} \subset \{ (x,t)\in Q_{2\rho}(z_0): |Du|_E \geq 1+\delta \} ,$$ 
and by using the bounds~\eqref{derivativebounds},~\eqref{hatflipschitz},~\eqref{schrankeeins},~\eqref{hesseschrankedelta}, and also the fact that~$u_\epsilon$ admits second order weak spatial derivatives that belong to~$L^2_{\loc}(Q_{2\rho}(z_0))$ according to Lemma~\ref{approxregularityeins}. The reasoning for the second quantity~$\foo{II}$ is similar, where we recall that~$f\in L^2(\Omega_T)$. Therefore, we take absolute values in~\eqref{est:stetiginzeiteins} and pass to the limit~$|\tau_2-\tau_1|\downarrow 0$ on both sides of~\eqref{est:stetiginzeiteins} to obtain 
\begin{align*}
    \limsup\limits_{|\tau_2-\tau_1| \downarrow 0} \Bigg| \int_{B_{2\rho}(x_0)\times\{\tau_2\}} & v^2 \psi^2\,\dx - \int_{B_{2\rho}(x_0)\times\{\tau_1\}} v^2 \psi^2\,\dx \Bigg| = 0
\end{align*}
for any smooth spatial cut-off function~$\psi\in C^1_0(B_{2\rho}(x_0))$ and any~$t_0-4\rho^2<\tau_1<\tau_2<t_0$. The claimed assertion~\eqref{est:stetiginzeiteins} now follows verbatim from the arguments provided by DiBenedetto in~\cite[Chapter VIII, Section 3, Pages 230 -- 231]{dibenedetto1993degenerate}.
\end{proof}


\section{The non-degenerate regime} \label{sec:nondegenerate}
In this section, we establish the proof of Proposition~\ref{nondegenerateproposition}. Roughly speaking, the measure-theoretic information~\eqref{nondegeneratemeascond} allows to deduce a lower bound for~$|Du_\epsilon|_E$ on the smaller cylinder~$Q_{\frac{\rho}{2}}(z_0)$, provided the parameter~$\nu\in(0,\frac{1}{4}]$ incorporated in the measure-theoretic information~\eqref{nondegeneratemeascond} is chosen sufficiently small in dependence on the given data. By differentiating the weak form of~\eqref{weakformapprox}, we subsequently obtain a solution to a linear parabolic equation with elliptic and bounded coefficients, which further enables us to exploit a quantitative excess-decay estimate for parabolic De Giorgi classes that is taken from~\cite{dibenedetto2023parabolic}. We recall that in the course of this section, the set of assumptions~\eqref{schrankeeins},~\eqref{schrankezwei},~\eqref{deltamu}, 
and~\eqref{nondegeneratemeascond} is at our disposal.\,\\


We begin this section with the following expedient proposition, where we exploit the measure-theoretic information~\eqref{nondegeneratemeascond}. This allows us to derive a lower bound for~$|Du_\epsilon|_E $ on the smaller cylinder~$Q_{\frac{\rho}{2}}(z_0)$. 


\begin{myproposition} \label{lowerboundprop}
Let~$\delta,\epsilon\in(0,1]$ and~$u_\epsilon$ denote the weak solution to~\eqref{approx}. Then, there exists a parameter~$\nu=\nu(C_\F(\delta),\hat{C}_\F,\delta,\|f\|_{L^{n+2+\sigma}(Q_R)},M,N,n,R,R_E,r_E,\sigma) \in(0,\frac{1}{4}]$, such that if there holds
\begin{equation} \label{lowerboundvs}
   | Q_\rho(z_0)  \cap  \{ \partial_{e^*} u_\epsilon - (1+\delta) \leq (1-\nu) \mu \}|  \leq \nu |Q_\rho(z_0)|,
\end{equation}
for at least one~$e^*\in\partial E^*$, then there holds
\begin{equation} \label{lowerboundest}
   |D u_\epsilon|_E \geq 1+\delta+\frac{\mu}{4} \qquad\mbox{a.e. in~$Q_{\frac{\rho}{2}}(z_0)$}.
\end{equation}
\end{myproposition}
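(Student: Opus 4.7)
My plan is a De~Giorgi-type iteration applied to the directional derivative $\partial_{e^*}u_\epsilon$ in the spirit of Kuusi--Mingione~\cite{kuusi2013gradient}, coupled at the end with the $C^0$-in-$t$, $L^2$-in-$x$ continuity of $v_\epsilon = (\partial_{e^*}u_\epsilon-(1+\delta))_+^2$ furnished by Lemma~\ref{lem:stetigkeitinzeit}. Since $|Du_\epsilon|_E\geq\partial_{e^*}u_\epsilon$ by~\eqref{minkowskialternativ}, it suffices to prove that $\partial_{e^*}u_\epsilon\geq 1+\delta+\tfrac{\mu}{4}$ a.e.\ in $Q_{\rho/2}(z_0)$, for the direction $e^*$ selected by~\eqref{lowerboundvs}.

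The iteration is run on shrinking cylinders $Q_{r_j}(z_0)$ with $r_j = \tfrac{\rho}{2}(1+2^{-j})$ and levels $k_j = 1+\delta+\tfrac{\mu}{4}+\tfrac{\mu}{2^{j+2}}$. After differentiating the weak form~\eqref{weakformapprox} in direction $e^*$ (rigorously via Steklov averaging and the second-order regularity of Proposition~\ref{approxregularityeins}), I would test it against
\[
\phi_j = \zeta_j^{\,2}\, T_{\tau_j}\!\bigl((k_j-\partial_{e^*}u_\epsilon)_+\bigr),\qquad \tau_j \coloneqq k_j-(1+\delta)-\tfrac{\mu}{8},
\]
with $\zeta_j\in C^1_0(Q_{r_j}(z_0),[0,1])$ a standard parabolic cut-off and $T_{\tau_j}(s)\coloneqq\min\{s,\tau_j\}$. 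The shape of $\phi_j$ is dictated by the need to avoid $\epsilon$-dependent constants: the support of $D\phi_j$ lies in $\{1+\delta+\tfrac{\mu}{8}<\partial_{e^*}u_\epsilon<k_j\}$, exactly the regime where Lemma~\ref{bilinearelliptic} gives uniform ellipticity of $\hat{\mathcal{B}}_\epsilon$ with constants depending only on $C_{\mathcal{F}}(\delta)$ and $\delta$. The time-derivative term is handled via a primitive of the truncated nonlinearity in the spirit of Lemma~\ref{subsollemma}, the spatial-coefficient contribution via~\eqref{hatflipschitz}, and the $L^{n+2+\sigma}$-datum term via Young's inequality. Combining the resulting Caccioppoli inequality with the parabolic Sobolev embedding (Lemma~\ref{sobolevembedding}) yields the geometric recursion
\[
Y_{j+1}\leq C\,b^{\,j}\,Y_j^{1+\kappa},\qquad Y_j\coloneqq\frac{\bigl|Q_{r_j}(z_0)\cap\{1+\delta+\tfrac{\mu}{8}<\partial_{e^*}u_\epsilon<k_j\}\bigr|}{|Q_{r_j}(z_0)|},
\]
with $C,b,\kappa$ independent of $\epsilon$. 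Since~\eqref{deltamu} gives $\delta<\mu$, one has $1+\delta+\tfrac{\mu}{4}\leq 1+\delta+(1-\nu)\mu$ for $\nu\leq\tfrac34$, so~\eqref{lowerboundvs} implies $Y_0\leq C\nu$. Lemma~\ref{geometriclem} then forces $Y_\infty=0$ provided $\nu$ is chosen small in terms of $C,b,\kappa$, which produces the \emph{a priori insufficient} measure-theoretic conclusion
\[
\bigl|Q_{\rho/2}(z_0)\cap\{1+\delta+\tfrac{\mu}{8}<\partial_{e^*}u_\epsilon<1+\delta+\tfrac{\mu}{4}\}\bigr|=0.
\]
This does not exclude the possibility that $\partial_{e^*}u_\epsilon\leq 1+\delta+\tfrac{\mu}{8}$ on a set of positive measure in $Q_{\rho/2}(z_0)$, i.e.\ that the regularized equation remains (almost) degenerate there; by construction, no test function respecting $\epsilon$-uniformity can reach this residual region within the De~Giorgi scheme, which is the main obstacle.

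To close the gap I would invoke Lemma~\ref{lem:stetigkeitinzeit}. Fubini's theorem applied to~\eqref{lowerboundvs}, together with the fact that $B_{\rho/2}(x_0)$ occupies a fixed fraction of $B_\rho(x_0)$, yields—for $\nu$ small compared to a dimensional constant—some $t_\ast\in\Lambda_\rho(t_0)$ with $\int_{B_{\rho/2}(x_0)}v_\epsilon(\cdot,t_\ast)\,\dx\geq c(n)\mu^2|B_{\rho/2}(x_0)|$. The $L^2_{\loc}(B_{2\rho}(x_0))$-continuity of $t\mapsto v_\epsilon(\cdot,t)$—supplemented by the sub-solution energy estimate of Lemma~\ref{subsollemma} to extract a quantitative modulus independent of $\epsilon$—propagates this integrated positivity from $t_\ast$ to every $t\in\Lambda_{\rho/2}(t_0)$. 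On the other hand, for a.e.\ such $t$ the null-set identity above, combined with the bounds $v_\epsilon\leq\mu^2$ and $v_\epsilon\equiv 0$ on $\{\partial_{e^*}u_\epsilon\leq 1+\delta\}$, gives
\[
\int_{B_{\rho/2}(x_0)}v_\epsilon(\cdot,t)\,\dx\leq\mu^2\bigl|B_{\rho/2}(x_0)\cap\{\partial_{e^*}u_\epsilon(\cdot,t)\geq 1+\delta+\tfrac{\mu}{4}\}\bigr|.
\]
Comparing the two estimates, the latter slice must exhaust $B_{\rho/2}(x_0)$ up to a null set for every $t\in\Lambda_{\rho/2}(t_0)$, from which~\eqref{lowerboundest} follows. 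The genuinely delicate point is the quantitative compatibility between the smallness of $\nu$ required for $Y_0\leq C\nu$ to trigger Lemma~\ref{geometriclem} and the time-modulus inherited from Lemma~\ref{lem:stetigkeitinzeit}; this coupling is what pins down the dependence of $\nu$ on all the data listed in the statement.
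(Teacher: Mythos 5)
Your first half reproduces the paper's Steps 2--3: a De Giorgi iteration on doubly truncated level sets of $\partial_{e^*}u_\epsilon$, with a test function engineered so that its gradient is supported in $\{1+\delta+\tfrac{\mu}{8}<\partial_{e^*}u_\epsilon<k_j\}$, where Lemma~\ref{bilinearelliptic} gives $\epsilon$-independent ellipticity; the paper packages the truncation multiplicatively as $\zeta\,\partial_{e^*}\tilde u\,(\partial_{e^*}\tilde u-a)_+(\partial_{e^*}\tilde u-\kappa)_-$ and uses Lemma~\ref{poincarelem} rather than Lemma~\ref{sobolevembedding}, but the outcome is the same null band~\eqref{est:meascondinterscaled}. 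One caveat on your choice: $\phi_j$ itself does not vanish on $\{\partial_{e^*}u_\epsilon\le 1+\delta\}$ (only $D T_{\tau_j}$ does), so the lower-order term $D\zeta_j\cdot T_{\tau_j}\bigl((k_j-\partial_{e^*}u_\epsilon)_+\bigr)$ is paired with $\partial_{e^*}[\hat{\h}_\epsilon(x,t,Du_\epsilon)]$ over the degenerate set, where $\hat{\F}_\epsilon$ is only $C^1$ in $\xi$; the factor $(\partial_{e^*}\tilde u-a)_+$ in the paper's test function is there precisely to keep every term away from $\overline{E}$.

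The genuine gap is in your closing argument. The null band only tells you that, for a.e.\ fixed $t$, the ball splits into a ``low'' part ($\partial_{e^*}u_\epsilon\le 1+\delta+\tfrac{\mu}{8}$) and a ``high'' part ($\ge 1+\delta+\tfrac{\mu}{4}$) whose proportions may vary with $t$. Lemma~\ref{lem:stetigkeitinzeit} is purely qualitative -- its proof yields no modulus of continuity at all, let alone one uniform in $\epsilon$ -- so it cannot ``propagate integrated positivity from $t_*$ to every $t\in\Lambda_{\rho/2}(t_0)$'': continuity of $t\mapsto\|v_\epsilon(\cdot,t)\|_{L^2}$ is perfectly compatible with that quantity decaying smoothly to zero across the time interval as the high part shrinks slice by slice. (Even where the lower bound holds, your comparison cannot force the high part to exhaust $B_{\rho/2}(x_0)$ up to a null set, since $v_\epsilon$ may be as large as $\mu^2/64$ on the low part and the Fubini lower bound at $t_*$ is strictly below $\mu^2|B_{\rho/2}(x_0)|$.) The missing ingredient is the slice-wise discrete isoperimetric inequality, Lemma~\ref{isoperimetric}: applied with $l=1+\delta+\tfrac{\mu}{8}$, $m=1+\delta+\tfrac{\mu}{4}$ and integrated in time, the null band forces, for a.e.\ $t$, either $\partial_{e^*}u_\epsilon(\cdot,t)\le 1+\delta+\tfrac{\mu}{8}$ a.e.\ in the ball or $\ge 1+\delta+\tfrac{\mu}{4}$ a.e.\ in the ball. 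Only after this dichotomy does $t\mapsto\|v_\epsilon(\cdot,t)\|_{L^2}^2$ take values in two separated bands, so that the qualitative continuity of Lemma~\ref{lem:stetigkeitinzeit}, together with the measure condition~\eqref{lowerboundvs}, excludes the low alternative on every time slice; this is exactly how the paper concludes.
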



\begin{proof}
To abbreviate notation, we shall simply write~$u=u_\epsilon$ in what follows. The proof consists of three major steps and follows the approach taken in~\cite[Proposition~4.1]{elliptisch}, which is inspired by the work of Kuusi and Mingione in~\cite[Proof of Proposition 3.7]{kuusi2013gradient}. \,\\
\textbf{Step 1:} We commence by rescaling the solution~$u$ to the unit cylinder. For this matter, we introduce the rescaled mappings
\begin{align*}
\begin{cases}
    \Tilde{u}(x,t) \coloneqq \frac{u(x_0+\rho x,t_0+\rho^2t)}{\rho}  &  \mbox{for~$(x,t)\in Q_1$}, \\
    \Tilde{\F}_\epsilon(x,t,\xi) \coloneqq \hat{\F}_\epsilon(x_0+\rho x,t_0+\rho^2 t,\xi)  &  \mbox{for~$(x,t)\in Q_{1}, \xi\in\R^n$}, \\
    \Tilde{\mathcal{B}}_\epsilon(x,t,\xi) \coloneqq \hat{\mathcal{B}}_\epsilon(x_0+\rho x,t_0+\rho^2 t,\xi)  &  \mbox{for~$(x,t) \in Q_{1}, \xi\in\R^n$}, \\
    \Tilde{f} (x,t) \coloneqq \rho f(x_0+\rho x,t_0+\rho^2 t) & \mbox{for~$(x,t)\in Q_{1}$},
    \end{cases}
\end{align*}
where we set~$Q_1 \coloneqq B_{1}\times(-1,0]$. In the following we will use the abbreviation~$\Tilde{\h}_\epsilon(x,t,\xi) \coloneqq \nabla\Tilde{\F}_\epsilon(x,t,\xi)$. As~$u$ is a weak solution to~\eqref{approx} in~$Q_\rho(z_0)$, the rescaled mapping~$\Tilde{u}$ is a weak solution to
\begin{equation*}
    \partial_t \Tilde{u} - \divv \Tilde{\h}_\epsilon(x,D\Tilde{u}) = \Tilde{f} \qquad \mbox{in~$Q_{1}$}.
\end{equation*}
Moreover, the measure-theoretic assumption~\eqref{lowerboundvs} translates to
\begin{align} \label{lowerboundscaled}
   |\{ \partial_{\Tilde{e}^*} \Tilde{u} - (1+\delta) \leq (1-\nu)\mu  \}| \leq \nu |Q_1|
\end{align}
on~$Q_1$, where~$\Tilde{e}^* \in \partial E^*$ denotes one element that satisfies this measure condition. \,\\
\textbf{Step 2:} Next, we aim to establish a Caccioppoli-type estimate for~$\Tilde{u}$. Let~$\kappa\in(1+\delta+\frac{\mu}{4},1+\delta+\mu)$ and also~$e^*\in\partial E^*$ be chosen arbitrarily. In a similar manner to Lemma~\ref{subsollemma}, we abbreviate~$a=1+\delta$ and also~$\Tilde{\kappa}=1+\delta+\frac{\mu}{8}$. We test the weak form~\eqref{weakformapprox} with~$\partial_{e^*}\phi_{e^*}$, where~$\phi_{e^*}=\zeta \partial_{e^*} \Tilde{u}
 ( \partial_{e^*} \Tilde{u} - a)_+ ( \partial_{e^*} \Tilde{u} - \kappa)_- .$ Here,~$\zeta \in W^{1,\infty}_0(Q_1,[0,1])$ denotes an arbitrary cut-off function, whose choice can be justified by an approximation argument. Integrating by parts, we obtain
\begin{equation} \label{lowerbound}
     \iint_{Q_1} \big(\partial_t [\partial_{e^*}\Tilde{u}] \phi_{e^*}  + \langle \partial_{e^*}[\Tilde{\h}_\epsilon(x,t,D\Tilde{u})], D\phi_{e^*}\rangle \big)  \,\dx\dt = - \iint_{Q_1} \Tilde{f} \partial_{e^*}\phi_{e^*}\,\dx\dt.
\end{equation}
As before, this test function is admissible up to a Steklov-average procedure as in Lemma~\ref{energybound}. We now choose~$\zeta \in W^{1,\infty}_0(Q_1,[0,1])$ as~$\zeta(x,t) \coloneqq \psi^2(x) \eta^2(t) \omega(t)$, where~$\psi\in C^1_0(B_1,[0,1])$ denotes an arbitrary smooth and non-negative spatial cut-off function and~$\eta,\omega\in W^{1,\infty}(\R,[0,1])$ denote Lipschitz continuous cut-off functions in time with~$\eta(-1)=0$,~$\partial_t \eta \geq 0$ and~$\omega(0)=0$,~$\partial_t \omega \leq 0$. In particular, the cut-off function~$\omega(t)$ will be specified later on. Then, there holds
\begin{align*}
    \partial_j \phi_{e^*} &= \partial_j \zeta  (\partial_{e^*} \Tilde{u} - a)_+ (\partial_{e^*} \Tilde{u} -\kappa)_- + \zeta \partial_j \partial_{e^*} \Tilde{u} (\partial_{e^*} \Tilde{u} - a)_+ (\partial_{e^*} \Tilde{u} -\kappa)_- \\
    &\quad + \zeta \partial_{e^*}\Tilde{u} \partial_{j}\partial_{e^*}\Tilde{u}   \big( (\partial_{e^*} \Tilde{u}-\kappa)_-\bigchi_{\{\partial_{e^*} \Tilde{u}>a\}} + (\partial_{e^*} \Tilde{u}-a)_+\bigchi_{\{ \partial_{e^*} \Tilde{u}< \kappa \}} \big) 
\end{align*} 
a.e. in~$Q_1$, for any~$j=1,\ldots,n$. We begin by treating the term involving the time derivative in~\eqref{lowerbound}, where we obtain
\begin{align*}
   \iint_{Q_1} \partial_t[\partial_{e^*} \Tilde{u}] \phi_{e^*} \,\dx\dt =  \frac{1}{2} \iint_{Q_1} \partial_t(\partial_{e^*}\Tilde{u})^2 ( \partial_{e^*} \Tilde{u} - a)_+ ( \partial_{e^*} \Tilde{u} - \kappa)_- \zeta \,\dx\dt.
\end{align*}
An integration by parts thus leads to
\begin{align} \label{lowerboundtime}
  \iint_{Q_1} & \partial_t[\partial_{e^*} \Tilde{u}] \phi_{e^*} \,\dx\dt \\ 
    &= \frac{1}{2} \iint_{Q_1} \partial_t \Bigg(\int_{0}^{\partial_{e^*} \Tilde{u}^2}(\sqrt{s}-a)_+ (\sqrt{s}-\kappa)_- \,\ds \Bigg) \bigchi_{\{ a<\partial_{e^*} \Tilde{u}<\kappa \}} \zeta \,\dx\dt \nonumber \\
    &=  -\iint_{Q_1} \Bigg(\int_{0}^{\partial_{e^*} \Tilde{u}} s(s-a)_+ (s-\kappa)_-  \,\ds \Bigg) \bigchi_{\{ a<\partial_{e^*} \Tilde{u}<\kappa \}} \eta \partial_t \eta\, \omega \psi^2 \,\dx\dt \nonumber \\
    & \quad - \frac{1}{2} \iint_{Q_1} \Bigg(\int_{0}^{\partial_{e^*} \Tilde{u}} s(s-a)_+ (s-\kappa)_-  \,\ds \Bigg) \bigchi_{\{ a<\partial_{e^*} \Tilde{u}<\kappa \}} \partial_t \omega\, \eta^2 \psi^2 \,\dx\dt \nonumber.
\end{align}
We will now treat the inner one-dimensional integrals in both integral quantities on the right-hand side of the preceding identity. The integral in the first term on the right-hand side above is bounded further above, while the second term is bounded below. We begin with the treatment of the first quantity, where we exploit the fact that~$\partial_{e^*} \Tilde{u} \leq  |D \Tilde{u}|_E \leq  \frac{1}{r_E}\|D\Tilde{u}\|_{L^\infty(Q_1)}$ a.e. in~$Q_1$, which follows from~\eqref{minkowskialternativ} and~\eqref{betragminkowski}. In combination with~\eqref{schrankezwei}, this leads to
\begin{align*}
    \int_{0}^{\partial_{e^*} \Tilde{u}} s(s-a)_+ (s-\kappa)_-  \,\ds &\leq \int_{0}^{\frac{1}{r_E}\|D\Tilde{u}\|_{L^\infty(Q_1)}} s^3  \,\ds \\
    &\leq C(r_E)\|D\Tilde{u}\|^4_{L^\infty(Q_1)} \\
    &\leq C(M,r_E) 
\end{align*} 
for a.e.~$(x,t)\in Q_1 \cap \{ a<\partial_{e^*} \Tilde{u}<\kappa \}$. The second quantity is for a.e.~$(x,t)\in Q_1 \cap \{ a<\partial_{e^*} \Tilde{u}<\Tilde{\kappa} \}$ bounded below by exploiting the assumption~\eqref{deltamu} and our choices of~$a,\kappa,\Tilde{\kappa}$ as follows
\begin{align*}
    \int_{0}^{\partial_{e^*} \Tilde{u}} s(s-a)_+ (s-\kappa)_-  \,\ds  &\geq \int_{a}^{\partial_{e^*} \Tilde{u}} s(s-a)_+ (s-\kappa)_-  \,\ds  \\
    &\geq C(\delta) \int_{a}^{\partial_{e^*} \Tilde{u}} (s-a)_+  \,\ds  \\
    &= C(\delta) (\partial_{e^*} \Tilde{u}-a)^2_+ .
\end{align*}
Next, we turn our attention to the diffusion term in~\eqref{lowerbound}. There, we calculate
\begin{align*}
    \iint_{Q_1} &\langle \partial_{e^*} [\Tilde{\h}_\epsilon(x,t,D\Tilde{u})],D\phi_{e^*} \rangle\,\dx\dt \\
    &= \iint_{Q_1} \langle\Tilde{\B}_\epsilon(x,t,D\Tilde{u})(\partial_{e^*}D\Tilde{u}, D\phi_{e^*} \rangle\,\dx\dt + \iint_{Q_1} \langle\partial_{e^*}\Tilde{\h}_\epsilon
    (x,t,D\Tilde{u}), D\phi_{e^*} \rangle\,\dx\dt \\
    &\eqqcolon \foo{I} + \foo{II}. 
\end{align*}
We begin by treating the first term~$\foo{I}$, which yields
\begin{align*}
    \foo{I} &= \iint_{Q_1} \Tilde{\B}_\epsilon(x,t,D\Tilde{u})(\partial_{e^*} D\Tilde{u},D\zeta) \partial_{e^*}\Tilde{u} (\partial_{e^*}\Tilde{u} - a)_+ (\partial_{e^*}\Tilde{u} -\kappa)_- \,\dx\dt \\
    &\quad + \iint_{Q_1} \zeta \Tilde{\B}_\epsilon(x,t,D\Tilde{u})(\partial_{e^*}D\Tilde{u},\partial_{e^*}D\Tilde{u})(\partial_{e^*}\Tilde{u} - a)_+ (\partial_{e^*}\Tilde{u} -\kappa)_- \,\dx\dt \\
    &\quad + \iint_{Q_1} \zeta \Tilde{\B}_\epsilon(x,t,D\Tilde{u})(\partial_{e^*}D\Tilde{u},\partial_{e^*}\Tilde{u}) \partial_{e^*}\Tilde{u}  (\partial_{e^*}\Tilde{u} -\kappa)_- \bigchi_{\{\partial_{e^*}\Tilde{u}>a \}} \,\dx\dt \\
    &\quad + \iint_{Q_1} \zeta \Tilde{\B}_\epsilon(x,t,D\Tilde{u})(\partial_{e^*}D\Tilde{u},\partial_{e^*}D\Tilde{u}) \partial_{e^*}\Tilde{u} (\partial_{e^*}\Tilde{u} -a)_+ \bigchi_{\{\partial_{e^*}\Tilde{u}<\kappa \}} \,\dx\dt.
\end{align*}
Due to the set inclusion
 $$ \{ (x,t)\in Q_{1}: \partial_{e^*}\Tilde{u} \geq 1+\delta \} \subset \{ (x,t)\in Q_{1}: |D\Tilde{u}|_E \geq 1+\delta \}, $$
 which holds true for any~$e^*\in\partial E^*$, we employ Lemma~\ref{bilinearelliptic}, where we recall~\eqref{hatfvoraussetzung}, to obtain the lower bound
\begin{align} \label{lowerboundtermeins}
    \foo{I} &\geq - C \iint_{Q_1}  |\partial_{e^*}D\Tilde{u}| |D\zeta| \partial_{e^*}\Tilde{u} (\partial_{e^*}\Tilde{u} - a)_+ (\partial_{e^*}\Tilde{u} -\kappa)_- \,\dx\dt \\
    &\quad + (\epsilon+C_1) \iint_{Q_1} \zeta |\partial_{e^*}D\Tilde{u}|^2 (\partial_{e^*}\Tilde{u} - a)_+ (\partial_{e^*}\Tilde{u} -\kappa)_- \,\dx\dt \nonumber \\
    &\quad + (\epsilon + C_2) \iint_{Q_1} \zeta |\partial_{e^*}D\Tilde{u}|^2 \partial_{e^*}\Tilde{u} \big( (\partial_{e^*}\Tilde{u} -\kappa)_- \bigchi_{\{\partial_{e^*}\Tilde{u}>a \}} + (\partial_{e^*}\Tilde{u} -a)_+ \bigchi_{\{\partial_{e^*}\Tilde{u}<\kappa \}} \big) \,\dx\dt, \nonumber
\end{align}
where~$C=C(C_\F(\delta))$, and~$C_1=C_1(C_\F(\delta),\delta),C_2=C_2(C_\F(\delta),\delta)$ denote some positive ellipticity constants. In turn, we used the definition of~$C_\F$ according to~\eqref{schrankehessian} and also~\eqref{hesseschrankedelta}. Next, we treat the second quantity~$\foo{II}$, which we estimate further above with the Cauchy-Schwarz inequality, as well as by exploiting the Lipschitz assumption~\eqref{hatflipschitz}. By additionally using the representation formula~\eqref{representation} and the bound~$|e^*|\leq R_E$, we obtain
\begin{align*}
    \foo{II} &= \iint_{Q_1} \langle\partial_{e^*}\Tilde{\h}_\epsilon
    (x,t,D\Tilde{u}), D\phi_{e^*} \rangle\,\dx\dt\\
    &\leq C \iint_{Q_1}|D\phi_{e^*}|\,\dx\dt \\
    &\leq C \iint_{Q_1}  |D\zeta| \partial_{e^*}\Tilde{u} (\partial_{e^*}\Tilde{u} - a)_+ (\partial_{e^*}\Tilde{u} -\kappa)_-\,\dx\dt  + C \iint_{Q_1} \zeta  |\partial_{e^*}D\Tilde{u}| (\partial_{e^*}\Tilde{u} - a)_+ (\partial_{e^*}\Tilde{u} -\kappa)_-\,\dx\dt \\
    &\quad + C \iint_{Q_1} \zeta  |\partial_{e^*}D\Tilde{u}| \partial_{e^*}\Tilde{u} \big( (\partial_{e^*}\Tilde{u} -\kappa)_- \bigchi_{\{\partial_{e^*}\Tilde{u}>a \}} + (\partial_{e^*}\Tilde{u} -a)_+ \bigchi_{\{\partial_{e^*}\Tilde{u}<\kappa \}} \big) \,\dx\dt
\end{align*}
with~$C=C(\hat{C}_\F,N,n,R_E,r_E)$. Applying Young's inequality, we further estimate
\begin{align} \label{lowerboundtermzwei}
     \foo{II} &\leq C \iint_{Q_1}  |D\zeta| \partial_{e^*}\Tilde{u} (\partial_{e^*}\Tilde{u} - a)_+ (\partial_{e^*}\Tilde{u} -\kappa)_-\,\dx\dt \\
     &\quad + \mbox{$\frac{1}{4}$}C_1 \iint_{Q_1} \zeta  |\partial_{e^*}D\Tilde{u}|^2 (\partial_{e^*}\Tilde{u} - a)_+ (\partial_{e^*}\Tilde{u} -\kappa)_-\,\dx\dt + C \iint_{Q_1} \zeta (\partial_{e^*}\Tilde{u} - a)_+ (\partial_{e^*}\Tilde{u} -\kappa)_- \,\dx\dt \nonumber \\
    &\quad + \mbox{$\frac{1}{4}$}C_2 \iint_{Q_1} \zeta  |\partial_{e^*}D\Tilde{u}|^2 \partial_{e^*}\Tilde{u} \big( (\partial_{e^*}\Tilde{u} -\kappa)_- \bigchi_{\{\partial_{e^*}\Tilde{u}>a \}} + (\partial_{e^*}\Tilde{u} -a)_+ \bigchi_{\{\partial_{e^*}\Tilde{u}<\kappa \}} \big) \,\dx\dt \nonumber \\
    &\quad + C \iint_{Q_1} \zeta  \partial_{e^*}\Tilde{u} \big( (\partial_{e^*}\Tilde{u} -\kappa)_- \bigchi_{\{\partial_{e^*}\Tilde{u}>a \}} + (\partial_{e^*}\Tilde{u} -a)_+ \bigchi_{\{\partial_{e^*}\Tilde{u}<\kappa \}} \big) \,\dx\dt \nonumber 
\end{align}
with~$C=C(C_\F(\delta),\hat{C}_\F,\delta,N,n,R_E,r_E)$. Finally, we turn our attention to the term involving the datum~$\Tilde{f}$ in~\eqref{lowerbound}. In a similar way to the treatment of the second term~$\foo{II}$, we estimate
\begin{align*}
    \foo{III} &\eqqcolon \iint_{Q_1} \Tilde{f} \partial_{e^*}\phi_{e^*}\,\dx\dt \\
    &\leq \iint_{Q_1}  |\Tilde{f}| |D\zeta| \partial_{e^*}\Tilde{u} (\partial_{e^*}\Tilde{u} - a)_+ (\partial_{e^*}\Tilde{u} - \kappa)_- \,\dx\dt \\
    &\quad + \iint_{Q_1} \zeta |\Tilde{f}| |\partial_{e^*}D\Tilde{u}| (\partial_{e^*}\Tilde{u} - a)_+ (\partial_{e^*}\Tilde{u} - \kappa)_- \,\dx\dt \\
    &\quad + \iint_{Q_1} \zeta |\Tilde{f}| |\partial_{e^*}D\Tilde{u}| \partial_{e^*}\Tilde{u} \big( (\partial_{e^*}\Tilde{u} -\kappa)_- \bigchi_{\{\partial_{e^*}\Tilde{u}>a \}} + (\partial_{e^*}\Tilde{u} -a)_+ \bigchi_{\{\partial_{e^*}\Tilde{u}<\kappa \}} \big) \,\dx\dt.
\end{align*}
Similarly to before, we now employ Young's inequality to obtain
\begin{align} \label{lowerbounddatumsummed}
    \foo{III} &\leq C \iint_{Q_1}  |\Tilde{f}| |D\zeta| \partial_{e^*}\Tilde{u} (\partial_{e^*}\Tilde{u} - a)_+ (\partial_{e^*}\Tilde{u} - \kappa)_- \,\dx\dt  \\
    &\quad + \mbox{$\frac{1}{4}$} C_1 \iint_{Q_1} \zeta |\partial_{e^*}D\Tilde{u}|^2 (\partial_{e^*}\Tilde{u} - a)_+ (\partial_{e^*}\Tilde{u} - \kappa)_- \,\dx\dt \nonumber \\
    &\quad + C \iint_{Q_1} \zeta |\Tilde{f}|^2 (\partial_{e^*}\Tilde{u} - a)_+ (\partial_{e^*}\Tilde{u} - \kappa)_- \,\dx\dt \nonumber \\
    &\quad + \mbox{$\frac{1}{4}$}C_2 \iint_{Q_1} \zeta |\partial_{e^*}D\Tilde{u}|^2 \partial_{e^*}\Tilde{u} \big( (\partial_{e^*}\Tilde{u} -\kappa)_- \bigchi_{\{\partial_{e^*}\Tilde{u}>a \}} + (\partial_{e^*}\Tilde{u} -a)_+ \bigchi_{\{\partial_{e^*}\Tilde{u}<\kappa \}} \big) \,\dx\dt \nonumber \\
    &\quad + C \iint_{Q_1} \zeta |\Tilde{f}|^2 \partial_{e^*}\Tilde{u} \big( (\partial_{e^*}\Tilde{u} -\kappa)_- \bigchi_{\{\partial_{e^*}\Tilde{u}>a \}} + (\partial_{e^*}\Tilde{u} -a)_+ \bigchi_{\{\partial_{e^*}\Tilde{u}<\kappa \}} \big) \,\dx\dt \nonumber
\end{align}
with constant~$C=C(C_\F(\delta),\delta,N,n,r_E)$. Passing to the limit~$h\to 0$ in the quantity arising from the term involving the time derivative and combining our results~\eqref{lowerboundtermeins},~\eqref{lowerboundtermzwei}, and~\eqref{lowerbounddatumsummed}, we achieve
\begin{align} \label{lowerboundenergy}
&- C(\delta) \iint_{Q_1} (\partial_{e^*}\Tilde{u} - a)^2_+ \bigchi_{\{a<\partial_{e^*}\Tilde{u}<\Tilde{\kappa} \}} \partial_t \omega\, \eta^2 \psi^2 \,\dx\dt \\
    &\quad +(\epsilon+C_1) \iint_{Q_1} \psi^2\eta^2\omega |\partial_{e^*}D\Tilde{u}|^2  (\partial_{e^*}\Tilde{u}-a)_+ (\partial_{e^*}\Tilde{u}-\kappa)_- \,\dx\dt \nonumber \\
    &\quad+ (\epsilon+C_2) \iint_{Q_1} \psi^2\eta^2\omega |\partial_{e^*}D\Tilde{u}|^2 \partial_{e^*}\Tilde{u} \big( (\partial_{e^*}\Tilde{u}-\kappa)_- \bigchi_{\{ \partial_{e^*}\Tilde{u}>a \}} + (\partial_{e^*}\Tilde{u}-a)_+ \bigchi_{\{ \partial_{e^*}\Tilde{u}<\kappa \}} \big) \,\dx\dt \nonumber \\
    &\leq C \iint_{Q_1} \psi^2\eta^2\omega (1+|\Tilde{f}|^2) (\partial_{e^*}\Tilde{u}-a)_+ (\partial_{e^*}\Tilde{u}-\kappa)_- \,\dx\dt \nonumber \\
    &\quad + C \iint_{Q_1} \psi^2\eta^2\omega \partial_{e^*}\Tilde{u} (1+|\Tilde{f}|^2) \big( (\partial_{e^*}\Tilde{u}-\kappa)_- \bigchi_{\{ \partial_{e^*}\Tilde{u}>a \}} + (\partial_{e^*}\Tilde{u}-a)_+ \bigchi_{\{ \partial_{e^*}\Tilde{u}<\kappa \}} \big) \,\dx\dt \nonumber \\
    &\quad + C \iint_{Q_1} |\Tilde{f}| |D\psi| \psi \eta^2 \omega \partial_{e^*}\Tilde{u} (\partial_{e^*}\Tilde{u}-a)_+ (\partial_{e^*}\Tilde{u}-\kappa)_- \,\dx\dt \nonumber \\
    & \quad  + C \iint_{Q_1} |D\psi| \psi \eta^2 \omega (1+\partial_{e^*}\Tilde{u}) (\partial_{e^*}\Tilde{u} - a)_+ (\partial_{e^*}\Tilde{u} -\kappa)_- \,\dx\dt \nonumber \\
    &\quad + C \iint_{Q_1}  \bigchi_{\{ a<\partial_{e^*}\Tilde{u}<\kappa \}} \eta \partial_t \eta\, \omega \psi^2  \,\dx\dt + C \iint_{Q_1}  |\partial_{e^*}D\Tilde{u}| |D\zeta| \partial_{e^*}\Tilde{u} (\partial_{e^*}\Tilde{u} - a)_+ (\partial_{e^*}\Tilde{u} -\kappa)_- \,\dx\dt \nonumber    
\end{align}
with constants~$C_1=C_1(C_\F(\delta),\delta), C_2=C_2(C_\F(\delta),\delta)$ and~$C=C(C_\F(\delta),\hat{C}_\F,\delta,N,n,R_E,r_E)$. The very last quantity on the right-hand side of the preceding estimate~\eqref{lowerboundenergy} is estimated further above with an other application of Young's inequality, which yields
\begin{align*}
   C \iint_{Q_1} &  |\partial_{e^*}D\Tilde{u}| |D\zeta| \partial_{e^*}\Tilde{u} (\partial_{e^*}\Tilde{u} - a)_+ (\partial_{e^*}\Tilde{u} -\kappa)_- \,\dx\dt \\
   &\leq \mbox{$\frac{1}{2}$} C_1 \iint_{Q_1} \psi^2\eta^2\omega |\partial_{e^*} D\Tilde{u}|^2  (\partial_{e^*}\Tilde{u}-a)_+ (\partial_{e^*}\Tilde{u}-\kappa)_- \,\dx\dt\\
   &\quad + C \iint_{Q_1} |D\psi|^2 \eta^2\omega \partial_{e^*}\Tilde{u}^2 (\partial_{e^*}\Tilde{u}-a)_+ (\partial_{e^*}\Tilde{u}-\kappa)_-\,\dx\dt
\end{align*}
with constant~$C=C(C_\F(\delta),\hat{C}_\F,\delta,N,n,r_E)$. At this point, we specify the remaining cut-off function in time that is~$\omega(t)$. For an arbitrary~$\tau\in(-1,0)$ and~$\lambda\in(0,-\tau)$, let
\begin{align*}
     \omega(t) = \begin{cases}
        1  & \mbox{for~$t\in(-\infty,\tau]$ }, \\
        1-\frac{t-\tau}{\lambda}  & \mbox{for~$t\in(\tau,\tau+\lambda]$ }, \\
        0  & \mbox{for~$t\in(\tau+\lambda,\infty)$ }. 
    \end{cases}
\end{align*}
Overall, this leads us to the subsequent energy estimate
\begin{align*}
    & \mbox{ $ \frac{ 1}{\lambda}$}C(\delta) \iint_{B_1\times(\tau,\tau+\lambda]} (\partial_{e^*}\Tilde{u} - a)^2_+ \bigchi_{\{a<\partial_{e^*}\Tilde{u}<\Tilde{\kappa} \}} \eta^2 \psi^2 \,\dx\dt \\
    &\quad +(\epsilon+C_1) \iint_{Q_\tau} |\partial_{e^*}D\Tilde{u}|^2 \zeta^2 (\partial_{e^*}\Tilde{u}-a)_+ (\partial_{e^*}\Tilde{u}-\kappa)_- \,\dx\dt \nonumber \\
    &\quad+ (\epsilon+C_2) \iint_{Q_\tau} |\partial_{e^*}D\Tilde{u}|^2 \partial_{e^*}\Tilde{u} \zeta^2 \big( (\partial_{e^*}\Tilde{u}-\kappa)_- \bigchi_{\{ \partial_{e^*}\Tilde{u}>a \}} + (\partial_{e^*}\Tilde{u}-a)_+ \bigchi_{\{ \partial_{e^*}\Tilde{u}<\kappa \}} \big) \,\dx\dt \nonumber \\
    &\leq C \iint_{Q_1}  \eta^2 \omega  (\partial_{e^*}\Tilde{u}-a)_+ (\partial_{e^*}\Tilde{u}-\kappa)_- \big(\psi^2(1+|\Tilde{f}|^2) + |D\psi|^2 \partial_{e^*}\Tilde{u}^2 \big) \,\dx\dt \\
    &\quad +  C \iint_{Q_1}  \psi^2\eta^2\omega (1+|\Tilde{f}|^2) \partial_{e^*}\Tilde{u} \big( (\partial_{e^*}\Tilde{u}-\kappa)_- \bigchi_{\{ \partial_{e^*}\Tilde{u}>a \}} + (\partial_{e^*}\Tilde{u}-a)_+ \bigchi_{\{ \partial_{e^*}\Tilde{u}<\kappa \}} \big) \,\dx\dt \\
    &\quad + C \iint_{Q_1} |\Tilde{f}| |D\psi| \psi \eta^2 \omega \partial_{e^*}\Tilde{u} (\partial_{e^*}\Tilde{u}-a)_+ (\partial_{e^*}\Tilde{u}-\kappa)_- \,\dx\dt \\
    &\quad + C \iint_{Q_1}  \bigchi_{\{ a<\partial_{e^*}\Tilde{u}<\kappa \}} \eta \partial_t \eta\, \omega \psi^2  \,\dx\dt
\end{align*}
with~$C=C(C_\F(\delta),\hat{C}_\F,\delta,N,n,R_E,r_E)$, where we abbreviated notation by~$Q_\tau \coloneqq B_1\times(-1,\tau]$. The second quantity on the left-hand side involving second order weak derivatives of~$\Tilde{u}$ is discarded due to a non-negative contribution. In the third term, we exploit the lower bound~$\partial_{e^*}\Tilde{u}\geq 1+\delta$, which holds true whenever the integrand is positive. Moreover, the entire right-hand side is bounded further above by exploiting the assumptions~\eqref{schrankeeins} and~\eqref{schrankezwei} in combination with the bounds for~$\psi,\eta,\omega$. In the terms involving the datum~$\Tilde{f}$, we apply Hölder's inequality with exponents~$(n+2+\sigma,\frac{n+2+\sigma}{n+1+\sigma})$ respectively with $(\frac{n+2+\sigma}{2},\frac{n+2+\sigma}{n+\sigma})$. Passing to the limit~$\lambda\downarrow 0$ and taking the essential supremum with respect to~$\tau\in(-1,0)$, we thus obtain
\begin{align} \label{lowerboundee}
    &\esssup\limits_{t\in(-1,0)} \int_{B_1\times\{t\}} \big(\partial_{e^*}\Tilde{u}-(1+\delta) \big)^2_+ \bigchi_{\{1+\delta<\partial_{e^*}\Tilde{u}<\Tilde{\kappa} \}} \Tilde{\zeta} \,\dx \\
    &\quad + \iint_{Q_1} |\partial_{e^*}D\Tilde{u}|^2 \Tilde{\zeta} \big( (\partial_{e^*}\Tilde{u}-\kappa)_- \bigchi_{\{ \partial_{e^*}\Tilde{u}>1+\delta \}} + \big(\partial_{e^*}\Tilde{u}-(1+\delta)\big)_+ \bigchi_{\{ \partial_{e^*}\Tilde{u}<\kappa \}} \big) \,\dx\dt \nonumber \\
    &\leq C \big( \|D\Tilde{\zeta}\|_{L^\infty(Q_1)} + \|D\Tilde{\zeta}\|^2_{L^\infty(Q_1)} + \|\partial_t \Tilde{\zeta}\|_{L^\infty(Q_1)} \big) |Q_1 \cap\{1+\delta<\partial_{e^*}\Tilde{u}<\kappa\}|^{1-\frac{2}{n+2+\sigma}} \nonumber
\end{align}
with $C=C(C_\F(\delta),\hat{C}_\F,\delta,\|f\|_{L^{n+2+\sigma}(Q_R)},M,N,n,R,R_E,r_E,\sigma)$, where we also used
$$\|\Tilde{f}\|_{L^{n+2+\sigma}(Q_1)} \leq \rho^{\beta} \|f\|_{L^{n+2+\sigma}(Q_R)}\leq R^{\beta} \|f\|_{L^{n+2+\sigma}(Q_R)},$$
which is an immediate consequence of changing variables. Moreover, we abbreviated~$\Tilde{\zeta}\coloneqq\psi^2\eta^2$. In particular, the preceding estimate holds in fact true for any smooth cut-off function~$\zeta\in C^1(Q_1,[0,1])$ vanishing on the parabolic boundary of~$Q_1$, which follows from an approximation argument and the fact that both~$\psi,\eta$ were chosen arbitrarily. \,\\ 

\textbf{Step 3:} Our aim is to apply the geometric convergence Lemma~\ref{geometriclem} and exploit the Caccioppoli-type estimate from Step 2. Therefore, we consider the following sequences of levels
\begin{alignat*}{2}
  \delta_m &\coloneqq 1+\delta+\frac{\mu}{8}\Big(1-\frac{1}{2^{m}}\Big),\qquad && m\in\R_{\geq 0}, \\
    \kappa_m &\coloneqq 1+\delta+\frac{\mu}{4}\Big(1+\frac{1}{2^m}\Big),\qquad && m\in\R_{\geq 0}, 
\end{alignat*}
such that for any~$m\in\R_{\geq 0}$ we have
$$ 1+\delta \leq \delta_m \leq 1+\delta+\frac{\mu}{8} \leq 1+\delta+\frac{\mu}{4} \leq \kappa_m \leq 1+\delta+\frac{\mu}{2}.$$
The sequence of radii is given by
$$r_m \coloneqq \frac{1}{2}+\frac{1}{2^{m+1}},\qquad  m\in\N_0.$$
Next, we consider standard smooth cut-off functions~$\zeta_m \in C^1(Q_{r_m},[0,1])$ vanishing on the parabolic boundary of~$Q_1$ that 
satisfy the bounds
$$\zeta_m \equiv 1 \quad \mbox{on~$Q_{r_{m+1}}$} \qquad \& \qquad \|D\zeta_m\|_{L^\infty(B_{r_m})},\|\partial_t \zeta_m\|_{L^\infty(Q_1)} \leq 4^{m}$$
for any~$m\in\N_0$. We note that due to the rescaling procedure performed in Step 1 of the proof, the upper bounds for~$D\zeta_m$ and~$\partial_t \zeta_m$ are independent of the original radius~$\rho$. Moreover, we have
$$ \delta_{m+1} - \delta_{m} = \frac{\mu}{2^{m+4}} , \qquad 
\kappa_{m+\frac{1}{2}} - \kappa_{m+1} = \frac{\mu(\sqrt{2}-1)}{2^{m+3}}$$
for any~$m\in\R_{\geq 0}$.
Let us denote
$$Y_m \coloneqq |A_m| \coloneqq |Q_{r_m} \cap \{\delta_m < \partial_{e^*}\Tilde{u} <\kappa_m\}|$$
and further choose the test function~$\zeta = \zeta_m$ and levels~$\kappa=\kappa_m$ in the previous estimate~\eqref{lowerboundee} from Step 2, while the parameter~$\delta>0$ is replaced with~$\delta_m>0$. Additionally, we consider~$\delta_{m},\kappa_{m+\frac{1}{2}}$, where~$\delta_m$ takes the role of~$a=1+\delta$ in the estimate~\eqref{lowerboundee}, while ~$\kappa_{m+\frac{1}{2}}$ takes the role of~$\Tilde{\kappa}$ and~$\kappa_m$ the one of~$\kappa$ in~\eqref{lowerboundee}. In combination with the Poincar\'{e}-type inequality from Lemma~\ref{poincarelem}, we achieve
\begin{align*}
     \big(\delta_{m+1}  &- \delta_{m}\big)^2  \big(\kappa_{m+\frac{1}{2}}  - \kappa_{m+1}\big)^2   Y_{m+1} \\
    &\leq \big\| \big(\partial_{e^*}\Tilde{u} -\delta_{m}\big)_+ \big(\partial_{e^*}\Tilde{u}  - \kappa_{m+\frac{1}{2}}\big)_- \zeta_m \big\|^2_{L^2(A_{m+1})} \\
    &\leq  \big\| \big(\partial_{e^*}\Tilde{u} -\delta_{m}\big)_+ \big(\partial_{e^*}\Tilde{u}  - \kappa_{m+\frac{1}{2}}\big)_- \zeta_m \big\|^2_{L^2(Q_{r_m})}  \\
    &\leq C(n) |A_m|^{\frac{2}{n+2}} \bigg(\esssup\limits_{t\in(-r^2_m,0)}\int_{B_{r_m}} \big(\partial_{e^*}\Tilde{u}-\delta_{m} \big)^2_+ \big( \partial_{e^*}\Tilde{u}  -\kappa_{m+\frac{1}{2}} \big)^2_- \zeta^2_m \,\dx \\
    &\quad + \iint_{Q_{r_m}} |D[(\partial_{e^*}\Tilde{u} -\delta_m)_+ (\partial_{e^*}\Tilde{u} -\kappa_m)_- \zeta_m ]|^2\,\dx\dt \bigg) \\ 
    &\leq C 4^{m} |A_m|^{\frac{2}{n+2}} |A_m|^{1-\frac{2}{n+2+\sigma}} \\
    &= C 4^{m} Y^{1+\frac{2\beta}{n+2}}_m, 
\end{align*}
where~$C=C(C_\F(\delta),\hat{C}_\F,\delta,\|f\|_{L^{n+2+\sigma}(Q_R)},M,N,n,R,R_E,r_E,\sigma)$. In turn, we also used the fact that~$\delta_m<\delta_{m+1}$ as well as
~$\kappa_{m+1}<\kappa_{m+\frac{1}{2}}<\kappa_m$. Due to the identity for~$\delta_{m+1}-\delta_{m}$ and
~$\kappa_{m+\frac{1}{2}} - \kappa_{m+1}$ from above, and also the estimate~\eqref{deltamu}, there holds
$$Y_{m+1} \leq C 2^{6m+7} Y^{1+\frac{2\beta}{n+2}}_m \leq C 2^{6m} Y^{1+\frac{2\beta}{n+2}}_m$$
 for any~$m\in\N_0$ with~$C=C(C_\F(\delta),\hat{C}_\F,\delta,\|f\|_{L^{n+2+\sigma}(Q_R)},M,N,n,R,R_E,r_E,\sigma)$. At this point, we fix~$e^*\in\partial E^*$ through the specific choice of~$\Tilde{e}^*\in \partial E^*$, which denotes one such parameter that satisfies~\eqref{lowerboundscaled}. Finally, we apply the geometric convergence Lemma~\ref{geometriclem} with the choices~$b\coloneqq 2^6$ and~$\kappa \coloneqq \frac{2\beta}{n+2}$. The assumption of the lemma, which we recall is given by
$$Y_0 \leq C^{-\frac{1}{\kappa}}b^{-\frac{1}{\kappa2}},$$  
is satisfied, provided the parameter~$\Tilde{\nu}\in(0,\frac{1}{4}]$ is chosen small enough in dependence on the full  data~$(C_\F(\delta),\hat{C}_\F,\delta,\|f\|_{L^{n+2+\sigma}(Q_R)},N,n,R_E,M,R,r_E,\sigma)$, a matter of fact that can be inferred from the bound
\begin{align*}
    Y_0 &= |Q_1 \cap \{1+\delta < \partial_{\Tilde{e}^*}\Tilde{u} < 1+\delta+\mbox{$\frac{\mu}{2}$} \}| \\ 
    &\leq |Q_1 \cap \{ \partial_{\Tilde{e}^*}\Tilde{u}<1+\delta+\textstyle{\frac{\mu}{2}} \}| \\ 
    &\leq \nu|Q_1| 
\end{align*}
for~$\nu \in (0,\frac{1}{4}]$. An application of Lemma~\ref{geometriclem} thus yields the convergence~$Y_m \to 0$ as~$m\to\infty$, which implies
\begin{align} \label{est:meascondinterscaled}
    \big|Q_{\frac{1}{2}}\cap \{ 1+\delta+\mbox{$\frac{\mu}{8}$} \leq \partial_{\Tilde{e}^*}\Tilde{u} \leq 1+\delta+ \mbox{$\frac{\mu}{4}$} \} \big|=0,
\end{align} 
where we abbreviated~$Q_{\frac{1}{2}}\coloneqq B_{\frac{1}{2}}\times(-\frac{1}{4},0]$. By denoting
$$ v \eqqcolon \frac{(\partial_{\Tilde{e}^*}\Tilde{u}-(1+\delta))^2_+}{\mu^2}, $$
the preceding measure-theoretic information can be rewritten as
\begin{align*} 
    | Q_{\frac{1}{2}}\cap \{ \mbox{$\frac{1}{64}$} \leq v \leq \mbox{$\frac{1}{16}$} \} |=0.
\end{align*}
Next, we apply Lemma~\ref{isoperimetric} for~$\partial_{\Tilde{e}^*}\Tilde{u}$ with parameters~$l=1+\delta+\frac{\mu}{8}<1+\delta+\frac{\mu}{4}=m$ to obtain for a.e.~$t\in(-\mbox{$\frac{1}{4}$},0]$ the following 
\begin{align*}
    \mbox{$\frac{\mu}{8}$} |B_{\frac{1}{2}} & \cap \{ \partial_{\Tilde{e}^*}\Tilde{u}(\cdot,t) < 1+\delta+\mbox{$\frac{\mu}{8}$} \}| |B_{\frac{1}{2}} \cap \{ \partial_{\Tilde{e}^*}\Tilde{u}(\cdot,t) > 1+\delta+\mbox{$\frac{\mu}{4}$} \}| \\
    & \leq C(n) \int_{B_{\frac{1}{2}} \cap \{1+\delta+\frac{\mu}{8} < \partial_{\Tilde{e}^*}\Tilde{u}(\cdot,t) < 1+\delta+\frac{\mu}{4} \}}|\partial_{\Tilde{e}^*}D\Tilde{u}|\,\dx.
\end{align*}
After integrating the preceding inequality with respect to~$t\in(-\mbox{$\frac{1}{4}$},0]$ and exploiting the fact~\eqref{est:meascondinterscaled}, there holds
\begin{align*}
     \int_{(\mbox{$\frac{1}{4}$},0]} |B_{\frac{1}{2}} & \cap \{ \partial_{\Tilde{e}^*}\Tilde{u}(\cdot,t) < 1+\delta+\mbox{$\frac{\mu}{8}$} \}| |B_{\frac{1}{2}} \cap \{ \partial_{\Tilde{e}^*}\Tilde{u}(\cdot,t) > 1+\delta+\mbox{$\frac{\mu}{4}$} \}| \,\dt \\
     &\leq \mbox{$\frac{C(n)}{\mu}$} \iint_{Q_{\frac{1}{2}} \cap \{1+\delta+\frac{\mu}{8} < \partial_{\Tilde{e}^*}\Tilde{u} < 1+\delta+\frac{\mu}{4} \}}|\partial_{\Tilde{e}^*}D\Tilde{u}|\,\dx \dt \\
     &= 0. 
\end{align*}
Consequently, in combination with~\eqref{est:meascondinterscaled}, for a.e.~$t\in(-\mbox{$\frac{1}{4}$},0]$ there holds
\begin{align} \label{time-stripes}
    \partial_{\Tilde{e}^*}\Tilde{u}(\cdot,t) \leq 1+\delta+\frac{\mu}{8} \quad\mbox{a.e. in~$B_{\frac{1}{2}}$} \qquad\mbox{or} \qquad \partial_{\Tilde{e}^*}\Tilde{u}(\cdot,t) \geq 1+\delta+\frac{\mu}{4} \quad\mbox{a.e. in~$B_{\frac{1}{2}}$}. 
\end{align}
We now conclude the proof by arguing that the preceding measure-theoretic information implies that there holds~$\partial_{\Tilde{e}^*}\Tilde{u}\geq 1+\delta+\frac{\mu}{4}$ a.e. in~$Q_{\frac{1}{2}}$. Let us assume the contrary and suppose that~$\partial_{\Tilde{e}^*}\Tilde{u}\leq 1+\delta+\frac{\mu}{8}$ a.e. in some subset~$\Tilde{Q}\subsetneq Q_{\frac{1}{2}}$ with positive measure~$|\Tilde{Q}|>0$. We note that the case where~$\Tilde{Q}=Q_{\frac{1}{2}}$ can be excluded, given that the parameter~$\nu\in(0,\frac{1}{4}]$, which has been chosen above for the derivation of~\eqref{est:meascondinterscaled}, is possibly further reduced in dependence on the space dimension~$n$. Indeed, we may also assume that~$|\Tilde{Q}| \leq \frac{1}{2}|Q_{\frac{1}{2}}|$, such that~$|Q_{\frac{1}{2}}\setminus \Tilde{Q}|\geq \frac{1}{2}|Q_{\frac{1}{2}}|$, according to the very same reasoning just provided. At this point, we recall the expedient Lemma~\ref{lem:stetigkeitinzeit}, asserting that there holds 
\begin{align} \label{est:lowerboundstetiginzeit}
   v =(\partial_{e^*}\Tilde{u}-(1+\delta))^2_+
   \in C^0\big((-\mbox{$\frac{1}{4}$},0); L^2(B_{\frac{1}{2}})\big).
\end{align} 
We now consider a time-slice~$\Tilde{t}\in (-\mbox{$\frac{1}{4}$},0)$ for which there exist sequences~$(t_i)_{i\in\N}$ in~$Q_{\frac{1}{2}}\setminus \Tilde{Q}$ and~$(\Tilde{t}_i)_{i\in\N}$ in~$\Tilde{Q}$ with the property that~$t_i,\Tilde{t}_i\to\Tilde{t}$ as~$i\to\infty$. This is possible according to the assumption~$|Q_{\frac{1}{2}}\setminus \Tilde{Q}|\geq \frac{1}{2}|Q_{\frac{1}{2}}|>0$. Due to~\eqref{time-stripes}, we infer that there holds
$$ \limsup\limits_{|t_i- \Tilde{t}|\downarrow 0} \|v\|^2_{L^2(B_{\frac{1}{2}}\times\{t_i\})} \geq \mbox{$\frac{1}{16^2}$}, \qquad \liminf\limits_{|\Tilde{t}_i - \Tilde{t}|\downarrow 0} \|v\|^2_{L^2(B_{\frac{1}{2}}\times\{\Tilde{t}_i\})} \leq \mbox{$\frac{1}{64^2}$}
,$$
which further implies that at least one of the following must be true:
$$ \limsup\limits_{|t_i - \Tilde{t}|\downarrow 0} \Big | \|v\|^2_{L^2(B_{\frac{1}{2}}\times\{t_i\})} - \|v\|^2_{L^2(B_{\frac{1}{2}}\times\{\Tilde{t}\})} \Big| >0 $$
or
$$ \limsup\limits_{|\Tilde{t}_i- \Tilde{t}|\downarrow 0} \Big | \|v\|^2_{L^2(B_{\frac{1}{2}}\times\{\Tilde{t}_i\})} - \|v\|^2_{L^2(B_{\frac{1}{2}}\times\{\Tilde{t}\})} \Big| >0. $$
However, this contradicts the regularity property~\eqref{est:lowerboundstetiginzeit}. Therefore, such a subset~$\Tilde{Q}\subset Q_{\frac{1}{2}}$ with~$|\Tilde{Q}|>0$ cannot exist and there holds~$\partial_{\Tilde{e}^*}\Tilde{u}\geq 1+\delta+\frac{\mu}{4}$ a.e. in~$Q_{\frac{1}{2}}$. 
Finally, scaling back to the original solution~$u$ and also reverting to the abbreviation~$u=u_\epsilon$, we eventually obtain the claimed estimate~\eqref{lowerboundest} via
$$|Du_\epsilon|_E \geq \partial_{\Tilde{e}^*} u_\epsilon \geq 1+\delta+\frac{\mu}{4}\qquad\mbox{a.e. in~$Q_{\frac{\rho}{2}}(z_0)$},$$ 
where we used~\eqref{minkowskialternativ}. This finishes the proof of the proposition. 
\end{proof}


The lower bound~\eqref{lowerboundest} from the preceding Proposition~\ref{lowerboundprop} implies that for any~$i=1,\ldots,n$ the function~$D_i u_\epsilon$ is a weak solution to a linear elliptic equation on~$Q_{\frac{\rho}{2}}(z_0)$ whose ellipticity constant only depends on~$C_\F(\delta)$ and the parameter~$\delta\in(0,1]$, but is independent of~$\epsilon\in(0,1]$. This yields the subsequent quantitative~$L^\infty$- and oscillation estimate.


\begin{myproposition} \label{dibenedetto}
     Let~$\delta,\epsilon\in(0,1]$ and~$u_\epsilon$ denote the weak solution to~\eqref{approx}. Assume there exists~$\nu\in(0,\frac{1}{4}]$, such that~\eqref{lowerboundvs} holds true for at least one~$e^*\in \partial E^*$. Then, there exists a constant~$C\geq 1$ and a parameter~$\alpha\in(0,1)$, depending on
     \begin{align*}
         C &=C(C_\F(\delta),\hat{C}_\F,\delta,\|f\|_{L^{n+2+\sigma}(Q_R)},M,N,n,R,r_E,\sigma), \\
         \alpha&=\alpha(C_\F(\delta),\hat{C}_\F,\delta,\|f\|_{L^{n+2+\sigma}(Q_R)},M,N,n,r_E,\sigma),
     \end{align*}
     such that for any~$i\in\{1,\ldots,n\}$ there holds the quantitative local~$L^\infty$-estimate
    \begin{align} \label{excesssupbound}
    \esssup\limits_{Q_{\frac{\rho}{2}}(z_0)} |D_i u_\epsilon
    -(D_i u_\epsilon)_{z_0,\rho}|
    &\leq C\Bigg(\bigg( \fiint_{Q_{\rho}(z_0)}|D_i u_\epsilon
    -(D_i u_\epsilon)_{z_0,\rho}|^2 \,\dx\dt \bigg)^{\frac{1}{2}} + \rho^{\beta} \Bigg).
\end{align}
    Moreover, for any~$\theta\in(0,\frac{1}{2}]$ there holds the quantitative oscillation estimate
    \begin{align} \label{excessquantitative}
    \essosc\limits_{Q_{\theta \frac{\rho}{2}}(z_0)} \big(D_i u_\epsilon
    -(D_i u_\epsilon)_{z_0,\rho}\big) \leq C \bigg(\theta^{\alpha} \essosc\limits_{Q_{\frac{\rho}{2}}(z_0)} \big( D_i u_\epsilon - (D_i u_\epsilon)_{z_0,\rho} \big) +\rho^{\beta}\bigg).
\end{align}
\end{myproposition}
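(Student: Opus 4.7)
\textbf{Plan of proof for Proposition~\ref{dibenedetto}.} The starting point is the conclusion of Proposition~\ref{lowerboundprop}, which under the measure-theoretic assumption~\eqref{lowerboundvs} supplies the pointwise lower bound
\begin{equation*}
|Du_\epsilon|_E \geq 1+\delta+\tfrac{\mu}{4}\qquad\text{a.e.\ in }Q_{\frac{\rho}{2}}(z_0).
\end{equation*}
This lower bound moves the entire smaller cylinder into the non-degenerate regime of the equation, so that by Lemma~\ref{bilinearelliptic} the bilinear form $\hat{\B}_\epsilon(\cdot,\cdot,Du_\epsilon)$ is uniformly elliptic on $Q_{\frac{\rho}{2}}(z_0)$ with ellipticity and boundedness constants depending only on $C_\F(\delta)$ and $\delta$, and in particular independent of $\epsilon\in(0,1]$.

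The plan is then to differentiate the weak formulation~\eqref{weakformapprox} in a fixed Cartesian direction $e_i$, $i\in\{1,\ldots,n\}$. Since Proposition~\ref{approxregularityeins} ensures $u_\epsilon\in L^2_{\loc}(\Lambda_R;W^{2,2}_{\loc}(B_R))$, this can be justified rigorously through a Steklov-average/difference-quotient argument analogous to the one used in Lemma~\ref{subsollemma} and Proposition~\ref{approxregularityzwei}, localized to $Q_{\frac{\rho}{2}}(z_0)$ where the equation is non-degenerate. The outcome is that $w_i\coloneqq D_iu_\epsilon-(D_iu_\epsilon)_{z_0,\rho}$ is a local weak solution on $Q_{\frac{\rho}{2}}(z_0)$ of a linear parabolic equation of the form
\begin{equation*}
\partial_t w_i - \divv\bigl(A_\epsilon(x,t)\,Dw_i\bigr) = -\,\partial_i f - \divv(g_i),
\end{equation*}
where $A_\epsilon(x,t)\coloneqq\nabla^2\hat{\F}_\epsilon(x,t,Du_\epsilon)$ satisfies
$$\lambda(C_\F(\delta),\delta)|\xi|^2 \leq \langle A_\epsilon\xi,\xi\rangle \leq \Lambda(C_\F(\delta),\delta)|\xi|^2$$
uniformly in $\epsilon$, and $g_i(x,t)\coloneqq\partial_{x_i}\nabla\hat{\F}(x,t,Du_\epsilon)$ is bounded by~\eqref{hatflipschitz}.

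Having reduced the problem to a linear parabolic equation with bounded measurable coefficients, elliptic with constants independent of $\epsilon$, and a right-hand side lying in a parabolic Morrey/Lebesgue class dictated by $f\in L^{n+2+\sigma}(Q_R)$, I would invoke the classical De Giorgi--Nash--Moser theory in the form developed in~\cite[Chapter~6]{dibenedetto2023parabolic}. The local $L^\infty$-bound \eqref{excesssupbound} follows from the Moser-type iteration for linear parabolic equations, with the $\rho^\beta$ additive term coming from the estimate
$$\|f\|_{L^{n+2+\sigma}(Q_{\rho}(z_0))}\cdot|Q_\rho(z_0)|^{-\frac{1}{2}+\frac{1}{n+2+\sigma}} \lesssim \rho^\beta\|f\|_{L^{n+2+\sigma}(Q_R)},$$
recalling the definition $\beta=\sigma/(n+2+\sigma)$ from~\eqref{beta}. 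The oscillation decay~\eqref{excessquantitative} is then a direct consequence of the De Giorgi--Nash Hölder continuity for linear parabolic equations with bounded measurable coefficients (cf.~\cite[Chapter~6, Theorem~6.1]{dibenedetto2023parabolic}), applied to $w_i$ on the nested cylinders $Q_{\theta\rho/2}(z_0)\subset Q_{\rho/2}(z_0)$, producing a Hölder exponent $\alpha=\alpha(C_\F(\delta),\hat{C}_\F,\delta,\|f\|_{L^{n+2+\sigma}(Q_R)},M,N,n,r_E,\sigma)\in(0,1)$ independent of $\epsilon$.

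The main technical obstacle is the rigorous differentiation of the equation in the presence of the one-sided degeneracy: although $|Du_\epsilon|_E\ge 1+\delta+\mu/4$ on $Q_{\rho/2}(z_0)$ makes the diffusion smooth there, one must differentiate globally in a weak sense and localize via cut-offs supported inside $Q_{\rho/2}(z_0)$, taking care that the lower-order term $\partial_{x_i}\nabla\hat{\F}$ stays controlled by~\eqref{hatflipschitz} independently of the size of $|Du_\epsilon|$ (thanks to the truncation in $\hat{\F}$). Once this differentiation is made rigorous, the remaining steps are a direct application of the parabolic linear theory, and the $\epsilon$-independence of all constants is preserved throughout.
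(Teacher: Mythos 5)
Your proposal follows essentially the same route as the paper: use Proposition~\ref{lowerboundprop} to place $Q_{\frac{\rho}{2}}(z_0)$ in the non-degenerate regime, differentiate the weak form \eqref{weakformapprox} in the direction $e_i$ (justified by the $W^{2,2}_{\loc}$-regularity and the lower bound on $|Du_\epsilon|_E$), observe that $D_iu_\epsilon$ solves a linear parabolic equation whose coefficients are elliptic and bounded uniformly in $\epsilon$ by Lemma~\ref{bilinearelliptic} and \eqref{hatflipschitz}, and then invoke the linear parabolic De Giorgi theory for the sup-bound and the oscillation decay, with the $\rho^\beta$ terms coming from $f\in L^{n+2+\sigma}$ exactly as you computed. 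The only cosmetic difference is that the paper explicitly records the parabolic De Giorgi class energy inequality and cites the quantitative results of \cite[Chapter~12, Theorems~2.1 and~3.1]{dibenedetto2023parabolic} rather than a Moser iteration, but this is the same theory.
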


\begin{proof}

We differentiate the weak form~\eqref{weakformapprox} of~$u_\epsilon$ by testing the latter with~$D_i\phi$, where~$\phi\in C^1_0(Q_{\frac{\rho}{2}}(z_0))$ denotes an arbitrary smooth test function. We note that this procedure is indeed admissible due to the previous Proposition~\ref{lowerboundprop}, which establishes that~$|Du_\epsilon|_E\geq 1+\delta+\mbox{$\frac{\mu}{4}$}$ a.e. in~$Q_{\frac{\rho}{2}}(z_0)$, such that, according to~$\eqref{fregularity}_3$,~$\hat{\F}_\epsilon(x,t,Du_\epsilon(x,t))$ is of class~$C^2$ with respect to the gradient variable for any~$(x,t)\in Q_{\frac{\rho}{2}}(z_0)$. Subsequently, we integrate by parts in the evolution term and also in the diffusion term, which yields that for any~$i=1,\ldots,n$, the function~$v\coloneqq D_i u_\epsilon$ is a weak solution to
\begin{equation} \label{sollinell}
      \iint_{Q_{\frac{\rho}{2}}(z_0)} \big(-v\, \partial_t \phi +  \hat{\B}_\epsilon(x,t,Du_\epsilon)(D v,D\phi) \big) \,\dx\dt =  - \iint_{Q_{\frac{\rho}{2}}(z_0)} \bigg(f D_i\phi + \sum\limits_{j=1}^n g^i_j D_j \phi \bigg) \,\dx\dt,
\end{equation}
where the bounded coefficients~$g^i \colon Q_{\frac{\rho}{2}}(z_0) \to \R^n$ are given by
$$ |g^i(x,t)| \coloneqq |\partial_{x_i} \nabla\hat{\F}(x,t,Du_\epsilon)| \leq C(\hat{C}_\F,N,r_E),$$
with the bound being an immediate consequence of the Lipschitz assumption~\eqref{hatflipschitz}. Moreover, the bilinear form~$\hat{\mathcal{B}}_\epsilon(x,t,Du_\epsilon)$ is elliptic and bounded for any~$(x,t)\in Q_{\frac{\rho}{2}}(z_0)$, with an ellipticity constant that is independent of the parameter~$\epsilon\in(0,1]$, i.e. there exists a positive constant~$C=C(C_\F(\delta),\delta)\geq 1$ with
\begin{align} \label{ellipticityohneeps}
  C^{-1}|\xi|^2 \leq \hat{\mathcal{B}}_\epsilon(x,t,Du_\epsilon)(\xi,\xi) \leq C|\xi|^2  
\end{align}
for any~$(x,t)\in Q_{\frac{\rho}{2}}(z_0)$ and any~$\xi\in\R^n$. The ellipticity follows from Lemma~\ref{bilinearelliptic} and from the fact that there holds~$|Du_\epsilon|_E \geq 1+\delta+\frac{\mu}{4} \geq 1+\frac{5\delta}{4}$ a.e. in~$Q_{\frac{\rho}{2}}(z_0)$, a consequence of Proposition~\ref{lowerboundprop} and assumption~\eqref{deltamu}. 
The upper bound in~\eqref{ellipticityohneeps} is an immediate consequence of the very definition of~\eqref{bilinear}. Moreover, the constant~$C$ can be chosen to depend on~$C_\F(\delta)$ rather than~$C_\F$, where we recall~\eqref{hatfvoraussetzung}, which is possible due to~$\delta\in(0,1]$ and the set inclusion
$$ \{ \xi\in\R^n:|\xi|_E\geq \mbox{$\frac{5}{4}$}\delta\} \subset \R^n\setminus E_\delta. $$
Due to the fact that~$v$ is a weak solution to the equation~\eqref{sollinell} that is linear and has bounded and elliptic coefficients, we infer that~$v$ belongs to a parabolic De Giorgi class, in the sense that for any level~$k>0$ and any~$\sigma>0$ there holds
  \begin{align*}
        \esssup\limits_{ t\in \Lambda_{\sigma \frac{\rho}{2} }(t_0)}  \int_{B_{\sigma \frac{\rho}{2} }(x_0) \times\{t \}} (v &-k)^2_{\pm}\,\dx + \iint_{Q_{\sigma \frac{\rho}{2}}(z_0)} |D(v - k)_{\pm}|^2\,\dx\dt \\
        & \leq C \Bigg( \iint_{Q_{\frac{\rho}{2}}(z_0) } \bigg(\frac{(v-k)^2_{\pm}}{(1-\sigma)^2 \rho^2} + \frac{v-k)^2_{\pm}}{(1-\sigma) \rho } \bigg)\,\dx\dt \\
        &\quad + C\big(\rho^{2(1-\beta)}+\|f\|^2_{L^{n+2+\sigma}(Q_{\frac{\rho}{2}}(z_0))} \big) |Q_{\frac{\rho}{2}}(z_0) \cap \{v\gtrless k\}|^{1-\frac{2(1-\beta)}{n+2}} \Bigg)
    \end{align*}
with a constant~$C=C(C_\F(\delta),\hat{C}_\F,\delta,\|f\|_{L^{n+2+\sigma}(Q_R)},M,N,n,r_E,\sigma)$. Due to similarity, we omit the details and refer the reader to~\cite[Chapter~12,~(1.8)]{dibenedetto2023parabolic} and also to~\cite[Lemma~7.2]{degeneratesystems} for the proof. Since~$v=D_i u_\epsilon$ satisfies the identity~\eqref{sollinell}, the same holds true for the translated function~$D_i u_\epsilon-(D_i u_\epsilon)_{z_0,\rho}$. An application of~\cite[Chapter 12,~Theorem~2.1]{dibenedetto2023parabolic} with the choice~$\delta = \frac{\beta}{n+2}$ thus yields for the function~$D_i u_\epsilon-(D_i u_\epsilon)_{z_0,\rho}$ the quantitative~$L^\infty$-estimate
\begin{align*} 
    \esssup\limits_{Q_{\frac{\rho}{2}}(z_0)} |D_i u_\epsilon
    -(D_i u_\epsilon)_{z_0,\rho}|
    &\leq C\Bigg(\bigg( \fiint_{Q_{\rho}(z_0)}|D_i u_\epsilon
    -(D_i u_\epsilon)_{z_0,\rho}|^2 \,\dx\dt \bigg)^{\frac{1}{2}} + \rho^{\beta} \Bigg)
\end{align*}
with a constant~$C=C(C_\F(\delta),\hat{C}_\F,\delta,\|f\|_{L^{n+2+\sigma}(Q_R)},M,N,n,R,r_E,\sigma)$. Moreover, we employ the quantitative excess-decay estimate for parabolic De Giorgi classes, which can be found in~\cite[Chapter~12,~Theorem 3.1]{dibenedetto2023parabolic}. An application of the latter yields for any~$\theta\in(0,1]$ the excess-decay estimate
\begin{align*} 
    \essosc\limits_{Q_{\theta \frac{\rho}{2}}(z_0)} \big(D_i u_\epsilon
    -(D_i u_\epsilon)_{z_0,\rho}\big) \leq C \bigg(\theta^{\alpha} \essosc\limits_{Q_{\frac{\rho}{2}}(z_0)} \big( D_i u_\epsilon - (D_i u_\epsilon)_{z_0,\rho} \big) +\rho^{\beta}\bigg)
\end{align*}
for some constant~$C >1$ and an exponent~$\alpha\in(0,1)$ that both depend on the data as above.
We note that a similar excess-decay result has already been given in~\cite{ladyzenskaia1968linear}. However, for convenience and the quantitative form, we employ the former result of~\cite{dibenedetto2023parabolic}.
\end{proof}


The previous Proposition~\ref{dibenedetto} allows us to obtain a quantitative excess-decay estimate for the excess~\eqref{excess}, constituting the major component for the proof of Proposition~\ref{nondegenerateproposition}.


\begin{mylem} \label{excessdecay}
Let~$\delta,\epsilon\in(0,1]$ and~$u_\epsilon$ denote the weak solution to~\eqref{approx}. Assume there exists~$\nu\in(0,\frac{1}{4}]$, such that~\eqref{lowerboundvs} holds true for at least one~$e^*\in \partial E^*$. Then, there exists a constant~$\Tilde{C}\geq  1$ and an exponent~$\Tilde{\alpha}\in(0,1)$, both depending on
\begin{align*}
    \Tilde{C} &= \Tilde{C}(C_\F(\delta),\hat{C}_\F,\delta,\|f\|_{L^{n+2+\sigma}(Q_R)},M,N,n,R,R_E,r_E,\sigma), \\
    \Tilde{\alpha} &= \Tilde{\alpha}(C_\F(\delta),\hat{C}_\F,\delta,\|f\|_{L^{n+2+\sigma}(Q_R)},M,N,n,r_E,\sigma),
\end{align*}
such that the quantitative excess-decay estimate
\begin{equation} \label{excessdecayest}
   \Phi(z_0,\theta\rho)\leq \Tilde{C} \big( \theta^{2\Tilde{\alpha}} \Phi(z_0,\rho)+\rho^{\Tilde{\alpha}}\mu^2\big)
\end{equation}
holds true for any~$\theta\in(0,1]$, provided that~$\rho\in(0,1]$.
\end{mylem}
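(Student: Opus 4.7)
The plan is to split according to the size of $\theta$. For $\theta\in(\tfrac14,1]$ the estimate is essentially cosmetic: a direct comparison of averages gives $\Phi(z_0,\theta\rho)\le \theta^{-(n+2)}\Phi(z_0,\rho)\le 4^{n+2}\Phi(z_0,\rho)$, and since $\theta^{2\tilde\alpha}\ge 4^{-2\tilde\alpha}$ for any $\tilde\alpha\in(0,1)$, the bound \eqref{excessdecayest} follows by absorbing the factor $4^{n+2+2\tilde\alpha}$ into $\tilde C$. The substantive range is $\theta\in(0,\tfrac14]$, which I would handle via the two building blocks from Proposition~\ref{dibenedetto}, namely the quantitative $L^\infty$-excess estimate~\eqref{excesssupbound} and the oscillation decay~\eqref{excessquantitative}. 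Both are applicable precisely because Proposition~\ref{lowerboundprop} has already secured the lower bound~\eqref{lowerboundest} on $|Du_\epsilon|_E$, rendering the linearized equation satisfied by each $D_i u_\epsilon$ uniformly elliptic with $\epsilon$-independent constants.

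The reduction to a component-wise oscillation estimate proceeds via
\[
\Phi(z_0,\theta\rho) \;\le\; \fiint_{Q_{\theta\rho}(z_0)}\!\bigl|Du_\epsilon-(Du_\epsilon)_{z_0,\rho}\bigr|^2\,\dx\dt \;\le\; C(n)\sum_{i=1}^n \bigl(\essosc_{Q_{\theta\rho}(z_0)} D_i u_\epsilon\bigr)^2,
\]
using that $\essosc$ is invariant under subtraction of a constant. Next, I would apply~\eqref{excessquantitative} at parameter $2\theta\in(0,\tfrac12]$, so that $(2\theta)\cdot\tfrac{\rho}{2}=\theta\rho$, to each function $D_i u_\epsilon-(D_i u_\epsilon)_{z_0,\rho}$, bound its oscillation on $Q_{\rho/2}(z_0)$ by twice the corresponding essential supremum, and invoke~\eqref{excesssupbound} together with the trivial inequality $\fiint_{Q_\rho(z_0)}|D_iu_\epsilon-(D_iu_\epsilon)_{z_0,\rho}|^2\le \Phi(z_0,\rho)$. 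This chain yields
\[
\essosc_{Q_{\theta\rho}(z_0)} D_i u_\epsilon \;\le\; C\bigl(\theta^\alpha\,\Phi(z_0,\rho)^{1/2}+\rho^\beta\bigr),
\]
whence squaring, summing over $i$, and using $(a+b)^2\le 2a^2+2b^2$ produces
\[
\Phi(z_0,\theta\rho) \;\le\; C\bigl(\theta^{2\alpha}\Phi(z_0,\rho)+\rho^{2\beta}\bigr).
\]

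It remains to recast the lower-order term $\rho^{2\beta}$ in the prescribed form $\rho^{\tilde\alpha}\mu^2$. Here the crucial hypothesis is~\eqref{deltamu}, which ensures $\mu>\delta$ and hence $\mu^2\ge\delta^2$. Choosing $\tilde\alpha\coloneqq\alpha\in(0,\beta)\subset(0,1)$ from Proposition~\ref{dibenedetto} and recalling $\rho\in(0,1]$, one has $\rho^{2\beta}\le\rho^{\tilde\alpha}\le\delta^{-2}\rho^{\tilde\alpha}\mu^2$; the factor $\delta^{-2}$ is absorbed into $\tilde C$, delivering~\eqref{excessdecayest}.

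The main obstacle is not the combinatorial glueing of the two estimates from Proposition~\ref{dibenedetto}---these are tailor-made for this purpose---but rather this final conversion of the inhomogeneous error term. Without the lower bound $\mu>\delta$ supplied by~\eqref{deltamu}, the parameter $\mu$ could be arbitrarily small relative to $\rho^\beta$, and the lemma as stated would fail; it is precisely the standing assumption of the non-degenerate regime that permits the data-dependent trade between $\delta$ and $\mu$ on which the prescribed form $\rho^{\tilde\alpha}\mu^2$ ultimately rests.
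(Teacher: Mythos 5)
Your proposal is correct and follows essentially the same route as the paper: the trivial treatment of large $\theta$, the chain $\Phi(z_0,\theta\rho)\lesssim\sum_i(\essosc_{Q_{\theta\rho}}D_iu_\epsilon)^2$ followed by \eqref{excessquantitative} and \eqref{excesssupbound}, and the conversion of the error term $\rho^{2\beta}$ into $\rho^{\tilde\alpha}\mu^2$ via $\mu>\delta$ from \eqref{deltamu} with $\tilde\alpha\le\beta$ are exactly the paper's steps (the paper splits at $\theta=\tfrac12$ rather than $\tfrac14$; your choice is the more careful one, since \eqref{excessquantitative} produces the oscillation on $Q_{\theta\rho/2}$). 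One small wrinkle: the intermediate bound $\fiint_{Q_{\theta\rho}}|D_iu_\epsilon-(D_iu_\epsilon)_{z_0,\rho}|^2\le(\essosc_{Q_{\theta\rho}}D_iu_\epsilon)^2$ is not literally valid because the mean over the larger cylinder need not lie in the essential range over $Q_{\theta\rho}$ --- instead use $L^2$-minimality to replace it by the mean over $Q_{\theta\rho}$ itself, which does lie in that range; this is immediate and does not affect the argument.
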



\begin{proof}
In the case where~$\theta\in(\frac{1}{2},1]$, the claimed excess-decay estimate~\eqref{excessdecayest} readily follows by an application of the~$L^2$-minimality of the mean value
\begin{equation*}
    \Phi(z_0,\theta\rho) \leq C(n)\Phi(z_0,\rho) \leq 2^{2\Tilde{\alpha}}C(n)\theta^{2\Tilde{\alpha}} \Phi(z_0,\rho) \leq C(n) \theta^{2\Tilde{\alpha}} \Phi(z_0,\rho)
\end{equation*}
for any~$\Tilde{\alpha}\in(0,1)$. Thus, let us treat the case where~$\theta\in(0,\frac{1}{2}]$, such that Proposition~\ref{dibenedetto} is applicable. The estimates~\eqref{excesssupbound},~\eqref{excessquantitative}, and also~\eqref{deltamu}, altogether yield
\begin{align*}
    \fiint_{Q_{\theta \rho}(z_0)}| & D_i u_\epsilon - (D_i u_\epsilon)_{z_0,\theta\frac{\rho}{2}}|^2\,\dx\dt \\
    &\leq \bigg(\essosc\limits_{Q_{\theta\rho}(z_0)} D_i u_\epsilon \bigg)^2 \\
    &= \bigg(\essosc\limits_{Q_{\theta\rho}(z_0)} [D_i u_\epsilon - (D_i u_\epsilon)_{z_0,\rho}] \bigg)^2 \\
    &\leq C \Bigg(\theta^{2\alpha} \essosc\limits_{Q_{\frac{\rho}{2}}(z_0)} |D_i u_\epsilon - (D_i u_\epsilon)_{z_0,\rho}|^2
    +\rho^{2\beta}\Bigg) \\
    &\leq \Tilde{C} \Bigg(\theta^{2\Tilde{\alpha}} \fiint_{Q_{\rho}(z_0)}|D_i u_\epsilon - (D_i u_\epsilon)_{z_0,\rho}|^2\,\dx\dt +  \rho^{2\Tilde{\alpha}} \mu^2 \Bigg)
\end{align*}
with constant~$C\geq 1$ and exponent~$\Tilde{\alpha}\in(0,1)$ depending on the given data as stated in the Proposition, where we set~$\Tilde{\alpha}\coloneqq\min\{\alpha,\frac{\beta}{2} \}$ for~$\beta\in(0,1)$ given in~\eqref{beta}. In the very last step, we estimated the essential oscillation further above through the essential supremum and also exploited the fact that~$\rho\in(0,1]$. Since $i\in\{1,\ldots,n\}$ was chosen arbitrarily, we sum up with respect to $i=1,\ldots,n$ and obtain the claimed estimate~\eqref{excessdecayest}. 
\end{proof}


At this point, all ingredients are in place to treat the proof of Proposition~\ref{nondegenerateproposition}.


\begin{proof}[\textbf{\upshape Proof of Proposition~\ref{nondegenerateproposition}}]
By~$\Tilde{\alpha}\in(0,1)$ and~$\Tilde{C}>1$ we denote the exponent and the constant from Lemma~\ref{excessdecay}. Let us choose the parameter~$\theta\in(0,1]$ as follows
$$\theta\coloneqq \min \Big\{\frac{4M^2r^2_E+\delta^2}{\Tilde{C}\delta^2},(2\Tilde{C})^{-\frac{1}{2(\Tilde{\alpha}-\alpha)}},2^{-\frac{1}{\alpha}} \Big\},$$
where~$\alpha\in(0,\Tilde{\alpha})$ denotes an arbitrarily chosen exponent that is strictly smaller than~$\Tilde{\alpha}$. Moreover, we consider the radius~$\Tilde{\rho}\coloneqq \min\{\theta,1\}.$ Accordingly, both~$\theta$ and~$\Tilde{\rho}$ depend on the full data 
$$(C_\F(\delta),\hat{C}_\F,\delta,\|f\|_{L^{n+2+\sigma}(Q_R)},M,N,n,R,R_E,r_E,\sigma).$$
For the following reasoning, we consider a radius~$\rho\in(0,\Tilde{\rho}]$ and a cylinder~$Q_{2\rho}(z_0) \Subset Q_R $. The bounds~\eqref{minkowskischranke} and~\eqref{schrankezwei} enable us to estimate the excess further above by
\begin{align} \label{decayabsch}
    \Phi(z_0,\rho) \leq 4 \Tilde{M}^2 = 4 M^2 r^2_E.
\end{align}
We will now show inductively that for any~$i\in\N$ there holds
\begin{equation} \label{excessiterated}
    \Phi(z_0,\theta^i \rho) \leq \theta^{2\alpha i}\mu^2.
\end{equation}
To abbreviate notation, we refer to~$\eqref{excessiterated}_i$ in the~$i$-th step of the iteration. The case~$i=1$ is treated first. Due to our choices of~$\theta$ and~$\Tilde{\rho}$, applying Lemma~\ref{excessdecay} and exploiting the bounds~\eqref{deltamu} and~\eqref{decayabsch}, we achieve
\begin{align*}
    \Phi(z_0,\theta\rho) \leq \Tilde{C} \big(\theta^{2\Tilde{\alpha}}\Phi(z_0,\rho)+\rho^{2 \Tilde{\alpha}}\mu^2\big) \leq \Tilde{C}\theta^{2\Tilde{\alpha}} \bigg(\frac{4M^2}{\delta^2}+1\bigg) \mu^2 \leq \theta^{2\alpha}\mu^2,
\end{align*}
verifying that~$\eqref{excessiterated}_1$ holds indeed true. Next, we treat the case where~$i>1$ under the assumption that~$\eqref{excessiterated}_{i-1}$ is satisfied. By additionally employing Lemma~\ref{excessdecay} and utilizing the choices of~$\theta$ and~$\Tilde{\rho}$ once more, we obtain
\begin{align*}
    \Phi(z_0,\theta^{i}\rho) &\leq \Tilde{C} \big(\theta^{2\Tilde{\alpha}}\Phi(z_0,\theta^{i-1}\rho)+ (\theta^{i-1}\rho)^{2\Tilde{\alpha}}\mu^2\big) \\
    &\leq \Tilde{C} \big(\theta^{2\Tilde{\alpha}} \theta^{2\alpha(i-1)}\mu^2+ \theta^{2\alpha (i-1)} \rho^{2\Tilde{\alpha}} \mu^2\big) \\
    &\leq \Tilde{C} \theta^{2\alpha i} \big(\theta^{2(\Tilde{\alpha}-\alpha)} + \theta^{-2\alpha i} \Tilde{\rho}^{2\Tilde{\alpha}} \mu^2\big) \mu^2 \\
    & \leq  2\Tilde{C} \theta^{2(\Tilde{\alpha}-\alpha)} \theta^{2\alpha i} \mu^2 \\
    & \leq \theta^{2\alpha i} \mu^2.
\end{align*}
Therefore, also~$\eqref{excessiterated}_i$ holds true. Next, we aim to establish both assertions~\eqref{lebesguerepresentant} and~\eqref{excessgdelta}. Lemma~\ref{gdeltalem} and~$\eqref{excessiterated}_{i-1}$ yield for any~$i\geq 2$ the following estimate
\begin{align} \label{intmedstep}
    \Tilde{\Phi}(z_0,\theta^{i-1} \rho) &\coloneqq \fiint_{Q_{\theta^{i-1} \rho}(z_0)} |\G_{2\delta}(Du_\epsilon)-(\G_{2\delta}(Du_\epsilon))_{z_0,\theta^{i-1}}\rho|^2\,\dx\dt \\
    &\leq  \fiint_{Q_{\theta^{i-1} \rho}(z_0)} |\G_{2\delta}(Du_\epsilon)-\G_{2\delta}((Du_\epsilon)_{z_0,\theta^{i-1}\rho})|^2\,\dx \dt \nonumber \\
    &\leq C \Phi(z_0,\theta^{i-1}\rho) \leq C \theta^{2\alpha(i-1)}\mu^2, \nonumber
\end{align}
with~$C=C(R_E,r_E)$. This further implies
\begin{align*}
    |(\G_{2\delta}(Du_\epsilon))_{z_0,\theta^{i-1}\rho}   - (\G_{2\delta}(Du_\epsilon))_{z_0,\theta^i\rho}|^2 
    &\leq \fiint_{Q_{\theta^i \rho}(z_0)} |\G_{2\delta}(Du_\epsilon)-(\G_{2\delta}(Du_\epsilon))_{z_0,\theta^{i-1}\rho}|^2\,\dx\dt  \\ 
    &\leq \frac{1}{\theta^{n+2}} \Tilde{\Phi}(z_0,\theta^{i-1} \rho) \leq \frac{C}{\theta^{n+2}} \theta^{2\alpha(i-1)}\mu^2. 
\end{align*}
Considering two arbitrarily chosen natural numbers~$l<m$ and taking roots in the preceding estimate, we obtain, due to the assumption that~$\theta^\alpha \leq \frac{1}{2}$, the following
\begin{align*}
    |(\G_{2\delta}(Du_\epsilon))_{z_0,\theta^{l}\rho} - (\G_{2\delta}(Du_\epsilon))_{z_0,\theta^m\rho}|
    &\leq \sum\limits_{i=l+1}^{m} |(\G_{2\delta}(Du_\epsilon))_{z_0,\theta^{i-1}\rho}  -(\G_{2\delta}(Du_\epsilon))_{z_0,\theta^i\rho}| \\
    &\leq C \theta^{-\frac{n+2}{2}} \sum\limits_{i=l+1}^{m} \theta^{\alpha (i-1)} \mu \leq C \theta^{-\frac{n+2}{2}} \frac{\theta^{\alpha l}}{1-\theta^\alpha} \mu \leq C \theta^{-\frac{n+2}{2}} \theta^{\alpha l}\mu.
\end{align*}
 Consequently, we have established that the sequence~$((\G_{2\delta}(Du_\epsilon))_{z_0,\theta^{i}\rho} )_{i\in\N}$ is a Cauchy sequence in~$\R^n$. We denote its limit by
$$\Gamma_{z_0} \coloneqq \lim\limits_{i\to\infty} (\G_{2\delta}(Du_\epsilon))_{z_0,\theta^{i}\rho}.$$
Passing to the limit~$m\to\infty$ in the preceding estimate, we derive
\begin{align*}
    |(\G_{2\delta}(Du_\epsilon))_{z_0,\theta^{l}\rho}-\Gamma_{z_0}| \leq C \theta^{-\frac{n+2}{2}} \theta^{\alpha l}\mu \qquad \mbox{for any~$l\in\N$.}
\end{align*}
By combining this result with estimate~\eqref{intmedstep}, there holds
\begin{align*}
    \fiint_{Q_{\theta^l \rho}(z_0)} |\G_{2\delta}(Du_\epsilon)-\Gamma_{z_0}|^2\,\dx\dt &\leq 2\Tilde{\Phi}(z_0,\theta^{l}\rho) + 2 |(\G_{2\delta}(Du_\epsilon))_{z_0,\theta^{l}\rho}-\Gamma_{z_0}|^2 \\
    &\leq C \theta^{2\alpha l}\mu^2 + C \theta^{-(n+2)} \theta^{\alpha 2l}\mu^2 \\
    &\leq C \theta^{-(n+2)} \theta^{2\alpha l}\mu^2.
\end{align*}
Eventually, we convert the preceding estimate into the excess-decay estimate~\eqref{excessgdelta}. Given~$r\in(0,\rho]$, we choose~$l\in\N$, such that~$\theta^{l+1}\rho <r \leq \theta^{l}\rho$ is satisfied. Then, our choice of~$\theta$ and the preceding inequality yield 
\begin{align*}
    \fiint_{Q_r(z_0)} |\G_{2\delta}(Du_\epsilon) - \Gamma_{z_0}|^2\,\dx\dt &\leq \frac{1}{\theta^{n+2}} \fiint_{Q_{\theta^{l}\rho}(z_0)} |\G_{2\delta}(Du_\epsilon) - \Gamma_{z_0}|^2\,\dx\dt \\
    &\leq C \theta^{-2(n+2)} \theta^{2\alpha l}\mu^2 \\
    &\leq C\Big(\frac{r}{\rho} \Big)^{2\alpha}\mu^2
\end{align*}
with~$C=C(C_\F(\delta),\hat{C}_\F,\delta,\|f\|_{L^{n+2+\sigma}(Q_R)},M,N,n,R,R_E,r_E,\sigma)$. Finally, we apply Jensen's inequality, such that there holds
\begin{align*}
    |(\G_{2\delta}(Du_\epsilon))_{z_0,r} - \Gamma_{z_0}|^2 \leq \fiint_{Q_r(z_0)} |\G_{2\delta}(Du_\epsilon) - \Gamma_{z_0}|^2 \,\dx\dt \leq C\Big(\frac{r}{\rho} \Big)^{2\alpha}\mu^2,
\end{align*}
which furthermore establishes
$$\Gamma_{z_0} = \lim\limits_{r\downarrow 0} (\G_{2\delta}(Du_\epsilon))_{z_0,r}.$$
The bound~\eqref{lebesguebound} follows from estimate~\eqref{schrankeeins} and the fact that there holds~$|(\G_{2\delta}(Du))_{z_0,r}|\leq R_E\mu$ for any~$r\in(0,\rho]$.
\end{proof}


\section{The degenerate regime} \label{sec:degenerate}
In this section we devote ourselves to the treatment of the degenerate regime. In particular, we aim to give the proof of Proposition~\ref{degenerateproposition}. For this matter, the function 
$$v_\epsilon \coloneqq ( \partial_{e^*} u_\epsilon - (1+\delta))^2_+.$$
and also its rescaled version~$\Tilde{v}_\epsilon$~\eqref{tildevrescaled}, where~$\delta,\epsilon\in(0,1]$ and~$e^*\in \partial E^*$, turn out expedient. In fact, we have already established that~$ v_\epsilon$ is a weak sub-solution to a linear parabolic equation with elliptic and bounded coefficients, as stated in Lemma~\ref{subsollemma}, and further deduced two classical De Giorgi class-type estimates in form of Lemma~\ref{degiorgilem}. They will serve as a key tool in the subsequent argument. More precisely, we elaborate a tool that is commonly referred to as~\textit{expansion of positivity}, which, together with the measure-theoretic information~\eqref{degeneratemeascond}, yields a quantitative reduction of the supremum of~$|\G_\delta(Du_\epsilon)|_E$ within a smaller cylinder. We recall that throughout this section the set of assumptions~\eqref{schrankeeins},~\eqref{schrankezwei},~\eqref{deltamu}, and also~\eqref{degeneratemeascond} is at our disposal. \,\\


Our next goal is to derive the aforementioned tool that converts measure-theoretic information of~$\Tilde{v}_\epsilon$ at a certain time-slice into pointwise estimates at later times. This tool is commonly known as~\textit{expansion of positivity}. 


\begin{myproposition} [Expansion of positivity] \label{expansion}
    Let~$\Tilde{v}_\epsilon \leq 1$ denote the weak sub-solution defined in~\eqref{tildevrescaled}. Further, let~$\rho\in(0,1]$,~$\Tilde{t}\in(-\rho^2,-\frac{1}{2}\rho^2)$, and~$\Gamma>0$, such that~$\Tilde{t}+\Gamma\rho^2< 0$ holds true. For any~$\alpha\in(0,1)$ and~$L\in(0,1)$, there exists a parameter~$\eta\in(0,1)$ depending on the data
    $$(\alpha,C_\F(\delta),\hat{C}_\F,\delta,\|f\|_{L^{n+2+\sigma}(Q_R)},\Gamma,M,N,n,R,r_E,\sigma),$$
    such that whenever the assumption
    \begin{align} \label{expansionmeascond}
        |B_\rho\cap\{1-\Tilde{v}_{\epsilon}(\cdot,\Tilde{t})>L\}| \geq \alpha |Q_\rho|
    \end{align}
    is satisfied, then either for all times
    $$\Tilde{t}+\Gamma\rho^2<t\leq 0$$
    there holds
    $$1-\Tilde{v}_{\epsilon}(\cdot,t)\geq \eta L \qquad \mbox{a.e. in~$B_\rho$},$$
    or we have~$\eta L \leq C\rho^\beta$ with a constant
    $$C=C(C_\F(\delta),\hat{C}_\F,\delta,\|f\|_{L^{n+2+\sigma}(Q_R)},M,N,n,R,r_E,\sigma).$$
\end{myproposition}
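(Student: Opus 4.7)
I work with the complement $w_\epsilon \coloneqq 1 - \Tilde{v}_\epsilon \in [0,1]$. Substituting $\Tilde{v}_\epsilon = 1 - w_\epsilon$ into the weak form of Corollary~\ref{subsoltildev} (the constant drops out against compactly supported test functions) shows that $w_\epsilon$ is a non-negative \emph{super}-solution of the same linear parabolic equation, with inhomogeneity controlled by $1 + |\Tilde{f}|^2$ and $1 + |\Tilde{f}|$. In these variables the hypothesis reads $|B_\rho \cap \{w_\epsilon(\cdot,\Tilde{t}) > L\}| \geq \alpha |B_\rho|$, and the target is the a.e.\ bound $w_\epsilon \geq \eta L$ on $B_\rho \times (\Tilde{t} + \Gamma\rho^2, 0]$, up to the dichotomy $\eta L \leq C\rho^\beta$ dictated by the forcing.

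The strategy is the classical two-step DiBenedetto scheme. \emph{Step~1 (time propagation).} Test the weak super-solution inequality for $w_\epsilon$ with a function of the form $\phi = \psi^2(x)/(\sigma L + w_\epsilon \wedge L)$, where $\psi \in C^1_0(B_\rho)$ is a standard spatial cutoff and $\sigma \in (0,1)$ is a small auxiliary parameter. Integrating from $\Tilde{t}$ to an arbitrary $t \in (\Tilde{t}, 0]$ and absorbing the forcing via Hölder's inequality against $\|f\|_{L^{n+2+\sigma}(Q_R)}$ produces a logarithmic estimate yielding
\[
\big|B_\rho \cap \{w_\epsilon(\cdot,t) > \eta_1 L\}\big| \geq \tfrac{\alpha}{2}\,|B_\rho| \qquad \text{for all } t \in (\Tilde{t},0],
\]
for some $\eta_1 \in (0,1)$ quantitative in $\alpha, \Gamma$ and the data -- unless the forcing error, which scales like $\rho^\beta$, already dominates $\eta_1 L$, in which case the alternative $\eta L \leq C \rho^\beta$ of the proposition is realised. \emph{Step~2 (De Giorgi iteration).} With this pointwise-in-time measure estimate in hand, apply the Caccioppoli-type estimate~\eqref{degiorgieins} on $Q \coloneqq B_\rho \times (\Tilde{t} + \Gamma\rho^2/2, 0]$ to the truncations $(\Tilde{v}_\epsilon - k_j)_+$ along the decreasing sequence $k_j \coloneqq 1 - \eta_1 L(1 + 2^{-j})/2$. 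Combining with the parabolic Sobolev embedding (Lemma~\ref{sobolevembedding}) and with the measure information of Step~1 (which keeps the super-level sets $\{\Tilde{v}_\epsilon > k_j\}$ strictly below $|Q|$ at every time slice), one produces the standard recursion $Y_{j+1} \leq C b^j Y_j^{1+\kappa}$ for $Y_j \coloneqq |Q \cap \{\Tilde{v}_\epsilon > k_j\}|$; the geometric convergence Lemma~\ref{geometriclem} then forces $Y_\infty = 0$, hence $w_\epsilon \geq \eta L$ a.e.\ on $Q$ with $\eta \coloneqq \eta_1/4$.

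The main obstacle is Step~1. A pure super-solution admits a clean logarithmic estimate; here the forcing term coming from $f \in L^{n+2+\sigma}(Q_R)$ must be absorbed carefully, and the resulting error scales sharply as $\rho^\beta$ after Hölder -- it is precisely the impossibility of dwarfing this error by $\eta_1 L$ when $L$ is very small that forces the alternative in the statement. A secondary subtlety is the $\epsilon$-uniformity of all constants: this is guaranteed throughout by appealing only to the ellipticity of the linearised bilinear form provided by Lemma~\ref{bilinearelliptic}, whose ellipticity constants depend on $C_{\F}(\delta)$ and $\delta$ but not on $\epsilon$.
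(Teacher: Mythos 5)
Your route is genuinely different from the paper's: the paper does not reprove expansion of positivity but reduces it to \cite[Chapter~12, Sections~10.1--10.2]{dibenedetto2023parabolic}, verifying that the structural hypotheses of that reference hold via the De Giorgi class estimates of Lemma~\ref{degiorgilem} and identifying the quantity $\Gamma_*$ there with $C\|f\|_{L^{n+2+\sigma}(Q_R)}\rho^\beta$ (whence the second alternative). Your attempt at a self-contained two-step argument is legitimate in spirit, and your Step~1 (logarithmic/time-propagation estimate for the supersolution $w_\epsilon=1-\Tilde v_\epsilon$, with the forcing absorbed by H\"older and producing the $\rho^\beta$ alternative) is sound.

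However, Step~2 has a genuine gap. The recursion $Y_{j+1}\le Cb^jY_j^{1+\kappa}$ forces $Y_j\to0$ only under the smallness hypothesis $Y_0\le C^{-1/\kappa}b^{-1/\kappa^2}$ of Lemma~\ref{geometriclem}, i.e.\ the measure of the starting super-level set must lie below a fixed critical threshold $\nu_0$ depending only on the data. What Step~1 delivers is merely $|B_\rho\cap\{\Tilde v_\epsilon(\cdot,t)>1-\eta_1L\}|\le(1-\tfrac{\alpha}{2})|B_\rho|$ for each time slice, hence $Y_0\le(1-\tfrac{\alpha}{2})|Q|$; for small $\alpha$ this is nowhere near $\nu_0|Q|$, so the geometric iteration cannot be started from the level $k_0=1-\eta_1L$. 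The missing ingredient is the intermediate ``shrinking of the level sets'': one must apply the discrete isoperimetric inequality (Lemma~\ref{isoperimetric}) slice-wise over a finite sequence of levels $1-\eta_1L2^{-j}$, $j=0,\dots,j_*$, pairing the slice-wise measure bound from Step~1 with the gradient bound from the energy estimate~\eqref{degiorgieins}, to conclude that after $j_*=j_*(\alpha,\nu_0,\text{data})$ halvings the super-level set has measure at most $\nu_0|Q|$. Only then does your De Giorgi iteration apply, yielding $w_\epsilon\ge\eta_1L\,2^{-j_*-1}$ and hence $\eta=\eta_12^{-j_*-1}$ (which is also why $\eta$ degenerates as $\alpha\downarrow0$). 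Without this step the argument does not close.
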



\begin{proof}
The result follows directly from the De Giorgi class-type energy estimates from Lemma~\ref{degiorgilem} in Section~\ref{sec:energyestimates} and the arguments presented in~\cite[Chapter~12, Section~10.1 Proof of Theorem 10.1. Preliminaries]{dibenedetto2023parabolic} and~\cite[Chapter~12, Section~10.2 Proof of Theorem 10.1. Expansion of Positivity]{dibenedetto2023parabolic}. These sections develop the framework within the context of a Harnack inequality for parabolic De Giorgi classes. As in the proof of Proposition~\ref{dibenedetto}, we note that the structural assumptions outlined in~\cite[Chapter~12, Section~1 Quasi-Linear Equations and De Giorgi Classes,~(1.2)]{dibenedetto2023parabolic} are satisfied in our setting. Consequently, the expansion of positivity stated in Proposition~\ref{expansion} immediately follows from these references.

However, it is important to highlight that, in~\cite[Chapter~12, Section~10.2. Proof of Theorem 10.1. Expansion of Positivity, Proposition~10.1]{dibenedetto2023parabolic}, the quantity~$\Gamma_*$ corresponds to~$C\|f\|_{L^{n+2+\sigma}(Q_R)} \rho^\beta$ in our context, where the constant~$C$ depends only on the full data set~$(C_\F(\delta), \hat{C}_\F, \delta, M, N, n, R, r_E, \sigma)$. Furthermore, as noted in~\cite[Chapter~12, Section~10.1 Proof of Theorem 10.1. Preliminaries,~(10.2)]{dibenedetto2023parabolic}, the additional term~$\frac{1}{u(x_0,t_0)}$ appears solely due to the rescaling performed immediately prior to that step. For parabolic De Giorgi classes as considered in~\cite[Chapter~12, Section~1 Quasi-Linear Equations and De Giorgi Classes]{dibenedetto2023parabolic}, this term does not naturally arise. The claimed expansion of positivity follows.
\end{proof}


Finally, we prove the main result of the degenerate regime, that is Proposition~\ref{degenerateproposition}.


\begin{proof}[\textbf{\upshape Proof of Proposition~\ref{degenerateproposition}}]
  As before, let~$\Tilde{v}_\epsilon \leq 1$ denote be the weak sub-solution defined in~\eqref{tildevrescaled}. We begin by expressing the measure-theoretic information~\eqref{degeneratemeascond} in terms of~$\Tilde{v}_\epsilon$. We recall that for~$u_\epsilon$ the latter is given by
  \begin{equation*}
      |Q_\rho(z_0)\cap\{ \partial_{e^*}u_\epsilon \leq 1+\delta + (1-\nu)\mu\}| \geq \nu |Q_\rho(z_0)|.
  \end{equation*}
For the re-scaled function~$\Tilde{v}_\epsilon$, this translates to the measure-theoretic information
 \begin{align} \label{tildevmeasure}
        |Q_\rho \cap \{\Tilde{v}_\epsilon \leq (1-\nu)^2 \} | \geq \nu |Q_\rho|.
    \end{align}
The preceding upper bound for~$\Tilde{v}_\epsilon$ in~\eqref{tildevmeasure} can be estimated further above by 
\begin{align*}
    \Tilde{v}_\epsilon \leq  (1-\nu)^2 \leq 1-\nu^2,
\end{align*}
due to~$\nu\in(0,1)$. The measure condition~\eqref{tildevmeasure} thus implies that
\begin{align*}
     |Q_\rho \cap \{\Tilde{v}_\epsilon \leq 1-\nu^2\} | \geq \nu |Q_\rho|.
\end{align*}
 This measure-theoretic information in particular implies the existence of a time~$t_0\in\big(-\rho^2,-\frac{1}{2}\nu\rho^2 \big]$, such that there holds
\begin{align} \label{measureball}
   | B_\rho\cap\{\Tilde{v}_\epsilon(\cdot,t_0)\leq 1-\nu^2 \}| \geq \mbox{$\frac{1}{2}$}\nu |B_{\rho}|,
\end{align}
since otherwise a contradiction is obtained as follows
\begin{align*}
    \nu |Q_\rho| &\leq \displaystyle\int_{-\rho^2}^{0} | B_\rho\cap\{\Tilde{v}_\epsilon (\cdot,t)\leq 1-\nu^2 \}|\,\dt \\
    &= \displaystyle\int_{-\rho^2}^{-\frac{1}{2}\nu\rho^2} | B_\rho\cap\{\Tilde{v}_\epsilon(\cdot,t)\leq 1-\nu^2 \}|\,\dt + \displaystyle\int_{-\frac{1}{2}\nu\rho^2}^{0} | B_\rho\cap\{\Tilde{v}_\epsilon(\cdot,t)\leq 1-\nu^2 \}|\,\dt \\
    &\leq \mbox{$\frac{1}{2}$} \nu|Q_\rho|\big(1-\mbox{$\frac{1}{2}$}\nu \big) + {\textstyle\frac{1}{2}}\nu|Q_\rho| \\
    &= \nu{\textstyle \big(1-\frac{1}{4}\nu\big)} |Q_\rho|.
\end{align*}
Consequently, the measure-theoretic information~\eqref{measureball} allows an application of the expansion of positivity stated in Proposition~\ref{expansion} with the choice~$\alpha \coloneqq \frac{1}{2}\nu$,~$\Gamma\coloneqq \frac{1}{4}\nu$, and~$L\coloneqq \nu^2$. This way, we obtain the existence of a parameter~$\eta\in(0,1)$ that depends on 
$$\eta=\eta(C_\F(\delta),\hat{C}_\F,\delta,\|f\|_{L^{n+2+\sigma}(Q_R)},M,N,n,\nu,R,r_E,\sigma),$$
such that either there holds~$\eta \nu^2 \leq C\rho^\beta$, or we have
\begin{equation} \label{tildevshrink}
    \Tilde{v}_\epsilon \leq 1-\eta\nu^2 \qquad \mbox{a.e. in~$B_\rho \times \big(-\frac{1}{4}\nu\rho^2,0 \big]$}.
\end{equation}
The bound~\eqref{tildevshrink} can be translated to the original solution~$u_\epsilon$ itself, which yields
\begin{equation*} 
    \partial_{e^*} u_\epsilon-(1+\delta) \leq \sqrt{1-\eta\nu^2}\mu \qquad \mbox{a.e. in~$B_\rho(x_0) \times \big(t_0-\frac{1}{4}\nu\rho^2,t_0 \big]$}.
\end{equation*}
Due to the fact that~$\nu\in(0,\frac{1}{4}]$, there holds~$\sqrt{1-\eta\nu^2}\in[\frac{1}{2},1)$, ensuring a lower bound for the given quantity. In particular, way may choose~$\sqrt{1-\eta\nu^2}\in\big[2^{-\frac{\Tilde{\beta}}{2}},1\big)$, as stated in Proposition~\ref{degenerateproposition}. Moreover, by choosing a radius~$\hat{\rho}>0$ small enough in dependence on the given data 
$$(C_\F(\delta),\hat{C}_\F,\delta,\|f\|_{L^{n+2+\sigma}(Q_R)},M,N,n,\nu,R,r_E,\sigma),$$
such that~$C\Tilde{\rho}^\beta<\eta\nu^2$, the first alternative of Proposition~\ref{expansion} never applies. Eventually, since~$e^*\in\partial E^*$ remains chosen arbitrarily up to this point, we pass to the supremum over all~$e^*\in\partial E^*$, and use~\eqref{minkowskialternativ} and the definition of~$\G_\delta$, to obtain
$$ |\G_\delta(Du_\epsilon)|_E \leq \kappa\mu \qquad\mbox{a.e. in~$Q_{\Tilde{\nu}\rho}(z_0)$}, $$
 with~$\Tilde{\nu}\coloneqq\frac{\sqrt{\nu}}{2}$ and~$\kappa\coloneqq \sqrt{1-\eta\nu^2} \in \big[2^{-\frac{\Tilde{\beta}}{2}},1\big)$. This finishes the proof of the proposition.
\end{proof}


 \nocite{*}
 \,\\
\bibliographystyle{plain}
\bibliography{Literature.bib}


\end{document}